\documentclass[a4paper, 12pt]{amsart}

\usepackage{amsmath,amssymb,enumerate,comment,amsthm,mathrsfs,bbm,mathtools,enumitem}
 \usepackage{comment,amsfonts,amscd}
 \usepackage[bbgreekl]{mathbbol}

 \usepackage{tikz}
 \usepackage[all]{xy}
\usepackage{amscd}
\usepackage{array}
\newcommand{\ip}[1]{\mathopen{\langle}#1\mathclose{\rangle}}
\newtheorem{Thm}{Theorem}[section]
\newtheorem{Prop}[Thm]{Proposition}

\newtheorem{Lem}[Thm]{Lemma}

\newtheorem{Cor}[Thm]{Corollary}

\theoremstyle{definition}

\newtheorem{Rem}[Thm]{Remark}

\newtheorem{Def}[Thm]{Definition}

\newtheorem{no}[Thm]{Notations}

\newtheorem{ass}[Thm]{Assumption}

\newtheorem{Exm}[Thm]{Example}

\newcommand{\Cs}{C$^\ast$}

\newcommand{\id}{\mbox{\rm id}}
\newcommand{\rg}{\mathop{{\mathrm C}_{\mathrm r}^\ast}}

\newcommand{\rca}[1]{\mathop{\rtimes _{#1, {\mathrm r}}}}
\newcommand{\Cn}[1]{\mathcal{O}_{#1}}

\newcommand{\IB}{\mathbb B}
\newcommand{\IC}{\mathbb C}

\newcommand{\IK}{\mathbb K}

\newcommand{\IN}{\mathbb N}
\newcommand{\IR}{\mathbb R}
\newcommand{\IT}{\mathbb T}

\newcommand{\IZ}{\mathbb Z}

\newcommand{\cF}{\mathcal F}
\newcommand{\cH}{\mathcal H}

\newcommand{\cG}{\mathcal G}

\newcommand{\cS}{\mathcal S}
\newcommand{\cU}{\mathcal U}

\newcommand{\fn}{\mathfrak n}
\newcommand{\fA}{\mathfrak A}

\newcommand{\fB}{\mathfrak B}

\newcommand{\cE}{\mathcal E}
\newcommand{\cZ}{\mathcal Z}

\newcommand{\cM}{\mathcal M}
\newcommand{\ve}{\varepsilon}

\newcommand{\acts}{\curvearrowright}

\newcommand{\ad}{\mathrm{Ad}}

\DeclareMathOperator{\cl}{\mathop{cl}}
\DeclareMathOperator{\SU}{\mathop{SU}}

\DeclareMathOperator{\csp}{\mathop{\overline{span}}}
\DeclareMathOperator{\spa}{\mathop{span}}

\DeclareMathOperator{\supp}{supp}

\DeclareMathOperator{\SL}{SL}
\DeclareMathOperator{\Ped}{Ped}

\DeclareMathOperator{\Cso}{{\rm C}^\ast}

\newcommand{\ONB}{\mathrm{ONB}}

\makeatletter
\@addtoreset{equation}{section}
\makeatother

\title[]{Stabilization theorem and symmetric structure of Cuntz--Pimsner algebras}
\author{Miho Mukohara}
\author{Yuhei Suzuki}
\subjclass[2020]{Primary~
46L55, Secondary~46L35}
\keywords{Pimsner algebras, Stabilization theorem, Hao--Ng isomorphism problems.}

\address{Miho Mukohara, Department of Mathematics, Kyushu University, Japan}
\email{mukohara@math.kyushu-u.ac.jp}
\address{Yuhei Suzuki, Department of Mathematics, Faculty of Science, Hokkaido University,
Kita 10, Nishi 8, Kita-Ku, Sapporo, Hokkaido, 060-0810, Japan}
\email{yuhei@math.sci.hokudai.ac.jp}
\begin{document}
\maketitle

\begin{abstract}

We establish a crossed product decomposition theorem for stabilized Cuntz--Pimsner algebras.
This extends Cuntz's classical decomposition for Cuntz algebras
and reveals an implicit symmetric structure within these algebras.
Exploiting this structure, we characterize their simplicity and classify their ideals, tracial weights, and KMS weights for generalized quasi-free flows,
revisiting and refining seminal results by Kitamura, Schweizer, and Laca--Neshveyev.

We also give a short, elementary solution to the reduced Hao--Ng isomorphism problem for locally compact groups in full generality.
Bypassing non-self-adjoint operator algebra techniques used in prior work, our proof relies solely on \Cs-algebra theory.
In contrast, we disprove the full Hao--Ng conjecture by constructing counterexamples.

Combining these results, we investigate quasi-free actions on Cuntz algebras.
Notably, we affirmatively answer a recent question posed by Izumi on isometric shift-absorption for compact groups.
\end{abstract}

\tableofcontents

\section{Introduction}
Pimsner \cite{Pim} established an extremely powerful method for constructing universal \Cs-algebras from a \Cs-correspondence.
The advantages of the Pimsner construction lie in its inherent universality and the tractability of its KK-theoretic invariants.
Moreover, the Pimsner construction inherits various analytic properties from the coefficient \Cs-algebras (see \cite[Section 4.6]{BO}), while frequently yielding simple \Cs-algebras (see, e.g., \cite{Kum}, \cite{SuzP}).
Recently, this construction has emerged as a powerful tool even for well-studied \Cs-algebras such as Kirchberg algebras and AF-algebras.
(For background on the classification theory of simple \Cs-algebras,
we refer the reader to the comprehensive introduction in \cite{CGS}.)
 This is notably evident in the construction of group actions---especially amenable actions of non-amenable groups---as well as outer actions of unitary tensor categories (see, e.g., \cite{Kit}, \cite{Mey}, \cite{OS}, \cite{Suzsf}).

In this paper, we show that Kasparov's stabilization theorem for Hilbert \Cs-modules \cite{Kas} is highly effective for investigating the structure of Pimsner algebras.
Specifically, under the standard Assumption \ref{ass:setting} below, we prove that the stabilization of a Cuntz--Pimsner algebra is always decomposable as a crossed product of a self-similar automorphism, in the same spirit as Cuntz's crossed product decomposition of the stabilized Cuntz algebras.
Furthermore, we utilize this main result to examine the detailed structure of Cuntz--Pimsner algebras.
We provide characterizations of their simplicity, a classification of ideals and tracial weights, and a classification theorem for KMS weights for (generalized) quasi-free flows.
These results unify and extend existing literature, including the seminal results of Kitamura \cite{Kit}, Schweizer \cite{Sch}, and Laca--Neshveyev \cite{LN}.

We also provide a short, self-contained solution to the reduced Hao--Ng isomorphism problem for locally compact Hausdorff groups in full generality.
Although a solution for the non-degenerate case was recently announced in \cite{DT},
earlier versions of the preprint\footnote{A correction is proposed in the latest revision (v3).} contained a critical gap.
Unlike previous approaches developed in \cite{KR}, \cite{DT}, which rely heavily on the machinery of non-self-adjoint operator algebras and their \Cs- and ${\rm W}^\ast$-envelopes,
our proof builds solely on basic properties of \Cs-algebras.
Our proof also recovers the full isomorphism-type result formulated and proved by Katsoulis and Ramsey for hyperrigid \Cs-correspondences (Theorem 3.10, \cite{KR}) in the regular case.
We further show that their reformulation is essential by constructing counterexamples to the original conjecture.
Our counterexamples are regular and hence hyperrigid, thereby answering the questions posed below Theorem 3.10 and at the end of Section 4 in \cite{KR}
and disproving one half of the refined conjecture stated in Section 7 of \cite{KR19} even in this specific case (see also the concluding remarks in \cite{BKQR}, \cite{DT}).
The proofs of these results are independent of the rest of the paper;
both are self-contained and less than a page long.

As an application of these results, we investigate quasi-free actions on Cuntz algebras.
This approach provides an elementary and conceptual explanation at the level of \Cs-correspondences for the well-known result of Doplicher--Roberts \cite{MR901232} (and its far-reaching generalization \cite{MR1432596}) that the fixed point algebra of a quasi-free compact group action is stably isomorphic to a graph algebra. In particular, we resolve a question---recently posed by Izumi \cite{Izuqp}---concerning when a quasi-free action of a compact group is isometrically shift-absorbing.

\begin{ass}\label{ass:setting}
Throughout this article, except in Sections \ref{section:HN} and \ref{section:HN2},
for a \Cs-correspondence $\cE$ over $A$, we suppose the following standard assumptions.
The coefficient \Cs-algebra $A$ is $\sigma$-unital,
and $\cE$ is
 \begin{itemize}
\item faithful, i.e., the left action $A \rightarrow \IB(\cE)$ is injective,
\item non-degenerate, i.e., $A\cdot \cE$ is dense in $\cE$,
\item full, i.e., the range of the $A$-valued inner product is total in $A$,
\item countably generated.
\end{itemize}
\end{ass}
For basic facts and notation regarding Pimsner algebras and \Cs-correspondences, we refer the reader to Section 4.6 of \cite{BO}
in the faithful case, and \cite{Kat04} for the general case.
Unless otherwise specified, we follow the (standard) notations used therein.
\begin{no}Here we give a list of notations used throughout the article.
\begin{itemize}
\item We include $0$ in $\IN$.
\item Put $\IK:=\IK(\ell^2(\IN))$ for short.
\item The multiplier algebra of a \Cs-algebra $A$ is denoted by $\cM(A)$.
\item For a right Hilbert \Cs-module $\cE$, we write $\IK(\cE)$ and $\IB(\cE)$
for the \Cs-algebras of compact operators and adjointable operators, respectively. 
\item We write $\cU(\cE)$ for the unitary group of $\IB(\cE)$.
\item For a subset $S$ of a normed space $X$, we set $(S)_1:=\{s\in S: \|s\|=1\}$.
\item For two elements $x, y\in X$ and $\ve>0$, we write $x\approx_\ve y$ when $\|x-y\| <\ve$.
\item Similarly, for $S \subset X$, $x\in X$, and $\ve>0$, we write $x\in_\ve S$ if there is an element $s\in S$ with $x\approx_\ve s$.
\item For a subset $S$ of a topological space, we denote by $\cl(S)$ the closure of $S$.
\item For two \Cs-algebras $A, B$, we denote by $A\otimes B$ their minimal tensor product.
\item For a tensor product of \Cs-correspondences, we primarily use the interior tensor product.
Thus, the symbol $\otimes$ is reserved for the interior tensor product.
When we wish to emphasize the coefficient \Cs-algebra $A$, we write $\otimes_A$ as is standard.
Although the exterior tensor product is also denoted by the same symbol, this occurs only at a few points in this article and will be accompanied by an explicit note.
\end{itemize}
\end{no}

\section{Stabilizations and crossed product decompositions of Cuntz--Pimsner algebras}\label{section:st}

Here we record consequences of the stabilization theorem to the Cuntz--Pimsner algebras.
In particular, we show that the Cuntz--Pimsner algebras are stably isomorphic to the crossed product of an automorphism
on an associated \Cs-algebra.
We will use these results to study the simplicity, ideals, traces, and KMS weights on Cuntz--Pimsner algebras in later sections.
Notations introduced in this section will be used therein.

For a $\ast$-homomorphism $\rho \colon A \rightarrow \cM(A)$,
we write ${}_\rho A$ for the \Cs-correspondence over $A$
where the right \Cs-module structure is standard, and the left action is given via $\rho$.

The next proposition is elementary but fundamental for us.
\begin{Prop}\label{Prop:st}
Let $\mathcal{E}$ be a \Cs-correspondence over $A$.
Then the exterior tensor product $\mathcal{E}\otimes \IK$ is isomorphic to
${}_\rho (A\otimes \IK)$ for some $\ast$-homomorphism $\rho\colon A\otimes \IK \rightarrow \cM(A\otimes \IK)$.
\end{Prop}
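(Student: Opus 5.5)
Under Assumption \ref{ass:setting}, $A$ is $\sigma$-unital and $\cE$ is countably generated and full. The plan is to first identify the right Hilbert module $\cE\otimes\IK$ over $A\otimes\IK$ with the standard module $A\otimes\IK$. Then we transport the left action along this unitary. Write $\cH_A:=\ell^2(\IN)\otimes A$ for the standard Hilbert $A$-module.

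\emph{Step 1: a right-module isomorphism.} By Kasparov's stabilization theorem \cite{Kas}, since $\cE$ is countably generated, $\cE\oplus\cH_A\cong\cH_A$. Conversely, fullness of $\cE$ and $\sigma$-unitality of $A$ give $\cH_A\oplus\cH_\cE\cong \cH_\cE$, where $\cH_\cE:=\ell^2(\IN)\otimes\cE$. To see this, choose a sequence $(\xi_n)$ in $\cE$ such that the finite sums $\sum_{n\le N}\langle\xi_n,\xi_n\rangle$ form an approximate unit of $A$. This is possible because the linear span of the inner products is a dense ideal and $A$ has a countable approximate unit; the standard argument (as in Mingo--Phillips / Brown's theorem) realizes $A$ as a complemented submodule of $\cH_\cE$. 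Kasparov's theorem applied to $\cH_\cE$, which is countably generated, then yields $\cH_\cE\cong\cH_\cE\oplus\cH_A$. An Eilenberg swindle-type comparison gives
\[
\cH_\cE\cong\cH_\cE\oplus\cH_A\cong \cH_A .
\]
The first isomorphism is the one just obtained, and the second comes from $\cE\oplus\cH_A\cong\cH_A$ after tensoring with $\ell^2$, since $\cH_\cE\oplus\cH_A\cong\ell^2\otimes(\cE\oplus\cH_A)\cong\ell^2\otimes\cH_A\cong\cH_A$.

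\emph{Step 2: pass to the stabilization.} The exterior tensor product $\cE\otimes\IK$, as a right $A\otimes\IK$-module, is isomorphic to $\cH_\cE\otimes_A (A\otimes\IK)$ up to the obvious identification $\IK\cong \ell^2\otimes \overline{\ell^2}$. More directly, $\cE\otimes\IK\cong \cH_\cE\otimes\overline{\ell^2}$, and likewise $A\otimes\IK\cong\cH_A\otimes\overline{\ell^2}$. The unitary $U\colon\cH_\cE\to\cH_A$ of Step 1 therefore induces a unitary $V=U\otimes 1\colon\cE\otimes\IK\to A\otimes\IK$ of right Hilbert $A\otimes\IK$-modules.

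\emph{Step 3: define $\rho$.} The left action $\varphi\otimes\id\colon A\otimes\IK\to\IB(\cE\otimes\IK)$ is conjugated by $V$ to give
\[
\rho(x):=V(\varphi\otimes\id)(x)V^\ast\in\IB(A\otimes\IK)=\cM(A\otimes\IK).
\]
This uses the standard identification of adjointable operators on a \Cs-algebra viewed as a module over itself with its multiplier algebra. Then $V$ is an isomorphism of correspondences $\cE\otimes\IK\cong{}_\rho(A\otimes\IK)$.

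The main obstacle is Step 1, specifically absorbing $\cH_A$ into $\cH_\cE$ via fullness. This is where $\sigma$-unitality is genuinely used to produce the sequence $(\xi_n)$ and the resulting isometry $A\to\cH_\cE$, $a\mapsto(\xi_n a)_n$, with complemented range. The remaining steps are formal.
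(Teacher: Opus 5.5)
Your proof is correct. Its outer frame matches the paper's: identify $\cE\otimes\IK$ with $A\otimes\IK$ as right modules through $\IK\cong\ell^2\otimes\overline{\ell^2}$, then conjugate the left action into $\IB(A\otimes\IK)=\cM(A\otimes\IK)$. The difference lies in how you obtain $\cE\otimes\ell^2\cong A\otimes\ell^2$.

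The paper uses Kasparov's theorem in the form $\cE\otimes\ell^2\cong p(A\otimes\ell^2)$, with $p\in\cM(A\otimes\IK)$ a projection. It observes that $p$ is full (because $\cE$ is) and that $\bigoplus_{\IN}p\sim p$ (because $\ell^2\cong\ell^2\otimes\ell^2$), and then quotes Brown's Lemma 2.5 to get $p\sim 1$. You instead prove the two absorptions $\cE\oplus\cH_A\cong\cH_A$ and $\cH_\cE\oplus\cH_A\cong\cH_\cE$ directly on modules and close with a swindle. This amounts to unpacking Brown's lemma in Hilbert-module language, in the spirit of Mingo--Phillips. The paper's version is shorter given Brown's lemma. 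Yours is more self-contained, provided you actually produce the sequence $(\xi_n)$ rather than citing ``the standard argument''.

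Producing $(\xi_n)$ takes only a few lines:
\begin{enumerate}
\item Take an approximate unit $(e_k)_{k\geq 1}$ of $A$ in $\Ped(A)$ with $e_{k+1}e_k=e_k$. Put $e_0:=0$ and $d_k:=e_k-e_{k-1}$, so that $d_k=e_{k+1}d_ke_{k+1}$.
\item Since $\Ped(A)$ lies in the dense ideal $\spa\ip{\cE,\cE}$, write $e_{k+1}=\ip{\eta,\zeta}$ with $\eta,\zeta\in\cE^{\oplus m}$.
\item Set $w:=\eta d_k^{1/2}$ and $\theta_{w,w}:=w\ip{w,\cdot}$. Then $d_k=\ip{\zeta,w}\ip{w,\zeta}=\ip{\zeta,\theta_{w,w}\zeta}=\ip{\theta_{w,w}^{1/2}\zeta,\theta_{w,w}^{1/2}\zeta}$, a finite sum of terms $\ip{\omega,\omega}$ with $\omega\in\cE$.
\item Enumerating these terms block by block gives $\sum_n\ip{\xi_n,\xi_n}=1$ strictly. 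Hence $a\mapsto(\xi_na)_n$ is an isometry $A\to\cH_\cE$ with adjoint $(\eta_n)_n\mapsto\sum_n\ip{\xi_n,\eta_n}$.
\end{enumerate}

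One attribution in Step 1 is off. Kasparov's theorem applied to $\cH_\cE$ yields $\cH_\cE\oplus\cH_A\cong\cH_A$, not $\cH_\cE\cong\cH_\cE\oplus\cH_A$. The latter comes from the complemented copy of $A$ in $\cH_\cE$ together with $\ell^2\otimes\ell^2\cong\ell^2$. Indeed, $\cH_\cE\cong\ell^2\otimes\cH_\cE\cong\cH_A\oplus\cE'$ for some $\cE'$, hence $\cH_\cE\oplus\cH_A\cong\cH_A\oplus\cE'\cong\cH_\cE$, using $\cH_A\oplus\cH_A\cong\cH_A$. With this correction, the rest of your chain goes through as written.
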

\begin{proof}
We regard the complex conjugate space $\overline{\ell^2}$ as a \Cs-($\mathbb{C}$, $\IK$)-correspondence
given by the obvious left $\IC$-action and the right $\IK$-action together with the inner product
\[\ip{\bar{\xi}, \bar{\eta}}=e_{\xi, \eta} \quad {\rm~for~}\xi, \eta \in \ell^2.\]
We also regard $\ell^2$ as a \Cs-correspondence over $\IC$ in the obvious way.
Then, as right Hilbert \Cs-$\IK$-modules, we have an isomorphism
\[\ell^2\otimes \overline{\ell^2} \cong \IK\]
given by sending $\xi \otimes \bar{\eta}$ to $e_{\xi, \eta}$ for $\xi, \eta\in \ell^2$.
Then, as right Hilbert \Cs-$(A\otimes \IK)$-modules, one has
\[\cE \otimes \IK \cong \cE\otimes \ell^2\otimes \overline{\ell^2} \cong \cE\otimes \ell^2\otimes \ell^2\otimes \overline{\ell^2} \cong A\otimes \ell^2 \otimes \ell^2\otimes \overline{\ell^2} \cong A\otimes \IK.\]
Here the third isomorphism is a consequence of the stabilization theorem.
Indeed, by the stabilization theorem \cite[Theorem 2]{Kas} (with $G=1$),
$\cE\otimes \ell^2$ is isomorphic to a Hilbert \Cs-submodule of $A\otimes \ell^2$ of the form $p(A\otimes \ell^2)$ for some projection $p\in \cM(A\otimes \IK)=\IB(A\otimes \ell^2)$.
Since $\ell^2 \cong \ell^2\otimes \ell^2$, the projection $p$ satisfies $\bigoplus_{n\in \IN} p \sim p$, where the infinite direct sum is taken in the strict topology.
Moreover, as $\cE$ is full, the projection $p$ is full.
Hence, by Lemma 2.5 of \cite{Br}, $p$ is Murray--von Neumann equivalent to the unit in $\cM(A\otimes \IK)$. This completes the proof of the isomorphism.
\end{proof}

We recall the definition of a $G$-\Cs-correspondence over a $G$-\Cs-algebra $(A, \alpha)$.
A \Cs-correspondence $\cE$ over $A$ equipped with a continuous $G$-action $\upsilon \colon G \acts \cE$ by isometric linear transformations is said to be a $G$-\Cs-correspondence over $(A, \alpha)$
if $\upsilon$ satisfies the compatibility conditions
\[\ip{\upsilon_s(\xi), \upsilon_s(\eta)}_A = \alpha_s(\ip{\xi, \eta}_A),\quad \upsilon_s(\xi a)=\upsilon_s(\xi)\alpha_s(a),\quad \upsilon_s(a\xi)=\alpha_s(a)\upsilon_s(\xi)\]
for all $s\in G$, $\xi, \eta\in \cE$, and $a\in A$.
By the universal property of the Cuntz--Pimsner algebra,
the action $\upsilon$ extends to a \Cs-dynamical system $\alpha_\upsilon \colon G \acts \Cn{\cE}$.
We refer to the action $\alpha_\upsilon$ as the generalized quasi-free action induced by $\upsilon$.
When $\cE$ is of the form ${}_\rho A$ for some $G$-equivariant $\ast$-homomorphism $\rho \colon A\rightarrow \cM(A)$,
we write $\alpha_\upsilon$ by $\alpha$ for short.
When $\alpha$ is the trivial action, we refer to $\alpha_\upsilon$ as a quasi-free action.

\begin{Rem}[Equivariant version of Proposition \ref{Prop:st}]\label{Rem:st}
We apply the following fact in the case $G=\IR$ to study KMS weights on Cuntz--Pimsner algebras.
In Section \ref{section:HN}, we also employ the general case to give a short and alternative solution to the Hao--Ng isomorphism problem.

Let $G$ be a locally compact second-countable group.
By Proposition \ref{Prop:st} (the Kasparov stabilization theorem \cite{Kas}) and Lemma 2.3 of \cite{MP},
we obtain the following equivariant version
by replacing $\ell^2$ with $L^2(G)^{\infty}:=\ell^2\otimes L^2(G)$ in the proof:

\emph{Let $(\mathcal{E}, \upsilon)$ be a full, countably generated $G$-\Cs-correspondence over a $G$-\Cs-algebra $A$.
We equip $\IK(L^2(G)^{\infty})$ with the left regular $G$-action $\lambda^{\infty}$.
Then the exterior tensor product $(\mathcal{E}\otimes \IK(L^2(G)^{\infty}), \upsilon \otimes \lambda^{\infty})$ is $G$-equivariantly isomorphic to
${}_\rho (A\otimes \IK(L^2(G)^{\infty}))$ for some $G$-equivariant $\ast$-homomorphism $\rho\colon A\otimes \IK(L^2(G)^{\infty}) \rightarrow \cM(A\otimes \IK(L^2(G)^{\infty}))$.}
 
 Note that $\alpha_{\upsilon \otimes \lambda^{\infty}}\colon G \acts \Cn{\cE\otimes \IK(L^2(G)^{\infty})}$ is exterior equivalent
 to the plain stabilization $\alpha_\upsilon \otimes 1_{\IK} \colon G \acts\Cn{\cE}\otimes \IK$ via the unitary representation
 \[1_{\Cn{\cE}} \otimes \lambda^{\infty} \colon G \rightarrow \cU(L^2(G)^\infty)\subset \cM(\Cn{\cE\otimes \IK(L^2(G)^{\infty})}).\]
 Hence, there exists a canonical isomorphism between $\Cn{\cE\otimes \IK(L^2(G)^{\infty})}\rca{\alpha_{\upsilon\otimes \lambda^{\infty}}} G$ and $(\Cn{\cE}\rca{\alpha_\upsilon}G) \otimes \IK$.
\end{Rem}

Let $\rho\colon A \rightarrow \cM(A)$ be a faithful non-degenerate $\ast$-homomorphism.
We denote by the same symbol $\rho$ for the strictly continuous extension
 $\cM(A) \rightarrow \cM(A)$.
Consider the inductive system $(\cM(A), \rho)_{n\in \IZ}$ indexed by the directed set $\IZ$.
We denote by $\fA$ its inductive limit.
Let $\iota_n \colon \cM(A) \rightarrow \fA$ be the $n$-th canonical map.
Note that $\iota_{n+1}\circ \rho=\iota_n$ for $n\in \IZ$.

For each $n\in \IZ$, set $B_n:=\iota_n(A)$.
Unless otherwise specified, we identify $A$ with $B_0$ via $\iota_0$.
For $n\in \IZ$ and $m\in \IN$, one has
$B_n=\iota_{n+m}(\rho^m(A))$. From this, for $n, m \in \IZ$, by Cohen's factorization theorem (see Theorem 4.6.4 in \cite{BO}), one has
\begin{equation}\label{equation:B}
B_n \cdot B_m = B_{\max\{n, m\}}.
\end{equation}
(Throughout the paper, we only need the density of the obvious inclusion, which is clear. However we freely use Cohen's factorization theorem as it simplifies notations.)
Next, for $n, m \in \IZ \cup \{\pm \infty\}$ with $n\leq m$, set
\[B_{[n, m]}:=\csp\{B_k:k\in [n, m]\cap \IZ\} \subset \fA.\]
We write $\fB:=B_{[-\infty, \infty]}$ for short.
By equation (\ref{equation:B}), each $B_{[n, m]}$ is a \Cs-subalgebra of $\fA$.
When $n\in \IZ$ and $k\in \IN$, one has
$B_{[n, n+k]}=\iota_{n+k}(A+\rho(A)+\cdots +\rho^k(A))$.
For $n\in \IZ$, by equation (\ref{equation:B}), the hereditary \Cs-subalgebra of $\fB $ generated by $B_n$ is equal to $B_{[n, \infty]} \lhd \fB$.
In particular $B_n$ is non-degenerate in $B_{[n, \infty]}$.
Note that when $\rho(A)\subset A$, one has $B_n \subset B_{n+1}$ for all $n\in \IZ$
and hence in this case $A$ is non-degenerate in $\fB$.
By the universality of $\fA$, one has an automorphism $\sigma$ on $\fA$
satisfying $\sigma \circ \iota_n =\iota_{n+1}$ for $n\in \IZ$.
It is easy to see that $\sigma(B_n)=B_{n+1}$ for $n\in \IZ$.
Hence $\sigma$ restricts to an automorphism on $\fB $,
which we denote by the same symbol $\sigma$.
Note that $\iota_n \circ \rho=\sigma^{-1}\circ \iota_n$.

The next lemma is useful to determine the position of an element in $\fB$.

\begin{Lem}\label{Lem:mult}
For $a\in \cM(A)$, if $\rho(a)\in A$, one has $a\in A$.
\end{Lem}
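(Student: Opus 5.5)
Our plan is to combine the non-degeneracy of $\rho$ with Cohen's factorization theorem, and to bring in faithfulness only at the last step. Recall that $\rho\colon A\to\cM(A)$ is faithful and non-degenerate. We use the same letter for its strictly continuous extension to $\cM(A)$, which is again injective.

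Fix $a\in\cM(A)$ with $\rho(a)\in A$. Since $\rho$ is non-degenerate, $\rho(A)A$ is dense in $A$. By Cohen's factorization theorem, $\rho(a)=\rho(b)c$ for some $b\in A$ and $c\in A$. However, this alone does not place $a$ in $A$; we must use that $\rho(a)$ lies in $A$ to approximate $a$ itself.

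Take an approximate unit $(e_\lambda)$ of $A$. By strict continuity of $\rho$, $\rho(e_\lambda)\to 1$ strictly. Consider the elements $a e_\lambda\in A$. We want them to converge in norm to $a$, i.e.\ we want $a$ to be norm-approximated by elements of $A$. Since $\rho$ is injective, it is isometric on $\cM(A)$. Therefore
\[
\|a-ae_\lambda\| = \|\rho(a)-\rho(a)\rho(e_\lambda)\|.
\]
It thus suffices to prove that $\rho(a)\rho(e_\lambda)\to\rho(a)$ in norm. Because $\rho(a)\in A$, we may factor $\rho(a)=x y^*$ with $x, y\in A$, by Cohen's factorization applied to $A$ as a right module over itself. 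Then $\rho(a)\rho(e_\lambda)=x\,(\rho(e_\lambda)y)^*$. Non-degeneracy of $\rho$ gives $\rho(e_\lambda)y\to y$ in norm, since $\rho(e_\lambda)\to 1$ strictly and $y\in A$. Hence $\rho(a)\rho(e_\lambda)\to \rho(a)$ in norm, so $ae_\lambda\to a$ in norm. As $A$ is closed in $\cM(A)$, we conclude $a\in A$.

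The main point to check is that $\rho$ is isometric on all of $\cM(A)$, not just on $A$. This follows because the extension of a faithful non-degenerate $\ast$-homomorphism to $\cM(A)$ is injective. To see this, suppose $\rho(m)=0$. Then $\rho(mb)=\rho(m)\rho(b)=0$ for every $b\in A$. Since $mb\in A$ and $\rho$ is faithful on $A$, $mb=0$ for all $b$, so $m=0$. An injective $\ast$-homomorphism between \Cs-algebras is isometric, which completes the argument.
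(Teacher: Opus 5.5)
Your proof is correct and is essentially the paper's argument. Both observe that $\rho(ae_n)=\rho(a)\rho(e_n)\to\rho(a)$ in norm, since $\rho(a)\in A$ and $\rho(e_n)\to 1$ strictly by non-degeneracy, and then use that $\rho$ is isometric on $\cM(A)$ to get $ae_n\to a$; your explicit check that the extension to $\cM(A)$ is injective fills in a point the paper leaves implicit, and your opening factorization $\rho(a)=\rho(b)c$ is not needed.
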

\begin{proof}
Take an approximate unit $(e_n)_{n\in \IN}$ of $A$.
Assume that $a\in \cM(A)\cap \rho^{-1}(A)$.
Then one has
\[\rho(a)=\lim_{n \rightarrow \infty} \rho(a e_n)\]
in norm. Since $\rho$ is isometric, this implies
$a=\lim_{n \rightarrow \infty} ae_n \in A$.
\end{proof}

For $\rho \colon A \rightarrow \cM(A)$,
we denote by $\Cn{\rho}$ the Cuntz--Pimsner algebra of the \Cs-correspondence ${}_\rho A$ over $A$.
The following theorem and Proposition \ref{Prop:st} show that,
under Assumption \ref{ass:setting}, the Cuntz--Pimsner algebra $\Cn{\cE}$ is always stably isomorphic to the reduced crossed product of an associated self-similar automorphism.
This is a natural generalization of the canonical crossed product decomposition of
the stabilized Cuntz algebras $\Cn{n}\otimes \IK$ for $2\leq n <\infty$ established in \cite{Cu77}.
\begin{Thm}\label{Thm:decomp}
The Cuntz--Pimsner algebra $\Cn{\rho}$ is isomorphic to
the full hereditary \Cs-subalgebra $H$ of $\fB \rca{\sigma} \mathbb{Z}$ generated by $A$.
\end{Thm}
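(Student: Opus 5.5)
Let $u\in\cM(\fB\rca{\sigma}\IZ)$ be the canonical unitary implementing $\sigma$, so that $u b u^*=\sigma(b)$ for $b\in\fB$. The plan is to build a Toeplitz--Pimsner covariant representation of ${}_\rho A$ inside $H$ and then verify three things: covariance, gauge-compatibility (to get injectivity via the gauge-invariant uniqueness theorem), and surjectivity onto $H$.

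\textbf{The covariant representation.} Take $\pi:=\iota_0|_A\colon A\to B_0\subset\fB$. For $\xi\in{}_\rho A$, which is just an element of $A$, set
\[t(\xi):=u^*\iota_0(\xi),\]
with the sign chosen so that the relations below hold. Then
\[t(\xi)^*t(\eta)=\iota_0(\xi)^*uu^*\iota_0(\eta)=\iota_0(\xi^*\eta)=\pi(\ip{\xi,\eta}),\]
and $t$ is right $A$-linear. For the left action we need $\pi(a)t(\xi)=t(\rho(a)\xi)$, that is, $\iota_0(a)u^*=u^*\iota_0(\rho(a))$. Conjugating, this becomes $u\iota_0(a)u^*=\sigma(\iota_0(a))=\iota_1(a)$. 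On the other hand, $\iota_0(\rho(a))=\iota_1(\rho^2(a))$ is not $\iota_1(a)$. So with this convention the left action must be matched using $\iota_0\circ\rho=\sigma^{-1}\circ\iota_0$, and the correct choice is $t(\xi):=u\,\iota_0(\xi)$. Indeed, then $\pi(a)u=u\,\sigma^{-1}(\iota_0(a))=u\,\iota_0(\rho(a))$, giving $\pi(a)t(\xi)=t(\rho(a)\xi)$. Both $\pi(A)$ and $t({}_\rho A)$ lie in $H$, since $t(\xi)=\lim u\iota_0(e_n\xi)$ and, after factoring $\xi=\xi'\xi''$, we have $u\iota_0(\xi)=\sigma(\iota_0(\xi'))\,u\,\iota_0(\xi'')$; it must be checked that this sits in $A(\fB\rca{\sigma}\IZ)A$.

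\textbf{Covariance.} Since $\rho$ is faithful and non-degenerate, Katsura's ideal is $\rho^{-1}(\IK({}_\rho A))=\rho^{-1}(A)\cap A$, and by Lemma \ref{Lem:mult} this is all of $A$ whenever $\rho(A)\subset A$. For $a$ in this ideal, $\psi_t(\rho(a))=\sum t(x_i)t(y_i)^*$ with $\rho(a)=\sum x_iy_i^*$, and this equals $u\,\iota_0(\rho(a))\,u^*=\sigma(\sigma^{-1}(\iota_0(a)))=\pi(a)$. So the representation is covariant and induces $\Phi\colon\Cn{\rho}\to H$.

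\textbf{Injectivity.} The dual action $\hat\sigma$ of $\IT$ fixes $\fB$ and sends $u\mapsto zu$, so $\Phi$ intertwines the gauge action with $\hat\sigma$. The map $\pi$ is injective. By the gauge-invariant uniqueness theorem (Katsura), $\Phi$ is injective.

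\textbf{Surjectivity (the main obstacle).} $H$ is densely spanned by the elements $a\,b\,u^n\,a'$ with $a,a'\in A$ and $b\in\fB$. Write $u^na'=\sigma^n(a')u^n$, so everything reduces to showing that the compressions $A\,B_k\,B_{n}\,u^n\,A$ lie in the image. Words $t(\xi_1)\cdots t(\xi_n)\,t(\eta_1)^*\cdots t(\eta_m)^*$ produce $u^{n}\,\iota_0(\cdot)$-products equal to $\iota_{n}(\cdot)\cdots u^{n-m}$, so the image contains $\iota_k(A)$-type elements sandwiched by $A$, for all $k\ge0$, using \eqref{equation:B}. For $k<0$, the relation $B_k\cdot B_0=B_0$ absorbs $B_k$ into $A$. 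Hence the spectral subspaces of $H$ for $\hat\sigma$ match those of $\Cn{\rho}$, and $\Phi$ is surjective.

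\textbf{Fullness.} $H$ is full because $B_{[n,\infty]}\supset B_n=\sigma^n(A)=u^nAu^{*n}$ is generated by $A$ under conjugation by $u$, and the union of these is dense in $\fB$.

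The delicate points are the bookkeeping of which $B_k$ appear after compressing by $A$ on both sides, and the precise verification that $\Phi$ is onto $H$, which I expect to follow from Cohen factorization together with \eqref{equation:B}.
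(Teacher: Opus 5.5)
Your proposal is correct and follows the paper's proof: the same covariant pair $(a, ua)$, the same covariance check via $\iota_0\circ\rho=\sigma^{-1}\circ\iota_0$, gauge-invariant uniqueness for injectivity, and surjectivity via \eqref{equation:B}; your explicit fullness argument is a small addition that the paper leaves implicit. The bookkeeping you left open closes exactly as in the paper: the products $S_{a_1}\cdots S_{a_{l+k}}S_{b_l}^\ast\cdots S_{b_1}^\ast$ map onto $B_{l+k}u^k$, so $B_{[k,\infty]}u^k$ lies in the image for every $k\geq 0$, and $A\cdot \fB u^k\cdot A=A\cdot\fB\cdot B_k u^k=B_{[k,\infty]}u^k$ (negative powers follow by taking adjoints), while $ua\in H$ follows from $u\rho(e_n)=e_nu$ together with the non-degeneracy of $\rho$.
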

\begin{proof}
Let $u\in \cM(\fB \rca{\sigma} \mathbb{Z})$ denote the canonical implementing unitary element of $\sigma$.
Direct calculations show that the family
$(u a)_{a\in A}$ and the canonical copy
$A \subset \fB \rca{\sigma} \IZ$ satisfy the Cuntz--Pimsner relations of ${}_\rho A$.
(Indeed, if $\rho(a)\in A$ for $a\in A$, write $\rho(a)=b c^\ast$; $b, c\in A$.
Then one has $(ub)(uc)^\ast= u \rho(a)u^\ast =u \iota_{-1}(a)u^\ast=a$. The other relations are clear.)
Hence one has a $\ast$-homomorphism $\Phi \colon \Cn{\rho} \rightarrow \fB \rca{\sigma} \mathbb{Z}$
given by $\Phi(S_a)=u a$ for $a\in {}_\rho A$.
Since $\rho$ is non-degenerate, with $(e_n)_n$ an approximate unit of $A$,
for $a\in A$, one has
\[\Phi(S_a)=\lim_n u \rho(e_n) a e_n= \lim_{n} e_n \Phi(S_a)e_n \in H.\]
Hence $\Phi(\Cn{\rho}) \subset H$.
It is also clear that $\Phi$ is gauge-equivariant.
Since $\Phi$ is injective on $A$, it is injective on $\Cn{\rho}$ as well by the gauge-invariant uniqueness theorem (\cite{BO}, Theorem 4.6.20).
It remains to show that $H\subset \Phi(\Cn{\rho})$.

Obviously we have $A \subset \Phi(\Cn{\rho})$.
For $(n, m)\in \IN^2\setminus\{(0, 0)\}$ and $a_1, \ldots, a_n, b_1, \ldots, b_m\in A$, observe that
\[\Phi(S_{a_1} \cdots S_{a_n} S_{b_m}^\ast \cdots S_{b_1}^\ast)= \iota_1(a_1)\iota_2(a_2)\cdots \iota_n(a_n) \iota_{n}(b_m^\ast) \iota_{n-1}(b_{m-1}^\ast) \cdots \iota_{n-m+1}(b_1^\ast) u^{n-m}.\]
(Here and below when $n$ or $m$ is $0$, we ignore the corresponding terms.)
By equation (\ref{equation:B}), one has
\[B_n u^{n-m}=\{\Phi(S_{a_1} \cdots S_{a_n} S_{b_m}^\ast \cdots S_{b_1}^\ast):
a_1, \ldots, a_n, b_1, \ldots, b_m\in A\}.\]
We also have $B_0=\Phi(A)$.
Thus, for $k\in \IN$, by applying the equality to the case $n=l+k, m=l$ with $l\in \IN$,
we obtain $B_{[k, \infty]} u^k \subset \Phi(\Cn{\rho})$.
For $k \in \IN$, one has 
\[A\cdot (\fB u^k) \cdot A= A\cdot \fB \cdot B_k \cdot u^k = B_{[k, \infty]} u^k.\]
This proves the desired inclusion $H\subset \Phi(\Cn{\rho})$.
\end{proof}

\section{Invariant ideals and simplicity of Cuntz--Pimsner algebras}\label{section:simple}
In this section, we classify gauge-invariant ideals and characterize the simplicity for Cuntz--Pimsner algebras.
We first study the case $\cE={}_\rho A$ for some $\rho \colon A \rightarrow \cM(A)$.
We characterize the minimality of the associated automorphism $\sigma$ on $\fB$, and also classify the $\sigma$-invariant ideals of $\fB$.
The general case is deduced to this case by Proposition \ref{Prop:st}.

The next lemma classifies $\sigma$-invariant ideals of $\fB$.
This revisits a result of Katsura \cite{KatId} on the classification of gauge-invariant ideals in the Cuntz--Pimsner algebras.
We note that the \Cs-algebra $D$ therein cannot be replaced
by the most natural candidate $A$ as a counter-example is given in Remark \ref{Rem:idclass} below.
\begin{Lem}\label{Lem:idclass}
There is a bijective correspondence between $\sigma$-invariant ideals $I \lhd \fB $
and ideals $J \lhd D:=A+\rho(A)$ satisfying 
\begin{equation}\label{equation:inv2}
D\cdot \rho(J)\subset J,\quad \rho^{-1}(J)\cap D \subset J.
\end{equation}
The correspondence is given by
\[I \mapsto I \cap D,\quad J \mapsto \csp\{\iota_n(J):n \in \IZ\}.\]
\end{Lem}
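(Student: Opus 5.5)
The plan is to treat $\fB$ as an increasing union of the \Cs-subalgebras $B_{[-k,k]}$ and to peel off one layer at a time using equation (\ref{equation:B}). Note first that $\iota_0(D)=B_0+\iota_0(\rho(A))=B_0+B_{-1}=B_{[-1,0]}$, so "$I\cap D$" means $I\cap B_{[-1,0]}$ pulled back by $\iota_0$; similarly $\iota_n(J)\subset B_{[n-1,n]}$. Since $B_n$ has the form $\iota_{n+m}(\rho^m(A))$, equation (\ref{equation:B}) shows that $B_{m+1}$ is an ideal of $B_{[n,m+1]}$ for all $n\le m+1$, because $B_kB_{m+1}=B_{m+1}$ for $k\le m+1$. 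I will also use two standard facts about ideals: an ideal $I$ of an increasing union satisfies $I=\cl\bigcup_k (I\cap B_{[-k,k]})$, and an ideal of an extension $C=C'+K$ with $K\lhd C$ is determined by $I\cap K$ together with the image of $I\cap C'$.

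\emph{Necessity and well-definedness of $I\mapsto I\cap D$.} Let $I$ be $\sigma$-invariant and set $J:=\iota_0^{-1}(I)\cap D$; this is an ideal of $D$.
\begin{itemize}
\item For $x\in D$ and $j\in J$, the element $\iota_0(x\rho(j))=\iota_0(x)\iota_{-1}(j)$ lies in $I$, because $\iota_{-1}(j)=\sigma^{-1}(\iota_0(j))\in I$. It also lies in $\iota_0(D)$, since $D\cdot\rho(A)\subset D$. Hence $D\cdot\rho(J)\subset J$.
\item If $x\in D$ and $\rho(x)\in J$, then $\iota_0(x)=\iota_1(\rho(x))=\sigma(\iota_0(\rho(x)))\in I$, so $x\in J$.
\end{itemize}
For injectivity of this map, I will show $I=\csp\{\iota_n(J)\}$. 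Applying $\sigma^n$ gives $\iota_n(J)=I\cap B_{[n-1,n]}$. By induction on the length of $[n,m]$, using the extension $B_{[n,m+1]}=B_{[n,m]}+B_{m+1}$, I will show that $I\cap B_{[n,m]}$ is the closed span of the $I\cap B_{[j-1,j]}$. The key point is that every element of $B_{[n,m]}$ can be written as $\iota_m(y)$ with $y\in A+\rho(A)+\cdots+\rho^{m-n}(A)$, which lets me move between levels; Lemma \ref{Lem:mult} is what locates such elements precisely.

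\emph{Sufficiency.} Given $J$ satisfying (\ref{equation:inv2}), put $I_J:=\csp\{\iota_n(J):n\in\IZ\}$. It is clearly $\sigma$-invariant. To see it is an ideal, it suffices to check that $B_k\,\iota_n(J)\subset I_J$ for all $k,n$.
\begin{itemize}
\item For $k\ge n-1$: by (\ref{equation:B}) this product lands in $B_k$ or in $\iota_n(DJ)$, and both cases are handled directly.
\item For $k<n-1$: write $\iota_n(J)=\iota_{k+1}(\rho^{n-k-1}(J))$ and use the first condition $D\rho(J)\subset J$ repeatedly to push the product into $\iota_{k+1}(J)$.
\end{itemize}

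The main obstacle is proving $I_J\cap\iota_0(D)=\iota_0(J)$, that is, that $I_J$ contains no unexpected elements. I plan to avoid norm-approximation arguments and work with quotients instead. Set $D_k:=A+\rho(A)+\cdots+\rho^k(A)\subset\cM(A)$ and let $J_k\lhd D_k$ be the ideal generated by $J,\rho(J),\dots,\rho^{k-1}(J)$. By the first condition, $J_k$ is just the sum of these with $D_k$-multiples. I will then show that $\rho$ induces $\ast$-homomorphisms $D_k/J_k\to D_{k+1}/J_{k+1}$, and that these are injective precisely because of the second condition $\rho^{-1}(J)\cap D\subset J$, applied layer by layer. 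The inductive limit of these injective maps is then $\fB/I_J$, viewed as a union of the corresponding quotients of the $B_{[-k,k]}$. Since the maps are injective, $D/J$ embeds into $\fB/I_J$, which gives $I_J\cap D=J$. Combined with the first part, this shows the two assignments are mutually inverse.
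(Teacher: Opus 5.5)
\paragraph{Main gap.} The gap is in the half showing that a $\sigma$-invariant ideal $I$ is generated by $I\cap D$. There you invoke a ``standard fact'': that an ideal of $C=C'+K$ with $K\lhd C$ is determined by its intersection with $K$ together with (the image of) its intersection with $C'$. This is false, and it fails in exactly the configuration you use.

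Take the two distinct $\sigma$-invariant ideals $I_0\subsetneq I$ of Remark~\ref{Rem:idclass}. One checks that
\[
I\cap B_1=\iota_1(c_0(\IN\setminus\{0\}))=I_0\cap B_1, \qquad I\cap B_0=\iota_1(\csp\{\delta_{2n}:n\geq 1\})=I_0\cap B_0 .
\]
Yet $I\cap B_{[0,1]}\neq I_0\cap B_{[0,1]}$, even though $B_1\lhd B_{[0,1]}=B_0+B_1$.

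The correct statement uses the image of all of $I$ in $C/K$, and controlling that image is the real content of your induction step. If $x=y+z\in I$ with $y\in B_{[n,m]}$ and $z\in B_{m+1}$, neither summand need lie in $I$. Moreover $B_{[n,m]}\cap B_{m+1}$ can even be $0$ (e.g.\ when $\rho(A)\cap A=0$), so nothing ties $y$ to $I\cap B_{[n,m]}$. Lemma~\ref{Lem:mult} does not help here; it is needed only in the other direction.

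\paragraph{What is missing.} You need an overlap/approximate-unit argument. Peel off the ideal $B_{[m,m+1]}\lhd B_{[n,m+1]}$ instead of $B_{m+1}$. Then $I\cap B_{[m,m+1]}=\iota_{m+1}(J)$ is exactly your base case, and $B_m\subset B_{[n,m]}\cap B_{[m,m+1]}$ supplies an approximate unit for this ideal.

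Concretely, take $x=\sum_{i=n}^{m+1}\iota_i(a_i)\in I$ and $e$ in an approximate unit of $A$. Then $\iota_m(e)x\in I\cap B_{[m,m+1]}$. Also $x-\iota_m(e)x\in I$ lies within $\|a_{m+1}-\rho(e)a_{m+1}\|$ of $B_{[n,m]}$, and this distance tends to $0$ by non-degeneracy of $\rho$. Since $B_{[n,m]}/(I\cap B_{[n,m]})\to\fB/I$ is isometric, $x-\iota_m(e)x$ is close to $I\cap B_{[n,m]}$, which lies in $\csp\{\iota_k(J)\}$ by induction. The paper runs the mirror image of this, peeling off the bottom layer with $\iota_1(e)$ and using the isometric map $B_{[0,1]}/(I\cap B_{[0,1]})\to B_{[0,N]}/(I\cap B_{[0,N]})$.

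\paragraph{Secondary points in the other direction.} Your sufficiency argument is essentially the paper's, and its core is correct. Once you know $J_k=\sum_{j<k}\rho^j(J)$ algebraically (finite sums of a \Cs-subalgebra and an ideal are closed), peeling off the $J$-summand with Lemma~\ref{Lem:mult} and $\rho^{-1}(J)\cap D\subset J$ gives injectivity of $D_k/J_k\to D_{k+1}/J_{k+1}$.

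However, with $D_k$ embedded by $\iota_k$, the limit of these $\rho$-maps is $B_{[0,\infty]}$ modulo $\csp\{\iota_n(J):n\geq 1\}$, not $\fB/I_J$. To reduce to it you still need $B_{[0,1]}\cdot I_J\subset\csp\{\iota_n(J):n\geq 1\}$.

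Also, in your ideal check the two cases are swapped:
\begin{itemize}
\item For $k>n$ you need the first condition, iterated, to get $A\rho^{k-n}(J)\subset J$, so that the product lies in $\iota_k(J)$ and not merely in $B_k$.
\item For $k<n-1$, one has $B_k\iota_n(J)=\iota_n(\rho^{n-k}(A)J)\subset\iota_n(J)$ with no condition at all.
\end{itemize}
Your identity $\iota_n(J)=\iota_{k+1}(\rho^{n-k-1}(J))$ runs in the wrong direction.
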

\begin{proof}
Throughout the proof, we identify $D$ with $B_{[0, 1]}$ via $\iota_1$.

First, let $J\lhd D$ be an ideal with condition (\ref{equation:inv2})
and put
\[I:=\csp\{\sigma^n(J):n \in \IZ\} \subset \fB ,\quad I_n:= \csp\{\sigma^k(J):0\leq k\leq n\} \subset B_{[0, n+1]}.\]
We show that $I$ is a $\sigma$-invariant ideal with $I\cap D =J$.

Note that for $n\geq 2$, one has
\[D \rho^n (J) = D \rho^{n-1}(D)\rho^n(J) =D \rho^{n-1}(D\rho(J))\subset D \rho^{n-1}(J).\]
Hence one has
\begin{equation}\label{equation:inv3}
D\rho^n(J)\subset J \quad {\rm~for~}n\in \IN.
\end{equation}
Clearly we also have
\begin{equation}\label{equation:inv5}
\rho^n(D) J \subset J \quad {\rm~ for~} n\in \IN.
\end{equation}
By relations (\ref{equation:inv3}) and (\ref{equation:inv5}),
one has 
\begin{equation}\label{equation:inv4}
\sigma^n(J)\cdot B_{[m, m+1]}\subset \sigma^{\max\{n, m\}}(J)\quad{\rm~ for~} n, m\in \IZ.
\end{equation}
This shows that $I\lhd \fB $ and $I_n \lhd B_{[0, n+1]}$.
We next show that $I\cap D= J$.
Relation (\ref{equation:inv4}) implies
\[I \cap D \subset D\cdot I \subset I_+:=\csp\{\sigma^n(J):n\geq 0\} \lhd B_{[0, \infty]}.\]
Hence it suffices to show that $I_+ \cap D =J$.
Let
\[\pi\colon B_{[0, \infty]}\rightarrow B_{[0, \infty]}/I_+,\quad \pi_n\colon B_{[0, n+1]}\rightarrow B_{[0, n+1]}/I_n,\quad q\colon D \rightarrow D/J\]
denote the quotient maps.
Then, for any $x\in B_{[0, N+1]}$; $N\in \IN$, one has
\[\|\pi(x)\|=\inf_{n\geq N} \|\pi_n(x)\|.\]
If $I_+ \cap D \neq J$, then for some $x\in D$, we have $\|\pi(x)\|<\|q(x)\|$.
Then, for a sufficiently large $n\in \IN$, one has $\|\pi_n(x)\|<\|q(x)\|$.
This shows that the ideal $D\cap I_n= \ker(\pi_n|_D) \lhd D$ is strictly larger than $J$.
Hence there are elements $x_0, \ldots, x_n\in J$ with
\[a:=\sum_{k=0}^n\sigma^k(x_k) \in D\setminus J.\]
We fix such a presentation of $a$ with the smallest possible $n\geq 1$.
Write $x_n=a_n + \rho(b_n)$; $a_n, b_n\in A$.
Then one has 
\[\iota_{n+1}(a_n)=a-\iota_n(b_n)-\sum_{k=0}^{n-1} \sigma^k(x_k) \in B_{[0, n]} \subset \iota_{n+1}(\rho(\cM(A))).\]
Hence $a_n\in A \cap \rho(\cM(A))$. By Lemma \ref{Lem:mult}, one has $a'_n \in A$
with $\rho(a'_n)=a_n$.
Put $c_n:= a'_n + b_n\in A$. Then $\rho(c_n) = x_n \in J$ and hence $c_n\in J$ by condition (\ref{equation:inv2}).
Define $x'_{n-1}:=x_{n-1}+c_n\in J$.
Then one has
\[a=\sum_{k=0}^{n-2} \sigma^k(x_k) +\sigma^{n-1}(x'_{n-1}).\]
This contradicts to the minimality of $n$.
Thus $I_+ \cap D = J$.

Next, let $I \lhd \fB$ be a $\sigma$-invariant ideal.
Put $J:=D \cap I$.
Then, as $I$ is $\sigma$-invariant,
one has
\[D\cdot \rho(J) \subset D \cap I=J,\quad \rho^{-1}(J)\cap D \subset I \cap D = J.\]
Thus $J$ satisfies relation (\ref{equation:inv2}).

We next show that $J$ generates $I$ as a $\sigma$-invariant ideal in $\fB$.
This completes the proof.

Denote by $I'$ the $\sigma$-invariant ideal of $\fB $ generated by $J$. Clearly $I' \subset I$ and 
\[I'=\csp\{\sigma^n(J): n\in \IZ\}\]
by relation (\ref{equation:inv2}).
To lead to a contradiction, assume that $I\neq I'$.
Then, since both $I'$ and $I$ are $\sigma$-invariant ideals,
there exists $n\in \IN$ with
$I'\cap B_{[0, n]} \neq I \cap B_{[0, n]}$.
Let $N\in \IN$ be the smallest integer with this property.
Note that $N>1$ as $I\cap D = J \subset I' \cap D$.
Pick
\[x=\sum_{i=0}^N \iota_i(a_i) \in (I\setminus I') \cap B_{[0, N]};\quad a_i\in A.\]
Then for any $\varepsilon>0$, one has $e\in (A_+)_1$ with
\[\sum_{i=1}^{N}\iota_{1}(e) \iota_i(a_i)\approx_\varepsilon \sum_{i=1}^{N}\iota_i(a_i).\]
Observe that $\iota_1(e)x \in B_{[1, N]}\cap I = \sigma(B_{[0, N-1]}\cap I)\subset I'$
 by the minimality of $N$.
Also
\[I\ni x-\iota_1(e)x\approx_\varepsilon \iota_0(a_0)-\iota_1(e \rho(a_0)) \in B_{[0, 1]}.\]
Since the canonical map $B_{[0, 1]}/(I \cap B_{[0, 1]}) \rightarrow B_{[0, n]}/(I \cap B_{[0, n]})$ is isometric,
this implies 
\[x-\iota_1(e)x \in_{2\varepsilon} B_{[0, 1]}\cap I\subset I'.\]
This proves $x\in_{2\varepsilon} I'$.
As $\varepsilon>0$ is arbitrary, we conclude $x\in I'$. This is a contradiction.
\end{proof}
\begin{Rem}\label{Rem:idclass}
When $\rho$ is proper, one has $D=A$ and hence the intersection with $A$ already separates $\sigma$-invariant ideals of $\fB$.
Unfortunately this is not true in general.
Here we present a counter-example.

Put $A:=c_0(\IN)$. Put $S:=2\IN +1$.
Then we define a $\ast$-homomorphism $\rho \colon A \rightarrow \cM(A)=\ell^\infty(\IN)$ to be
\begin{equation*}
 \rho(\delta_n):=
 \begin{cases}
 \delta_0 + \chi_S & \text{if $n=0$,} \\
 \delta_{2n} & \text{otherwise.}
 \end{cases}
\end{equation*}
Then clearly $\rho$ is injective and non-degenerate.
One has
\[D:=A+\rho(A)= A+ \IC \chi_S \subset \ell^\infty(\IN).\]
Consider the two ideals
\[J:= c_0(\IN\setminus \{0\})+ \IC \chi_S,\quad J_0:=c_0(\IN\setminus \{0\}) \lhd D.\]
Then both $J$ and $J_0$ satisfy condition (\ref{equation:inv2}) in Lemma \ref{Lem:idclass}.
Hence by Lemma \ref{Lem:idclass}, they generate distinct $\sigma$-invariant ideals in $\fB $, say $I$, $I_0$, respectively,
which satisfy $I\cap D= J$, $I_0 \cap D=J_0$.
At the same time, we have
\[I \cap A = J\cap A =J_0=I_0\cap A.\]
Thus the two ideals are not separated on $A$.
\end{Rem}

While the minimality of $\sigma$ is already characterized by Lemma \ref{Lem:idclass}, we provide a more straightforward characterization.
\begin{Lem}\label{Lem:minimal}
The automorphism $\sigma$ on $\fB$ is minimal if and only if there is no proper ideal $J\lhd A$ satisfying
\begin{equation}\label{equation:inv}
A\rho(J)\subset J,\quad \rho^{-1}(J) \subset J.
\end{equation}
\end{Lem}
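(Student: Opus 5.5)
We deduce Lemma~\ref{Lem:minimal} from Lemma~\ref{Lem:idclass}, by translating between ideals of $A$ and ideals of $D=A+\rho(A)$. Throughout, "proper ideal" is read as a nonzero ideal different from $A$, since $J=0$ trivially satisfies (\ref{equation:inv}). We use the identification $A=B_0$ via $\iota_0$, under which $\iota_0\circ\rho=\sigma^{-1}\circ\iota_0$.

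\emph{Minimality implies no such $J$.} Suppose $J\lhd A$ is nonzero, proper, and satisfies (\ref{equation:inv}). Put $J_D:=J+\rho(J)\subset D$.
\begin{itemize}
\item $J_D$ is an ideal of $D$. We have $A\rho(J)\subset J$ by assumption and $\rho(A)\rho(J)\subset\rho(J)$. Also $\rho(A)J\subset J$, because multipliers preserve a closed ideal $J=AJ$.
\item First half of (\ref{equation:inv2}). We need $D\rho(J_D)\subset J_D$. The only nontrivial term is $A\rho^2(J)$. Arguing as for (\ref{equation:inv3}), $A\rho^2(J)=A\rho(A)\rho^2(J)\subset A\rho(A\rho(J))\subset A\rho(J)\subset J$.
\item Second half of (\ref{equation:inv2}). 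Let $d=a+\rho(b)\in D$ with $\rho(d)=j+\rho(k)$, where $j,k\in J$. By faithfulness of $\rho$ on $\cM(A)$ we get $\rho(d-k)=j\in A$. Lemma~\ref{Lem:mult} then gives $d-k\in A$, and $\rho^{-1}(J)\subset J$ gives $d-k\in J$. Hence $d\in J\subset J_D$.
\item $J_D\cap A=J$. Let $x=j+\rho(k)\in A$. Then $\rho(k)\in A$, so for an approximate unit $(e_n)$ of $A$ we have $\rho(k)=\lim e_n\rho(k)\in\cl(A\rho(J))\subset J$. Thus $x\in J$.
\end{itemize}
By Lemma~\ref{Lem:idclass}, $J_D$ gives a $\sigma$-invariant ideal $I\lhd\fB$ with $I\cap D=J_D$. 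This $I$ is nonzero because $J\neq0$. It is proper because $J_D\cap A=J\neq A$ forces $J_D\neq D$. So $\sigma$ is not minimal.

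\emph{No such $J$ implies minimality.} Let $I\lhd\fB$ be a nonzero proper $\sigma$-invariant ideal, and set $J:=I\cap A$.
\begin{itemize}
\item Invariance. From $\sigma^{-1}(I)=I$ we get $\rho(J)\subset I$, hence $A\rho(J)\subset I\cap A=J$. If $a\in A$ and $\rho(a)\in J$, then $\sigma^{-1}(a)\in I$, so $a\in I\cap A=J$.
\item $J$ is proper. If $J=A$, then $B_0\subset I$, so $B_n=\sigma^n(B_0)\subset I$ for all $n$, and $I=\fB$, a contradiction.
\item $J$ is nonzero. This is the step needing care. By Lemma~\ref{Lem:idclass}, $I$ is generated by $I\cap D$, so $I\cap D\neq0$. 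Pick $0\neq x\in I\cap D$. Since $D\subset\cM(A)$ acts faithfully on $A$, some $a\in A$ has $ax\neq0$. But $ax\in I\cap A=J$.
\end{itemize}
This contradicts the hypothesis, so $\sigma$ is minimal.

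The main obstacle is the forward direction. An ideal of $A$ satisfying (\ref{equation:inv}) must be promoted to an ideal of $D$ satisfying (\ref{equation:inv2}) without changing its intersection with $A$. The choice $J_D=J+\rho(J)$, together with Lemma~\ref{Lem:mult} and the approximate-unit trick, is what handles this.
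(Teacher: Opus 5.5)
Your proof is correct and follows essentially the same route as the paper. In one direction you promote $J$ to $J+\rho(J)\lhd D$, verify condition (\ref{equation:inv2}) via Lemma~\ref{Lem:mult}, and invoke Lemma~\ref{Lem:idclass}; in the other you take $I\cap A$ and obtain nonzeroness from the essentiality of $A$ in $D$. The differences are cosmetic: you check properness through $J_D\cap A=J$ rather than the paper's $A\cdot L\subset J\subsetneq A$, and the step $\rho(d-k)=j$ needs only linearity, not faithfulness.
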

\begin{proof}
Assume that there is no proper ideal $J \lhd A$ with condition (\ref{equation:inv}).
Let $I \lhd \fB$ be a nonzero $\sigma $-invariant ideal.
We show that $J:=I\cap A$ is a nonzero ideal satisfying condition (\ref{equation:inv}).
Since $A$ generates $\fB $ as a $\sigma$-invariant ideal, this proves the minimality of $\sigma$.

As $I$ is $\sigma$-invariant,
one has
\[A\rho(J) \subset A \cap I=J,\quad \rho^{-1}(J) \subset I \cap A = J.\]
Hence $J$ indeed satisfies condition (\ref{equation:inv}).
By Lemma \ref{Lem:idclass}, one has
$I \cap B _{[0, 1]}\neq 0$.
Since $B_{1}\lhd B _{[0, 1]}$ is essential,
we have
$I \cap B_{1}\neq 0$.
By applying $\sigma^{-1}$ further, we conclude
$J=I\cap A \neq 0$.

Conversely, assume that there is a proper ideal $J \lhd A$ with condition (\ref{equation:inv}).
Then
\[L:=J+\rho(J) \subsetneq A + \rho(A)\] as $A\cdot L \subset J \subsetneq A$.
By condition (\ref{equation:inv}), $L$ is an ideal of $D$.
Clearly $L$ satisfies the first condition in (\ref{equation:inv2}) in Lemma \ref{Lem:idclass}.
To show the second condition, pick $a, b \in A$ with $\rho(a+\rho(b))\in L$.
We need to show that $a+\rho(b)\in L$.
Pick $x, y\in J$ with $\rho(a+\rho(b))= x+\rho(y)$.
This equality implies $\rho(b)\in A$, hence by replacing $a$ with $a+\rho(b)\in A$, we may assume that $b=0$.
We have $\rho(a-y)=x\in J$, which implies $a-y\in J$ by condition (\ref{equation:inv}). Hence $a\in J$ as desired.
By Lemma \ref{Lem:idclass}, the ideal $L$ generates a proper $\sigma$-invariant ideal in $\fB$.
Hence $\sigma$ is not minimal.
\end{proof}
When the coefficient \Cs-algebra is not stably projectionless, we conclude the following complete characterization of the simplicity of the Cuntz--Pimsner algebras.
The assumption is used to control the multiplier algebra of an inductive limit \Cs-algebra, which is very hard in the stably projectionless case.
Note that in the unital case, this recovers the claim of \cite{Sch}, whose important technical parts are unfortunately unpublished and unavailable in public.

We introduce a few related definitions.
\begin{Def}[cf.~\cite{Sch}, \cite{Kit}]\label{Def:IdC*corr}
Let $\cE$ be a \Cs-correspondence over $A$. Let $\rho$ denote the left $A$-action of $\cE$.
We say that $I \lhd A$ is $\cE$-invariant
if it satisfies
\[\ip{\cE, \rho(I)\cE}\subset I,\quad \rho^{-1}(\IK(\cE I))\subset I.\]
We say that $\cE$ is minimal if there is no proper $\cE$-invariant ideal of $A$.
\end{Def}
Note that $\cE$ is minimal if and only if the associated $\ast$-homomorphism
$\rho \colon A \otimes \IK \rightarrow \cM(A\otimes \IK)$ in Proposition \ref{Prop:st} 
satisfies condition (\ref{equation:inv}).
\begin{Thm}\label{Thm:simple}
Assume that $A$ is not stably projectionless. Let $\mathcal{E}$ be a \Cs-correspondence over $A$.
Then $\Cn{\mathcal{E}}$ is simple if and only if $\mathcal{E}$ satisfies the following conditions.
\begin{enumerate}\upshape
\item $\cE$ is minimal.
\item There are no $n\geq 1$ and a nonzero projection $p\in A\otimes \IK$ with
\[p(\mathcal{E}^{\otimes n} \otimes \IK)p \cong p(A\otimes \IK)p\]
as \Cs-correspondences over $p(A\otimes \IK)p$.
\end{enumerate}
\end{Thm}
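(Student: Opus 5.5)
We first reduce to the case $\cE={}_\rho A$. Simplicity is stable under stabilization, and $\Cn{\cE}\otimes\IK\cong\Cn{\cE\otimes\IK}$. By Proposition \ref{Prop:st}, $\cE\otimes\IK\cong{}_\rho(A\otimes\IK)$, and $A\otimes\IK$ is again not stably projectionless. Condition (1) for $\cE$ is equivalent to condition (\ref{equation:inv}) for this $\rho$, by the remark after Definition \ref{Def:IdC*corr}. Condition (2) is formulated stably, so it transfers verbatim once we note that $\mathcal{E}^{\otimes n}\otimes\IK\cong{}_{\rho^n}(A\otimes\IK)$. Hence we may assume $A$ is stable, $\cE={}_\rho A$, and $A$ contains a nonzero projection.

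By Theorem \ref{Thm:decomp}, $\Cn{\rho}$ is a full hereditary subalgebra of $\fB\rca{\sigma}\IZ$, so it is simple if and only if $\fB\rca{\sigma}\IZ$ is simple. For a $\IZ$-action we use the standard criterion (Kishimoto, Olesen--Pedersen): $\fB\rca{\sigma}\IZ$ is simple if and only if $\sigma$ is minimal and every power $\sigma^n$, $n\neq 0$, is outer, i.e.\ properly outer. Properly outer is equivalent to outer here, because by minimality there are no nontrivial invariant ideals to restrict to. Lemma \ref{Lem:minimal} identifies minimality of $\sigma$ with condition (1). Since necessity of minimality is clear, it remains to show that, under minimality, some $\sigma^n$ ($n\geq1$) is inner, or more precisely implemented by a multiplier unitary of $\fB$, if and only if condition (2) fails.

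\emph{Condition (2) fails $\Rightarrow$ $\sigma^n$ is inner.} Suppose $p(\rho^n\text{-correspondence})p\cong p A p$, i.e.\ $p\,{}_{\rho^n}A\,p$ is a trivial correspondence. This yields a partial isometry $v\in\cM(A)$ with $v^*v=\rho^n(p)$ and $vv^*=p$ such that $v\rho^n(x)v^*=x$ for $x\in pAp$. Since $A$ is stable and $\fB$ is generated by full corners, we try to assemble the elements $\iota_k(v)$ into a unitary in $\cM(\fB)$ (or in $\cM(p\fB p)$, extended by stability) implementing $\sigma^{-n}$ on the full corner generated by $p$. Then $\sigma^n$ is exterior equivalent to the identity on a full corner, and so $\fB\rca{\sigma}\IZ$ has a nontrivial center after cutting down. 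Concretely, the element $\sum$-free unitary $u^n v$ is central in the corner $p(\fB\rca\sigma\IZ)p$ and is not a scalar, which contradicts simplicity. This direction is a direct computation.

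\emph{Some $\sigma^n$ is inner $\Rightarrow$ condition (2) fails.} This is the main obstacle. Given a unitary $w\in\cM(\fB)$ with $\sigma^n=\ad w$, we must produce a projection $p\in A$ and an implementing partial isometry between $p$ and $\rho^n(p)$ in $\cM(A)$. Here the non-stably-projectionless hypothesis enters, because $\cM(\fB)$ of the inductive limit is hard to control. The plan is as follows.

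1. Fix a nonzero projection $p\in A=B_0$. Then $wpw^*=\sigma^n(p)\in B_n$.

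2. Approximate $pw^*$ by an element of $p\fB$ lying in some $B_{[-N,N]}$. Using projections, we obtain a partial isometry in $\fB$ with source $\sigma^n(p)$ and range $p$, close to $pw$.

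3. Perturb this partial isometry into a finite stage $\iota_M(\cM(A))$.

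4. Use that the implemented map $\ad(w)$ acts as $\sigma^n$ on $pB_0p$, which gives $v^*\rho^n(x)v\approx x$ on $pAp$. Together with the fact that $\ad w$ is exactly $\sigma^n$, this upgrades to an exact isomorphism of correspondences $p\,{}_{\rho^n}A\,p\cong pAp$, possibly after changing $p$ to a subprojection, which condition (2) allows.

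The delicate points are the perturbation inside the inductive limit and the passage from approximate to exact implementation. For the latter we would use the fact that approximately inner automorphisms that agree with $\sigma^n$ are rigid on the corner $pAp$, together with Lemma \ref{Lem:mult} to land back in $A$ rather than merely in $\cM(A)$.
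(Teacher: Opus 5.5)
Your reduction and overall architecture match the paper's. You stabilize to $\cE={}_\rho A$, pass to $\fB\rca{\sigma}\IZ$ by Theorem \ref{Thm:decomp}, identify (1) with minimality of $\sigma$ by Lemma \ref{Lem:minimal}, and then compare (2) with outerness of the powers $\sigma^n$. However, neither direction of this last comparison is actually established, and the key idea is missing in both.

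\textbf{Direction ``(2) fails $\Rightarrow$ not simple''.} Write $w:=v^\ast$. Then $w\in\rho^n(p)Ap\subset A$, $w^\ast w=p$, $ww^\ast=\rho^n(p)$, and $wa=\rho^n(a)w$ for $a\in pAp$. This only controls $\sigma^{-n}$ on $pAp=pB_0p$. That subalgebra is in general not hereditary in $\fB$, because $p\fB p$ is the closed span of the corners $pB_kp=\iota_k(\rho^k(p)A\rho^k(p))$, $k\geq 0$. So there is nothing to ``assemble'' yet, and your centrality claim cannot be checked.

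In fact the centrality claim is false for an arbitrary choice of $v$. Take $A=\IK\oplus\IK$, $\rho$ the flip, $n=2$, $e$ a rank-one projection, $p=(e,e)$ and $v=(e,-e)$. Then $u^2v$ anticommutes with $pu\in p(\fB\rca{\sigma}\IZ)p$.

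The missing idea is that the isomorphism forces $\rho^n(p)=ww^\ast\in A$ and $\rho^n(p)A\rho^n(p)=wAw^\ast=\rho^n(pAp)$. Iterate with $\rho^{n(k-1)}(w)\cdots\rho^n(w)w$, and use Lemma \ref{Lem:mult} together with injectivity of $\rho$. This gives $\rho^k(p)\in A$ and $\rho^k(p)A\rho^k(p)=\rho^k(pAp)$ for all $k$, hence $p\fB p=pAp$. Only then does $\sigma^{-n}(a)=waw^\ast$ hold on a genuine hereditary subalgebra. That shows $\sigma^n$ is not properly outer, and Proposition \ref{Prop:properly outer} gives non-simplicity.

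\textbf{Converse direction.} Steps 2--4 of your plan contain no mechanism that turns an approximate implementer into an exact one. The ``rigidity of approximately inner automorphisms on $pAp$'' you invoke is not an available fact. The paper proceeds as follows, with $u\in\cM(\fB)$ a unitary such that $\ad(u)=\sigma^{-n}$.
\begin{enumerate}
\item First it gets $p\sim\iota_n(p)$ in a finite stage $B_{[0,N]}$. Since $B_{[n,N]}\lhd B_{[0,N]}$ contains $\iota_n(p)$, it also contains $p$, whence $\rho^n(p)\in A$ (cf.\ Lemma \ref{Lem:mult}).
\item Iterating gives $\rho^k(p)\in A$ for all $k$. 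After replacing $p$ by $\rho^{N-n}(p)$, one has $p\sim\rho^n(p)$ in $A$.
\item The corners $pB_kp$ increase with dense union in $p\fB p$. Approximating $up$ within $1/4$ by an element of $B_Mp$ gives $(pB_{L+n}p)_1\subset_{1/2}pB_Lp$. Hence the closed subspace $pB_Lp$ equals $pB_{L+n}p$ for all $L\geq M$.
\item Replacing $p$ by $q:=\rho^M(p)$ (a shift along $\rho$, not a subprojection) yields $q\fB q=qAq$. Hence $\rho^n(q)\fB q\subset A$, and $uq\in A$ is an exact intertwining partial isometry giving $q({}_{\rho^n}A)q\cong qAq$.
\end{enumerate}
Exactness thus comes from the stabilization of the corners $pB_Lp$, i.e.\ from showing that the relevant corner of $\fB$ already lies in $A$. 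It does not come from perturbing an approximate implementer.
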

\begin{proof}
Clearly all properties are preserved under taking the stabilization.
Hence by taking the stabilization, it suffices to show the claim when
$\cE={}_\rho A$ for some $\rho\colon A \rightarrow \cM(A)$.
Then by Lemma \ref{Lem:minimal}, the minimality of $\cE$ is equivalent to the minimality of $\sigma$ on $\fB$.
Thus the minimality of $\cE$ is necessary for the simplicity of $\Cn{\rho}$.
Hence to complete the proof, it suffices to show that,
for a minimal ${}_\rho A$, the second condition is equivalent to the proper outerness of $\sigma^n$ for all $n \geq 1$.
(For details on the proper outerness of an automorphism, we refer the reader to Section \ref{section:prop outer}.)

We first assume that $\cE$ fails condition (2) for an $n\geq 1$, and show that $\sigma^n$ is not properly outer.
By assumption, one has a nonzero partial isometry element $v \in A$
satisfying $v a = \rho^n(a) v$ for all $a\in pAp$ where $p=v^\ast v$, and $vv^\ast = \rho^n(p)$.
This implies $\rho^n(p)\in ApA$ and $\rho^n(p)A\rho^n(p)=vA v^\ast =\rho^n(pAp)$.
For $k \geq 2$, by using $\rho^{n(k-1)}(v)\rho^{n(k-2)}(v) \cdots v$ instead of $v$,
we obtain $\rho^{nk}(p)\in ApA \subset A$ and $\rho^{nk}(p)A\rho^{nk}(p)=\rho^{nk}(pAp)$.
This yields
\[\rho^k(p)\in A,\quad \rho^k(p)A\rho^k(p)=\rho^k(pAp)\quad {\rm~for~ all~} k\in \IN.\]
We conclude that $p\fB p=pB_{[0, \infty]}p=pAp$.
This yields $\sigma^n|_{p\fB p}=\ad(v)|_{p\fB p}$, hence $\sigma^n$ is not properly outer.

Conversely, assume that for an $n\geq 1$, $\sigma^n$ is not properly outer.
Then $\sigma^n$ is inner by the minimality of $\sigma$ and
Proposition \ref{Prop:properly outer}.
Fix a nonzero projection $p\in A$. Then, since $\sigma^n$ is inner,
one has $\iota_n(p) \sim p$ in $B_{[0, \infty]}$.
Choose a sufficiently large $N\geq n$ with $\iota_n(p) \sim p$ in $B_{[0, N]}$.
Since $B_{[n, N]} \lhd B_{[0, N]}$, this implies $p\in B_{[n, N]}$.
Hence we conclude
 $\rho^n(p)\in A$ (cf.~Lemma \ref{Lem:mult}).
Since this argument works for $nk$; $k\in \IN$, instead of $n$,
we conclude $\rho^{nk}(p)\in A$ for all $k\in \IN$.
Hence $\rho^k(p)\in A$ for all $k\in \IN$ by Lemma \ref{Lem:mult}.
Thus, by replacing $p$ with $\rho^{N-n}(p)$,
we may further assume that $p \sim \rho^n(p)$ in $A$.

We next claim that, for a sufficiently large $M \in \IN$, with $q:=\rho^M(p)\in A$, one has $\rho^{kn}(qAq) = \rho^{kn}(q)A\rho^{kn}(q)$ for $k\in \IN$.
Note that the equality implies $\rho^k(qAq)= \rho^k(q)A\rho^k(q)$ for all $k \in \IN$,
by the following commutative diagram and the injectivity of $\rho$:
\begin{equation*}
\xymatrix{
q A q \ar[r]^-{\rho^k}\ar[d]_-{\rho^{nk}}& \rho^k(q)A\rho^k(q) \ar[dl]^-{\rho^{(n-1)k}}\ar@{}@<-1.5ex>[dl]|{\circlearrowright}\\
\rho^{nk}(q)A\rho^{nk}(q)
}
\end{equation*}
Put 
\[H_k:=p B_k p =\iota_k(\rho^k(p)A\rho^k(p)) \quad{\rm~ for~} k\in \IN.\]
As $\rho^k(p)\in A$ for $k\in \IN$,
one has $H_k=\iota_{k+1}(\rho^{k+1}(p)\rho(A)\rho^{k+1}(p)) \subset H_{k+1}$.
Hence the sequence $(H_k)_k$ is increasing and
their union is dense in $p\fB p$.
Pick a unitary element $u\in \cM(\fB)$ with $\ad(u)=\sigma^{-n}$.
Since $\rho^n(p)\sim p$ in $A$,
one has
\[up\in \rho^n(p)\fB p=\cl \left(\bigcup_{m\in \IN} \rho^n(p)B_m p\right),\]
where the union is increasing.
Hence for a sufficiently large $M$, one has $v\in (B_Mp)_1$ with $\|up-v\|<1/4$.
For $L\geq M$, as $pB_{L+n}p= pu^\ast B_L up$, one has $(pB_{L+n}p)_1 \subset_{1/2} pB_{L}p$.
Since $pB_{L}p$ is a closed subspace of $pB_{L+n}p$, we conclude $p B_L p = p B_{L+n} p$ for $L \geq M$.
This yields $pB_M p=pB_{M+kn}p$ for $k\in \IN$, whence
\[\iota_{M+kn}(\rho^{M+kn}(p)\rho^{kn}(A)\rho^{M+kn}(p))=pB_M p=p B_{M+kn} p=\iota_{M+kn}(\rho^{M+kn}(p)A \rho^{M+kn}(p)).\]
Hence the present $M$ satisfies the desired condition.

Now we have $q\fB q=qAq$.
This together with the relation $\rho^n(q)\sim q$ in $A$ implies $\rho^n(q)\fB q \subset A$.
In particular $uq \in A$, and this element gives rise to the isomorphism
\[q({}_{\rho^n} A)q \cong qAq.\]

\end{proof}
\begin{Rem}
Suppose that $A\otimes \IK$ contains a full projection, $\cE$ is minimal, and $\Cn{\cE}$ is not simple.
Then, by applying the latter half of the proof of Theorem \ref{Thm:simple}
to a full projection,
we conclude the periodicity of $\cE$: $\cE^{\otimes n} \cong A$ for some $n\geq 1$.
Note that when $A$ is unital, this in particular recovers the claim of Schweizer \cite{Sch}, whose proof is unpublished and not available in public.
Indeed one can find $n\geq 1$ and a full projection $p$ with $p (\cE^{\otimes n} \otimes \IK)p \cong p(A\otimes \IK)p$ as in the proof of Theorem \ref{Thm:simple}.
By the fullness of $p$, one can choose a family $(p_i)_{i\in I}$ of mutually orthogonal projections in $A\otimes \IK$
with $p_i \precsim p$ for all $i\in I$ and $\sum_{i\in I} p_i = 1_{A\otimes \IK}$ in the strict topology.
These conditions imply that $\cE^{\otimes n} \otimes \IK \cong A\otimes \IK$,
from which it follows that $\cE^{\otimes n} \cong A$.
\end{Rem}
\begin{Rem}
In the non-unital case, unlike the claim of \cite{Sch} in the unital case,
one cannot conclude the periodicity of $\cE$, that is, the isomorphism $\cE^{\otimes n} \cong A$ for some $n \geq 1$,
from the non-simplicity of $\Cn{\cE}$ and the minimality of $\cE$.
Here we give an example.
Set $A:= c_0(\IN)\otimes \IK$.
Pick isometry elements $v_1, v_2\in \cM(\IK)$ with $v_1v_1^\ast + v_2 v_2^\ast=1$.
Then define a $\ast$-homomorphism $\rho \colon A\rightarrow A$ to be
\[\rho(\delta_0 \otimes x):= \delta_0 \otimes v_1 x v_1^\ast,\quad \rho(\delta_1\otimes x):=\delta_0\otimes v_2 x v_2^\ast,\quad \rho(\delta_n\otimes x):=\delta_{n-1}\otimes x\]
for $x\in \IK$ and $n\geq 2$. Clearly $\rho$ is injective, non-degenerate, and ${}_\rho A$ is minimal.
On the one hand, one has $\fB \cong \IK$.
Hence $\sigma$ is inner, and $\Cn{\rho}$ is isomorphic to $\IK\otimes C(\IT)$.
On the other hand, as $\rho$ is not surjective, one has
\[({}_\rho A)^{\otimes n} \not\cong A\]
for all $n \geq 1$.
Yet, with $p=\delta_0 \otimes e$ where $e\in \IK$ is any nonzero projection, obviously one has
$p({}_{\rho} A)p \cong p A p$.
This example also shows that not all nonzero projections $q$ satisfy the condition $q({}_{\rho} A)q\cong qA q$.
Indeed put $q=\delta_n\otimes e$ where $n\geq 1$ and $e\in \IK$ is a nonzero projection.
Then for $0\leq k<n$ and $l\geq 1$, one has
\[\rho^k(q)({}_{\rho} A)^{\otimes l}\rho^k(q)=0 \not\cong \rho^k(q)A\rho^k(q).\]
\end{Rem}
\begin{Rem}
For a non-proper minimal \Cs-correspondence, the second condition of Theorem \ref{Thm:simple} is redundant.
Hence this recovers a result of Kitamura \cite{Kit} in the non-stably-projectionless case.
For the general case, see also Remark \ref{Rem:simple} for a proof based on an old result \cite{OP}.
\end{Rem}

\section{Simplicity of Cuntz--Pimsner algebras: stably projectionless case}\label{section:simplepjl}
Here we give a complement of Theorem \ref{Thm:simple} in the stably projectionless case.
\begin{Thm}\label{Thm:simpleproless}
Let $A$ be a stably projectionless \Cs-algebra.
Let $\cE$ be a \Cs-correspondence over $A$.
Then $\Cn{\cE}$ is simple if and only
if $\cE$ is minimal and one of the following conditions holds true:
\begin{enumerate}\upshape
\item $A\otimes \Cn{\infty}$ is still $($stably$)$ projectionless,
\item $A \otimes \Cn{\infty}$ contains a nonzero projection,
and the exterior tensor product $\cE\otimes \Cn{\infty}$ satisfies the second condition in Theorem \ref{Thm:simple}.
\end{enumerate}
The Cuntz algebra $\Cn{\infty}$ can be replaced by any other purely infinite simple \Cs-algebra.
\end{Thm}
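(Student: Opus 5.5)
Our plan is to reduce to the situation of Theorem \ref{Thm:simple} by tensoring with $\Cn{\infty}$ (or any purely infinite simple \Cs-algebra $P$), and to show that this tensoring does not change simplicity of the Cuntz--Pimsner algebra. First we check that $\Cn{\cE\otimes P}\cong \Cn{\cE}\otimes P$, where $\cE\otimes P$ is the exterior tensor product viewed as a correspondence over $A\otimes P$. Both sides are generated by copies of $A\otimes P$ and $\cE\otimes P$ satisfying the Cuntz--Pimsner relations, and the canonical map between them is gauge-equivariant and injective on $A\otimes P$. Hence the gauge-invariant uniqueness theorem gives the isomorphism. Since $P$ is simple and nuclear, ideals of $\Cn{\cE}\otimes P$ correspond to ideals of $\Cn{\cE}$. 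Consequently $\Cn{\cE}$ is simple if and only if $\Cn{\cE\otimes P}$ is simple.

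Next we verify that minimality transfers: $\cE$ is minimal if and only if $\cE\otimes P$ is minimal. Because $P$ is simple and nuclear, every ideal of $A\otimes P$ has the form $I\otimes P$, and the two invariance conditions of Definition \ref{Def:IdC*corr} for $I\otimes P$ reduce to those for $I$. The condition involving $\IK(\cE I)$ needs a little care, namely that $\rho^{-1}(\IK(\cE I))\otimes P=(\rho\otimes\id)^{-1}(\IK((\cE\otimes P)(I\otimes P)))$. This follows by slicing with states on $P$, which is legitimate because $\IK(\cE\otimes P)\cong \IK(\cE)\otimes P$. After this, the case split of the theorem is natural.

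In case (2), $A\otimes P$ is not stably projectionless, so Theorem \ref{Thm:simple} applies directly to $\cE\otimes P$. Combined with the two transfer facts above, this gives the equivalence.

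In case (1), where $A\otimes P$ is stably projectionless, we need to show that minimality alone gives simplicity. We stabilize via Proposition \ref{Prop:st} so that $\cE\otimes P\otimes\IK\cong {}_\rho B$ with $B=A\otimes P\otimes\IK$. Theorem \ref{Thm:decomp} then realizes the algebra as a full hereditary subalgebra of $\fB\rca{\sigma}\IZ$, and Lemma \ref{Lem:minimal} shows that $\sigma$ is minimal. It remains to show that each $\sigma^n$, $n\geq1$, is properly outer. By minimality and Proposition \ref{Prop:properly outer}, a non-properly-outer $\sigma^n$ would be inner. We then aim for a contradiction with projectionlessness. The algebra $\fB$ is stable and purely infinite, being an inductive limit of $P$-stable pieces, so it would contain nonzero projections as soon as $\sigma^n=\ad(u)$ with $u\in\cM(\fB)$.

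The main obstacle is exactly this last deduction: getting from the inner implementation $\sigma^n=\ad(u)$ to a projection in some $B_{[0,N]}$, and hence, via Lemma \ref{Lem:mult}, a projection in $A\otimes P\otimes\IK$. Our first approach would exploit that $A\otimes P$ is $\Cn{\infty}$-absorbing, hence purely infinite. Then every nonzero hereditary subalgebra of $\fB$ is non-stably-finite and contains an infinite element. We would use $u$ together with the self-similarity $\sigma(B_k)=B_{k+1}$ to produce, from a positive element $a\in A$, a nonzero projection via functional calculus of an almost-idempotent $u$-twisted element, arguing in the spirit of the $1/4$-approximation step in the proof of Theorem \ref{Thm:simple}. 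If this fails, our fallback is the old Olesen--Pedersen criterion \cite{OP}: show that the Connes spectrum of the dual action is full. Pure infiniteness would make the strong Connes spectrum nontrivial on every invariant hereditary subalgebra without appealing to projections.
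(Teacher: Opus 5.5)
Your reduction steps are fine: the isomorphism $\Cn{\cE\otimes P}\cong\Cn{\cE}\otimes P$, the transfer of simplicity and minimality, and case (2) via Theorem \ref{Thm:simple} are what the paper means by ``immediate consequence''. The gap is in case (1), which is the only new content of the theorem. You never show that no $\sigma^n$ with $n\geq 1$ is inner, and the mechanisms you suggest do not work as stated:
\begin{itemize}
\item Pure infiniteness does not produce projections. For example, $C_0(\IR)\otimes\Cn{\infty}$ is purely infinite and projectionless.
\item The $1/4$-approximation step in the proof of Theorem \ref{Thm:simple} starts from a projection $p$ that is already given, so it cannot manufacture one.
\item Pure infiniteness does not control the Connes spectrum. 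The identity automorphism of $\Cn{2}\otimes\IK$ is minimal on a purely infinite algebra, yet the crossed product is not simple.
\end{itemize}
Also, the claim that $\fB$ is an inductive limit of $P$-stable pieces holds only for proper $\rho$, where $\fB=\varinjlim(A,\rho)$. In general $B_{[0,1]}\cong A+\rho(A)\subset\cM(A)$.

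The missing idea is to use innerness and minimality together structurally. An inner automorphism fixes every ideal, so $\sigma$ acts on $\mathrm{Prim}(\fB)$ with $\sigma^n=\id$. Minimality makes every (finite) orbit closure equal to all of $\mathrm{Prim}(\fB)$. One checks that this forces $\mathrm{Prim}(\fB)$ to be a single finite orbit with no inclusions among its points, so $\fB$ is a finite direct sum of simple \Cs-algebras. The paper then splits cases:
\begin{itemize}
\item If $\rho$ is non-proper, $\sigma^n$ cannot be inner at all, since it maps the ideal $B_{[0,\infty]}$ onto the strictly smaller ideal $B_{[n,\infty]}$.
\item If $\rho$ is proper, $\fB\otimes\Cn{\infty}$ is an inductive limit of copies of $A\otimes\Cn{\infty}$, hence stably projectionless. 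But $C\otimes\Cn{\infty}$ contains nonzero projections for every nonzero simple $C$. So $\fB$ has no nonzero simple \Cs-subalgebra, contradicting the direct-sum decomposition.
\end{itemize}
Hence $\sigma$ is outer and minimal, and Proposition \ref{Prop:properly outer} together with Theorem \ref{Thm:decomp} gives simplicity.

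With this structural fact, your own $P$-stabilized route also closes in the proper case. A simple summand of the purely infinite algebra $\fB$ is purely infinite simple, so it contains a nonzero projection. That projection perturbs into some $\iota_N(A\otimes P\otimes\IK)$, contradicting projectionlessness.
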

\begin{Rem}
Motivating examples with (1) is arising from quasi-free flows on the Cuntz--Pimsner algebras.
Indeed, for any (unitary) flow $u \colon \IR \acts \cE$ on a \Cs-correspondence $\cE$ over $A$,
the crossed product \Cs-algebra $\Cn{\cE} \rca{{\alpha_\upsilon}}\IR$ of the quasi-free flow ${\alpha_\upsilon}$ is isomorphic
to the Cuntz--Pimsner algebra of the Hao--Ng \Cs-correspondence $\cE\rca{u}\IR$.
The coefficient \Cs-algebra is $A \rca{1} \IR \cong A \otimes C_0(\IR)$,
which clearly satisfies condition (1).
\end{Rem}
\begin{proof}
By passing to the stabilization, we may assume that $\cE={}_\rho A$ for some $\rho \colon A \rightarrow \cM(A)$.
In case (2), the statement is an immediate consequence of Theorem \ref{Thm:simple}.
Hence we may assume that $A\otimes \Cn{\infty}$ is stably projectionless.
If $\rho$ is non-proper, the sequence $B_{[n, \infty]} \lhd \fB$; $n\in \IZ$,
is strictly decreasing.
Hence $\sigma\colon \IZ \acts \fB$ is outer.
By Proposition \ref{Prop:properly outer} or by \cite{OP}, we conclude the simplicity of $\Cn{\cE}$.

Next we consider the case that $\rho$ is proper.
In this case $\fB \otimes \Cn{\infty}$ is also stably projectionless.
Since the tensor product of a simple \Cs-algebra with $\Cn{\infty}$ contains nonzero projections,
$\fB $ has no nonzero simple \Cs-subalgebra.
This together with the minimality of $\sigma$
forces that the action $\sigma \colon \IZ \acts \fB $ is outer.
By Theorem \ref{Thm:decomp} and Proposition \ref{Prop:properly outer}, we conclude the simplicity of $\Cn{\cE}$.
\end{proof}
\begin{Rem}\label{Rem:simple}
The above proof in the non-proper case does not use the assumption (1).
Hence this recovers Theorem 2.3 of \cite{Kit} from \cite{OP}.
\end{Rem}

\section{Proper outerness of minimal $\IZ$-actions}\label{section:prop outer}
We discuss the proper outerness of minimal $\IZ$-actions without assuming the simplicity of the underlying \Cs-algebras.

We first recall the definition of proper outerness.
\begin{Def}
An automorphism $\alpha$ of $A$ is said to be properly outer
if it satisfies the following condition:
For any nonzero hereditary \Cs-subalgebra $H\subset A$ and any $a\in A$,
one has
\[\inf\{\|ha\alpha(h^\ast)\| : h\in H,\|h\|=1\}=0.\]
Note that, by spectral theory, this condition implies that $\alpha$ is outer.

We say that an action $\alpha \colon \Gamma \acts A$ is properly outer if $\alpha_s$ is properly outer for all $s\in \Gamma \setminus \{e\}$.
\end{Def}
The next statement follows from \cite[Theorem 2.15]{MR3756105} and some well-known facts.

\begin{Prop}\label{Prop:properly outer}
 For an action $\alpha\colon \IZ\acts A$ on a $\sigma$-unital \Cs-algebra $A$,
 the following conditions are equivalent.
 \begin{enumerate}\upshape
 \item It is outer and minimal.\label{item:outer}
 \item The crossed product \Cs-algebra $A\rca{\alpha}\IZ$ is simple.\label{item:simple}
 \item It is properly outer and minimal.\label{item:prouter}
 \end{enumerate}
\end{Prop}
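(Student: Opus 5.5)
The plan is to prove the cycle (3)$\Rightarrow$(2)$\Rightarrow$(1)$\Rightarrow$(3). We use \cite[Theorem 2.15]{MR3756105} for the hard part and standard facts for the rest.

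\textbf{(3)$\Rightarrow$(2).} This is the classical Kishimoto/Olesen--Pedersen argument. Proper outerness of the action yields that every nonzero ideal $I$ of $A\rca{\alpha}\IZ$ satisfies $I\cap A\neq 0$. Concretely, let $E$ be the canonical conditional expectation. For a positive $x\in I$ with $E(x)\neq 0$, approximate $x$ by a finite sum $\sum_{|k|\le N} a_k u^k$. Then use proper outerness, applied finitely many times inside a hereditary subalgebra cut down from $a_0$, to find $h$ of norm one with $h x h^\ast \approx h a_0 h^\ast$ and $\|h a_0 h^\ast\|\approx \|a_0\|$. This shows that the quotient map $A\rca{\alpha}\IZ\to (A\rca{\alpha}\IZ)/I$ is not isometric on $A$ if $I\cap A=0$, which is a contradiction. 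Now $I\cap A$ is a nonzero $\alpha$-invariant ideal, so minimality gives $I\cap A=A$, and hence $I$ is everything. Since these are standard results, I will cite them, again via \cite{MR3756105}, rather than redo the computation.

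\textbf{(2)$\Rightarrow$(1).} Minimality is immediate: a proper nonzero invariant ideal $J$ gives the proper nonzero ideal $J\rca{\alpha}\IZ$. For outerness, suppose $\alpha_n=\ad(w)$ with $n\neq0$ and $w\in\cU(\cM(A))$. Then $w^\ast u^n$ is a unitary in $\cM(A\rca{\alpha}\IZ)$ commuting with $A$ and $u$ (after adjusting by $\alpha$-invariance of $w$ up to a scalar, which holds by minimality and a spectral argument). So it is a nonscalar central unitary, which contradicts simplicity; the nonscalarity comes from the dual action moving it. A cleaner route is to use that simplicity of a $\IZ$-crossed product forces the Connes spectrum to be all of $\IT$, and inner $\alpha_n$ would make the Connes spectrum finite. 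I would cite Olesen--Pedersen \cite{OP} here.

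\textbf{(1)$\Rightarrow$(3).} This is the main obstacle, and it is where \cite[Theorem 2.15]{MR3756105} enters. That result says, roughly, that for a minimal action, an automorphism $\alpha_n$ that is not properly outer is already inner on an invariant piece, hence inner by minimality. The point is that the failure of proper outerness of $\alpha_n$ produces a nonzero hereditary subalgebra on which $\alpha_n$ is inner. The set of points of the primitive ideal space where $\alpha_n$ is ``locally inner'' is open and $\alpha$-invariant, so by minimality it is dense, and the local implementing unitaries glue together. I expect the gluing (for non-simple $A$) to be exactly the content of the cited theorem, together with $\sigma$-unitality to obtain a multiplier unitary. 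So I would state: given minimality, $\alpha_n$ is properly outer if and only if it is outer, and cite the theorem. This finishes the cycle.
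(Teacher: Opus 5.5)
\textbf{The gap is in (1)$\Rightarrow$(3).} Your (3)$\Rightarrow$(2) (the Kishimoto-type argument that proper outerness forces every nonzero ideal of $A\rca{\alpha}\IZ$ to meet $A$) is fine, and so is the minimality half of (2)$\Rightarrow$(1). But (1)$\Rightarrow$(3) carries all the content, and you do not prove it. Instead you attribute to \cite[Theorem 2.15]{MR3756105} a statement whose content you only guess. In the paper, that theorem is used solely for (1)$\Leftrightarrow$(2), i.e.\ minimal and outer iff the crossed product is simple. Proper outerness of each power $\alpha^n$ needs a separate argument. Your heuristic does not supply one, for two reasons:
\begin{enumerate}
\item Failure of the infimum condition defining proper outerness is an approximate, norm-type statement. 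Nothing in your sketch produces a hereditary subalgebra on which $\alpha_n$ is \emph{exactly} inner.
\item Even granting local implementing unitaries, they are determined only up to central unitaries. Gluing them into a unitary of $\cM(A)$ is a cohomological problem, since locally inner automorphisms need not be inner, and you do not address it.
\end{enumerate}

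\textbf{The missing idea is to work in the crossed product and prove (2)$\Rightarrow$(3).} Fix $n\neq 0$ and argue as follows.
\begin{enumerate}
\item The canonical conditional expectation from $A\rca{\alpha}\IZ$ onto the crossed product by the subgroup $n\IZ$, namely $A\rca{\alpha^n}\IZ$, has finite Watatani index.
\item Hence simplicity of $A\rca{\alpha}\IZ$ and \cite[Theorem 3.3]{Izu} make $A\rca{\alpha^n}\IZ$ a finite direct sum of simple \Cs-algebras.
\item The center of $\cM(A\rca{\alpha^n}\IZ)$ is then finite-dimensional. The dual $\IT$-action (extended to the multiplier algebra) must act trivially on it, because $\IT$ is connected. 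So it leaves every ideal invariant.
\item This says the strong Connes spectrum of $\alpha^n$ is all of $\IT$, and \cite[Lemma 1.1]{Kis} then gives proper outerness of $\alpha^n$.
\end{enumerate}
Together with (1)$\Leftrightarrow$(2) from \cite{MR3756105} and the trivial (3)$\Rightarrow$(1), this closes the cycle.

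\textbf{Your outerness argument in (2)$\Rightarrow$(1) also has a hole.} If $\alpha^n=\ad(w)$, then $w^\ast\alpha(w)$ is only a central unitary of $\cM(A)$. For non-prime $A$ this can be far from scalar, and you give no argument that $w$ can be re-chosen to make this cocycle scalar. The same finite-index trick repairs it: $\alpha^n=\ad(w)$ gives $A\rca{\alpha^n}\IZ\cong A\otimes C(\IT)$, whose multiplier algebra has infinite-dimensional center, contradicting Izumi's theorem. Alternatively, cite \cite{MR3756105} for (2)$\Rightarrow$(1), as the paper does.
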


\begin{proof}
The equivalence between (\ref{item:outer}) and (\ref{item:simple}) follows from \cite[Theorem 2.15]{MR3756105}.

We assume (\ref{item:simple}). Let $n\in \IZ \setminus \{0\}$.
Since the subgroup conditional expectation $E\colon A\rca{\alpha} \IZ\to A\rca{\alpha} n\IZ= A\rca{\alpha^{n}} \IZ$ has finite Watatani index,
it follows from Theorem 3.3 of \cite{Izu} that $A\rca{\alpha} n\IZ$ is a finite direct sum of simple \Cs-algebras.
We consider the strictly continuous extension $\widehat{\alpha^{n}}\colon \IT\acts \cM(A\rca{\alpha^{n}}\IZ)$ of the dual action.
Since the center $\mathrm{Z}(\cM(A\rca{\alpha^{n}}\IZ))$ is finite-dimensional,
the restricted action $\IT\acts \mathrm{Z}(\cM(A\rca{\alpha^{n}}\IZ))$ is trivial.
Thus $\widehat{\alpha^{n}}$ leaves every ideal of $A\rca{\alpha^{n}}\IZ$ invariant.
This implies that the strong Connes spectrum $\tilde{\IT}(\alpha^{n})$ is equal to $\IT$ (see Section 1 of \cite{Kis}).
Now, the proper outerness of $\alpha^{n}$ follows from \cite[Lemma 1.1]{Kis}.
This proves the implication $(2)\Rightarrow(3)$.

The remaining implication $(3)\Rightarrow (1)$ is trivial.
\end{proof}

\section{A short solution to the reduced Hao--Ng isomorphism problem}\label{section:HN}
The reduced Hao--Ng isomorphism problem asks whether the reduced crossed product of a Cuntz--Pimsner algebra
is naturally isomorphic to the Cuntz--Pimsner algebra of the reduced crossed product \Cs-correspondence.
This isomorphism is particularly useful for non-discrete groups,
because the structure of Cuntz--Pimsner algebras is much easier to study than that of crossed products by continuous groups.
Indeed, the general structure theory available for the latter remains very limited.

In this section, we give a short, alternative, and self-contained solution to the \emph{reduced Hao--Ng isomorphism problem} in full generality.
Throughout this section, we do not impose any of the conditions in Assumption \ref{ass:setting}\footnote{Thus, our \Cs-correspondence may fail to be non-degenerate, full, faithful, or countably generated, and the coeffcient \Cs-algebra may fail to be $\sigma$-unital.}.
To the best of our knowledge, prior to the preprint \cite{DT},
a positive solution was known only in specific cases such as
amenable groups \cite{HN}, discrete exact groups \cite{BKQR}, discrete groups \cite{Kat17}, and hyperrigid \Cs-correspondences \cite{KR}.
Furthermore, all of these results,
including \cite{DT}, assume non-degeneracy.

Our motivation is threefold:
\begin{itemize}
\item The strategies in \cite{KR}, \cite{DT}, etc. rely heavily on the machinery of non-self-adjoint operator algebras and their \Cs-envelopes.
Since this machinery may be less familiar to general \Cs-algebraists, it is desirable to provide an alternative proof using only standard facts about \Cs-algebras.
\item We wish to clarify that the reduced Hao--Ng isomorphism for \Cs-dynamical systems is not a non-self-adjoint operator algebra phenomenon, contrary to the prevailing view in the literature (see, e.g., \cite{Kat17}, \cite{KR19}, \cite{KR}, \cite{DT}). As additional benefits, we are able to remove the non-degeneracy assumption, and our proof is less than a page long.
\item The first two versions of \cite{DT}, which claim a solution to the reduced Hao--Ng isomorphism problem in the non-degenerate case, contain a serious gap.
Although a correction\footnote{Since we do not specialize in non-self-adjoint operator algebras,
we refrain from evaluating the correctness of this revision.} has been proposed in the latest major revision (v3), we believe it is preferable to provide a self-contained proof in the present paper to prevent confusion in the literature, as this result plays a central role in the last two sections.
\end{itemize}
Indeed, surprisingly, our proof relies only on the following three well-known facts:
\begin{enumerate}
\item The reduced crossed product functor preserves the injectivity of morphisms.
\item The reduced crossed product of an essential $G$-invariant ideal is an essential ideal of the reduced crossed product.
\item Katsura's gauge-invariant uniqueness theorem \cite{Kat04}, Theorem 6.4.
\end{enumerate}
Note that by conditions (1) and (2), for any $G$-\Cs-algebra $(A, \alpha)$,
as $A\rca{\alpha} G \lhd \cM(A)_{\rm c} \rca{\alpha} G$ is essential,
one has the inclusion $\cM(A)_{\rm c} \rca{\alpha} G \subset \cM(A\rca{\alpha} G)$
where $\cM(A)_{\rm c}$ denotes the $G$-continuous part of $\cM(A)$.
Hence, our proof applies to any crossed product functor satisfying conditions (1) and (2).
We remark that the full crossed product functor fails to satisfy both conditions.

Throughout this section, let $G$ be a locally compact (Hausdorff) group.
We recall that for a $G$-\Cs-correspondence $(\cE, \upsilon)$ over a $G$-\Cs-algebra $(A, \alpha)$,
the reduced Hao--Ng \Cs-correspondence $\cE\rca{\upsilon} G$ is the \Cs-correspondence over $A\rca{\alpha} G$
defined as the norm closure of $C_c(G, \cE) \subset \Cn{\cE}\rca{\alpha_\upsilon} G$,
where the bimodule structure and inner product are the restrictions of the canonical operations on $\Cn{\cE}\rca{\alpha_\upsilon} G$.
Alternatively, one can choose the Toeplitz--Pimsner algebra $\mathcal{T}_\cE\rca{\alpha_\upsilon} G$ in place of $\Cn{\cE}\rca{\alpha_\upsilon} G$,
or one can define $\cE\rca{\upsilon} G$ as the completion of the pre-\Cs-correspondence $C_c(G, \cE)$ over $C_c(G, A) \subset A\rca{\alpha} G$
where the bimodule structure and inner product are defined via convolution.

The reduced Hao--Ng isomorphism problem (see, e.g., \cite{BKQR}, \cite{KR}, \cite{DT}) asks whether the inclusion $\cE\rca{\upsilon} G \subset \Cn{\cE}\rca{\alpha_\upsilon}G$ extends to
a \Cs-algebra isomorphism
\[\Cn{\cE\rca{\upsilon}G} \cong \Cn{\cE}\rca{\alpha_\upsilon} G.\]

Note that both \Cs-algebras $\IK(\cE\rca{\upsilon} G)$ and $\IK(\cE)\rca{\alpha_\upsilon} G$ are canonically identified
with the same \Cs-subalgebra of $\mathcal{T}_\cE\rca{\alpha_\upsilon} G$.
This yields a canonical identification $\IK(\cE\rca{\upsilon} G)=\IK(\cE)\rca{\alpha_\upsilon} G$, under which the two dense copies of $C_c(G, \cE) \cdot C_c(G, \cE)^\ast$ are identified.
Under this identification, the left action of $A\rca{\alpha} G$ on $\cE\rca{\upsilon} G$ is the $\ast$-homomorphism
$A\rca{\alpha} G \rightarrow \IB(\cE)_{\rm c} \rca{\alpha_\upsilon} G \subset \IB(\cE \rca{\upsilon} G)$
induced by the original left action $A\rightarrow \IB(\cE)_{\rm c}$.
Hence, $\cE\rca{\upsilon} G$ is faithful if and only if $\cE$ is faithful.

\begin{Thm}\label{Thm:HN}
For any $G$-\Cs-correspondence over a $G$-\Cs-algebra,
the reduced Hao--Ng isomorphism holds.
\end{Thm}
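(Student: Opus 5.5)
We plan to apply Katsura's gauge-invariant uniqueness theorem \cite{Kat04}, Theorem 6.4, to the representation of $\cE\rca{\upsilon} G$ inside $\Cn{\cE}\rca{\alpha_\upsilon} G$. Write $\pi_A\colon A\rca{\alpha} G\to \Cn{\cE}\rca{\alpha_\upsilon}G$ for the map induced by $A\to\Cn{\cE}$, and $t\colon \cE\rca{\upsilon}G\to \Cn{\cE}\rca{\alpha_\upsilon}G$ for the inclusion. The pair $(\pi_A,t)$ is a Toeplitz representation, since all operations on $\cE\rca\upsilon G$ are restrictions of those in the ambient algebra. By fact (1), $\pi_A$ is injective, because $A\to\Cn{\cE}$ is injective. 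The pair admits a gauge action, namely $\widehat{\ }$-free: the gauge action $\gamma$ of $\IT$ on $\Cn{\cE}$ commutes with $\alpha_\upsilon$, so it induces an action on $\Cn{\cE}\rca{\alpha_\upsilon}G$ fixing $\pi_A$ and scaling $t$. Its image generates the ambient algebra, since $C_c(G,\Cn{\cE})$ is spanned densely by products of elements from $C_c(G,A)$ and $C_c(G,\cE)$ and their adjoints. So once covariance holds, Katsura's theorem gives injectivity of the induced map $\Cn{\cE\rca\upsilon G}\to\Cn{\cE}\rca{\alpha_\upsilon}G$ and surjectivity by generation.

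The key step is covariance on Katsura's ideal. Let $J_\cE=\varphi^{-1}(\IK(\cE))\cap(\ker\varphi)^\perp\lhd A$, which is $G$-invariant, and let $J'$ be Katsura's ideal for $\cE\rca\upsilon G$. We must show the following two things.

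First, we need $J'\subset J_\cE\rca{\alpha}G$. We use the identification $\IK(\cE\rca\upsilon G)=\IK(\cE)\rca{}G$ and the left action $\varphi\rtimes G\colon A\rca{}G\to\IB(\cE)_{\rm c}\rca{}G$. Set $I=\varphi^{-1}(\IK(\cE))$, so that $I\rca{}G=(\varphi\rtimes G)^{-1}(\IK(\cE)\rca{}G)$. The inclusion $\supseteq$ is clear; for $\subseteq$, we use fact (1) applied to $A/I\to \IB(\cE)_{\rm c}/\IK(\cE)$, which is injective, together with exactness of the sequence $0\to I\rca{}G\to A\rca{}G\to (A/I)\rca{}G$ at the level of kernels. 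If exactness fails for non-exact $G$, we only need that any $x$ mapping into $\IK\rca{}G$ lies in $I\rca{}G$, and we would argue with fact (2) instead: $I+(\ker\varphi)$... I expect this to be the main obstacle. The cleaner route is to work with the essential ideal $J_\cE\oplus\ker\varphi$-type ideal $I_0:=J_\cE+\ker\varphi$, which is essential in $\varphi^{-1}(\IK(\cE))$. Then, for $x\in J'$, we have $x\cdot (\ker\varphi\rca{}G)=0$ because $x\perp\ker(\varphi\rtimes G)\supseteq \ker\varphi\rca{}G$. Moreover $x\cdot(J_\cE\rca{}G)\subset J_\cE\rca{}G$. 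Using fact (2), that $I_0\rca{}G$ is essential in the relevant crossed product, we would deduce that $x$ multiplies into $J_\cE\rca{}G$ as a two-sided multiplier compatible with the decomposition. Faithfulness of the multiplier representation of the hereditary subalgebra then places $x$ in $J_\cE\rca{}G$. This again uses fact (1) to embed $(\varphi^{-1}(\IK(\cE))/I_0)\rca{}G$.

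Second, for $x\in J_\cE\rca{}G$, covariance holds. Indeed, $\pi_A(x)=\psi_t((\varphi\rtimes G)(x))$: on $C_c(G,J_\cE)$ this follows pointwise from covariance of $A\to\Cn{\cE}$ on $J_\cE$, since $a=\psi(\varphi(a))$ in $\Cn{\cE}$, and the identification $\IK(\cE\rca\upsilon G)=\IK(\cE)\rca{}G$ is compatible with $\psi_t$. By continuity the identity extends to all of $J_\cE\rca{}G$, and hence to $J'$. This yields a surjective gauge-equivariant map injective on $A\rca{}G$, and Theorem 6.4 of \cite{Kat04} completes the proof.
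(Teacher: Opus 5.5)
\textbf{There is a genuine gap, and it sits in the step you flagged.} Your frame is the same as the paper's: an injective, gauge-compatible Toeplitz representation of $\cE\rca{\upsilon}G$ in $\Cn{\cE}\rca{\alpha_\upsilon}G$, followed by Katsura's gauge-invariant uniqueness theorem. Your second step, covariance on $J_\cE\rca{\alpha}G$, is also fine.

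The problem is the inclusion $J_{\cE\rca{\upsilon}G}\subseteq J_\cE\rca{\alpha}G$ you try to prove first. It is false in general when $G$ is not exact, so no version of your essential-ideal argument can establish it. Note also that ``$x$ multiplies $J_\cE\rca{\alpha}G$ into itself'' holds for \emph{every} $x\in A\rca{\alpha}G$, so that step carries no information.

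\textbf{Counterexample.} Take a non-exact discrete group $\Gamma$, and a $\Gamma$-\Cs-algebra $A$ with an invariant ideal $L$ such that $N:=\ker\bigl(A\rca{\alpha}\Gamma\to (A/L)\rca{\alpha}\Gamma\bigr)\supsetneq L\rca{\alpha}\Gamma$. Put $\cE:=\cE_1\oplus A$, with the following actions:
\begin{itemize}
\item $A$ acts on the summand $A$ by left multiplication;
\item $A$ acts on $\cE_1:=H\otimes A$ by $a\mapsto \pi(a+L)\otimes 1$, with the diagonal $\Gamma$-action. Here $\pi$ is a faithful covariant representation of $(A/L,\Gamma)$ of infinite multiplicity, so that $\pi(A/L)\cap\IK(H)=0$.
\end{itemize}
Then $\cE$ is faithful and $J_\cE=\varphi^{-1}(\IK(\cE))=L$. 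However, every $y\in N$ acts on $\cE\rca{\upsilon}\Gamma$ as $(0,y)\in \IK(A_A)\rca{\alpha}\Gamma\subseteq\IK(\cE)\rca{\alpha_\upsilon}\Gamma$. Hence $N\subseteq J_{\cE\rca{\upsilon}\Gamma}$, and the inclusion fails. The same example refutes your intermediate claim $\varphi^{-1}(\IK(\cE))\rca{\alpha}G=(\varphi\rtimes G)^{-1}(\IK(\cE)\rca{\alpha_\upsilon}G)$. So your route can only work for exact $G$; there, your essential-ideal idea does go through inside $(I/J_\cE)\rca{\alpha}G$. But the non-exact case is exactly the content of the theorem.

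\textbf{The missing idea.} You never need to identify $J_{\cE\rca{\upsilon}G}$. You only need $x=\psi(\varphi'(x))$ in $\Cn{\cE}\rca{\alpha_\upsilon}G$ for every $x$ in it. The paper gets this as follows.
\begin{enumerate}
\item Reduce to faithful $\cE$ via $\cF:=\cE\oplus\ker\varphi$. This $\cF$ is faithful and satisfies $J_\cE\subseteq J_\cF$ and $J_{\cE\rca{\upsilon}G}\subseteq J_{\cF\rca{}G}$. By gauge-invariant uniqueness, $\Cn{\cE}\subseteq\Cn{\cF}$ and $\Cn{\cE\rca{\upsilon}G}\subseteq\Cn{\cF\rca{}G}$, and the isomorphism for $\cF$ restricts to the one for $\cE$.
\item For faithful $\cE$ one has $J_\cE=\varphi^{-1}(\IK(\cE))$. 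Hence $A+\IK(\cE)\subseteq\Cn{\cE}$ acts faithfully on $\cE$.
\item By fact (1), together with $\IB(\cE)_{\rm c}\rca{\alpha_\upsilon}G\subseteq\IB(\cE\rca{\upsilon}G)$ (facts (1) and (2)), the algebra $(A+\IK(\cE))\rca{\alpha_\upsilon}G$ acts faithfully on $\cE\rca{\upsilon}G$.
\item For $x\in J_{\cE\rca{\upsilon}G}$, the element $x-\varphi'(x)\in(A+\IK(\cE))\rca{\alpha_\upsilon}G$ annihilates $\cE\rca{\upsilon}G$, so it is zero.
\end{enumerate}
This yields covariance on all of $J_{\cE\rca{\upsilon}G}$, whatever that ideal turns out to be.
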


\begin{proof}
We first observe that it suffices to consider the case where $\cE$ is faithful.
Indeed, let $(\cE, \upsilon)$ be any $G$-\Cs-correspondence over a $G$-\Cs-algebra $(A, \alpha)$.
Let $\rho \colon A \rightarrow \IB(\cE)$ denote the left action, and write $I$ for its kernel.
Recall that the Katsura ideal $J_{\cE} \lhd A$ consists of all elements $a\in A$ satisfying $a I =\{0\}$ and $\rho(a)\in \IK(\cE)$.
Set $\cF:= \cE\oplus I$, which we regard as a $G$-\Cs-correspondence in the natural way.
Clearly, $\cF$ is faithful, the left action of $J_{\cE}$ on the second direct summand $I$ of $\cF$ is zero, and $J_{\cE}\subset J_{\cF}=J_{\cE}+I$.
Observe that \[\cF \rca{\upsilon \oplus \alpha} G \cong (\cE \rca{\upsilon} G)\oplus (I\rca{\alpha}G).\]
Since $\cF$ is faithful, so is $\cF \rca{\upsilon \oplus \alpha} G$.
The kernel of the left action of $A\rca{\alpha} G$ on $\cE\rca{\upsilon} G$ contains $I\rca{\alpha}G$\footnote{The kernel coincides with $I\rca{\alpha}G$
when $G$ is exact, but we only use this obvious inclusion.}, 
and hence $J_{\cE \rca{\upsilon} G} \cdot (I \rca{\alpha} G)=\{0\}$ by definition.
This yields $J_{\cE\rca{\upsilon} G} \subset J_{\cF\rca{\upsilon\oplus \alpha} G}$.
By Katsura's gauge-invariant uniqueness theorem (\cite{Kat04}, Theorem 6.4), we obtain the canonical inclusions
\[\Cn{\cE} \subset \Cn{\cF}, \quad \Cn{\cE\rca{\upsilon} G} \subset \Cn{\cF\rca{\upsilon\oplus \alpha} G}.\]
Thus, the reduced Hao--Ng isomorphism for $\cF$
restricts to that for $\cE$.

Now, let $(\cE, \upsilon)$ be a faithful $G$-\Cs-correspondence.
Note that $J_\cE=\rho^{-1}(\IK(\cE))\lhd A$.
Hence, the left multiplication action of $A+\IK(\cE) \subset \Cn{\cE}$ on $\cE \subset \Cn{\cE}$ is faithful.
By the inclusion $\IB(\cE)_{\rm c} \rca{\alpha_\upsilon} G \subset \IB(\cE\rca{\upsilon} G)$,
the left multiplication action $(A+\IK(\cE))\rca{\alpha_\upsilon} G \rightarrow \IB(\cE\rca{\upsilon}G)$ is faithful.
Thus, for any $a\in A \rca{\alpha} G$ and $x\in \IK(\cE)\rca{\alpha_\upsilon} G =\IK(\cE\rca{\upsilon}G)$,
the condition $(a-x)\cdot (\cE\rca{\upsilon}G)=0$ implies $a=x$ in $\Cn{\cE}\rca{\alpha_\upsilon}G$.
Therefore, the pair of inclusion maps $A \rca{\alpha}G, \cE\rca{\upsilon} G \subset \Cn{\cE}\rca{\alpha_\upsilon} G$
defines an injective covariant representation of $\cE\rca{\upsilon} G$.
By the universal property of Cuntz--Pimsner algebras, this extends uniquely to a $\ast$-homomorphism 
\[\Cn{\cE\rca{\upsilon} G} \rightarrow \Cn{\cE}\rca{\alpha_\upsilon} G.\]
 By Katsura's gauge-invariant uniqueness theorem, this map is an isomorphism.
\end{proof}
\section{Counterexamples to the full Hao--Ng isomorphism problem}\label{section:HN2}
In this section, we record a short proof of the full isomorphism-type result established by Katsoulis and Ramsey for hyperrigid \Cs-correspondences (\cite{KR}, Theorem 3.10) in the regular case.
Recall that a \Cs-correspondence is said to be regular if it is faithful and proper.
The proof of the isomorphism is essentially the same as that for the reduced case in Section \ref{section:HN}.
Note that, in contrast to prior work, we do not require the non-degeneracy assumption.

More significantly, we disprove the full Hao--Ng isomorphism conjecture by constructing (full, regular, and non-degenerate) counterexamples.
This resolves several open problems posed in the literature; see, e.g., \cite{BKQR}, \cite{KR19}, \cite{KR}, \cite{DT}.
The failure of the conjectured isomorphisms stems from the fact that the full crossed product functor
may send an injective morphism to a non-injective one.
The results in this section are not used elsewhere in this paper.

Throughout this section, the symbol ``$\rtimes$'' denotes the full crossed product.
Let $G$ be a locally compact (Hausdorff) group.

\begin{Def}[A modified full Hao--Ng \Cs-correspondence; see Section 7.1 in \cite{KR19}.]
We define $\cE\hat{\rtimes}_{\upsilon}G$ to be the norm closure of $C_c(G, \cE)$ in $\Cn{\cE}\rtimes_{\alpha_\upsilon} G$.
We write $A\rtimes ^\upsilon _{\alpha}G$\footnote{In \cite{KR}, this completion is denoted by $A\hat{\rtimes}_\alpha G$. However we use the present notation as it depends
not only on $(A, \alpha)$ but also on $(\cE, \upsilon)$.} for the norm closure of $C_c(G, A)$ in $\Cn{\cE}\rtimes_{\alpha_\upsilon} G$, which is a \Cs-algebra.
Then, as in the reduced case, $\cE\hat{\rtimes}_{\upsilon} G$ forms a \Cs-correspondence over $A\rtimes ^\upsilon _{\alpha}G$.
\end{Def}
For a regular \Cs-correspondence $\cE$, clearly $\cE\hat{\rtimes}_{\upsilon}G$ is faithful.
\begin{Rem}
\begin{enumerate}
\item The full Hao--Ng \Cs-correspondence $\cE\rtimes_{\alpha_\upsilon} G$ \cite{HN}
is defined over the full crossed product $A\rtimes_\alpha G$.
Analogously to the reduced case, it can be defined as the norm closure of $C_c(G, \cE)$ in $\mathcal{T}_{\cE} \rtimes_{\alpha_\upsilon} G$ (see Remark 7.8 in \cite{KR19}).
This definition is used even for the full Hao--Ng isomorphism problem for non-amenable groups in the literature (see e.g., \cite{BKQR}, \cite{KR19}, \cite{KR}).

\item The \Cs-correspondence $\cE\hat{\rtimes}_{\upsilon} G$ coincides with $\cE\rtimes_{\alpha_\upsilon} G$
if and only if the canonical map $A\rtimes_\alpha G \rightarrow \Cn{\cE}\rtimes_{\alpha_\upsilon} G$ is injective.
This holds, for instance, when the Katsura ideal is zero.
In general, however, this injectivity fails, as shown in Theorem \ref{Thm:counterex} below.
\end{enumerate}
\end{Rem}

Applying the same strategy as in Section \ref{section:HN},
we revisit Theorem 3.10 of \cite{KR} in the regular case.
Since the proof is identical to that of Theorem \ref{Thm:HN2}, we omit it.
Note that in the proof, we use the assumptions only to deduce the faithfulness of the left action of $(A+\IK(\cE)) \rtimes_\alpha^\upsilon G$ on $\cE\hat{\rtimes}_\upsilon G$.
\begin{Thm}\label{Thm:HN2}
For any regular $G$-\Cs-correspondence $(\cE, \upsilon)$ over a $G$-\Cs-algebra,
the inclusion map $\cE\hat{\rtimes}_{\upsilon}G \rightarrow \Cn{\cE}\rtimes_{\alpha_\upsilon} G$
extends to a \Cs-algebra isomorphism
\[\Cn{\cE\hat{\rtimes}_{\upsilon}G}\cong \Cn{\cE}\rtimes_{\alpha_\upsilon} G.\]
\end{Thm}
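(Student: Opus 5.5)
We follow the second half of the proof of Theorem \ref{Thm:HN}; no reduction to the faithful case is needed, since $\cE$ is regular. Let $\rho\colon A\to\IB(\cE)$ be the left action. Regularity means $\rho$ is injective and $\rho(A)\subset\IK(\cE)$, so the Katsura ideal is $J_\cE=A$, and in $\Cn{\cE}$ every $a\in A$ coincides with $\rho(a)\in\IK(\cE)$. Hence $A\subset\IK(\cE)$ inside $\Cn{\cE}$, and $A+\IK(\cE)=\IK(\cE)$. Write $D:=\IK(\cE)\rtimes^\upsilon_{\alpha_\upsilon}G$ for the closure of $C_c(G,\IK(\cE))$ in $\Cn{\cE}\rtimes_{\alpha_\upsilon}G$. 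It is a \Cs-subalgebra containing $A\rtimes^\upsilon_\alpha G$ and the closed span of $C_c(G,\cE)\cdot C_c(G,\cE)^\ast$.

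\textbf{Step 1.} We check that $D=\IK(\cE\hat{\rtimes}_\upsilon G)$ inside $\Cn{\cE}\rtimes_{\alpha_\upsilon}G$. Since $\IK(\cE)$ is the closed span of $\cE\cE^\ast$, a convolution/approximate-unit argument shows that $C_c(G,\cE)C_c(G,\cE)^\ast$ is dense in $C_c(G,\IK(\cE))$ for the inductive limit topology. Both algebras therefore have the same dense subspace.

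\textbf{Step 2 (the main obstacle).} We show that the left multiplication action of $D$ on $\cE\hat{\rtimes}_\upsilon G$ is faithful. Let $x\in D$ with $x\cdot(\cE\hat{\rtimes}_\upsilon G)=0$. Then $x\,C_c(G,\cE)C_c(G,\cE)^\ast=0$, so $xD=0$ by Step 1. Since $D$ is a \Cs-algebra containing $x$, this gives $xx^\ast=0$, hence $x=0$. This is exactly where regularity substitutes for conditions (1) and (2) of Section \ref{section:HN}, which fail for the full crossed product. Because $A\subset\IK(\cE)$, the essential-ideal argument collapses to this \Cs-identity inside the ambient algebra. The point to be careful about is that $\IK(\cE\hat{\rtimes}_\upsilon G)$ really is identified with $D$ as a subalgebra of $\Cn{\cE}\rtimes_{\alpha_\upsilon}G$, that is, that the inner products computed in the ambient algebra give the correspondence structure. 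This holds by definition of $\cE\hat{\rtimes}_\upsilon G$ as a closed subspace with the restricted operations.

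\textbf{Step 3.} Consequently, for $a\in A\rtimes^\upsilon_\alpha G$ and $k\in\IK(\cE\hat{\rtimes}_\upsilon G)=D$, the condition $(a-k)\cdot(\cE\hat{\rtimes}_\upsilon G)=0$ forces $a=k$ in $\Cn{\cE}\rtimes_{\alpha_\upsilon}G$. The left action of $A\rtimes^\upsilon_\alpha G$ on $\cE\hat{\rtimes}_\upsilon G$ is faithful (a special case of Step 2) and lands in $D=\IK$. So the inclusions of $A\rtimes^\upsilon_\alpha G$ and $\cE\hat{\rtimes}_\upsilon G$ form an injective covariant representation satisfying the Cuntz--Pimsner relation on the whole coefficient algebra, which is the Katsura ideal here. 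The universal property then gives a $\ast$-homomorphism $\Cn{\cE\hat{\rtimes}_\upsilon G}\to\Cn{\cE}\rtimes_{\alpha_\upsilon}G$.

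\textbf{Step 4.} This map is surjective, because $C_c(G,A)$ and $C_c(G,\cE)$ generate the full crossed product. It is equivariant for the gauge actions: the gauge action on $\Cn{\cE}$ commutes with $\alpha_\upsilon$ and so extends to the full crossed product. It is injective on the coefficient algebra by construction. Katsura's gauge-invariant uniqueness theorem (\cite{Kat04}, Theorem 6.4) therefore shows it is an isomorphism.
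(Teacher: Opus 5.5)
Your proof is correct and is essentially the paper's own argument. The paper omits the proof, saying it is identical to the second half of the proof of Theorem \ref{Thm:HN}, with regularity used only to get faithfulness of the left action of $(A+\IK(\cE))\rtimes^{\upsilon}_{\alpha}G$ on $\cE\hat{\rtimes}_{\upsilon}G$; your Steps 1--2 supply exactly this, via $A+\IK(\cE)=\IK(\cE)$ in $\Cn{\cE}$ and the $xx^\ast=0$ argument, after which the universal property and Katsura's gauge-invariant uniqueness theorem finish as in the reduced case.
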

Now, we show that the (original) full Hao--Ng isomorphism fails, disproving both the original conjecture \cite{HN} (see also \cite{BKQR}, \cite{KR}) and one half of its refined version in \cite{KR19}.
The following counterexamples show that this failure occurs even for discrete exact group actions on regular (hence hyperrigid), full, and non-degenerate \Cs-correspondences. Our construction is inspired by \cite{Suzeq}.
For basic facts about the amenability of \Cs-dynamical systems, we refer the reader to \cite{OS}.
Note that Katsura's refined Cuntz--Pimsner algebra $\Cn{\cE}$ satisfies
$A\subset \Cn{\cE}$ even for a non-faithful \Cs-correspondence $\cE$ over $A$ (see \cite{Kat04}, Proposition 4.11). 
\begin{Thm}\label{Thm:counterex}
For any non-amenable exact discrete group $\Gamma$,
there exist a $\Gamma$-\Cs-algebra $(A, \alpha)$ and a $\Gamma$-equivariant $\ast$-homomorphism $\rho \colon A \rightarrow A$
such that the canonical map $A\rtimes_\alpha \Gamma \rightarrow \Cn{\rho} \rtimes_\alpha \Gamma$ is not injective.
More precisely, one can find such a $\rho$ so that
$A\rtimes_\alpha\Gamma \neq A\rca{\alpha}\Gamma$, while the action $\Gamma \acts \Cn{\rho}$ is amenable.
\end{Thm}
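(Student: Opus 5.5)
The plan is to find a $\Gamma$-\Cs-algebra $A$ on which the full and reduced crossed products differ, together with an equivariant endomorphism $\rho$ whose Cuntz--Pimsner algebra carries an amenable $\Gamma$-action. Amenability of $\Gamma\acts\Cn{\rho}$ gives $\Cn{\rho}\rtimes_\alpha\Gamma=\Cn{\rho}\rca{\alpha}\Gamma$. The reduced crossed product functor preserves injectivity, and $A\subset\Cn{\rho}$, so the image of $A\rtimes_\alpha\Gamma$ in $\Cn{\rho}\rtimes_\alpha\Gamma$ is $A\rca{\alpha}\Gamma$. Hence the canonical map $A\rtimes_\alpha\Gamma\to\Cn{\rho}\rtimes_\alpha\Gamma$ factors through the quotient $A\rtimes_\alpha\Gamma\to A\rca{\alpha}\Gamma$, and it fails to be injective as soon as $A\rtimes_\alpha\Gamma\neq A\rca{\alpha}\Gamma$. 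Everything therefore reduces to the construction.

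For the construction, inspired by \cite{Suzeq}, I would build $A$ as an inductive-limit type algebra in which a non-amenable piece is gradually pushed by $\rho$ into an amenable one. Since $\Gamma$ is exact, there is an amenable action $\Gamma\acts X$ on a compact space, for instance $X=\beta\Gamma$. Let $B$ be a $\Gamma$-\Cs-algebra with $B\rtimes\Gamma\neq B\rca{}\Gamma$; the simplest choice is $B=\IC$ with the trivial action, since $\Gamma$ is non-amenable. Take
\[A:=\Bigl(\prod_{n\in\IN}C(X)\Bigr)\oplus\IC\quad\text{or}\quad A:=c\bigl(\IN,C(X)\bigr),\]
the algebra of convergent sequences, with the limit point carrying $\IC\subset C(X)$, and let $\rho$ be a shift-type map, for example $\rho\bigl((f_n)_n\bigr)=(f_{n+1})_n$, or its unital variant that includes $\IC\to C(X)$ at the start.

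The point of the design is that $\fB$, equivalently the gauge-fixed core of $\Cn{\rho}$, forgets the limit point. Under the shifts, every element is eventually pushed into the $C(X)$-part, so $\Cn{\rho}$ should be generated, up to the corner decomposition of Theorem \ref{Thm:decomp}, by algebras of the form $C(X)$ with amenable $\Gamma$-action. At the same time, $A$ has a quotient $\IC$ with the trivial action, coming from evaluation at the limit point, so $A\rtimes\Gamma\to C^\ast(\Gamma)$ is surjective. Together with exactness this gives $A\rtimes\Gamma\neq A\rca{}\Gamma$.

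Concretely, I would aim for $A=C(\IN\cup\{\infty\})\otimes C(X)$ where the fibre at $\infty$ is replaced by $\IC$. Then $\rho$ is a unital, hence non-degenerate, injective map shifting the coordinate $n\mapsto n+1$ and embedding $\IC\subset C(X)$ at the limit. This is a regular correspondence, so faithful and proper, and it is also full.

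The main obstacle is proving that $\Gamma\acts\Cn{\rho}$ is amenable. My plan is to use the gauge action: amenability of $\Gamma\acts\Cn{\rho}$ follows if the fixed point algebra, or more precisely $\fB$ or $B_{[0,\infty]}$, is a $\Gamma$-\Cs-algebra whose centre contains an equivariant copy of $C(X)$ (approximately, in the sense of central sequences), since a unital $\Gamma$-equivariant embedding of $C(X)$ into the centre of the multiplier algebra witnesses amenability. The difficulty is that the limit point $\IC$ survives in every $B_n$. To get past this, I would show that the element $(1_{C(X)}\text{ on coordinates }\geq k)$ becomes, after applying $\iota$'s, a central approximate unit carrying equivariant maps $C(X)\to Z\bigl(\cM(\cdot)\bigr)$. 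Equivalently, I would exhibit approximately invariant positive-type functions $\Gamma\to C(X)\subset Z(B_{[n,\infty]})$ compatible with the Cuntz--Pimsner relation. If this fails, the fallback is to choose $\rho$ non-proper so that $\fB=B_{[-\infty,\infty]}$ contains the amenable algebra as an essential ideal and reduce amenability to that ideal.
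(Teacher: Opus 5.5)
\textbf{There is a genuine gap: your construction does not give a counterexample.}

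Your first paragraph is fine. Once $\Gamma\acts\Cn{\rho}$ is amenable and $A\rtimes_\alpha\Gamma\neq A\rca{\alpha}\Gamma$, non-injectivity follows exactly as you say, and this matches how the paper concludes. The problem is the construction, and it is fatal rather than technical.

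In your concrete example, $A=C(Y)$ with $Y=(\IN\times X)^+$ and $\rho(f)=(f(\infty),f_0,f_1,\ldots)$. Continuity forces the point at infinity to be fixed by the dual map. So the $\Gamma$-invariant character $\chi=\mathrm{ev}_\infty$ satisfies $\chi\circ\rho=\chi$. Let $z\in C(\IT)$ be the identity function. Then $a\mapsto\chi(a)$, $\xi\mapsto\chi(\xi)z$ is a covariant representation of ${}_\rho A$ in $C(\IT)$; covariance holds precisely because $\chi\circ\rho=\chi$. Hence $\Cn{\rho}$ has an equivariant quotient $C(\IT)$ with trivial $\Gamma$-action. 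Equivalently, $\fB\cong C((\IZ\times X)^+)$ has a $\Gamma$-fixed point. Since amenability passes to invariant quotients and $\Gamma$ is non-amenable, $\Gamma\acts\Cn{\rho}$ is not amenable.

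It is worse than that. The inclusion $(\IN\times X)^+\subset(\IZ\times X)^+$ is an equivariant section of the map dual to $\iota_0\colon A\to\fB$. Composing the resulting expectation with the gauge expectation gives a $\Gamma$-equivariant conditional expectation $\Cn{\rho}\to A$. Therefore $A\rtimes_\alpha\Gamma\to\Cn{\rho}\rtimes_\alpha\Gamma$ is actually injective.

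This is not an accident of your particular choice. For unital $\rho$ on unital $A$, the map $\chi\mapsto\chi\circ\rho$ sends the compact set of $\Gamma$-invariant characters of $A$ into itself. Any point of the (nonempty) inverse limit is a $\Gamma$-invariant character of $\fB=\varinjlim(A,\rho)\cong\Cn{\rho}^{\IT}$. So no non-degenerate design in which $A\rtimes_\alpha\Gamma\neq A\rca{\alpha}\Gamma$ is witnessed by a trivial-action quotient can make $\fB$ amenable. Your fallback fails too: amenability does not pass from an essential invariant ideal to the ambient algebra, and $c_0(\IZ\times X)\lhd C((\IZ\times X)^+)$ is exactly such a case.

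The missing idea is to obtain $A\rtimes_\alpha\Gamma\neq A\rca{\alpha}\Gamma$ from innerness rather than from a quotient, and to choose $\rho$ so that it dilutes the implementing unitaries instead of fixing them. The paper takes $A=(\bigotimes_{\IN}B)\rca{\beta}\widetilde{\Gamma}$ for an amenable $\Gamma\acts B$, where $\widetilde{\Gamma}$ is generated by $\bigoplus_{\IN}\Gamma$ and the constant sequences, and lets $\Gamma$ (the constant sequences) act by conjugation. An inner action of a non-amenable group has distinct full and reduced crossed products, and this $A$ has no characters at all. The endomorphism $\rho(xs)=\theta(x)\sigma(s)$ combines the tensor shift with the index shift, so the $0$-th copy of $B$ commutes with $\rho(A)$. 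Consequently, for $b$ in that copy, $(\iota_n(b))_n$ is a central sequence in $\fB$. This gives a unital equivariant embedding of $B$ into the central sequence algebra, so $\Gamma\acts\fB\cong\Cn{\rho}^{\IT}$ is amenable, and then so is $\Gamma\acts\Cn{\rho}$ by Theorem 5.1 of \cite{OS}.

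Within your commutative framework, the only way around the obstruction is to give up unitality. For example, a non-unital $\rho$ could map the trivial summand onto a $C(X)$-coordinate, so that the trivial piece is absorbed rather than preserved. That is not what you proposed, and it sacrifices non-degeneracy.
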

\begin{proof}
Choose an amenable action $\Gamma \acts B$ on a unital \Cs-algebra $B$.
Let $\widetilde{\Gamma}< \prod_{\IN} \Gamma$
be the subgroup generated by $\bigoplus_{\IN} \Gamma$ and the constant sequences.
Set
\[A:=\left(\bigotimes_{\IN} B\right) \rca{\beta} \widetilde{\Gamma},\]
where $\beta \colon \widetilde{\Gamma} \acts \bigotimes_{\IN} B$ denotes the restriction of the tensor product action.
We identify $\Gamma$ with the subgroup of $\widetilde{\Gamma}$ consisting of all constant sequences.
Denote by $\alpha \colon \Gamma \acts A$ the restriction of the conjugation action.
Since the action arises from a unitary representation in $A$,
one has
\[A\rtimes_\alpha \Gamma \neq A\rca{\alpha} \Gamma.\]

We next define a $\Gamma$-equivariant $\ast$-homomorphism $\rho \colon A \rightarrow A$ as follows.
Let $\theta\colon \bigotimes_{\IN} B \rightarrow \bigotimes_{\IN} B$ be the one-sided shift:
$\theta(x):=1_{B}\otimes x$.
Let $\sigma\colon \widetilde{\Gamma} \rightarrow \widetilde{\Gamma}$
denote the (well-defined) group homomorphism given by the index shift
\[\sigma(s_0, s_1, \ldots):=(1_\Gamma, s_0, s_1, \ldots).\]
Note that both $\theta$ and $\sigma$ are injective and that
$\theta$ is $\beta$-($\beta \circ \sigma$)-equivariant.
Hence, we obtain a $\ast$-endomorphism $\rho \colon A\rightarrow A$ satisfying
\[\rho(x s):=\theta(x) \sigma(s) \quad \text{for~} x \in \bigotimes_{\IN} B, s\in \widetilde{\Gamma}.\]
Clearly, $\rho$ is $\Gamma$-equivariant.

Observe that the $0$-th copy of $B$ in $\bigotimes_{\IN} B \subset A$ commutes with $\rho(A)$.
Since $\Gamma \acts B$ is amenable, this shows that the inductive limit action
\[\Gamma \acts \fB:=\varinjlim_{n\in \IN} (A, \rho)\]
is amenable, since there exists a unital $\Gamma$-equivariant embedding of $B$ into the central sequence algebra of $\fB$ (cf.~ Section 4 in \cite{OS}).
Since $\fB$ is $\Gamma$-equivariantly isomorphic to the gauge fixed point algebra $\Cn{\rho}^{\IT}$, the induced action $\Gamma \acts \Cn{\rho}^{\IT}$ is amenable.
Hence, so is $\Gamma \acts \Cn{\rho}$ by Theorem 5.1 of \cite{OS}. 
\end{proof}
\begin{Rem}
Here, we record consequences of the construction in Theorem \ref{Thm:counterex}
\begin{enumerate}
\item If $A_0$ is exact and the full group \Cs-algebra $\Cso(\Gamma)$ is non-exact (e.g., when $\Gamma$ contains a non-abelian free group),
then $\Cn{{}_\rho A\hat{\rtimes}_\alpha \Gamma} \cong \Cn{\rho}\rtimes_\alpha \Gamma= \Cn{\rho}\rca{\alpha} \Gamma$
is exact, while $\Cn{{}_\rho A \rtimes_\alpha \Gamma}$ is non-exact since it contains $\Cso(\Gamma)$.
Thus, even abstractly, the expected isomorphism $\Cn{\cE\rtimes_\upsilon \Gamma} \cong \Cn{\cE}\rtimes_{\alpha_\upsilon}\Gamma$ fails.
\item The original full isomorphism for non-amenable groups is known to hold only for proper graph \Cs-correspondences \cite{KR}.
Our counterexamples demonstrate that the conjectured isomorphism can fail drastically even in the setting of a single $\ast$-endomorphism on an inner \Cs-dynamical system,
highlighting the difficulty of obtaining a reasonable characterization of its validity.
\item
By considering the induced \Cs-dynamical system,
the statement of Theorem \ref{Thm:counterex}
extends to locally compact groups containing a non-amenable exact discrete subgroup.
For instance, this applies to non-amenable almost connected groups, by Theorem 5.5 of \cite{Ric}.

\item
In \cite{OS}, it is shown that for a $G$-\Cs-correspondence $\cE$ over an amenable $G$-\Cs-dynamical system $(A, \alpha)$,
the generalized quasi-free action $G\acts \Cn{\cE}$ is amenable.
Theorem \ref{Thm:counterex} shows that the converse fails.
\end{enumerate}
\end{Rem}

\section{Traces and KMS weights on Cuntz--Pimsner algebras}
In this section, we study traces and KMS weights on the Cuntz--Pimsner algebras.
We first study tracial weights. Then the study of KMS weights is deduced to this study by using the Takesaki--Takai duality (see \cite{KK}, \cite{Thom}) and the Hao--Ng isomorphism \cite{HN} (see also Theorem \ref{Thm:HN}).
These results recover the main result of Laca--Neshveyev \cite{LN}.
An advantage of our approach is that, unlike the strategy in \cite{LN}, weights can be directly constructed without passing to the Toeplitz extension.
We also give a new sufficient condition for a \Cs-correspondence to have automatic gauge-invariance of traces and KMS weights on the Cuntz--Pimsner algebra.
The condition, which we call tracial proper outerness, involves W$^\ast$-completions. The definition and our proofs are inspired by \cite{Ur} and references therein.

We note that a strong Morita equivalence gives rise to an affine isomorphism betweeen the trace spaces,
and an $\IR$-equivariant strong Morita equivalence gives rise to affine isomorphisms between the KMS spaces.
Hence by Proposition \ref{Prop:st} and Remark \ref{Rem:st}, by passing to the stabilization, the essential part of our proof is deduced to the case that $\mathcal{E}={}_\rho A$ for some $\rho \colon A\rightarrow \cM(A)$.
This together with Theorem \ref{Thm:decomp} makes many notations and calculations rather simple.
The statements for general \Cs-correspondences will be recorded at the end of this section.

Let $A$ be a \Cs-algebra.
Let $\rho \colon A\rightarrow \cM(A)$ be a faithful non-degenerate $\ast$-homomorphism.
We adapt the notations in Section \ref{section:st}.
The \Cs-dynamical system $(\fB, \sigma)$ introduced therein plays a fundamental role.

For basic facts on weights on \Cs-algebras, we refer the reader to the books \cite{Pedbook} and \cite{Thom}.
Throughout the article, all weights on \Cs-algebras, except on von Neumann algebras and multiplier algebras, are supposed to be densely defined and lower semi-continuous.
Following \cite{Thom}, we include the flow invariance in the axioms of a KMS weight.
\begin{no}

Here we fix some notations. We note that some of them are different from those in \cite{Thom} for our convenience.
\begin{itemize}
\item For a \Cs-algebra $A$, let ${\rm Ped}(A)$ denote its Pedersen ideal.
\item
The GNS representation of a weight $\varphi$ is denoted by $\pi_\varphi$.
\item For a weight $\varphi$, one has a unique normal, semi-finite, faithful weight $\bar{\varphi}$ on $\pi_\varphi(A)''$
with $\bar{\varphi}\circ \pi_\varphi= \varphi$. We write $\varphi^{\cM}$ for the composite $\bar{\varphi}\circ \pi_\varphi|_{\cM(A)_+} \colon \cM(A)_+ \rightarrow [0, \infty]$.
Note that $\varphi^{\cM}$ is not densely defined unless $\varphi$ is bounded, but satisfies the other axioms of a weight.
Note also that, by the normality of $\bar{\varphi}$, with $(e_n)_n$ an approximate unit of $A$,
one has
\[\varphi^\cM(a)=\lim_{n\rightarrow \infty} \varphi(a^{\frac{1}{2}} e_n a^{\frac{1}{2}}) \quad {\rm~for~all~}a\in \cM(A)_+.\]
\item Let ${\rm T}(A)$ denote the space of tracial weights on $A$.
\item We often identify $\tau \in {\rm T}(A)$ with a tracial positive linear functional on $\Ped(A)$ with the same symbol; see \cite{Pedbook} for details.
\item For a $\ast$-homomorphism $\rho\colon A \rightarrow \cM(A)$, we set
\[{\rm T}_\rho(A):=\{\tau \in {\rm T}(A): \tau^{\cM} \circ \rho \leq \tau^\cM, \quad \tau \circ \rho \equiv \tau {\rm~on~} A_+\cap \rho^{-1}(A)\}.\]
Note that when $\rho(A) \subset A$, one has
\[{\rm T}_\rho(A)=\{\tau \in {\rm T}(A): \tau \circ \rho = \tau\}.\]
\item For a flow $\alpha \colon \IR \acts A$ and $\beta \in \IR$,
let ${\rm KMS}^\beta(A, \alpha)$ denote the space of all $\beta$-KMS weights on $(A, \alpha)$.
We set
\[{\rm T}^\beta(A, \alpha):=\{\tau \in {\rm T}(A): \tau\circ \alpha_t=e^{-\beta t} \tau~{\rm~for~all~}t\in \IR\}.\]
We also denote ${\rm KMS}^0(A, \alpha)={\rm T}^0(A, \alpha)$, the space of all $\alpha$-invariant tracial weights,
by ${\rm T}_\alpha(A)$.
Note that when $\alpha$ is trivial, we have
\[{\rm KMS}^\beta(A, \alpha)={\rm T}(A)\]
for all $\beta \in \IR$.

\item When $\rho$ is $\alpha$-equivariant, we set
\[{\rm KMS}_\rho^\beta(A, \alpha):=\{\varphi \in {\rm KMS}^\beta(A, \alpha): \varphi ^{\cM} \circ \rho \leq \varphi ^{\cM}, \quad \varphi\circ \rho \equiv \varphi {\rm~on~} A_+\cap \rho^{-1}(A)\},\]
\[{\rm T}_\rho^\beta(A, \alpha):= {\rm T}_\rho(A)\cap {\rm T}^\beta(A, \alpha).\]
\end{itemize}
Note that all these sets of weights form a cone.
\end{no}
\begin{Rem}
Even when $\rho\colon A \rightarrow \cM(A)$ is non-proper,
the space ${\rm T}_\rho(A)$ can be large. Here we illustrate an example.

Let $X$ be a locally compact space with a Borel regular measure $\mu$.
Let $\tau \in {\rm T}(A)$.
Assume that one has a trace-preserving, injective, non-degenerate $\ast$-homomorphism
$\theta \colon (C_0(X)\otimes A, \mu \otimes \tau) \rightarrow (\cM(A), \tau^\cM)$.
(For instance, when $X$ is a compact metrizable space, $\mu$ is a faithful probability measure on $X$,
$A$ is any $\cZ$-stable \Cs-algebra, there exists an embedding
$\theta \colon C(X) \otimes A \rightarrow A$ with $\tau \circ \theta=\mu \otimes \tau$ for all $\tau \in {\rm T}(A)$
by the existence part of the classification theorem; applying Theorem B in \cite{CGS} to $C(X)$ and $\cZ$ in place of $A$ and $B$ therein.)
Let $(\varrho_x)_{x\in X}$ be a point-strict continuous family of $\ast$-endomorphisms $\varrho_x \colon A \rightarrow \cM(A)$.
Assume that
\begin{equation}\label{equation:harmonic}
\int_X \tau^\cM \circ \varrho_x {\rm d}\mu(x) =\tau.
\end{equation}
Define a $\ast$-homomorphism $\varrho \colon A\rightarrow \cM(C_0(X)\otimes A)$
 by sending $a\in A$ to the function $(X\ni x \mapsto \varrho_x(a))$.
Set $\rho:=\theta \circ \varrho \colon A \rightarrow \cM(A)$.
Then one has $\tau^\cM \circ \rho = \tau^\cM$ by the choice of $(\varrho_x)_{x\in X}$ and $\theta$.
If we assume in addition that $\varrho(A) \cap (C_0(X)\otimes A) =0$,
then one has $\rho(A) \cap A =0$.
Hence, even after relaxing the equality (\ref{equation:harmonic}) to the inequality
\begin{equation}\label{equation:harmonic2}
\int_X \tau^\cM \circ \varrho_x {\rm d}\mu(x) \leq \tau,
\end{equation}
 we still have $\tau\in {\rm T}_\rho(A)$.
The case $X$ is discrete, $\mu$ is the counting measure on $X$, and that $\theta\colon c_0(X)\otimes A\otimes \IK \rightarrow A\otimes \IK$
is the embedding induced from the diagonal embedding $c_0(X)\otimes \IK \rightarrow \IK(\ell^2(X))\otimes \IK \cong \IK$,
 is studied in \cite{SuzP} to construct amenable actions on finite simple \Cs-algebras.
The restriction of a trace-scaling flow on a $\cZ$-stable \Cs-algebra to a suitable interval also fulfills these conditions.
Our present results also apply to the continuous case, which illustrates an advantage of the present approach.
\end{Rem}

We use the next lemma to construct tracial weights.
When $(B_\lambda)_{\lambda \in \Lambda}$ is an increasing net of hereditary \Cs-subalgebras with a dense union,
the unions $\bigcup_{\lambda \in \Lambda} B_\lambda$ and $\bigcup_{\lambda \in \Lambda} \Ped(B_\lambda)$ satisfy
the assumption below.
\begin{Lem}\label{Lem:traceext}
Let $B\subset A$ be a dense $\ast$-subalgebra
satisfying $\cl(eAe) \subset B$ for all $e\in B_+$.
Then any tracial positive linear functional $\tau_0 \colon B \rightarrow \IC$
extends to a tracial weight $\tau$ on $A$.
\end{Lem}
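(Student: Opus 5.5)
The natural candidate is the pointwise supremum over the local pieces: for $a\in A_+$ set
\[\tau(a):=\sup\{\tau_0(b): b\in B_+,\ b\le a\},\]
or, more concretely, $\tau(a):=\sup_{e}\tau_0(a^{1/2}ea^{1/2})$ over suitable $e$. A cleaner route, which I would follow, is to pass through Pedersen's theory. First I will show that $B$ contains $\Ped(A)$. Indeed, for $e\in B_+$ and a continuous function $f$ vanishing near $0$, the element $f(e)$ lies in $\cl(eAe)\subset B$. Since $B$ is dense, such elements $f(e)$ with $e\in B_+$ approximate, and then generate, the minimal dense ideal. Concretely, I would show that $K:=\{x\in A_+: x\le f(e) \text{ for some } e\in B_+,\ f\in C_c((0,\infty))_+\}$ is contained in $B$ (because $x\le f(e)$ forces $x\in\cl(f'(e)Af'(e))$ for a slightly larger $f'$, hence $x\in B$). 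I would also show that $K$ is an invariant order ideal that is dense in $A_+$, so that $\Ped(A)$, being the span of the minimal such $K$, sits inside $B$. The density of $K$ uses density of $B$: approximate $a\in A_+$ by self-adjoint $b\in B$, then use $(b-\ve)_+$-type functional calculus inside $B$, plus a Kirchberg--Rørdam type comparison $(a-2\ve)_+ \precsim (b-\ve)_+$.

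Next I restrict $\tau_0$ to $\Ped(A)$. This gives a tracial positive linear functional on the Pedersen ideal. By the standard correspondence in \cite{Pedbook} (densely defined lower semicontinuous tracial weights $\leftrightarrow$ tracial positive functionals on $\Ped(A)$ satisfying a lower semicontinuity condition), it remains to produce a lower semicontinuous extension. Here the key point is that any tracial positive functional on $\Ped(A)$ is automatically the restriction of a lower semicontinuous tracial weight. For $a\in A_+$ I define
\[\tau(a):=\sup_{\ve>0}\tau_0((a-\ve)_+).\]
I then check additivity and lower semicontinuity using traciality: if $x^\ast x\le y$ then $\tau_0(xx^\ast)\le\tau_0(y)$. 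This is the classical argument of Pedersen, where traciality is what makes the supremum formula additive.

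Finally I must check that $\tau$ extends $\tau_0$ on all of $B$, not just on $\Ped(A)$. For $b\in B_+$, the elements $(b-\ve)_+$ lie in $\cl(bAb)\subset B$, so $\tau_0((b-\ve)_+)\le\tau_0(b)$ by positivity, giving $\tau(b)\le \tau_0(b)$. For the reverse inequality, I use $b^{1/2}e_\lambda b^{1/2}\le b$, where the $e_\lambda$ are built from $f(b)$, together with traciality: $\tau_0(b)=\tau_0(b^{1/2}f(b)b^{1/2})+\tau_0(b^{1/2}(1-f(b))b^{1/2})$, and the second term equals $\tau_0$ of an element of norm at most $\ve$ lying in a hereditary subalgebra on which $\tau_0$ is a bounded positive functional. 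I expect this reverse inequality, i.e. the absence of a "singular part" of $\tau_0$ near the spectral value $0$, to be the main obstacle. I would first try to bound $\tau_0(b-(b-\ve)_+)$ by $\ve\cdot\tau_0(g(b))$ with $g$ a fixed function supported near the spectrum, which tends to $0$.
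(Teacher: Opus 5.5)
Your Step~1 is false, and everything after it depends on it: the hypotheses do not force $\Ped(A)\subset B$. Take $A=\IK$, let $p_n$ be the projection onto $\spa\{\delta_0,\dots,\delta_n\}\subset\ell^2$, and put $B:=\bigcup_{n}p_n\IK p_n$. This is a dense $\ast$-subalgebra. If $e\in B_+$, then $e\in p_n\IK p_n$ for some $n$, so $\cl(eAe)\subset p_n\IK p_n\subset B$. Yet $\Ped(\IK)$ consists of all finite-rank operators, and for a unit vector $\xi$ with infinitely many nonzero coordinates the rank-one projection $e_{\xi,\xi}$ lies in no $p_n\IK p_n$.

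What breaks is the invariance of your order ideal $K$. It is hereditary and dense, and it becomes additive once you allow finite sums $\sum_k f_k(e_k)$, but it is not invariant. With $x:=e_{\xi,\delta_0}$ one has $x^\ast x=e_{\delta_0,\delta_0}\in K$, while $xx^\ast=e_{\xi,\xi}\notin B$. The Kirchberg--R{\o}rdam comparison only yields $y$ with $y^\ast y$ in the hereditary subalgebra of $(b-\ve)_+$, and passing from $y^\ast y$ to $yy^\ast$ is exactly this missing invariance. This situation is not exotic: the intended instances are increasing unions of hereditary subalgebras (see the remark before the lemma and its use in Lemma~\ref{Lem:tr}), and these typically do not contain the Pedersen ideal.

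Consequently $\tau_0$ is not even defined on $\Ped(A)$. Your formula $\sup_{\ve}\tau_0((a-\ve)_+)$ makes no sense for general $a\in A_+$; in the example, $(e_{\xi,\xi}-\ve)_+\notin B$. The missing idea is to use traciality to reach elements outside $B$. The paper does this in four steps:
\begin{enumerate}
\item It chooses an approximate unit $(e_n)_n$ of $A$ inside $B$, which is possible because $B$ is dense and closed under functional calculus.
\item It observes that $\tau_0$ is a bounded trace on the \Cs-algebra $\cl(e_nAe_n)\subset B$.
\item It extends this bounded trace to a tracial weight $\tau_n$ on the ideal $I_n$ generated by $e_n$, so that $\tau_n(yy^\ast)=\tau_0(y^\ast y)$ whenever $y^\ast y\in\cl(e_nAe_n)$.
\item It sets $\tau(a):=\lim_n\tau_n(a^{1/2}e_na^{1/2})$.
\end{enumerate}
Incidentally, the step you flagged as the main obstacle is immediate. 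For $b\in B_+$, the restriction of $\tau_0$ to the \Cs-algebra $\cl(bAb)\subset B$ is a positive functional, hence bounded, so $\tau_0((b-\ve)_+)\to\tau_0(b)$.
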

\begin{proof}
By assumption, $B$ is closed by (non-unital) continuous functional calculus.
Hence it contains an approximate unit $(e_n)_n$ of $A$.
Since $\cl(e_n A e_n) \subset B$ is a \Cs-algebra, $\tau_0|_{\cl(e_n A e_n)}$ is a bounded tracial weight.
Hence it extends to
a tracial weight $\tau_n$ on the ideal $I_n \lhd A$ generated by $e_n$.
The desired tracial weight $\tau$ is given by the formula
\[\tau(a):=\lim_{n\rightarrow \infty} \tau_n(a^{\frac{1}{2}} e_n a^{\frac{1}{2}}) \quad {\rm~for~}a\in A_+.\]
\end{proof}
The next lemma is useful to classify tracial weights.
\begin{Lem}\label{Lem:traceunique}
Let $B\subset \Ped(A)$ be a dense $\ast$-subalgebra.
Then for $\tau, \omega\in {\rm T}(A)$, the equality $\tau|_B=\omega|_B$ implies $\tau = \omega$.
\end{Lem}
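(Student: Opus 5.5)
The plan is to reduce the comparison of $\tau$ and $\omega$ to bounded traces on corners, where density of $B$ and norm continuity finish the argument. The tool is the standard description of the Pedersen ideal: $\Ped(A)$ is the linear span of $\Ped(A)_+$, and every positive element of $\Ped(A)$ lies in a corner $\cl(eAe)$ for some $e\in \Ped(A)_+$. In fact it lies in the ideal generated by some $f\in C_c(0,\infty)$ applied to a positive element, and on such a hereditary piece a tracial weight is bounded.

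First, I will show that $\tau$ and $\omega$ agree on all of $\Ped(A)$. Since both are tracial weights, they restrict to tracial positive linear functionals on $\Ped(A)$. Fix $x\in \Ped(A)_+$. I claim there is $e\in \Ped(A)_+$ such that $x\in \cl(eAe)$ and both $\tau$ and $\omega$ are bounded on the hereditary subalgebra $\cl(eAe)$.

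To produce $e$, write $x\leq \sum_i f_i(y_i)$ with $y_i\in A_+$ and $f_i\in C_c(0,\infty)_+$. Then choose $g_i\in C_c(0,\infty)_+$ with $g_if_i=f_i$, and put $e:=\sum_i g_i(y_i)$. For $a\in(\cl(eAe))_+$ we have $a\leq\|a\|\,\|e\|\,e$ up to the usual hereditary estimates; more precisely, $\tau(e^{1/2}ce^{1/2})\le\|c\|\tau(e)$ with $\tau(e)<\infty$. This gives $\tau|_{\cl(eAe)}$ a norm bound by $\tau(e)$, and the same bound holds for $\omega$ with $\omega(e)$.

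Next, I need $B$ to be dense in these corners in a usable way. This is the main obstacle: $B$ is only dense in $A$ and is contained in $\Ped(A)$, but it need not be hereditary. My first attempt is to approximate $x$ by elements of $e'Be'$, where $e'$ is a positive element of $\Ped(A)$ with $e'e=e$, obtained by functional calculus with a slightly larger bump function. Taking $b_n\in B$ with $b_n\to x$, the elements $e'b_ne'$ converge to $e'xe'$. These are not in $B$ unless $e'\in B$, so I plan to use traciality instead.

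Specifically, I will test the traces on products. Pick a self-adjoint $b\in B$ close to $x^{1/2}$; then $b^2\in B$ and $b^2\approx x$. I then need $\tau(b^2)\approx \tau(x)$, which requires boundedness of $\tau$ on a region containing $b^2$. This fails in general because $b$ need not lie in a bounded corner, so I will bound it differently. Since $b\in\Ped(A)$, there is a bounded-trace corner $\cl(e_bAe_b)$ containing $b$, but that corner depends on $b$.

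To get uniformity, I use the trace identity $\tau(b^2)=\tau(bb)$ and cut down. With $h\in\Ped(A)_+$ chosen so that $hx=x=xh$ approximately, I use $\tau(x)=\lim\tau(x^{1/2}h x^{1/2})$ and replace $x^{1/2}$ by $b$: the map $c\mapsto \tau(c^*hc)$ is continuous on all of $A$, since $\tau(c^*hc)=\tau(h^{1/2}cc^*h^{1/2})\le \|c\|^2\tau(h)$. Hence $\tau(x)$ is approximated by $\tau(b^*hb)$ for $b\in B$ near $x^{1/2}$, and likewise for $\omega$.

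The remaining issue is that $b^*hb$ need not lie in $B$. To handle this, I will choose $h$ itself as a limit of elements of $B$ and use continuity in $h$: $|\tau(c^*hc)-\tau(c^*h'c)|$ is controlled once both $h$ and $h'$ lie in a common bounded corner. Combining these estimates shows $\tau(x)=\omega(x)$. Finally, lower semicontinuity extends the equality from $\Ped(A)$ to all of $A_+$, because a densely defined lower semicontinuous tracial weight is determined by its values on $\Ped(A)_+$ via an approximate unit drawn from $\Ped(A)$.
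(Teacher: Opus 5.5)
\textbf{The decisive step has a gap.} The step where the hypothesis $\tau|_B=\omega|_B$ is finally used does not close as written. You replace $h$ by some $h'\in B$ and assert that $|\tau(c^*hc)-\tau(c^*h'c)|$ is controlled ``once both $h$ and $h'$ lie in a common bounded corner''. Nothing provides such a corner. Elements of $B$ close to $h$ need not lie in any fixed hereditary subalgebra on which $\tau$ and $\omega$ are bounded. This is exactly the obstacle you identified earlier (density of $B$ in $A$ gives no density in corners, and $B$ is not hereditary), only moved to a new place. If the corner is allowed to depend on $h'$, its bound depends on $h'$ as well, so it gives no control as $h'\to h$.

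There is also a separate error in your corner paragraph. The inequality $a\le\|a\|\,\|e\|\,e$ fails for $a=e^{1/2}$ in $C_0(\IR)$. Also, $\tau(e)$ is not a bound for $\tau$ on $\cl(eAe)$: replacing $e$ by $\varepsilon e$ leaves the corner unchanged but scales $\tau(e)$. Your later steps only use $\tau(h)<\infty$ and lower semicontinuity, so you can simply drop that paragraph.

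\textbf{The missing estimate.} No corner is needed; you only need $b\in B\subset\Ped(A)$. Once $b$ is fixed, $\tau(b^*b)<\infty$. Take $h'\in B$ self-adjoint (the real part of an element of $B$ near $h$ stays in $B$). Then
\[-\|h-h'\|\,b^*b\le b^*(h-h')b\le\|h-h'\|\,b^*b,\]
so $|\tau(b^*hb)-\tau(b^*h'b)|\le\|h-h'\|\,\tau(b^*b)$, and likewise for $\omega$. Since $b^*h'b\in B$, choosing $h'$ \emph{after} $b$ gives $\tau(b^*hb)=\omega(b^*hb)$. With the order of choices $h$, then $b$, then $h'$, your chain of approximations then goes through.

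\textbf{The paper's route.} The paper avoids this detour by taking the cut-down elements inside $B$ from the start. Density gives a (not necessarily increasing) approximate unit $(e_n)_n$ in $(B_+)_1$. For each $n$, the map $a\mapsto\tau(e_nae_n)$ is a bounded functional on $A$, because $0\le e_nae_n\le\|a\|e_n^2$ for $a\ge0$ and $e_n^2\in\Ped(A)$. It agrees with $\omega(e_n\,\cdot\,e_n)$ on the dense set $B$, because $e_nBe_n\subset B$. Finally, traciality together with lower semicontinuity gives $\tau(a)=\lim_n\tau(e_nae_n)$ for $a\in A_+$. In particular, your detour through $\Ped(A)_+$ and bounded corners around $x$ is unnecessary.
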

\begin{proof}
By assumption, one can find a not-necessary-increasing approximate unit $(e_n)_n$ in $(B_+)_1$.
Since $e_n\in \Ped(A)$,
both $\tau$ and $\omega$ are bounded on $\cl(e_n A e_n)$.
Hence these two weights coincide on each $\cl(e_n A e_n)$.
Then, as both $\tau, \omega$ are tracial and lower semi-continuous, for $a\in A_+$, one has
\[\tau(a)=\lim_{n\rightarrow \infty} \tau(e_n a e_n) = \lim_{n\rightarrow \infty} \omega(e_n a e_n) = \omega(a).\]

\end{proof}

The next lemma is a standard application of continuous functional calculus. For completeness, we include a proof.
\begin{Lem}\label{Lem:positive}
Let $B_1, \ldots, B_n$ be \Cs-subalgebras of $A$ with $B_i \cdot B_j \subset B_{\max\{i, j\}}$ for $i, j$, and $B_1+ B_2 +\cdots +B_n=A$.
Then for any $a\in A_+$, one has $b_i \in B_i$, $i=1, \ldots, n$, with
\[a=\sum_{i=1}^n b_i, \quad \sum_{i=1}^k b_i \geq 0\quad {\rm~for~all~} 1\leq k\leq n.\]
\end{Lem}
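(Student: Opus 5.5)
We argue by induction on $n$, peeling off the top algebra $B_n$. The case $n=1$ is trivial: put $b_1:=a$. For the inductive step, set $A':=B_1+\cdots+B_{n-1}$. The hypothesis $B_i\cdot B_j\subset B_{\max\{i,j\}}$ shows that $A'$ is a $\ast$-subalgebra. It also shows that $B_n$ is a two-sided ideal of $A$, since $B_iB_n\subset B_n$ and $B_nB_i\subset B_n$ for all $i$. Because $B_n$ is closed, it is a closed ideal.

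Next we check that $A'$ is closed, so that the induction hypothesis applies to the \Cs-algebra $A'$ with the same family $B_1,\ldots,B_{n-1}$. We proceed by induction on the number of summands. Suppose $A'':=B_1+\cdots+B_{n-2}$ is already known to be a \Cs-subalgebra. Then $B_{n-1}$ is an ideal in $A'=A''+B_{n-1}$, and the sum of a \Cs-subalgebra and a closed ideal is closed. Hence $A'$ is a \Cs-subalgebra. The same fact also gives that $A=A'+B_n$.

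Now let $a\in A_+$, and let $q\colon A\to A/B_n$ be the quotient map. Its restriction to $A'$ induces a $\ast$-isomorphism $A'/(A'\cap B_n)\cong A/B_n$. Surjectivity holds since $A=A'+B_n$; injectivity is immediate. Since $q(a)\geq 0$, and positive elements lift to positive elements along the surjective $\ast$-homomorphism $q|_{A'}$, we obtain $a'\in A'_+$ with $q(a')=q(a)$. Apply the induction hypothesis to $a'$ in $A'$. This gives $b_i\in B_i$ for $i\le n-1$ with $a'=\sum_{i\le n-1} b_i$ and $\sum_{i\le k}b_i\ge 0$ for all $k\le n-1$. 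Finally put $b_n:=a-a'$, which lies in $\ker q=B_n$. Then $\sum_{i\le n} b_i=a\ge 0$, and the remaining partial sums are nonnegative by construction.

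The main obstacle is the positive lift in the second step. It uses two standard facts: the closedness of $A''+B_{n-1}$, and the fact that a surjective $\ast$-homomorphism maps $A'_+$ onto the positive cone of its image. For the latter, the route is as follows. Given a positive element $y$ in the image, pick any self-adjoint preimage $x$ and take $x_+$ by continuous functional calculus. Then $q(x_+)=q(x)_+=y_+=y$, so $x_+$ is the desired positive lift.
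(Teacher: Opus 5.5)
Your proof is correct and is essentially the paper's argument: induction on $n$ with $C=B_1+\cdots+B_{n-1}$, the identification $A/B_n\cong C/(B_n\cap C)$, and the positive lift $\mathrm{Re}(c')_+\in C_+$ of $q(a)$ before applying the induction hypothesis. The only addition is your justification that $C$ is closed, which the paper takes for granted. To apply the ``C$^\ast$-subalgebra plus closed ideal'' fact there, regard $B_{n-1}$ as an ideal of the C$^\ast$-algebra $\overline{A'}$, which it is by continuity.
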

\begin{proof}
We prove the statement by induction on $n$.
The case $n=1$ is trivial.
For $n \geq 2$, assume that the claim holds true for $n-1$.
Let $B_1, \ldots, B_n$ be as in the statement.
Set $C:= B_1 + B_2 + \cdots + B_{n-1}$.
Then $C$ is a \Cs-subalgebra of $A$ with $C \cdot B_n \subset B_n$, $C+B_n= A$.
Let $a\in A_+$.
Pick $c'\in C$ and $b'_n\in B_n$ with $a=c' + b'_n$.
By considering the quotient map $A \rightarrow A/B_n = C/(B_n\cap C)$, one has
$c'-{\rm Re}(c')_+\in B_n \cap C$. Put $c:= {\rm Re}(c')_{+}\in C_+$, $b_n:= b'_n +c' -{\rm Re}(c')_{+} \in B_n$.
Then we have $a=c+ b_n$.
By applying the induction hypothesis to
$B_1, \ldots, B_{n-1} \subset C$ and $c\in C_+$,
one can find $b_i \in B_i$; $i=1, \ldots, n-1$,
with
\[c=\sum_{i=1}^{n-1} b_i, \quad \sum_{i=1}^k b_i \geq 0 \quad {\rm~for~} 1 \leq k \leq n-1.\]
The sequence $(b_i)_{i=1}^n$ satisfies the desired conditions.
\end{proof}
We first study tracial weights on $\fB$.
\begin{Lem}\label{Lem:tr}
There is an affine isomorphism from ${\rm T}(\fB )$ onto
the space of all sequences $(\tau_n)_{n\in \IZ}$ in ${\rm T}(A)$ satisfying
\begin{equation}\label{equation:compatible}
\tau_{n+1}^\cM \circ \rho \leq \tau_{n}, \quad \tau_{n+1}(\rho(a)) = \tau_{n}(a) {\rm~for~}a\in A_+\cap \rho^{-1}(A) \quad{\rm~for~all~}n\in \IZ.\end{equation}
The correspondence is given by
sending $\tau\in {\rm T}(\fB )$ to the sequence $(\tau\circ \iota_n)_{n\in \IZ}$.
\end{Lem}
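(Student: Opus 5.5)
We show that the map $\tau\mapsto(\tau\circ\iota_n)_{n\in\IZ}$ is well defined, injective, and surjective onto the sequences satisfying (\ref{equation:compatible}).

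\textbf{Well-definedness.} Let $\tau\in{\rm T}(\fB)$. Each $B_n=\iota_n(A)$ is a hereditary \Cs-subalgebra of the ideal $B_{[n,\infty]}\lhd\fB$, so $\tau_n:=\tau\circ\iota_n$ is a densely defined lower semi-continuous tracial weight on $A$. Since $\iota_n=\iota_{n+1}\circ\rho$, for $a\in A_+\cap\rho^{-1}(A)$ we get $\tau_{n+1}(\rho(a))=\tau(\iota_n(a))=\tau_n(a)$. For the inequality, let $(e_k)_k$ be an approximate unit of $A$ and $a\in A_+$. Then
\[\tau_{n+1}^\cM(\rho(a))=\lim_k\tau\bigl(\iota_n(a)^{1/2}\iota_{n+1}(e_k)\iota_n(a)^{1/2}\bigr)\le\tau(\iota_n(a))=\tau_n(a),\]
using lower semicontinuity/normality and $\iota_{n+1}(e_k)\le 1$ in $\cM(\fB)$. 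This requires some care about how $\tau^\cM$ on $\cM(A)$ compares with $\tau$ restricted to $\iota_{n+1}(\cM(A))$; I would verify it via the formula for $\varphi^\cM$ recorded in the notation list.

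\textbf{Injectivity.} The algebraic sum $\sum_{n}\iota_n(\Ped(A))$ is a dense $\ast$-subalgebra of $\fB$, closed under products by equation (\ref{equation:B}). I would check that it lies in $\Ped(\fB)$, since $\Ped$ of a hereditary subalgebra sits inside $\Ped$ of the ambient algebra. Lemma \ref{Lem:traceunique} then gives uniqueness, because $\tau$ on each $\iota_n(\Ped(A))$ is determined by $\tau_n$.

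\textbf{Surjectivity.} This is the main obstacle. Given a compatible sequence $(\tau_n)_n$, I would first build a trace on each $B_{[m,m+k]}=\iota_{m+k}(A+\rho(A)+\dots+\rho^k(A))$. For $a\in B_{[m,m+k]}$ with $a\ge 0$, Lemma \ref{Lem:positive} (applied with the subalgebras $B_{m+k},\dots,B_m$, which satisfy the product hypothesis by (\ref{equation:B})) gives
\[a=\sum_i\iota_i(b_i)\quad\text{with all partial sums positive,}\]
and I would define $\tau(a)=\sum_i\tau_i(b_i)$. The key points are:
\begin{itemize}
\item \emph{Well-definedness and additivity.} Two decompositions must give the same value. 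Reducing to a difference that telescopes, one needs exactly that $\tau_{n+1}\circ\rho=\tau_n$ on $A_+\cap\rho^{-1}(A)$; Lemma \ref{Lem:mult} locates the overlapping terms in $A$.
\item \emph{Positivity and finiteness.} The inequality $\tau_{n+1}^\cM\circ\rho\le\tau_n$ guarantees that positive partial sums receive nonnegative, well-behaved values.
\end{itemize}
To avoid infinite values, I would carry this out on the Pedersen-type subalgebra $\bigcup_{m,k}\Ped(B_{[m,m+k]})$, i.e.\ a union of hereditary pieces. This yields a tracial positive functional on a dense $\ast$-subalgebra satisfying the hereditary hypothesis of Lemma \ref{Lem:traceext}, which extends it to a tracial weight on $\fB$. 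Finally, $\tau\circ\iota_n=\tau_n$ holds by construction together with Lemma \ref{Lem:traceunique}.

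Affineness of the map is clear. I expect the consistency of $\tau$ across different decompositions, together with the compatibility $B_{[m,m+k]}\subset B_{[m-1,m+k+1]}$, to be the delicate step.
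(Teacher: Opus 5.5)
Your well-definedness and injectivity arguments are the same as the paper's. The inequality needs no extra care: the formula for $\varphi^{\cM}$ in the notation list gives exactly your computation.

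The gap is in surjectivity, at the passage from the finite pieces to all of $\fB$. The algebras $B_{[m,m+k]}$ are \emph{not} hereditary in $\fB$; only the ideals $B_{[m,\infty]}$ are. For $0\neq f\in\Ped(A)_+$, the corner $\cl(\iota_m(f)\fB\iota_m(f))$ contains $\iota_{m+l}(\rho^l(f)A\rho^l(f))$ for every $l\geq 0$. In general (e.g.\ for the Cuntz-type endomorphism of $\IK$) this corner lies in no single $B_{[-K,K]}$, so by a Baire category argument it is not contained in $\bigcup_{m,k}\Ped(B_{[m,m+k]})$. Hence this union does not satisfy the hypothesis of Lemma \ref{Lem:traceext}. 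A positive tracial functional on such a dense non-hereditary subalgebra need not be lower semicontinuous, nor extend to a tracial weight. There is also a domain problem. The summands $b_i$ from Lemma \ref{Lem:positive} are not positive, so $\tau_i(b_i)$ only makes sense if $b_i$ lies where $\tau_i$ is finite. Requiring $a\in\Ped(B_{[m,m+k]})$ gives no control over the individual summands.

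What is missing is a cut-down that makes everything bounded uniformly in $k$.
\begin{itemize}
\item Take an approximate unit $(e_n)_n$ of $A$ in $\Ped(A)$ with $e_ne_{n+1}=e_n$. Elements of $\cl(\iota_m(e_n)B_{[m,m+k]}\iota_m(e_n))$ are sums $\sum_{j=0}^k\iota_{m+j}(a_j)$ with $a_j\in\cl(\rho^j(e_n)A\rho^j(e_n))$.
\item Iterating the inequality in (\ref{equation:compatible}) gives $\tau_{m+j}(x)\leq\tau_{m+j}^{\cM}(\rho^j(e_{n+1}))\leq\tau_m(e_{n+1})$ for positive contractions $x$ there.
\item Your formula then defines a bounded tracial functional $\omega_{n,m,k}$ on this corner. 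It is well defined by the second condition and Lemma \ref{Lem:mult}. It is positive by Lemma \ref{Lem:positive} together with the first condition; apply the lemma in the order $B_m,\dots,B_{m+k}$, so that the last algebra is the ideal.
\item Its norm is at most $\tau_m(e_{n+1})$, \emph{independently of $k$}.
\end{itemize}
This uniform bound is what lets you extend by continuity to the hereditary corner $\cl(\iota_m(e_n)\fB\iota_m(e_n))$. Only then does Lemma \ref{Lem:traceext} apply: first over $n$, to get a tracial weight on the ideal $B_{[m,\infty]}$, and then over $m$, since these ideals increase to a dense union in $\fB$.

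So the consistency of decompositions, which you flag as delicate, is the easy algebraic part. The real work is the analytic extension, and your outline does not supply it.
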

\begin{proof}
We first show that the map is well-defined.
Let $\tau \in {\rm T}(\fB)$.
Then clearly the sequence $(\tau \circ \iota_n)_{n\in \IZ}$ satisfies the second condition in (\ref{equation:compatible}).
To show the first condition, pick $a\in A_+$ and an approximate unit $(e_k)_{k\in \IN}$ of $A$.
Then one has
\begin{eqnarray*}
\tau_n(a)&=&\tau(\iota_n(a))=\tau(\iota_{n+1}(\rho(a)))\\
&\geq& \sup_{k\in \IN} \tau(\iota_{n+1}(\rho(a)^{\frac{1}{2}} e_k \rho(a)^{\frac{1}{2}}))\\
&=& \tau^\cM_{n+1}(\rho(a)).
\end{eqnarray*}
This proves the claim. Clearly the map is affine.
The injectivity of the map follows from
Lemme \ref{Lem:traceunique}, applying to $B=\spa\{\iota_n(\Ped(A)):n \in \IZ\} \subset \Ped(\fB)$.

To show the surjectivity, let $(\tau_n)_{n\in \IZ}$ be a sequence in ${\rm T}(A)$ satisfying (\ref{equation:compatible}).
We fix an approximate unit $(e_n)_{n\in \IN}$ in $\Ped(A)$ with $e_n e_{n+1}=e_n$ for $n\in \IN$.
Note that as $e_n\in \Ped(A)_+$, for each $m, k\in \IN$, the weight $\tau_{m+k}$ is bounded on $\rho^k(e_n) A \rho^k(e_n)$
(whose norm is at most $\tau(\iota_m(e_{n+1})) < \infty$).
Hence $\tau_{m+k}$ has a continuous extension to $\cl(\rho^k(e_n) A \rho^k(e_n))$, which we denote by the same symbol $\tau_{m+k}$.
Let $k\in \IN$.
For $a\in \cl(\iota_{m}(e_n)B_{[m, m+ k]}\iota_{m}(e_n))$, write
\[a= \sum_{j=0}^k\iota_{m+j}(a_j);\quad a_j \in \cl(\rho^j(e_n)A \rho^j(e_n)) {\rm~ for~} 0 \leq j \leq n,\]
and set
\[\omega_{n, m, k}(a):=\sum_{j={0}}^k \tau_{m+j}(a_j).\]
By the second condition in (\ref{equation:compatible}), $\omega_{n, m, k}$ is well-defined.
By definition, $\omega_{n, m, k}$ is linear and self-adjoint, and satisfies $\omega_{n, m, k} \circ \iota_{m+l} \equiv \tau_{m+l}$ on
$\cl(\rho^l(e_n)A \rho^l(e_n))$ for $0 \leq l \leq k$.

We show that $\omega_{n, m, k}$ defines a bounded tracial weight on $\cl(\iota_{m}(e_n)B_{[m, m+ k]}\iota_{m}(e_n))$.
For $0\leq j \leq l \leq n$, $a\in \cl(\rho^j(e_n)A \rho^j(e_n))$, $b\in \cl(\rho^l(e_n)A \rho^l(e_n))$, one has
\[\omega_{n, m, k}(\iota_{m+j}(a)\iota_{m+l}(b))=
\tau_{m+l}(\rho^{l-j}(a)b) = \tau_{m+l}(b\rho^{l-j}(a))
=\omega_{n, m, k}(\iota_{m+l}(b)\iota_{m+j}(a)).\]
Hence $\omega_{n, m, k}$ is tracial.
We next show that $\omega_{n, m, k}$ is positive. Pick $0 \leq a\in \cl(\iota_m(e_n)B_{[m, m+ k]}\iota_m(e_n))$.
By Lemma \ref{Lem:positive}, one can choose $a_j\in \cl(\rho^j(e_n)A\rho^j(e_n))$; $j=0, 1, \ldots, k$, satisfying
\begin{equation}\label{equation:decompa}
a=\sum_{j=0}^k \iota_{m+j}(a_j), \quad \sum_{j=0}^l \iota_{m+j}(a_j) \geq 0
\end{equation}
for $0 \leq l \leq k$.
Note that the second condition implies $\sum_{j=0}^l\rho^{l-j}(a_j) \geq 0$ for $0 \leq l \leq k$.
Then, by the first condition in (\ref{equation:compatible}) and (\ref{equation:decompa}), one has
\begin{eqnarray*}
\omega_{n, m, k}(a)&=&\sum_{j=0}^k \tau_{m+j}(a_j)\\
&\geq&\tau_{m+1}^\cM (\rho(a_0))+\tau_{m+1}(a_1) + \sum_{j=2}^{k} \tau_{m+j}(a_j) \\
&=&\tau_{m+1}^\cM (\rho(a_0)+a_1) + \sum_{j=2}^{k} \tau_{m+j}(a_j) \\
&\geq& \tau_{m+2}^\cM( \rho^2(a_0)+\rho(a_1)+a_2)+ \sum_{j=3}^{k} \tau_{m+j}(a_j)\\
&\vdots&\\
&\geq& \tau^{\cM}_{m+k}(\sum_{j=0}^k \rho^{k-j}(a_j)) \geq 0.
\end{eqnarray*}
This proves the positivity of $\omega_{n, m, k}$.
Hence the formula indeed defines a well-defined bounded tracial weight on $\cl(\iota_m(e_n) B_{[m, m+k]} \iota_m(e_n))$.

Clearly $\omega_{n, m, k+1}$ extends $\omega_{n, m, k}$, and their norms are bounded by $\tau_m(e_{n+1}) <\infty$.
Hence one has a (bounded) tracial weight $\omega_{n, m}$ on $\cl(\iota_m(e_n) \fB \iota_m(e_n))$ which extends $\omega_{n, m, k}$'s.
Then, for each $m\in \IN$, by Lemma \ref{Lem:traceext},
one has $\omega_m\in {\rm T}(B_{[m, \infty]})$ which extends $\omega_{n, m}$'s.
It is not hard to check that $\omega_{m-1}$ extends $\omega_m$ for $m\in \IZ$.
Hence by Lemma \ref{Lem:traceext}, one has $\omega\in {\rm T}(\fB)$ which extends $\omega_m$'s.
This trace satisfies $\omega \circ \iota_n = \tau_n$ for all $n\in \IZ$.

\end{proof}
\begin{Lem}\label{Lem:invtr}
There is an affine isomorphism from ${\rm T}_\sigma(\fB )$ onto ${\rm T}_\rho(A)$.
The correspondence is given by
sending $\tau\in {\rm T}_\sigma(\fB )$ to the restriction $\tau|_A$.
\end{Lem}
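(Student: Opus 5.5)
We reduce the statement to Lemma \ref{Lem:tr}. Under the affine isomorphism ${\rm T}(\fB)\to\{(\tau_n)_{n\in\IZ}\}$, $\tau\mapsto(\tau\circ\iota_n)_n$, we identify which sequences come from $\sigma$-invariant traces. Since $\sigma\circ\iota_n=\iota_{n+1}$, we get $(\tau\circ\sigma)\circ\iota_n=\tau\circ\iota_{n+1}$. Thus the sequence attached to $\tau\circ\sigma$ is the shift $(\tau_{n+1})_n$ of the sequence attached to $\tau$. By the injectivity part of Lemma \ref{Lem:tr}, $\tau\circ\sigma=\tau$ holds if and only if $\tau_{n+1}=\tau_n$ for all $n$, i.e. the sequence is constant. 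Strictly, $\tau\circ\sigma$ is again a tracial weight on $\fB$ because $\sigma$ is an automorphism, so Lemma \ref{Lem:tr} applies to it.

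Next we check what the compatibility condition (\ref{equation:compatible}) says for a constant sequence $\tau_n\equiv\tau_0$. The two conditions become exactly
\[\tau_0^{\cM}\circ\rho\leq\tau_0 \ \text{on } A_+,\qquad \tau_0\circ\rho=\tau_0 \ \text{on } A_+\cap\rho^{-1}(A),\]
which is the definition of $\tau_0\in{\rm T}_\rho(A)$. The first inequality in (\ref{equation:compatible}) is stated on $A$, and $\tau_0^\cM\circ\rho$ only needs evaluating on $A_+$, so this matches the definition of ${\rm T}_\rho(A)$. Conversely, every $\tau_0\in{\rm T}_\rho(A)$ yields a constant compatible sequence. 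By the surjectivity in Lemma \ref{Lem:tr}, it therefore yields some $\tau\in{\rm T}(\fB)$ with $\tau\circ\iota_n=\tau_0$ for all $n$, and this $\tau$ is $\sigma$-invariant by the previous paragraph.

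Combining these, the restriction $\tau\mapsto\tau\circ\iota_0=\tau|_A$ (recall $A$ is identified with $B_0$ via $\iota_0$) maps ${\rm T}_\sigma(\fB)$ onto ${\rm T}_\rho(A)$. It is injective since a $\sigma$-invariant $\tau$ is determined by its constant sequence, hence by $\tau|_A$. It is also clearly affine. No step is a genuine obstacle, since the analytic work (positivity and extension of the traces) was done in Lemma \ref{Lem:tr}. The only point needing care is justifying that $\tau\circ\sigma$ corresponds to the shifted sequence, together with the invocation of injectivity. This uses $\iota_n(\Ped(A))$ being mapped by $\sigma$ into $\iota_{n+1}(\Ped(A))$, which is immediate from $\sigma\circ\iota_n=\iota_{n+1}$.
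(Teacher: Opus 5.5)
Your proposal is correct and follows essentially the same route as the paper: you characterize $\sigma$-invariance as constancy of the sequence $(\tau\circ\iota_n)_n$ and then invoke Lemma \ref{Lem:tr}. The only cosmetic difference is that the paper obtains the constancy characterization by applying Lemma \ref{Lem:traceunique} directly to $B=\spa\{\iota_n(\Ped(A)):n\in\IZ\}$, whereas you route it through the injectivity part of Lemma \ref{Lem:tr}, which is itself proved by that same application.
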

\begin{proof}
By Lemma \ref{Lem:traceunique} (applying to $B=\spa\{\iota_n(\Ped(A)):n\in \IZ\}$), $\tau\in {\rm T}(\fB )$ is $\sigma$-invariant if and only
if it satisfies $\tau \circ \iota_{n+1}=\tau \circ \iota_{n}$ for all $n\in \IZ$.
This fact together with Lemma \ref{Lem:tr} proves the claim. 
\end{proof}

\begin{Cor}\label{Cor:gitrace}
There is an affine isomorphism from the space of
gauge-invariant tracial weights $\tau$ on $\Cn{\rho}$ onto
${\rm T}_\rho(A)$.
The correspondence is given by
sending $\tau$ to $\tau|_A$.
\end{Cor}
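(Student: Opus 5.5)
The plan is to transport the problem along Theorem \ref{Thm:decomp}, which identifies $\Cn{\rho}$ with the full hereditary subalgebra $H$ of $\fB\rca{\sigma}\IZ$ generated by $A$. The isomorphism $\Phi$ is gauge-equivariant when $\fB\rca{\sigma}\IZ$ carries the dual action $\hat\sigma$ of $\IT$, since $\Phi(S_a)=ua$. I would therefore first show that gauge-invariant tracial weights on $\Cn{\rho}$ correspond bijectively to $\hat\sigma$-invariant tracial weights on $\fB\rca{\sigma}\IZ$. Because $H$ is full and hereditary, restriction gives a bijection between tracial weights on $\fB\rca{\sigma}\IZ$ and tracial weights on $H$; this is the standard Morita correspondence of tracial weights already invoked at the start of this section. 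It remains to check that this bijection respects $\hat\sigma$-invariance. This holds because $H$ is $\hat\sigma$-invariant, and by uniqueness the extension of an invariant trace is again invariant.

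Next I would identify $\hat\sigma$-invariant tracial weights on $\fB\rca{\sigma}\IZ$ with ${\rm T}_\sigma(\fB)$, using the canonical conditional expectation $E\colon \fB\rca{\sigma}\IZ\to\fB$. In one direction, given $\omega\in{\rm T}_\sigma(\fB)$, the weight $\omega\circ E$ is a lower semicontinuous, densely defined, $\hat\sigma$-invariant tracial weight; this is the standard dual-weight construction, and traciality uses $\sigma$-invariance of $\omega$. In the other direction, a $\hat\sigma$-invariant tracial weight $\tau$ satisfies $\tau=\tau\circ E$. This follows because $E(x)=\int_\IT\hat\sigma_z(x)\,dz$ for positive $x$, combined with lower semicontinuity and invariance; it is cleanest to check it on $\Ped$-type elements and then apply Lemma \ref{Lem:traceunique}. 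Its restriction $\tau|_\fB$ is densely defined, since it is finite on $\Ped(A)$ via $H$, and it is $\sigma$-invariant because $\tau(u x u^\ast)=\tau(x)$. The two maps are mutually inverse, with Lemma \ref{Lem:traceunique} giving uniqueness.

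Finally, Lemma \ref{Lem:invtr} identifies ${\rm T}_\sigma(\fB)$ with ${\rm T}_\rho(A)$ via restriction to $A=B_0$. Composing these bijections gives $\tau\mapsto\tau|_A$, and each step is visibly affine.

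I expect the main obstacle to be the weight-theoretic bookkeeping for unbounded weights in the first two steps. Specifically, one must show that the extension from $H$ is densely defined and gauge-invariant, and that $\tau=\tau\circ E$ for invariant weights. To handle this I would work on corners $e(\cdot)e$ with $e\in\Ped(A)_+$, where all weights are bounded and the arguments are purely algebraic. I would then pass to the limit using lower semicontinuity, as in the proofs of Lemmas \ref{Lem:traceext} and \ref{Lem:traceunique}.
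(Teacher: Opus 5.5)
Your proposal is correct and is essentially the paper's proof. You reduce to $\fB\rca{\sigma}\IZ$ via Theorem \ref{Thm:decomp}, which realizes $\Cn{\rho}$ as a full hereditary, gauge-invariant corner; you identify the gauge-invariant tracial weights there with the weights $\tau_0\circ E$ for $\tau_0\in{\rm T}_\sigma(\fB)$; and you conclude with Lemma \ref{Lem:invtr}. The bookkeeping you flag is exactly what the paper leaves implicit, and your sketch handles it: for instance, for $\tau=\tau\circ E$, both sides vanish on $\iota_n(\Ped(A))u^m$ with $m\neq 0$ by gauge invariance, so Lemma \ref{Lem:traceunique} applies.
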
\begin{proof}
By Theorem \ref{Thm:decomp}, it suffices to show the claim for $\fB\rca{\sigma}\IZ$ in place of $\Cn{\rho}$.
We note that any gauge-invariant tracial weight $\tau$ on $\fB\rca{\sigma}\IZ$ is of the form
$\tau_0 \circ E$, where $\tau_0:=\tau|_{\fB } \in {\rm T}_\sigma(\fB)$ and $E\colon \fB\rca{\sigma}\IZ \rightarrow \fB$ is the canonical conditional expectation,
and conversely, for any $\tau_0\in {\rm T}_\sigma(\fB)$, the composite $\tau_0 \circ E$ forms a gauge invariant tracial weight on $\fB\rca{\sigma}\IZ$.
Thus the statement follows from Lemma \ref{Lem:invtr}.
\end{proof}
When the positive powers of $\rho$ are sufficiently outer in the following sense, Corollary \ref{Cor:gitrace} in fact gives a complete description of ${\rm T}(\Cn{\rho})$.
\begin{Def}
For $\tau \in {\rm T}_\rho(A)$, we say that $\rho$ is \emph{properly $\tau$-outer}, if it satisfies the following condition:
There is no nonzero $x\in \pi_\tau(A)''$ satisfying
$x\pi_\tau(a)=\pi_\tau(\rho(a))x$ for all $a\in A$.

We say that $\rho$ is \emph{tracially properly outer}
if it is properly $\tau$-outer for all $\tau\in {\rm T}_\rho(A)$.
\end{Def}
\begin{Rem}
When $\rho$ is an automorphism,
our tracial proper outerness is equivalent
to the strong outerness defined e.g., in \cite{MS}, Definition 2.7.
\end{Rem}
\begin{Prop}\label{Prop:uniquetr}
Let $\tau_0 \in {\rm T}_\rho(A)$.
Assume that the positive powers of $\rho$ are properly $\tau_0$-outer.
Then there is a unique $\tau \in {\rm T}(\Cn{\rho})$ with $\tau|_A=\tau_0$.
\end{Prop}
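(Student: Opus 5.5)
Existence is immediate from Corollary \ref{Cor:gitrace}: the gauge-invariant tracial weight $\tau_0^\sharp \circ E$ restricts to $\tau_0$ on $A$, where $\tau_0^\sharp \in {\rm T}_\sigma(\fB)$ is the extension from Lemma \ref{Lem:invtr}. For uniqueness, we identify $\Cn{\rho}$ with the hereditary subalgebra $H$ of $\fB\rca{\sigma}\IZ$ via Theorem \ref{Thm:decomp}. Since $H$ is full, tracial weights on $H$ and on $\fB\rca{\sigma}\IZ$ correspond bijectively. By Corollary \ref{Cor:gitrace} and Lemma \ref{Lem:invtr}, it then suffices to show that every $\tau\in{\rm T}(\fB\rca{\sigma}\IZ)$ with $\tau|_A=\tau_0$ is gauge-invariant, i.e.\ $\tau=\tau|_{\fB}\circ E$.

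We first argue that $\tau|_{\fB}$ is already determined. It is a $\sigma$-invariant tracial weight, because $\tau\circ\ad(u)=\tau$ for the implementing unitary $u$ (using lower semicontinuity and $\tau^{\cM}$). Its restriction to $A$ is $\tau_0$, so Lemma \ref{Lem:invtr} identifies it with $\omega:=\tau_0^\sharp$. Next we want $\tau(xu^n)=0$ for all $n\neq 0$ and $x$ in a dense subset of $\Ped$-type elements, say $x\in \iota_m(e)\fB\iota_m(e)$ with $e\in\Ped(A)_+$. Then Lemma \ref{Lem:traceunique}, applied to the $\ast$-subalgebra spanned by $\Ped(\fB)u^n$ cut down suitably, gives $\tau=\omega\circ E$. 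By Theorem \ref{Thm:decomp} we may take $n\geq1$ (replacing by adjoints otherwise).

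The main step is this vanishing of Fourier coefficients, and I would handle it via the GNS representation. Consider $\pi_\tau$ and the normal extension $\bar\tau$ on $M:=\pi_\tau(\fB\rca{\sigma}\IZ)''$. Let $N:=\pi_\tau(\fB)''$, on which $\bar\tau$ restricts to a normal semifinite faithful trace; this uses $\omega$ semifinite on $N$, from density of $\Ped$. Let $F\colon M\to N$ be the $\bar\tau$-preserving normal conditional expectation. Then $y:=F(\pi_\tau(u^n))$ satisfies $y\pi_\tau(b)=\pi_\tau(\sigma^n(b))y$ for $b\in\fB$, by the bimodule property, since $u^nb=\sigma^n(b)u^n$. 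Meanwhile $\tau(xu^n)=\bar\tau(\pi_\tau(x)y)$, so it suffices to show $y=0$. Cutting by the central support of $\pi_\tau(A)$ and compressing to $\pi_\tau(\iota_0(\cdot))$, the relation $\sigma^n\circ\iota_0=\iota_0\circ\rho^{-n}$ should turn $y^\ast$ (or $y$, after adjusting the direction via $\iota_n\circ\rho^n=\iota_0$) into a nonzero element $x'$ of $\pi_{\tau_0}(A)''$ with $x'\pi_{\tau_0}(a)=\pi_{\tau_0}(\rho^n(a))x'$. This would contradict proper $\tau_0$-outerness of $\rho^n$.

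The obstacle is exactly this last transfer. One must show that the corner of $N$ cut by the support of $\pi_\tau(A)$ identifies with $\pi_{\tau_0}(A)''$; this holds since $A$ is hereditary in $B_{[0,\infty]}$ and $\omega|_A=\tau_0$, so the GNS spaces match up to the normal extension of $\rho$. One must also show that a nonzero intertwiner $y$ for $\sigma^n$ on $N$ produces a nonzero one in that corner. For the latter, I would first try $\sigma$-invariance: since the central supports of $\pi_\tau(B_k)$ increase to $1$ as $k\to-\infty$, some translate $\pi_\tau(u^k)y\pi_\tau(u^{-k})$, which is again an intertwiner, has nonzero compression by the support $p_0$ of $\pi_\tau(A)$. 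Then $p_0\,y\,\rho^n$-twisted gives the required $x'$.
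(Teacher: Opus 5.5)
Your reductions are sound: existence via Corollary \ref{Cor:gitrace}, passing to $\fB\rca{\sigma}\IZ$, identifying $\tau|_{\fB}$ through Lemma \ref{Lem:invtr}, and the relation $y\pi_\tau(b)=\pi_\tau(\sigma^n(b))y$ for $y=F(\pi_\tau(u^n))\in\pi_\tau(\fB)''$. The gap is the transfer step, and the justification you give for it is false.

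$A$ is not hereditary in $B_{[0,\infty]}$. By (\ref{equation:B}), $B_0B_kB_0=B_k$ for $k\geq 0$, so the hereditary subalgebra generated by $A=B_0$ is all of $B_{[0,\infty]}$. It equals $A$ only if $\rho(A)\supseteq A$. Hence the corner $p_0\pi_\tau(\fB)''p_0$ is the weak closure of $\pi_\tau(B_{[0,\infty]})$, not a copy of $\pi_{\tau_0}(A)''$. For instance, take the shift $\rho(x)=1\otimes x$ on $\bigotimes_{\IN}M_2$ with its trace, which satisfies all the hypotheses. There $A$ is non-degenerate in $\fB$, so $p_0=1$, and the corner is all of $\pi_\tau(\fB)''$, which strictly contains $\pi_\tau(A)''$.

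So your compressed intertwiner $p_0y'p_0$ need not lie in $\pi_{\tau_0}(A)''$, and proper $\tau_0$-outerness says nothing about it. Applying the trace-preserving expectation onto $\pi_\tau(A)''$ does give a $\rho^n$-intertwiner there. But $p_0y'p_0\neq 0$ no longer guarantees that this intertwiner is nonzero: your non-vanishing came from pairing with general $x\in\iota_m(e)\fB\iota_m(e)$, and the expectation does not see these.

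The missing idea is to test only against $A$ and to project directly onto $\pi_\tau(A)''$. Apply Lemma \ref{Lem:traceunique} to $\spa\{\iota_k(\Ped(A))u^n: k,n\in\IZ\}$, and use $\tau\circ\ad u=\tau$ to reduce $\tau(\iota_k(a)u^n)$ to $\tau(au^n)$ with $a\in\Ped(A)$ and $n\geq 1$. Let $N_0:=\pi_\tau(A)''$, with unit $p_0$, let $E$ be the $\bar\tau$-preserving expectation onto $N_0$, and put $y:=E(p_0\pi_\tau(u^n)p_0)$.

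Since $\iota_0(\rho^n(a))=\iota_{-n}(a)$ multiplies $A$ into itself, $p_0$ commutes with $\pi_\tau(\rho^n(a))$ and $\pi_\tau(\rho^n(a))p_0\in N_0$. Bimodularity then gives $\pi_\tau(a)y=y\pi_\tau(\rho^n(a))p_0$, while $\tau(au^n)=\bar\tau(\pi_\tau(a)y)$. Thus $\tau(au^n)\neq 0$ would make $y^\ast$ a nonzero $\rho^n$-intertwiner in $N_0\cong\pi_{\tau_0}(A)''$, a contradiction. This is exactly the paper's argument.

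Your larger algebra $\pi_\tau(\fB)''$ can be salvaged the same way. Applying the expectations onto $\pi_\tau(B_k)''\cong\pi_{\tau_0}(A)''$ for every $k$ gives $\bar\tau(\pi_\tau(\iota_k(a))y)=0$ for all $k$, and then $y=0$ by density. Compressions alone do not suffice.

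Finally, $\sigma^n\circ\iota_0=\iota_0\circ\rho^{-n}$ is meaningless, since $\rho$ is not invertible. The relation you need is $\sigma^{-n}\circ\iota_0=\iota_{-n}=\iota_0\circ\rho^n$.
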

\begin{proof}
By Theorem \ref{Thm:decomp}, it suffices to show the statement for $\fB\rca{\sigma} \IZ$ instead of $\Cn{\rho}$.

The existence of $\tau$ follows from Corollary \ref{Cor:gitrace}.
Hence we only need to show the uniqueness of extensions.

Let $\omega\in {\rm T}(\fB \rca{\sigma} \IZ)$ with $\omega|_A=\tau_0$.
Then, by Lemma \ref{Lem:invtr}, we have $\omega|_{\fB }=\tau|_{\fB }$.
To show that $\omega=\tau$, by Lemma \ref{Lem:traceunique} (applied to $B=\spa\{\iota_n(\Ped(A)) u^m: n, m\in \IZ\}$),
it suffices to show the equality $\omega(\iota_n(a)u^m)=0$ for all $a\in {\rm Ped}(A)$, $n\in \IZ$, and $m\geq 1$.
Since $\omega \circ \ad u=\omega$, it suffices to show the equality when $n=0$.
To lead to a contradiction, assume that
$\omega(a u^m)\neq 0$ for some $a\in {\rm Ped}(A)$ and $m\geq 1$.
Define $N$, $M$ to be the strong closure of $\pi_\omega(A)$ and $\pi_\omega(\Cn{\rho})$ respectively.
Let $p$ be the unit of $M$. Note that $p\in N$ and that $M=p\pi_\omega(\fB \rca{\sigma} \IZ)'' p$.
We equip them with the GNS tracial weights $\bar{\omega}$.
Then the inclusion $N\subset M$ preserves $\bar{\omega}$.
Since $\bar{\omega}$ is normal, faithful, and semi-finite on both $N$ and $M$,
one has a trace preserving conditional expectation
$E\colon M \rightarrow N$.
Put
\[x:=\pi_\omega(a),\quad y:=E(p\pi_\omega(u^m)p)\in N.\]
Note that
\[\bar{\omega}(xy)=\bar{\omega}(xp\pi_\omega(u^m)p)= \bar{\omega}(x\pi_\omega(u^m))\neq 0\]
since $\bar{\omega}$ is tracial, $px = xp =x$, and $\bar{\omega}\circ E =\bar{\omega}$.
Hence $y\in N \setminus \{0\}.$
For any $b\in A$, one has
\[\pi_\omega(b)y=E(p\pi_\omega(bu^m)p)=E(p\pi_\omega(u^m)p\pi_\omega (\rho^m(b))p)=y\pi_\omega(\rho^m(b))p,\]
where the last equality follows because $\pi_\omega(\cM(A))p\subset N$.
Since $\omega|_A=\tau_0$, we have a natural $\ast$-isomorphism between $N$ and $\pi_{\tau_0}(A)''$.
This contradicts to the proper $\tau_0$-outerness of $\rho^m$.
\end{proof}
By combining Corollary \ref{Cor:gitrace} and Proposition \ref{Prop:uniquetr}, we obtain the following consequence.
\begin{Thm}\label{Thm:trace}
Assume that
the positive powers of $\rho$ are tracially properly outer.
Then the map $\bullet|_A\colon {\rm T}(\Cn{\rho}) \rightarrow {\rm T}_\rho(A)$
gives an affine isomorphism.
\end{Thm}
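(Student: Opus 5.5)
The plan is to combine Corollary \ref{Cor:gitrace} and Proposition \ref{Prop:uniquetr}, the only new points being that the restriction map is well defined and injective on all of ${\rm T}(\Cn{\rho})$. By Theorem \ref{Thm:decomp} we identify $\Cn{\rho}$ with the full hereditary \Cs-subalgebra $H$ of $\fB\rca{\sigma}\IZ$ generated by $A$. Since $H$ is full and hereditary, tracial weights on $H$ and on $\fB\rca{\sigma}\IZ$ correspond bijectively, and this correspondence is compatible with restriction to $A\subset H$. So it suffices to work with tracial weights $\omega$ on $\fB\rca{\sigma}\IZ$.

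\emph{Well-definedness.} Take $\omega\in {\rm T}(\Cn{\rho})$. We need $\omega|_A\in {\rm T}_\rho(A)$, and in particular that $\omega|_A$ is densely defined on $A$.
\begin{itemize}
\item Density: $A$ is hereditary in $\Cn{\rho}$ (indeed $A=H\cap\fB$, and it is non-degenerate in a suitable sense), and $\Ped(\Cn{\rho})\cap A\supset \Ped(A)$. So the restriction is a densely defined, lower semi-continuous tracial weight.
\item Invariance: the extension of $\omega$ to $\fB\rca{\sigma}\IZ$ is $\ad u$-invariant because it is tracial. Hence $\omega|_{\fB}\in{\rm T}_\sigma(\fB)$, and Lemma \ref{Lem:invtr} gives $\omega|_A\in{\rm T}_\rho(A)$.
\end{itemize}
Affineness of the restriction map is clear.

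\emph{Surjectivity.} Given $\tau_0\in{\rm T}_\rho(A)$, Corollary \ref{Cor:gitrace} produces a gauge-invariant tracial weight $\tau$ on $\Cn{\rho}$ with $\tau|_A=\tau_0$.

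\emph{Injectivity.} Suppose $\omega,\omega'\in{\rm T}(\Cn{\rho})$ have the same restriction $\tau_0\in{\rm T}_\rho(A)$. By hypothesis, every positive power $\rho^m$ is properly $\tau_0$-outer. Proposition \ref{Prop:uniquetr} then says both weights equal the unique extension of $\tau_0$, so $\omega=\omega'$. This shows the restriction map is an affine bijection onto ${\rm T}_\rho(A)$.

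The main obstacle I expect is the bookkeeping in passing between $H$ and $\fB\rca{\sigma}\IZ$. Specifically, one must check that a tracial weight on the full hereditary subalgebra $H$ extends uniquely to a tracial weight on $\fB\rca{\sigma}\IZ$ that is $\ad u$-invariant. I would handle this through the standard bijection ${\rm T}(H)\cong{\rm T}(\fB\rca{\sigma}\IZ)$ for full hereditary subalgebras; invariance under $\ad u$ is automatic for a trace on the crossed product. Once this is in place, Proposition \ref{Prop:uniquetr} applies verbatim, since it was already proved for $\fB\rca{\sigma}\IZ$.
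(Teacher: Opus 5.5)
Your proof is the paper's: Theorem~\ref{Thm:trace} is obtained precisely by combining Corollary~\ref{Cor:gitrace} (surjectivity, via the gauge-invariant extension) with Proposition~\ref{Prop:uniquetr} (injectivity, after passing to $\fB\rca{\sigma}\IZ$, where traces are automatically $\ad u$-invariant), and your well-definedness check only makes explicit what the paper leaves implicit (applying the hypothesis to $\tau_0$ also uses ${\rm T}_\rho(A)\subset{\rm T}_{\rho^m}(A)$, which follows by iterating the two defining conditions with Lemma~\ref{Lem:mult}). One side remark is false but harmless: $A$ is in general not hereditary in $\Cn{\rho}$ (in fact $H\cap\fB=B_{[0,\infty]}$, not $A$), but density of $\omega|_A$ only needs $\Ped(A)\subset\Ped(\Cn{\rho})\cap A$, which holds for every \Cs-subalgebra.
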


We now study KMS weights for generalized quasi-free flows.
Assume that $A$ is equipped with a flow $\alpha \colon \mathbb{R} \acts A$.
Let $\cE$ be an $\mathbb{R}$-\Cs-correspondence over $(A, \alpha)$.
By Remark \ref{Rem:st}, after passing to the stabilization,
we may assume that $\cE={}_\rho A$, where $\rho\colon A \rightarrow \cM(A)$ is $\mathbb{R}$-equivariant and the action $\IR \acts {}_\rho A$ (as a set) is equal to $\alpha$.
We use the following result recorded in the book \cite{Thom} (where an explicit correspondence is given).
\begin{Thm}[\cite{Thom}, Theorem 7.2.14]\label{Thm:KMSvstr}
Let $\alpha \colon \IR \acts A$ be a flow on a \Cs-algebra $A$.
Let $\hat{\alpha} \colon \IR \acts A\rca{\alpha} \IR$ denote the dual flow of $\alpha$.
Then there is an affine isomorphism
from ${\rm KMS}^\beta(A, \alpha)$ onto ${\rm T}^\beta(A\rca{\alpha}\IR, \hat{\alpha})$.
\end{Thm}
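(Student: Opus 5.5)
The plan is to build the correspondence explicitly from dual weights and Connes-cocycle perturbations by the canonical unitary group, and to get surjectivity from Takesaki--Takai duality. Write $\lambda\colon \IR \rightarrow \cM(A\rca{\alpha}\IR)$ for the implementing unitary group, so $\lambda_t a\lambda_t^\ast=\alpha_t(a)$, and let $D$ be its (unbounded, affiliated) generator, $\lambda_t=e^{itD}$, so that $\Cso(\IR)\cong C_0(\widehat{\IR})$ sits in $\cM(A\rca{\alpha}\IR)$ as functions of $D$. The dual flow acts by $\hat\alpha_s(\lambda_t)=e^{-ist}\lambda_t$, $\hat\alpha_s|_A=\id$. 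In particular $\hat\alpha_s(D)=D-s$.

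\textbf{Step 1 (KMS to traces).} Given $\varphi\in{\rm KMS}^\beta(A,\alpha)$, I take the dual weight $\tilde\varphi$ on $A\rca{\alpha}\IR$. Its defining formula is $\tilde\varphi=\varphi\circ \mathcal{E}$ with $\mathcal{E}$ the operator-valued weight given by integrating the dual action. It is densely defined and lower semi-continuous, and it is $\hat\alpha$-invariant. By the $\beta$-KMS condition the modular flow of $\varphi$ is $\alpha_{-\beta t}$. By Haagerup--Digernes, the modular flow of $\tilde\varphi$ is then $\ad(\lambda_{-\beta t})$, i.e.\ it is inner and implemented by $e^{-i\beta tD}$. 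Pedersen--Takesaki Radon--Nikodym theory then gives a trace
\[\tau:=\tilde\varphi\bigl(e^{\beta D/2}\,\cdot\,e^{\beta D/2}\bigr),\]
defined rigorously through Pedersen's derivative of a weight by a positive affiliated operator commuting with its centralizer. Its modular flow is trivial, so it is tracial. Since $\hat\alpha_s(e^{\beta D})=e^{-\beta s}e^{\beta D}$ and $\tilde\varphi$ is $\hat\alpha$-invariant, $\tau\circ\hat\alpha_s=e^{-\beta s}\tau$. So $\tau\in{\rm T}^\beta(A\rca{\alpha}\IR,\hat\alpha)$. The assignment is clearly affine, and it preserves lower semicontinuity and density of domains.

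\textbf{Step 2 (traces to KMS).} Conversely, given $\tau\in{\rm T}^\beta$, I set $\psi:=\tau(e^{-\beta D/2}\cdot e^{-\beta D/2})$. The scaling property makes $\psi$ a $\hat\alpha$-invariant weight, and its modular flow is $\ad(\lambda_{-\beta t})$. By the characterization of dual weights (Haagerup, Digernes; in the \Cs-setting, Kustermans--Thomsen's treatment of Takesaki--Takai duality), a $\hat\alpha$-invariant weight of this form is $\tilde\varphi$ for a unique weight $\varphi$ on $A$. Concretely, $\varphi$ is the restriction of $\psi$ pushed down via $\mathcal{E}$, equivalently obtained by applying the same construction to $\hat{\hat\alpha}$ on $A\otimes\IK(L^2(\IR))$ and compressing. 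Since the modular flow of $\tilde\varphi$ restricts on $A$ to that of $\varphi$, the flow of $\varphi$ is $\alpha_{-\beta t}$ and it is $\alpha$-invariant, hence $\beta$-KMS. The two constructions are mutually inverse because the Radon--Nikodym perturbations by $e^{\pm\beta D}$ cancel and dual weights determine the original weight.

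\textbf{Main obstacle.} The hard part is Step 2's assertion that every $\hat\alpha$-invariant weight whose modular group is implemented by $\lambda$ is a dual weight, while staying within \Cs-weights: one must verify density of domain and lower semicontinuity, not just normality on the bicommutant. I would first try to reduce this to the von Neumann algebra statement (Haagerup's theorem on the double crossed product). Density would then come back from the fact that $\tau$ is finite on $\Ped(A\rca{\alpha}\IR)$, together with the smoothing by functions of $D$ with compact support.
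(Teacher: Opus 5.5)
The paper does not prove this statement. It imports it from \cite{Thom}, with the correspondence attributed to Takesaki--Takai duality as in \cite{KK}. Your outline is the standard route to it: form the dual weight, untwist by a function of the generator $D$ of $\lambda$, and recognize a dual weight for the converse. As written, however, it contains a sign error that makes Steps 1 and 2 incompatible.

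With your convention $\hat\alpha_s(\lambda_t)=e^{-ist}\lambda_t$, i.e.\ $\hat\alpha_s(D)=D-s$, the $\hat\alpha$-invariance of $\tilde\varphi$ gives
\[
\tau(\hat\alpha_s(x))=\tilde\varphi\bigl(\hat\alpha_{-s}(e^{\beta D/2})\,x\,\hat\alpha_{-s}(e^{\beta D/2})\bigr)=\tilde\varphi\bigl(e^{\beta(D+s)/2}\,x\,e^{\beta(D+s)/2}\bigr)=e^{\beta s}\,\tau(x).
\]
So your $\tau$ lies in ${\rm T}^{-\beta}(A\rca{\alpha}\IR,\hat\alpha)$ in the paper's notation, not in ${\rm T}^{\beta}$. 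To move $\hat\alpha_s$ off $x$ you must apply $\hat\alpha_{-s}$, not $\hat\alpha_s$, to the density.

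The same computation in Step 2 gives $\psi\circ\hat\alpha_s=e^{-\beta s}c(s)\,\psi$ whenever $\tau\circ\hat\alpha_s=c(s)\tau$. Hence $\psi$ is $\hat\alpha$-invariant only for $c(s)=e^{\beta s}$. With the scaling you claim, one gets $e^{-2\beta s}$, and no dual-weight characterization applies. The densities $e^{\pm\beta D/2}$ are forced by the modular group $\ad(\lambda_{-\beta t})$, so the repair is to use the dual flow $\hat\alpha_s(\lambda_t)=e^{ist}\lambda_t$ (so $\hat\alpha_s(D)=D+s$). Then both steps land where they should.

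The second point is that Step 2 is asserted rather than proved, and it carries essentially all the content of the cited theorem. Invoking a \Cs-level characterization of dual weights here amounts to citing the statement itself. Your reduction to von Neumann algebras does work, but it needs ingredients that are not in the proposal.
\begin{enumerate}
\item[(i)] Because $\tau$ is scaled by $\hat\alpha$, the dual flow is unitarily implemented in the GNS space and extends to $N:=\pi_\tau(A\rca{\alpha}\IR)''$. Only this extension identifies $N$ with the von Neumann crossed product of $M:=\pi_\tau(A)''=N^{\hat\alpha}$ by $\IR$. A priori $\pi_\tau$ is just some covariant representation, and Haagerup's theorem concerns genuine crossed products.
\item[(ii)] For the abelian group $\IR$, $\hat\alpha$-invariance alone makes the faithful normal semifinite weight $\psi$ a dual weight $\tilde\varphi_0$. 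Its Connes cocycle relative to any dual weight $\tilde\omega$ is $\hat\alpha$-fixed, hence a $\sigma^{\omega}$-cocycle in $M$.
\item[(iii)] You need an actual definition of $\varphi$; ``pushing $\psi$ down via $\mathcal{E}$'' is not one, and $A$ sits in $\cM(A\rca{\alpha}\IR)$, where the dual weight is infinite. Put $\varphi:=\varphi_0\circ\pi_\tau|_A$, equivalently $\varphi(a)\propto\psi(\lambda(g)^\ast a\lambda(g))/\|g\|_2^2$. Lower semicontinuity is then automatic.
\item[(iv)] Density goes as you suggest. $\mathcal{N}_\tau=\{x:\tau(x^\ast x)<\infty\}$ is a dense two-sided ideal stable under multipliers, so $z\,k(D)\in\mathcal{N}_\psi$ for $z\in\mathcal{N}_\tau$ and compactly supported $k$. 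The map $x\mapsto\int_{\IR}\hat\alpha_s(k(D)^\ast x\,k(D))\,ds$ takes values in $A$ and satisfies $0\leq\int_{\IR}\hat\alpha_s(k(D)^\ast x\,k(D))\,ds\leq\|k\|_2^2\|x\|$ for $x\geq 0$. Its values at $k(D)^\ast\lambda(h)^\ast a\lambda(h)k(D)$ approximate $a$ for suitable $h$.
\item[(v)] The $\alpha$-invariance of $\varphi$ comes from $\lambda_t$ lying in the centralizer of $\psi$; for $\beta=0$ the modular group gives nothing. The KMS condition comes from $\sigma^{\varphi_0}_t=\ad(\lambda_{-\beta t})|_M$ via Kustermans' analytic characterization.
\end{enumerate}
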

The next statement would be well-known for experts. (At least when the flow is almost periodic or the flow admits an invariant full element in ${\rm Ped}(A)$, it is easy to show the claim.)
Here we give a proof in full generality, based on Theorem \ref{Thm:KMSvstr}.
\begin{Prop}\label{Prop:rest}
Let $B\subset (A, \alpha)$ be an inclusion of $\IR$-\Cs-algebras.
Then for any $\beta \in \IR$ and any $\varphi \in {\rm KMS}^\beta(A, \alpha)$,
one has $\varphi|_B \in {\rm KMS}^\beta(B, \alpha)$. That is, the restriction weight is still densely defined on $B$.
\end{Prop}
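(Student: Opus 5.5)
We plan to pass through Theorem \ref{Thm:KMSvstr}, translating the KMS weight into a tracial weight on the crossed product, where restriction to a subalgebra is easier to control. Note first that $\varphi|_B$ is automatically lower semi-continuous, $\alpha$-invariant and satisfies the KMS condition wherever it makes sense: the KMS condition in the form of \cite{Thom} is a statement about pairs of elements in the domain and their analytic continuations, and it is inherited by $B$ as soon as $\varphi|_B$ is densely defined. Hence the only issue is density of the domain $\{b\in B_+ : \varphi(b)<\infty\}$ in $B_+$.

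Let $\tau:=\Phi(\varphi)\in {\rm T}^\beta(A\rca{\alpha}\IR, \hat{\alpha})$ be the tracial weight from Theorem \ref{Thm:KMSvstr}. Since the reduced crossed product preserves injectivity, we have $B\rca{\alpha}\IR \subset A\rca{\alpha}\IR$, $\hat\alpha$-equivariantly. The first key step is to show $\tau|_{B\rca{\alpha}\IR}$ is densely defined. For tracial weights this is easy: $\tau$ is finite on $\Ped(A\rca{\alpha}\IR)$, and lower semicontinuity plus traciality gives $\tau(x^\ast x)=\tau(xx^\ast)$. I would take $f\in C_c(\IR)$ and $b\in B$ and consider $x=b\,\lambda(f)$ type elements; using an element $e\in\Ped(A\rca{\alpha}\IR)_+$ one has $\tau(y^\ast e y)=\tau(e^{1/2}yy^\ast e^{1/2})\le \|y\|^2\tau(e)$. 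To get elements of $B\rca{\alpha}\IR$ itself I would instead use that $C^\ast(\IR)=C_0(\widehat\IR)$ sits in $\cM(A\rca{\alpha}\IR)$ and that $\tau$ restricted to elements $\lambda(g)a\lambda(g)$ with $a\in A_+$ and $\hat g\in C_c$ is finite; this is exactly the explicit formula in \cite{Thom}, roughly $\tau(\lambda(g)a\lambda(g)^\ast)=\int \varphi(\cdot)$ with weights $e^{\beta t}$ coming from the dual flow scaling. Concretely, the explicit correspondence expresses $\tau$ as the dual weight $\tilde\varphi$ composed with the twist $e^{-\beta H}$ of the generator, so finiteness of $\tau$ on $\lambda(g)b\lambda(g)^\ast$ reduces to finiteness of $\varphi$ on averaged elements of $B$.

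The second step reverses this: from $\tau|_{B\rca{\alpha}\IR}$ being a densely defined (hence a genuine) element of ${\rm T}^\beta(B\rca{\alpha}\IR,\hat\alpha)$, Theorem \ref{Thm:KMSvstr} applied to $(B,\alpha)$ yields $\psi\in{\rm KMS}^\beta(B,\alpha)$, and by naturality of the explicit correspondence (dual weight construction commutes with restriction along $B\subset A$) one gets $\psi=\varphi|_B$. In particular $\varphi|_B$ is densely defined.

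The main obstacle is the first step: proving density of $\tau$ on $B\rca{\alpha}\IR$ without an invariant full Pedersen element. My first attempt: use that $\tau$ is $\hat\alpha$-scaled, so $\tau\circ\hat\alpha_s=e^{-\beta s}\tau$, and that the dual action's spectral subspaces let us write $B\rca{\alpha}\IR$ as generated by $\lambda(g)B\lambda(h)$ with $\hat g,\hat h$ compactly supported; on such compressions the scaling is bounded and Takai duality identifies the compression with $B\otimes$ (compact operators on a bounded frequency window), where the trace is controlled by $\varphi$ on $\alpha$-smoothed elements $\int g(t)\alpha_t(b)\,dt$. Density of those smoothed elements in $B$ (they converge to $b$) should then give that $\tau$'s domain is dense, provided one shows $\varphi$ is finite on some dense set of smoothed positive elements of $B$ — which I would obtain by approximating via $\Ped(A)$ elements of the form $e b e$ and using the KMS condition to move $e$ around.
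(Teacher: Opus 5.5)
Your architecture is the paper's: pass to $\tau\in{\rm T}^\beta(A\rca{\alpha}\IR,\hat\alpha)$ via Theorem~\ref{Thm:KMSvstr}, restrict to $B\rca{\alpha}\IR$, come back via Theorem~\ref{Thm:KMSvstr} for $(B,\alpha)$, and identify the result with $\varphi|_B$ by compatibility of the explicit correspondence with restriction. The gap is in your first step, which you single out as the main obstacle. The argument you give for it does not close.

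After the detour through spectral subspaces, Takai duality and $\alpha$-smoothing, you reduce density of $\tau$ on $B\rca{\alpha}\IR$ to the claim that $\varphi$ is finite on a dense set of smoothed positive elements of $B$. That is essentially the proposition itself, so the argument is circular. Your suggested way of obtaining that claim also fails, for two reasons:
\begin{enumerate}
\item Elements $ebe$ with $e\in\Ped(A)$ do not lie in $B$.
\item A $\beta$-KMS weight with $\beta\neq 0$ need not be finite on $\Ped(A)$ at all. For example, take ${\rm Tr}(e^{-\beta H}\,\cdot\,)$ on $\IK$ with $H$ diagonal and $e^{-\beta h_n}\to\infty$. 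It is infinite on the rank-one projection onto a unit vector $\xi$ with $\sum_n e^{-\beta h_n}|\xi_n|^2=\infty$.
\end{enumerate}

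The missing observation is that, on the crossed product side, this step is trivial because it concerns a \emph{tracial} weight. For any \Cs-subalgebra $D$ of a \Cs-algebra $C$ and any $\tau\in{\rm T}(C)$, the restriction $\tau|_D$ is densely defined:
\begin{itemize}
\item Traciality makes $\{x\in C:\tau(x^\ast x)<\infty\}$ a two-sided ideal. Hence the domain of $\tau$ spans a dense two-sided ideal, which therefore contains $\Ped(C)$.
\item For $d\in D_+$ and $\ve>0$, one has $(d-\ve)_+=f(d)$ for some $f\in C_c((0,\infty))_+$. So $(d-\ve)_+\in\Ped(C)\cap D$ has finite trace, and $(d-\ve)_+\to d$. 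Equivalently, $\Ped(D)\subset\Ped(C)$.
\end{itemize}
This is precisely the paper's remark that ``the statement holds true for tracial weights''.

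Applied to $B\rca{\alpha}\IR\subset A\rca{\alpha}\IR$, it gives $\tau|_{B\rca{\alpha}\IR}\in{\rm T}^\beta(B\rca{\alpha}\IR,\hat\alpha)$ at once. The scaling property is inherited because $B\rca{\alpha}\IR$ is $\hat\alpha$-invariant. Your second step then finishes exactly as in the paper.

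The point of passing through Theorem~\ref{Thm:KMSvstr} is to trade the non-tracial density problem, where no Pedersen-ideal argument is available, for this one-line tracial one. Your proposal undoes that gain by pulling the problem back to $\varphi$ on $B$.
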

\begin{proof}
Note that the statement holds true for tracial weights instead of KMS weights.
Then observe that the restriction map coincides with the composite of the following sequence
\[{\rm KMS}^\beta(A, \alpha) \cong {\rm T}^\beta (A\rca{\alpha}\IR, \hat{\alpha}) \rightarrow {\rm T}^\beta (B\rca{\alpha}\IR, \hat{\alpha}) \cong {\rm KMS}^\beta(B, \alpha)\]
where the first and last maps are the isomorphisms given in Theorem \ref{Thm:KMSvstr}, and the middle map is the restriction map.
Hence it is well-defined.
\end{proof}

\begin{Thm}\label{Thm:KMS}
Let $\rho \colon A \rightarrow \cM(A)$ be an $\IR$-equivariant $\ast$-homomorphism.
Let ${\alpha_\upsilon} \colon \IR\acts \Cn{\rho}$ be the generalized quasi-free flow.
Let $\beta \in \IR$.
Then the map $\bullet|_{A}\colon {\rm KMS}^\beta(\Cn{\rho}, {\alpha_\upsilon}) \rightarrow {\rm KMS}^\beta_\rho(A, \alpha)$
is an affine surjection.
Moreover, the map is also injective if
the positive powers of the induced $\ast$-homomorphism
\[\rho \rca{\alpha} \IR\colon A\rca{\alpha}\IR \rightarrow \cM(A\rca{\alpha}\IR)\] are tracially properly outer.
\end{Thm}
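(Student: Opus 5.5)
The plan is to reduce everything to the tracial results already proved, via Theorem \ref{Thm:KMSvstr} (Takesaki--Takai type duality) and the Hao--Ng isomorphism (Theorem \ref{Thm:HN}).

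\textbf{Step 1: pass to crossed products by $\IR$.} By Theorem \ref{Thm:HN},
\[\Cn{\rho}\rca{\alpha_\upsilon}\IR \cong \Cn{{}_\rho A\rca{\alpha}\IR}.\]
Here ${}_\rho A\rca{\alpha}\IR$ is canonically identified with ${}_{\tilde\rho}(A\rca{\alpha}\IR)$, where $\tilde\rho:=\rho\rca{\alpha}\IR\colon A\rca{\alpha}\IR\to\cM(A\rca{\alpha}\IR)$. This map is faithful because reduced crossed products preserve injectivity, and non-degenerate. The dual flow on $\Cn{\rho}\rca{\alpha_\upsilon}\IR$ corresponds to the generalized quasi-free flow induced by $\hat\alpha$, because $\tilde\rho$ is $\hat\alpha$-equivariant. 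Hence Theorem \ref{Thm:KMSvstr} gives
\[{\rm KMS}^\beta(\Cn{\rho},\alpha_\upsilon)\cong{\rm T}^\beta(\Cn{\tilde\rho},\hat\alpha)\quad\text{and}\quad{\rm KMS}^\beta(A,\alpha)\cong{\rm T}^\beta(A\rca{\alpha}\IR,\hat\alpha).\]
By the proof of Proposition \ref{Prop:rest}, these isomorphisms intertwine restriction to $A$ with restriction to $A\rca{\alpha}\IR$.

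\textbf{Step 2: match the compatibility conditions.} I must check that under the second isomorphism, ${\rm KMS}^\beta_\rho(A,\alpha)$ corresponds exactly to ${\rm T}^\beta_{\tilde\rho}(A\rca{\alpha}\IR,\hat\alpha)$. These are the conditions $\varphi^\cM\circ\rho\le\varphi^\cM$ and $\varphi\circ\rho=\varphi$ on $A_+\cap\rho^{-1}(A)$.

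I expect this to be the main obstacle, since the correspondence of Theorem \ref{Thm:KMSvstr} is the dual weight construction $\varphi\mapsto\hat\varphi=\tilde\varphi\circ\hat E$. I would verify three things using the explicit formula $\hat\varphi(f^\ast\ast f)=\int\varphi(f(t)^\ast f(t))\,dt$ on $C_c(\IR,A)$-type elements, together with normality:
\begin{itemize}
\item dual weights behave functorially under equivariant maps into multiplier algebras, so $\hat\varphi^\cM\circ\tilde\rho=(\varphi^\cM\circ\rho)^{\wedge}$;
\item the order relation is preserved, using that the dual weight construction is monotone and normal;
\item $\rho^{-1}(A)$ crossed by $\IR$ equals $\tilde\rho^{-1}(A\rca{\alpha}\IR)$, at least densely enough for Lemma \ref{Lem:traceunique}-type uniqueness arguments. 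Lemma \ref{Lem:mult} should help here.
\end{itemize}
Conversely, the inverse (Takai duality) direction would be handled by applying the same functoriality once more.

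\textbf{Step 3: surjectivity.} Granting Step 2, take $\varphi\in{\rm KMS}^\beta_\rho(A,\alpha)$ and let $\tau_0\in{\rm T}^\beta_{\tilde\rho}(A\rca{\alpha}\IR,\hat\alpha)$ be its image. Corollary \ref{Cor:gitrace} gives a gauge-invariant trace $\tau$ on $\Cn{\tilde\rho}$ extending $\tau_0$. This trace is $\tau_0$ composed with the canonical conditional expectations through $\fB\rca{\sigma}\IZ$.

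It remains to see that $\tau$ scales under $\hat\alpha$ by $e^{-\beta t}$. The flow $\hat\alpha$ is gauge-commuting and restricts to $A\rca{\alpha}\IR$. By the uniqueness in Corollary \ref{Cor:gitrace}, $\tau\circ\hat\alpha_t$ and $e^{-\beta t}\tau$ are both gauge-invariant traces with the same restriction, so they coincide. Pulling back through Step 1 gives the desired KMS weight on $\Cn{\rho}$ restricting to $\varphi$. Well-definedness of the restriction map itself (density) follows from Proposition \ref{Prop:rest}.

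\textbf{Step 4: injectivity.} Under tracial proper outerness of the positive powers of $\tilde\rho$, Theorem \ref{Thm:trace} (or directly Proposition \ref{Prop:uniquetr}) says that traces on $\Cn{\tilde\rho}$ are determined by their restriction to $A\rca{\alpha}\IR$. Transporting back through the isomorphisms of Step 1 yields injectivity of $\bullet|_A$ on ${\rm KMS}^\beta(\Cn{\rho},\alpha_\upsilon)$.
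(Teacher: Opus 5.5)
Your proposal is correct and is essentially the paper's proof. Like the paper, you transport the restriction maps through Thomsen's duality (Theorem \ref{Thm:KMSvstr}) and the Hao--Ng isomorphism $\Cn{\rho}\rca{\alpha_\upsilon}\IR\cong\Cn{\rho\rca{\alpha}\IR}$, and then read off surjectivity and injectivity from the trace results. Your Steps 2 and 3 usefully spell out two points the paper's commutative-diagram argument takes for granted: that the duality carries ${\rm KMS}^\beta_\rho(A,\alpha)$ onto ${\rm T}^\beta_{\rho\rca{\alpha}\IR}(A\rca{\alpha}\IR,\hat{\alpha})$, and that the gauge-invariant extension from Corollary \ref{Cor:gitrace} is automatically $e^{-\beta t}$-scaling by the uniqueness in that corollary (the paper simply cites Theorem \ref{Thm:trace} for surjectivity).
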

\begin{proof}
Consider the following commutative diagram, where the vertical maps are the isomorphisms given in Theorem \ref{Thm:KMSvstr}
and the horizontal maps are the restriction maps (which are well-defined by Proposition \ref{Prop:rest}):
\[
 \begin{CD}
 {\rm KMS}^\beta(\Cn{\rho}, {\alpha_\upsilon}) @>{\bullet|_{A}}>> {\rm KMS}^\beta_\rho(A, \alpha) \\
 @VVV @VVV \\
 {\rm T}^\beta(\Cn{\rho}\rca{{\alpha_\upsilon}}\IR, \hat{{\alpha_\upsilon}}) @>{\bullet}|_{A\rca{\alpha}\IR}>> {\rm T}_{\rho \rca{\alpha} \IR}^\beta(A\rca{\alpha}\IR, \hat{\alpha}).
 \end{CD}
\]
By the Hao--Ng isomorphism \cite{HN}, $\Cn{\rho}\rca{{\alpha_\upsilon}}\IR$ is naturally isomorphic to $\Cn{\rho \rca{\alpha} \IR}$.
Then, by Theorem \ref{Thm:trace}, the bottom map is surjective.
Hence so is the top map.
When the positive powers of $\rho\rca{\alpha} \IR$ are tracially properly outer,
the bottom map is also injective by Theorem \ref{Thm:trace}. Hence so is the top map.
\end{proof}
Since the trace space and the KMS spaces are invariant under strong Morita equivalence,
by applying Theorems \ref{Thm:trace} and \ref{Thm:KMS} to the stabilizations, we conclude the following complete description results in the general form.
Before stating the results, we give a few more definitions.
\begin{Def}
Let $\cE$ be a \Cs-correspondence over $A$.
Let $\theta \colon {\rm T}(A) \rightarrow {\rm T}(\IK(\cE))$ be an affine isomorphism
induced from the strong Morita equivalence.
That is, for $\tau \in {\rm T}(A)$, $\theta(\tau)$ is a tracial weight on $\IK(\cE)$ satisfying $\theta(\tau)(e_{\xi, \eta})=\tau(\ip{\eta, \xi})$ for $\xi, \eta\in \cE \cdot \Ped(A)$.
We set
\[{\rm T}_\cE(A) := \{\tau \in {\rm T}(A): \theta(\tau)^\cM \circ (\bullet \otimes 1_\cE) \leq \tau, \quad \theta(\tau)(a\otimes 1_\cE)=\tau(a){\rm~for~}a\in A_+ {\rm~with~}
a\otimes 1_\cE \in \IK(\cE)\}.\]
We use the analogous notations for KMS weights.

Consider the dense subspace
\[\cE_0:=\left\{\xi\in \cE: \ip{\xi, \eta}\in {\rm Ped}(A){\rm~for~all~}\eta\in \cE \right\} \subset \cE.\]
For $\tau \in {\rm T}_\cE(A)$, let ${\cE}^{\tau}$ be the completion of the quotient space of $\cE_0$ given by the semi-inner product
$\ip{\xi, \eta}_\tau:=\tau(\ip{\xi, \eta})$ for $\xi, \eta\in \cE_0$.
The Hilbert space ${\cE}^{\tau}$ naturally forms a ${\rm W}^\ast$-bimodule over $\pi_\tau(A)''$.
We say that $\cE$ is properly $\tau$-outer, if
there is no nonzero $A$-bilinear operator in $\IB({\rm L}^2(\pi_\tau(A)''), {\cE}^\tau)$.
We say that $\cE$ is tracially properly outer, if it is properly $\tau$-outer for all $\tau \in {\rm T}_\cE(A)$.
\end{Def}

\begin{Exm}\label{Exm:trace}
Let $B$ be a \Cs-algebra with a unique tracial weight $\tau$ up to scalar.
Let $\sigma$ be an automorphism on $B$
whose induced automorphism on $\pi_\tau(B)''$ is outer.
Then, as any tracial weight $\varphi$ on $A\otimes B$ is of the form
$\omega \otimes \tau$ for some $\omega\in {\rm T}(A)$, for any \Cs-correspondence $\cE$,
the exterior tensor product $\cE\otimes ({}_\sigma B)$
is tracially properly outer.
An important choice is the Bernoulli shift $(\cZ^{\otimes \IZ}, \sigma)$ over the Jiang--Su algebra $\cZ$,
because the construction in this special case preserves K-theoretical data (Elliott invariants) for $\cZ$-stable \Cs-algebras,
as well as the (equivariant) Kasparov class and the ideal lattice in general.

\end{Exm}

We now state classification results of traces and KMS weights for general Cuntz--Pimsner algebras.
This immediately follows from Theorems \ref{Thm:trace}, \ref{Thm:KMS}, together with the following obvious observation.
\begin{Lem}
The following statements hold true.
\begin{itemize}
\item A $\ast$-homomorphism $\rho \colon A \rightarrow \cM(A)$ is tracially properly outer
if and only if the \Cs-correspondence ${}_\rho A$ is tracially properly outer.
\item A \Cs-correspondence $\cE$ is tracially properly outer
if and only if the stabilization $\cE \otimes \IK$ is tracially properly outer.
\end{itemize}
\end{Lem}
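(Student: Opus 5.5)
Both items reduce to comparing the two definitions of proper $\tau$-outerness: one phrased via intertwiners $x\in\pi_\tau(A)''$, the other via $A$-bilinear operators ${\rm L}^2(\pi_\tau(A)'')\to\cE^\tau$. We also need to check that the index sets ${\rm T}_\rho(A)$ and ${\rm T}_{{}_\rho A}(A)$ agree, and that ${\rm T}_\cE(A)$ and ${\rm T}_{\cE\otimes\IK}(A\otimes\IK)$ correspond under $\tau\mapsto\tau\otimes{\rm Tr}$.

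\textbf{First item.} Take $\cE={}_\rho A$. Then $\IK(\cE)=A$ acting by left multiplication, and $\theta$ is the identity. Moreover $a\otimes 1_\cE$ is just $\rho(a)$, so ${\rm T}_\cE(A)={\rm T}_\rho(A)$.

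Next identify the bimodule. Since $\ip{\xi,\eta}=\xi^\ast\eta$, the space $\cE_0$ is the closure-dense set of $\xi$ with $\xi^\ast A\subset\Ped(A)$. The Hilbert space $\cE^\tau$ is then canonically ${\rm L}^2(\pi_\tau(A)'',\bar\tau)$, with right action standard and left action twisted by $\pi_\tau\circ\rho$ (extended through $\tau^\cM$ to $\cM(A)$).

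An $A$-bilinear bounded $T\colon{\rm L}^2\to{\rm L}^2$ commutes with the right action, so by the commutation theorem it is left multiplication by some $x\in\pi_\tau(A)''$. Left $A$-linearity then reads $x\pi_\tau(a)=\pi_\tau(\rho(a))x$. Conversely, any such $x$ gives a nonzero bilinear map. Hence the two notions coincide.

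The main obstacle is the normalisation: the extension of $\pi_\tau$ to $\cM(A)$ is only defined on the support, as in the proof of Proposition \ref{Prop:uniquetr}. I would handle this by working with $\pi_\tau(\cM(A))\subset\pi_\tau(A)''$ (by non-degeneracy) and using semifiniteness of $\bar\tau$ to identify ${\rm L}^2$ bounded right-module maps with left multipliers.

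\textbf{Second item.} Use ${\rm T}(A)\cong{\rm T}(A\otimes\IK)$ via $\tau\mapsto\tau\otimes{\rm Tr}$. Then $\pi_{\tau\otimes{\rm Tr}}(A\otimes\IK)''=\pi_\tau(A)''\bar\otimes\IB(\ell^2)$, and $(\cE\otimes\IK)^{\tau\otimes{\rm Tr}}\cong\cE^\tau\otimes{\rm HS}(\ell^2)$. The $\theta$-conditions defining ${\rm T}_\cE$ are visibly compatible with this amplification, so the index sets correspond.

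An $A\otimes\IK$-bilinear map between the amplified bimodules is determined by an $A$-bilinear map between the originals. To see this, commute with $1\otimes\IB(\ell^2)$ on both sides: one side gives a tensor factor $\otimes 1$, the other side is then absorbed by the identity on Hilbert–Schmidt operators. Conversely, $T\mapsto T\otimes\id$ is injective. Thus nonzero bilinear maps exist for one if and only if they exist for the other. This is the standard Morita invariance of bimodule intertwiners, and I expect it to be routine.
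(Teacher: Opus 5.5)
Your proposal is correct and is essentially the paper's argument. The paper also identifies $\cE^\tau$ with ${\rm L}^2(\pi_\tau(A)'')$ as right modules, reads off right-linear bounded maps as elements of $\pi_\tau(A)''$ so that $A$-bilinearity becomes the intertwining relation with $\rho$, and simply declares the second item obvious; it works with maps $\cE^\tau\to{\rm L}^2(\pi_\tau(A)'')$, which is equivalent to your direction by taking adjoints.

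Your extra checks are welcome: that ${\rm T}_{{}_\rho A}(A)={\rm T}_\rho(A)$ (since $\theta=\id$ and $a\otimes 1_\cE=\rho(a)$), and the amplification argument for $\cE\otimes\IK$. The normalisation issue you flag does not actually arise. The representation $\pi_\tau$ is non-degenerate, so it extends to $\cM(A)$ on the whole GNS space. The projection $p$ in Proposition~\ref{Prop:uniquetr} appears only because there one restricts a GNS representation of the larger algebra to $A$.
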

\begin{proof}
The second statement is obvious, hence we only show the first claim.
Observe that when $\cE={}_\rho A$, one has a right $\pi_\tau(A)''$-linear isomorphism
${\cE}^\tau \cong L^2(\pi_\tau(A)'')$.
Hence we have a natural identification between $\pi_\tau(A)''$
and the set of all right $\pi_\tau(A)''$-linear bounded maps from $\cE^\tau$ to $L^2(\pi_\tau(A)'')$.
With this identification, $x\in \IB(\cE^\tau, L^2(\pi_\tau(A)''))$ is $A$-bilinear
if and only if it is contained in $\pi_\tau(A)''$ and satisfies $x \pi_\tau(\rho(a))=\pi_\tau(a)x$ for all $a\in A$.
This proves the claim.
\end{proof}

\begin{Thm}
Let $\cE$ be a \Cs-correspondence over $A$.
Assume that the positive tensor powers of $\cE$
are tracially properly outer.
Then there is an affine homeomorphism from ${\rm T}(\Cn{\cE})$ onto ${\rm T}_{\cE}(A)$ given by the restriction.
\end{Thm}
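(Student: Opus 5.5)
The plan is to reduce to the case $\cE={}_\rho A$ by stabilization and then apply Theorem \ref{Thm:trace}. By Proposition \ref{Prop:st}, we fix an isomorphism $\cE\otimes\IK\cong{}_\rho(A\otimes\IK)$ for a $\ast$-homomorphism $\rho\colon A\otimes\IK\to\cM(A\otimes\IK)$. Under Assumption \ref{ass:setting}, $\rho$ is faithful and non-degenerate, so the results of the previous subsections apply to it. Since the Pimsner construction commutes with stabilization, this gives $\Cn{\cE}\otimes\IK\cong\Cn{\rho}$. The inclusion $A\subset\Cn{\cE}$ then corresponds to the corner inclusion $A\otimes e_{00}\subset A\otimes\IK\subset\Cn{\rho}$.

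Next I transport the hypothesis. The $n$-th tensor power of $\cE\otimes\IK$ is $\cE^{\otimes n}\otimes\IK\cong{}_{\rho^n}(A\otimes\IK)$. Applying both parts of the preceding lemma, the positive powers of $\rho$ are tracially properly outer. Theorem \ref{Thm:trace} then gives an affine isomorphism ${\rm T}(\Cn{\rho})\to{\rm T}_\rho(A\otimes\IK)$ by restriction.

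It remains to identify both ends with the objects in the statement, compatibly with restriction. Strong Morita equivalence gives affine bijections $\tau\mapsto\tau\otimes{\rm Tr}$ from ${\rm T}(\Cn{\cE})$ to ${\rm T}(\Cn{\cE}\otimes\IK)$ and from ${\rm T}(A)$ to ${\rm T}(A\otimes\IK)$. These commute with restriction to $A$, respectively $A\otimes\IK$. The step needing genuine checking is that $\tau\otimes{\rm Tr}\in{\rm T}_\rho(A\otimes\IK)$ if and only if $\tau\in{\rm T}_\cE(A)$. For this I unwind the isomorphism of Proposition \ref{Prop:st}. Under it, $\rho$ becomes the left action $a\mapsto a\otimes 1_\cE$ tensored with $\IK$, and $(\tau\otimes{\rm Tr})^{\cM}\circ\rho$ becomes $\theta(\tau)^\cM\circ(\bullet\otimes1_\cE)$, amplified by ${\rm Tr}$. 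This uses the defining formula $\theta(\tau)(e_{\xi,\eta})=\tau(\ip{\eta,\xi})$ and the normality of the extended weights. In the same way, the condition $\rho(a)\in A\otimes\IK$ translates to $a\otimes1_\cE\in\IK(\cE)$. So both defining conditions of ${\rm T}_\rho$ correspond to those of ${\rm T}_\cE$, and composing the bijections gives the affine bijection ${\rm T}(\Cn{\cE})\to{\rm T}_\cE(A)$ by restriction.

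Finally I check that the bijection is a homeomorphism for the topology of pointwise convergence on Pedersen ideals. Restriction is clearly continuous. For the inverse, I use the explicit construction in Lemma \ref{Lem:tr} and Corollary \ref{Cor:gitrace}: the trace on the dense subalgebra spanned by the $\iota_n(\Ped)$ is given by finite sums of the $\tau_n$, and off-diagonal terms vanish. Hence the values of the extended trace on a dense subalgebra of the Pedersen ideal depend continuously on $\tau$, and lower semicontinuity together with Lemma \ref{Lem:traceunique}-type approximations upgrades this to continuity. I expect this topological point, together with the precise identification ${\rm T}_\rho\leftrightarrow{\rm T}_\cE$ through the stabilization isomorphism, to be the main obstacle. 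The rest is bookkeeping.
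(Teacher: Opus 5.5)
This is essentially the paper's argument: you stabilize via Proposition \ref{Prop:st}, transfer tracial proper outerness of the tensor powers through the two-part lemma, apply Theorem \ref{Thm:trace} to $\rho$, and come back by Morita invariance of the trace cones. The paper does this in one line and treats both the identification ${\rm T}_\rho(A\otimes\IK)\leftrightarrow{\rm T}_\cE(A)$, which you unwind correctly, and the topological claim as evident. The only soft spot is your extra continuity sketch for the inverse: lower semicontinuity only gives $\liminf_i\tilde\tau_i(x)\geq\tilde\tau(x)$, so the reverse inequality still needs a bound on the extended traces near $x$ that is uniform along the net, which the paper does not supply either.
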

\begin{Thm}
Let $(A, \alpha)$ be an $\IR$-\Cs-algebra.
Let $\cE$ is an $\IR$-\Cs-correspondence over $A$.
Let ${\alpha_\upsilon} \colon \IR \acts \Cn{\cE}$ denote the generalized quasi-free flow.
Assume that the positive tensor powers of $\cE\rca{{\alpha_\upsilon}} \IR$
are tracially properly outer.
Then for any $\beta\in \IR$,
the map
$\bullet|_A\colon {\rm KMS}^\beta(\Cn{\cE}, {\alpha_\upsilon})\rightarrow {\rm KMS}^\beta_\cE(A, \alpha)$
is an affine isomorphism.
\end{Thm}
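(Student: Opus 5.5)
The plan is to reduce the general statement to Theorem \ref{Thm:KMS} by stabilization, in the same way the trace classification was reduced to Theorem \ref{Thm:trace}.

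First, apply Remark \ref{Rem:st} with $G=\IR$. This gives an $\IR$-equivariant isomorphism
\[(\cE\otimes \IK(L^2(\IR)^\infty), \upsilon\otimes\lambda^\infty)\cong {}_\rho(A\otimes \IK(L^2(\IR)^\infty))\]
for some $\IR$-equivariant $\rho$. Put $\tilde A:=A\otimes\IK(L^2(\IR)^\infty)$, with the flow $\tilde\alpha:=\alpha\otimes\ad\lambda^\infty$.

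Next, use the exterior equivalence noted in Remark \ref{Rem:st}. The flow $\alpha_{\upsilon\otimes\lambda^\infty}$ on $\Cn{\rho}\cong \Cn{\cE}\otimes\IK(L^2(\IR)^\infty)$ is exterior equivalent to $\alpha_\upsilon\otimes \id$. Moreover $\Cn{\cE}$ sits inside it as an $\IR$-equivariant full corner. Hence $\beta$-KMS weights on $(\Cn{\cE},\alpha_\upsilon)$ correspond affinely and bijectively to those on $(\Cn{\rho},\alpha_{\upsilon\otimes\lambda^\infty})$. The same holds on the coefficient side: ${\rm KMS}^\beta(A,\alpha)\cong{\rm KMS}^\beta(\tilde A,\tilde\alpha)$. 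I would check that these correspondences commute with restriction to $A\subset\tilde A$.

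I would also check that the second correspondence carries ${\rm KMS}^\beta_\cE(A,\alpha)$ onto ${\rm KMS}^\beta_\rho(\tilde A,\tilde\alpha)$. This is a direct computation with rank-one operators $e_{\xi,\eta}$ in $\cE\otimes\IK$, using the defining formula $\theta(\varphi)(e_{\xi,\eta})=\varphi(\ip{\eta,\xi})$. It is essentially the definition of ${\rm KMS}^\beta_\cE$, in parallel with the remark that ``$\cE$ minimal iff $\rho$ satisfies (\ref{equation:inv})''. With this, Theorem \ref{Thm:KMS} applied to $\rho$ gives surjectivity of $\bullet|_A$.

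For injectivity, Theorem \ref{Thm:KMS} requires that the positive powers of $\rho\rca{\tilde\alpha}\IR$ be tracially properly outer. The plan is to transport the hypothesis through three identifications:
\begin{itemize}
\item ${}_\rho\tilde A\rca{}\IR\cong(\cE\rca{\upsilon}\IR)\otimes\IK$, up to the crossed product of the $\lambda^\infty$ part, which is again a stabilization. This follows by comparing $C_c$-completions inside $\Cn{\rho}\rca{}\IR\cong(\Cn{\cE}\rca{}\IR)\otimes\IK$.
\item Its tensor powers are the stabilizations of $(\cE\rca{\upsilon}\IR)^{\otimes n}$.
\item The preceding Lemma: tracial proper outerness is invariant under stabilization, and for ${}_\rho$-type correspondences it coincides with that of $\rho$.
\end{itemize}
Once these are in place, injectivity follows from Theorem \ref{Thm:KMS}.

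The main obstacle is the first bullet. It requires identifying the Hao--Ng correspondence of the stabilized $\IR$-correspondence with the stabilization of $\cE\rca{\upsilon}\IR$, compatibly with $\rho\rca{}\IR$ and its tensor powers. I would handle this by working entirely inside the ambient crossed products, where everything is a closure of $C_c(\IR,\cdot)$, and invoking the Takai-type identification from Remark \ref{Rem:st}.
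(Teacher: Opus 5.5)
Your proposal is correct and is essentially the paper's argument. You stabilize via Remark \ref{Rem:st}; you use that $\IR$-equivariant strong Morita equivalence induces affine isomorphisms of KMS spaces, which also handles the passage from $\alpha_{\upsilon\otimes\lambda^\infty}$ to $\alpha_\upsilon\otimes\id$, since exterior equivalence alone does not literally preserve KMS weights; you apply Theorem \ref{Thm:KMS} to the resulting $\rho$; and you transfer the outerness hypothesis through the lemma on stabilization and ${}_\rho A$. The paper calls these transfers obvious, and your extra checks are exactly the details it leaves implicit: matching ${\rm KMS}^\beta_\cE(A,\alpha)$ with ${\rm KMS}^\beta_\rho$, and identifying the Hao--Ng correspondence of the stabilization, together with its tensor powers, with $(\cE\rca{\upsilon}\IR)\otimes\IK$.
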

\section{Ideals of Cuntz--Pimsner algebras}
In this section, we study the ideal structure of Cuntz--Pimsner algebras beyond the simple case.
As in the previous section, we first study the case $\cE={}_\rho A$ for some $\rho \colon A \rightarrow \cM(A)$.
Again the general case will be deduced to this case by the stabilization theorem.

We employ the following notion of outerness for a $\ast$-homomorphism.
An extension of this notion to a general \Cs-correspondence will be given later.

For a \Cs-algebra $A$, we set
$\ell^2(A)$ to be the standard \Cs-correspondence $\ell^2 \otimes A$ over $A$.
\begin{Def}
We say that a $\ast$-homomorphism $\rho \colon A \rightarrow \cM(A)$ is centrally free,
if for any $b\in B_{[0, 1]}$, $a\in A$, $n, m\in \IN$ with $n\neq m$, $\ve>0$, there exists $\xi \in (\ell^2(B_{[0, 1]}))_1$ with
\[\ip{\xi, b \xi} \approx_\ve b,\quad \ip{\rho^n(\xi), a \rho^{m}(\xi)}\approx_\ve 0.\]
\end{Def}
\begin{Lem}\label{Lem:central free 1}
Assume that $\rho \colon A \rightarrow \cM(A)$ is centrally free.
Then for any $b\in B_{[0, 1]}$, $a\in A$, $n, m \geq 1$, and $\ve >0$,
there exists $\xi\in (\ell^2(B_{[0, 1]}))_1$ with
\[\sum_{k\in \IN} \xi(k) (b + \iota_n(a)u^{-m}) \xi(k)^\ast \approx_\ve b.\] 
\end{Lem}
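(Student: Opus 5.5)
The plan is to reduce the statement directly to the definition of central freeness. The cross term $\iota_n(a)u^{-m}$ will be rewritten, using the covariance of $u$ together with the relation $\sigma^{-1}\circ\iota_j=\iota_j\circ\rho$, as an expression of the form $\ip{\rho^{n'}(\xi), a\rho^{m'}(\xi)}$ with $n'\neq m'$. Throughout, $B_{[0,1]}=\iota_1(D)$ with $D=A+\rho(A)\subset\cM(A)$, and $\ip{\xi,\eta}=\sum_k \xi(k)^\ast\eta(k)$. Since the statement is written with $\xi(k)(\cdot)\xi(k)^\ast$, I will apply the definition to a vector $\eta$ and then set $\xi(k):=\eta(k)^\ast$. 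This adjoint convention should be fixed once and checked against the definition; it does not affect any estimate. I assume $\xi$ is taken in $\ell^2(B_{[0,1]})$ with $B_{[0,1]}\subset\fB$, so that $\iota_1$ is understood on the entries.

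First, split the sum into two terms. The first is $\sum_k \xi(k)b\xi(k)^\ast=\ip{\eta,b\eta}$, which the definition makes $\approx_\ve b$. The second is
\[\sum_k \xi(k)\iota_n(a)u^{-m}\xi(k)^\ast=\Big(\sum_k \xi(k)\iota_n(a)\sigma^{-m}(\xi(k)^\ast)\Big)u^{-m},\]
obtained by moving $u^{-m}$ to the right. Since $u$ is unitary, it suffices to make the bracket small in norm.

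Next, compute the bracket by applying the isometric automorphism $\sigma^{-(n-1)}$; this is legitimate because $n\geq 1$. Write $\xi(k)=\iota_1(d_k)$ with $d_k\in D$. Then:
\begin{itemize}
\item $\sigma^{-(n-1)}(\iota_1(d_k))=\iota_1(\rho^{n-1}(d_k))$;
\item $\sigma^{-(n-1)}(\iota_n(a))=\iota_1(a)$;
\item $\sigma^{-(n-1)-m}(\iota_1(d_k^\ast))=\iota_1(\rho^{m+n-1}(d_k^\ast))$.
\end{itemize}
Since $\iota_1$ is an isometric $\ast$-homomorphism on $\cM(A)$, the norm of the bracket equals
\[\Big\|\sum_k \rho^{n-1}(d_k)\,a\,\rho^{m+n-1}(d_k)^\ast\Big\|=\big\|\ip{\rho^{n-1}(\eta),a\rho^{m+n-1}(\eta)}\big\|,\]
where $\eta(k)=d_k^\ast$. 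The indices are $n'=n-1$ and $m'=m+n-1$, and they are distinct because $m\geq1$. The strict convergence of the sums in $\cM(A)$ is handled by the fact that $\rho$ is strictly continuous, so $\rho^j$ applied to the strictly convergent sums is fine.

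Finally, apply central freeness to $b$, $a$, the pair $(n',m')$ and $\ve/2$. This yields a single $\eta$ satisfying both $\ip{\eta,b\eta}\approx_{\ve/2}b$ and $\ip{\rho^{n'}(\eta),a\rho^{m'}(\eta)}\approx_{\ve/2}0$ simultaneously, and combining the two estimates gives the claim. The main obstacle is the bookkeeping of the identifications: $\iota_0$ versus $\iota_1$, $\rho$ versus $\sigma^{-1}$, and in particular the adjoint convention. One must make sure that the vector produced by the definition, transported by $\sigma^{n-1}$, lands back exactly on the original entries, via $\sigma^{n-1}\iota_1\rho^{n-1}=\iota_1$, so that a single $\xi$ serves both terms.
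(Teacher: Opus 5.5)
Your proposal is correct and is essentially the paper's proof. The paper likewise writes $c\,\iota_n(a)u^{-m}c^\ast=\iota_n(\rho^{n-1}(c)\,a\,\rho^{n+m-1}(c^\ast))u^{-m}$ for $c\in B_{[0,1]}\cong A+\rho(A)$, which you obtain after conjugating by $\sigma^{-(n-1)}$, and then invokes central freeness with the distinct indices $n-1$ and $n+m-1$. Your treatment of the adjoint convention is appropriate, and the sums are in fact norm convergent because $\rho$ is a contractive $\ast$-homomorphism, so no strict-topology argument is needed.
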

\begin{proof}
We identify $B_{[0, 1]}$ with $A+ \rho(A)$ via $\iota_1$.
Then for any $c\in B_{[0, 1]}$, one has
\[c\iota_n(a) u^{-m} c^\ast= \iota_n(\rho^{n-1}(c) a \rho^{n+m-1}(c^\ast)) u^{-m}.\]
Thus the condition in the statement follows from the central freeness of $\rho$.
\end{proof}
\begin{Lem}\label{Lem:central free 2}
Assume that $\rho \colon A \rightarrow \cM(A)$ is centrally free.
Then for any $b\in B_{[0, 1]}$, $x\in \fB$, $m \geq 1$, and $\ve >0$,
there exists $\xi\in (\ell^2(B_{[0, 1]}))_1$ with
\[\sum_{n\in \IN} \xi(n)^\ast (b + xu^{-m}) \xi(n) \approx_\ve b.\] 
\end{Lem}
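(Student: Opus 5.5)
The plan is to derive the statement from the central freeness of $\rho$ by the same commutation trick as in Lemma \ref{Lem:central free 1}, combined with an approximation argument and an iteration over finitely many terms.

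\emph{Preliminary facts.} For $\xi\in(\ell^2(B_{[0,1]}))_1$ write $\Phi_\xi(y):=\sum_{n}\xi(n)^\ast y\,\xi(n)$. This is a completely positive contraction on $\fB\rca{\sigma}\IZ$. Two further facts will be used repeatedly.
\begin{itemize}
\item Composition: $\Phi_\eta\circ\Phi_\xi=\Phi_{\zeta}$ with $\zeta(n,k)=\xi(n)\eta(k)\in B_{[0,1]}$, after re-indexing over $\IN$. We again have $\|\zeta\|\le 1$; if needed we normalize, or we work with norm-one rows directly.
\item Commutation: $\Phi_\xi(yu^{-m})=\Phi'_\xi(y)u^{-m}$, where $\Phi'_\xi(y)=\sum_n\xi(n)^\ast y\,\sigma^{-m}(\xi(n))$. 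Hence $\Phi_\xi$ maps $\fB u^{-m}$ into $\fB u^{-m}$.
\end{itemize}
Given $\ve>0$, by contractivity we may replace $x$ by a finite sum $\sum_{j=-N}^{N}\iota_j(a_j)$ with $a_j\in A$, at the cost of $\ve$.

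\emph{Single-term step.} Identify $B_{[0,1]}$ with $D=A+\rho(A)$ via $\iota_1$. For $c\in B_{[0,1]}$, $a\in A$ and $n\ge 1$, the same computation as in Lemma \ref{Lem:central free 1} gives
\[c^\ast\iota_n(a)u^{-m}c=\iota_n\bigl(\rho^{n-1}(c^\ast)\,a\,\rho^{n+m-1}(c)\bigr)u^{-m}.\]
Since $m\ge1$, the exponents $n-1\neq n+m-1$. Central freeness, applied to $b$, $a$, the pair $(n-1,n+m-1)$ and $\ve$, then yields $\xi$ with $\Phi_\xi(b)\approx_\ve b$ and $\Phi_\xi(\iota_n(a)u^{-m})\approx_\ve 0$; the second estimate uses the adjoint of the central freeness condition. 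Terms $\iota_j(a)$ with $j\le 0$ are rewritten as $\iota_1(\rho^{1-j}(a))$ with $\rho^{1-j}(a)\in\cM(A)$. Factorizing $a=a'a''$ (Cohen) and absorbing $\rho^{1-j}(a')$ into the left factor, together with $c^\ast$, reduces these to elements of the form $d\,\rho^{k}(c)$ with $d\in D$. Equivalently, we first compress by an approximate unit of $B_{[0,1]}$, so that every term lies in $B_{[0,1]}\cdot B_{\max(j,1)}$ and hence has index $\ge1$. I would rather arrange this reduction in advance than handle the case $n=0$ separately.

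\emph{Iteration.} Treat the finitely many terms one at a time. After applying $\Phi_{\xi_1}$, the killed term stays within $\ve$ of $0$ under all subsequent $\Phi$'s by contractivity. The remaining terms become new elements of $\fB u^{-m}$ by the commutation fact, and we approximate these again by finite sums. The approximation of $b$ is preserved because each new $\Phi_{\xi_i}$ almost fixes $\Phi_{\xi_{i-1}}\cdots(b)$. For this step, central freeness is applied to the current approximant of $b$, which still lies in $B_{[0,1]}$ since $\xi(n)^\ast B_{[0,1]}\xi(n)\subset B_{[0,1]}$. Composing via the first preliminary fact produces a single $\xi$.

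\emph{Expected main obstacle.} The main obstacle is the bookkeeping in the iteration: after compressing, the new coefficient elements must still be of the form $\iota_n(a)$ with $n\ge1$ and $a\in A$, up to $\ve$, so that central freeness applies again. The first approach I would try is to reorganize all terms as an element of $B_{[1,L]}u^{-m}$ for a fixed $L$ and kill them inductively in decreasing order of index. This works because $B_{[0,1]}\cdot B_{[1,L]}\subset B_{[1,L]}$, so the form is stable under the compressions.
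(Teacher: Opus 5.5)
Your architecture is the paper's: approximate $x$ by an element of $B_{[-N,N]}$, pre-compress once, then kill the finitely many surviving terms $\iota_j(a_j)u^{-m}$ one index at a time using the commutation formula of Lemma \ref{Lem:central free 1}, keep $b$ almost fixed, and multiply the rows. (For $j\ge 1$ each $\iota_j(A)u^{-m}$ is mapped into itself by these compressions, so no re-approximation is needed.)

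\textbf{The gap: indices $j\le 0$.} Your claim that compressing by an approximate unit $f$ of $B_{[0,1]}$ puts every term into $B_{\max(j,1)}$ is false. The coefficient becomes $f\iota_j(a)\sigma^{-m}(f)$, and since $B_0\cdot B_j=B_0$ for $j\le 0$, an index-$0$ part survives. For example, an approximate unit of $B_0$ is also one for $B_{[0,1]}$; taking $f=\iota_0(e)$ gives exactly $\iota_0\bigl(e\rho^{-j}(a)\rho^m(e)\bigr)$. Compressing by elements of $B_1$ would land in $B_{[1,\infty]}$, but it maps $b$ into $B_1$, so it cannot almost fix a general $b\in B_{[0,1]}$.

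The Cohen-factorization variant does not help either. The middle factor $\rho^{1-j}(a)$ is not in $A$, and moving part of it to one side destroys the symmetric form $\sum_k\rho^{n}(\xi(k))^\ast a\,\rho^{n'}(\xi(k))$ that central freeness controls. So the reduction to $B_{[1,L]}u^{-m}$, on which your iteration rests, is not established.

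\textbf{What the pre-compression does achieve, and how to finish.} Compressing by $e\in A$ with $ebe\approx b$ (as in the paper), or by your $f$, removes the negative indices: $ex\sigma^{-m}(e)\in B_{[0,N]}$. The term $\iota_0(a_0)u^{-m}$ still remains. Compressed by $\xi(k)=\iota_1(d_k)$ with $d_k\in D$, it becomes $\iota_1\bigl(\sum_k d_k^\ast\rho(a_0)\rho^m(d_k)\bigr)u^{-m}$. This is the central freeness expression for the pair $(0,m)$ with middle element $\rho(a_0)$. In other words, $a$ must be taken in $B_0=\iota_1(\rho(A))$, not literally in $A\subset D$ as you use it.

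That reading is the one forced by the correspondence version of the definition, where the middle term is $x\otimes 1_\cE$, i.e.\ $\rho(x)$ for ${}_\rho A$. It is also what the claimed equivalence between central freeness of $\rho$ and of ${}_\rho A$ requires. It contains your literal conditions, since the pair $(n,n')$ becomes $(n+1,n'+1)$ and $\rho$ is isometric. With it, you kill the index-$0$ term first and then handle indices $1,\dots,N$ exactly as you propose.

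\textbf{Note on the paper's text.} The paper asserts $ex\sigma^m(e)\in B_{[m,N]}$. With $\ad(u)=\sigma$, the convention under which Lemma \ref{Lem:central free 1} is computed, the compressed coefficient is $ex\sigma^{-m}(e)\in B_{[0,N]}$. So the index-$0$ term needs this same treatment there too.
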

\begin{proof}
The proof below is similar to the argument in the proof of Lemma 3.1 in \cite{SuzMAAN2}.
For completeness, we include a proof.

By approximation, it suffices to show the claim when $x\in B_{[-N, N]}$ for some $N\in \IN$.
We may choose $N\geq m$.
Let $\delta>0$, which will be determined later.
Choose $e\in A$ with $0\leq e \leq 1$, $e b e \approx_{\delta} b$.
Then one has $e x \sigma^m(e) \in B_{[m, N]}$.
Hence one can find $a_m, \ldots, a_N\in A$
with $exe =\sum_{j=m}^N \iota_j (a_j)$.
By applying Lemma \ref{Lem:central free 1} to $ebe$ and $\iota_m(a_m)$,
one can find $\xi_0\in (\ell^2(B_{[0, 1]}))_1$
with 
\[b_1:= \sum_{n\in \IN} \xi_0(n)^\ast e b e \xi_0(n) \approx_\delta ebe, \quad \sum_{n\in \IN} \xi_0(n)^\ast \iota_m(a_m)u^{-m} \xi_0(n) \approx_\delta 0.\]
Next we apply Lemma \ref{Lem:central free 1} to $b_1$ and $\iota_{m+1}(a'_{m+1})$ where $a'_{m+1}:= \sum_{n\in \IN} \rho(\xi_0(n)^\ast) a_{m+1}\rho^{m+1}(\xi_0(n))$
to get $\xi_1\in (\ell^2(B_{[0, 1]}))_1$ with
\[b_2:= \sum_{n\in \IN} \xi_1(n)^\ast b_1 \xi_1(n) \approx_\delta b_1, \quad \sum_{n_0, n_1\in \IN} \xi_1(n_1)^\ast \xi_0(n_0)^\ast \iota_{m+1}(a_{m+1})u^{-m} \xi_0(n_0) \xi_1(n_1) \approx_\delta 0.\]
By iterating this argument $N-m+1$ times, we obtain a sequence $(\xi_j)_{j=0}^{N-m}$ in $(\ell^2(B_{[0, 1]}))_1$ with the following conditions:
\[b_{j+1}:= \sum_{n\in \IN} \xi_{j}(n)^\ast b_{j} \xi_j(n) \approx_\delta b_j,\]
\[\sum_{n_0, \ldots, n_j\in \IN} \xi_j(n_j)^\ast \xi_{j-1}(n_{j-1})^\ast \cdots \xi_0(n_0)^\ast\iota_{m+j}(a_{m+j})u^{-m} \xi_0(n_0) \cdots \xi_j(n_j) \approx_\delta 0.\]
Define $\xi \in \ell^2(\IN^{N-m+1}, B_{[0, 1]})$ to be
\[\xi(k_0, \ldots, k_{N-m}):=e \xi_0(k_0)\xi_1(k_1)\cdots \xi_{N-m}(k_{N-m}).\]
Then by the choice of $(\xi_j)_{j=0}^{N-m}$, one has
\[\sum_{\fn\in \IN^{N-m+1}} \xi(\fn)^\ast b \xi(\fn) \approx_{(N-m+2)\delta} b, \quad \sum_{\fn\in \IN^{N-m+1}} \xi(\fn)^\ast x u^{-m} \xi(\fn) \approx_{(N-m+2)\delta} 0.\]
Now put $\delta = \frac{\ve}{N-m+2}$. Then for a bijection $f\colon \IN \rightarrow \IN^{N-m+1}$, the composite $\xi \circ f \in \ell^2(B_{[0, 1]})$ gives
the desired element.
\end{proof}
\begin{Thm}\label{Thm:idealclass1}
Assume that $\rho \colon A \rightarrow \cM(A)$ is centrally free.
Put $D:=A+\rho(A)$.
Then the map $\bullet \cap D$ gives a lattice isomorphism
from the ideal lattice of $\Cn{\rho}$ onto the lattice of all ideals of $D$ satisfying
condition $(\ref{equation:inv2})$ in Lemma \ref{Lem:idclass}.
\end{Thm}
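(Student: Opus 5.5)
Via Theorem \ref{Thm:decomp}, $\Cn{\rho}$ is the full hereditary \Cs-subalgebra $H$ of $\fB\rca{\sigma}\IZ$ generated by $A$. Ideals of $\Cn{\rho}$ therefore correspond bijectively, and lattice-isomorphically, to ideals of $\fB\rca{\sigma}\IZ$ via $I\mapsto I\cap H$ and $K\mapsto \cl(\fB\rca{\sigma}\IZ\cdot K\cdot \fB\rca{\sigma}\IZ)$. Under this correspondence intersection with $D=B_{[0,1]}$ is unaffected, because $D\subset \cM(H)$ acts on $A$ and $I\cap D$ can be recovered inside $\fB$. Lemma \ref{Lem:idclass} already gives a lattice isomorphism between $\sigma$-invariant ideals of $\fB$ and ideals of $D$ satisfying (\ref{equation:inv2}), namely $I\mapsto I\cap D$. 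The plan is therefore to show that every ideal $K\lhd \fB\rca{\sigma}\IZ$ is generated by the $\sigma$-invariant ideal $K\cap\fB$, i.e.\ $K=(K\cap \fB)\rca{\sigma}\IZ$. This is the usual ``residual intersection/separation'' property, and here it must come from central freeness rather than from outerness or an exactness assumption. Note that $\IZ$ is amenable, so reduced crossed products of invariant ideals behave well and $(\fB/J)\rca{}\IZ\cong(\fB\rca{}\IZ)/(J\rca{}\IZ)$.

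The key step is the following. Let $E$ be the canonical conditional expectation, and set $J:=K\cap\fB$. Passing to the quotient by $J\rca{\sigma}\IZ$, we reduce to showing: if $K\cap \fB=0$ then $K=0$. Equivalently, we need $\|E(y)+J\|\le \|y+K\|$ for $y$ in a dense subalgebra, which forces $E(K)\subset J$ and hence $K\subset J\rca{\sigma}\IZ$ by faithfulness of $E$ on the quotient. To get this, take $y=\sum_{|m|\le M} x_m u^{-m}$ with $x_m\in \fB$. By $\sigma$-invariance and conjugating by a power of $u$, we may move into the region where the coefficient $x_0$ can be compressed to an element $b\in B_{[0,1]}$ (more precisely, $B_{[n,n+1]}$, shifted by $\sigma$). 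Indeed $B_{[n,\infty]}$ is hereditary and generated by $B_n$, so cutting down by elements of $B_{n}$ approximates $x_0$ by an element of a finite interval, and translating reduces this to $B_{[0,1]}$ up to lengthening the interval. Here we will have to handle general $x_0\in B_{[0,N]}$ by iterating exactly as in the proof of Lemma \ref{Lem:central free 2}, which already accounts for arbitrary $x\in\fB$ in the off-diagonal terms.

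Then we apply Lemma \ref{Lem:central free 2} successively to each off-diagonal term $x_mu^{-m}$, $m\neq 0$; negative $m$ is handled by taking adjoints, and the multiple terms by concatenating the $\xi$'s as in that proof. This yields $\xi\in(\ell^2(B_{[0,1]}))_1$ for which the completely positive contraction $\Psi_\xi(z)=\sum_n \xi(n)^\ast z\xi(n)$ satisfies $\Psi_\xi(y)\approx_\ve b\approx E(y)$. Since $\Psi_\xi$ maps $K$ into $K$ and is contractive, we get $\|b+K\|\le\|y+K\|+\ve$. Moreover $b\in \fB$, and $\fB/J\hookrightarrow (\fB\rca{}\IZ)/K$ is isometric because $K\cap\fB=J$. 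This gives the desired inequality.

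The main obstacle is this compression step: ensuring that the diagonal part $E(y)$ is recovered in norm modulo $J$, not merely its compression. This requires checking that $\sup_e\|ebe+J\|=\|b+J\|$ over positive contractions $e\in A$ after translation, and that Lemma \ref{Lem:central free 2}, stated for $b\in B_{[0,1]}$, suffices for a general diagonal coefficient. I would first treat $x_0\in B_{[0,1]}$ exactly, and then use Lemma \ref{Lem:positive}-type decompositions and the isometry $B_{[0,1]}/(J\cap B_{[0,1]})\to B_{[0,n]}/(J\cap B_{[0,n]})$ from the proof of Lemma \ref{Lem:idclass} to extend to general coefficients. Finally, we combine this with Lemma \ref{Lem:idclass}, and check that the bijection preserves inclusions in both directions, which gives the lattice isomorphism.
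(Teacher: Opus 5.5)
There is a genuine gap in your proposal.

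Your reduction to $\fB\rca{\sigma}\IZ$ is fine. So is the target $K=(K\cap\fB)\rca{\sigma}\IZ$. So is the compression estimate $\|b+J\|\le\|y+K\|$, which you prove for algebraic $y$ whose zeroth coefficient $b$ lies exactly in $B_{[0,1]}$. The gap is the step you flag yourself. You set out to prove $\|E(y)+J\|\le\|y+K\|$ for \emph{all} algebraic $y$, i.e.\ for an arbitrary diagonal coefficient $x_0\in\fB$, and none of the tools you list gives this.

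Central freeness, and hence Lemma \ref{Lem:central free 2}, only guarantees $\sum_n\xi(n)^\ast b\,\xi(n)\approx b$ for $b\in B_{[0,1]}$. It says nothing about $\xi$ approximately fixing elements of $B_k$ for $k\geq 2$.

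Your alternative is to translate and cut down by $B_n$ to push $x_0\in B_{[0,L]}$ into a window of length one. This can destroy the norm modulo $J$ when $\rho$ is non-proper. In Remark \ref{Rem:idclass}, $\iota_1(\chi_S)\notin I_0$. Yet for all $e\in A$ and $n\geq 1$,
$\iota_n(e)\,\iota_1(\chi_S)\,\iota_n(e)=\iota_n(e\rho^{n-1}(\chi_S)e)\in\iota_n(J_0)\subset I_0$.
So for $b=\iota_1(\chi_S)$ and $e$ ranging over $B_n$ with $n\geq 1$, $\sup_e\|ebe+I_0\|=0$ while $b\notin I_0$. Any such low-level component of $x_0$ becomes invisible once you cut down at level $L-1\geq 1$. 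The quotient isometry from the proof of Lemma \ref{Lem:idclass} and Lemma \ref{Lem:positive} do not move elements of $B_{[0,n]}$ into $B_{[0,1]}$, so they do not repair this.

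The missing idea is that the general inequality is unnecessary.

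1. Since $E$ is a $\fB$-bimodule map with $E(uxu^\ast)=\sigma(E(x))$, $\cl(E(K))$ is a $\sigma$-invariant ideal of $\fB$.
2. By Lemma \ref{Lem:idclass}, $\cl(E(K))$ is generated as a $\sigma$-invariant ideal by $\cl(E(K))\cap B_{[0,1]}$. So it suffices to show $\cl(E(K))\cap B_{[0,1]}\subset K$.
3. Take $d$ in this set and $\ve>0$. Choose $x\in K$ with $E(x)\approx_\ve d$, and an algebraic $y$ close to $x$ whose zeroth coefficient is exactly $d$.
4. Repeated applications of Lemma \ref{Lem:central free 2} give $\xi$ with $\sum_{\fn}\xi(\fn)^\ast y\,\xi(\fn)\approx_\ve d$. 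Meanwhile $\sum_{\fn}\xi(\fn)^\ast x\,\xi(\fn)\in K$, so $d\in K$.
5. Hence $E(K)\subset K\cap\fB$. Amenability of $\IZ$ together with Lemma \ref{Lem:idclass} then yields the lattice isomorphism.

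Step 4 is exactly the case $x_0\in B_{[0,1]}$ that you already handle. With this reduction your argument closes and coincides with the paper's proof.
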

\begin{proof}
By Theorem \ref{Thm:decomp}, it suffices to show the statement for $\fB\rca{\sigma} \IZ$ instead of $\Cn{\rho}$.

Let $I\lhd \fB\rca{\sigma} \IZ$.
Since $\IZ$ is amenable (or exact), to show the claim, it suffices to show that $E(I) \subset I$ by Lemma \ref{Lem:idclass}.
Since $\cl(E(I))$ is a $\sigma$-invariant ideal of $\fB$, by Lemma \ref{Lem:idclass}, it is enough to show that
$\cl(E(I)) \cap B_{[0, 1]} \subset I$.

Fix $d \in \overline{E(I)} \cap B_{[0, 1]}$. Let $\ve>0$.
Then one has $x\in I$ with $E(x) \approx_\ve d$.
Choose $y\in \fB \rtimes_{{\rm alg}, \sigma} \IZ$
with $y \approx_\ve x$, $E(y)=d$.
By repeated applications of Lemma \ref{Lem:central free 2}, for some $N\in \IN$, one has $\xi \in (\ell^2(\IN^N, B_{[0, 1]}))_1$
with
\[\sum_{\fn\in \IN^N} \xi(\fn)^\ast y \xi(\fn) \approx_{\ve} d.\]
This proves $d\in_{3\ve} I$. Since $\ve>0$ is arbitrarily small, we conclude $d\in I$.
\end{proof}

Based on Theorem \ref{Thm:idealclass1}, under a suitable assumption, we generalize the classification result of ideals for general Cuntz--Pimsner algebras.
\begin{Def}
We say that a \Cs-correspondence $\cE$ is centrally free, if for any $d\in D:= A\otimes 1_\cE +\IK(\cE)$, $n, m\in \IN$ with $n\neq m$, $x\in\IK(\cE^{\otimes m}, \cE^{\otimes n})$, and any $\ve>0$,
there exists $\xi\in (\ell^2(D))_1$ with
\[\sum_{n \in \IN} \xi(n)^\ast d \xi(n)\approx_\ve d,\quad \sum_{k\in \IN} (\xi(k)^\ast \otimes 1_{\cE^{\otimes n}})(x\otimes 1_\cE) (\xi(k) \otimes 1_{\cE^{\otimes m}}) \approx_\ve 0.\]
By a similar argument to the proof of Lemma \ref{Lem:central free 2},
we only need to check the second condition for rank one operators $x= e_{\eta, \zeta}$; $\eta\in \cS_n, \zeta\in \cS_m$
for a sequence of total subsets $\cS_k \subset \cE^{\otimes k}$; $k\in \IN$.
\end{Def}
\begin{Exm}
Let $B$ be a \Cs-algebra.
Let $\sigma \colon \IZ \acts B$ be a centrally free action in the sense of \cite{SuzCMP}.
Then for any \Cs-correspondence $\cE$,
the exterior tensor product $\cE \otimes {}_\sigma B$
is centrally free.
Again an important choice is $B=\cZ^{\otimes \IZ}$ and $\sigma$ is the Bernoulli shift,
by the same reason as Example \ref{Exm:trace}.
For the fact that $(\cZ^{\otimes \IZ}, \sigma)$ is centrally free, see Example 4.10 in \cite{SuzCMP}.
\end{Exm}

The next obvious lemma shows that the central freeness of $\cE$ implies the central freeness of the associated $\ast$-homomorphism $\rho$ given by Proposition \ref{Prop:st}.

\begin{Lem}
The following statements hold true.
\begin{itemize}
\item A $\ast$-homomorphism $\rho \colon A \rightarrow \cM(A)$ is centrally free
if and only if the \Cs-correspondence ${}_\rho A$ is centrally free.
\item If a \Cs-correspondence $\cE$ is centrally free,
then so is the stabilization $\cE \otimes \IK$.
\end{itemize}
\end{Lem}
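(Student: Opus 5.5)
For the first statement I would unwind both definitions for $\cE={}_\rho A$ and check that they are literally the same condition.
For ${}_\rho A$ the left action $a\otimes 1_\cE$ is just $\rho(a)$, acting on $A$ by left multiplication, and $\IK({}_\rho A)=A$. Hence $D=A\otimes 1_\cE+\IK(\cE)$ is identified with $A+\rho(A)$, which is $B_{[0,1]}$ via $\iota_1$, and $\ell^2(D)=\ell^2(B_{[0,1]})$.
Next I would identify the tensor powers and the operators between them. There are canonical isomorphisms $({}_\rho A)^{\otimes n}\cong {}_{\rho^n}A$ given by $a_1\otimes\cdots\otimes a_n\mapsto \rho^{n-1}(a_1)\cdots\rho(a_{n-1})a_n$. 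Under them, $\IK(\cE^{\otimes m},\cE^{\otimes n})$ becomes $A$, acting by left multiplication. Also, for $c\in D$ one has $c\otimes 1_{\cE^{\otimes n}}=\rho^n(c)$.
With these identifications the second condition in the correspondence definition reads
\[\sum_k \xi(k)^\ast\,\rho(x)\,\rho(\xi(k))\approx_\ve 0\]
up to index bookkeeping: $\xi(k)^\ast\otimes 1_{\cE^{\otimes n}}$ becomes $\rho^n(\xi(k)^\ast)$, and the right-hand factor becomes $\rho^m(\xi(k))$. This is exactly the expression $\ip{\rho^n(\xi),a\rho^m(\xi)}$ appearing in the definition for $\rho$, with $a=x$. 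The first conditions, $\ip{\xi,b\xi}\approx b$ versus $\sum\xi(n)^\ast d\xi(n)\approx d$, coincide verbatim.
The only care needed is that the "$x\otimes 1_\cE$" in the correspondence definition shifts the level by one. I would check that as $(n,m)$ ranges over pairs with $n\neq m$ in both definitions, the pairs of exponents match. This holds because in the $\rho$-definition $a\in A$ sits at a position shifted by one relative to $D\cong B_{[0,1]}$, which is consistent with the calculation in Lemma \ref{Lem:central free 1}.

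For the second statement, let $\cE$ be centrally free and consider $\cE\otimes\IK$ over $A\otimes\IK$.
The algebra $D_{\IK}=(A\otimes\IK)\otimes 1+\IK(\cE\otimes\IK)$ equals $D\otimes\IK$, since $\IK(\cE\otimes\IK)=\IK(\cE)\otimes\IK$. Similarly $\IK((\cE\otimes\IK)^{\otimes m},(\cE\otimes\IK)^{\otimes n})=\IK(\cE^{\otimes m},\cE^{\otimes n})\otimes\IK$.
By the remark following the definition, it suffices to check the second condition on total sets, so I would take elementary tensors $x\otimes e_{ij}$. Similarly, by approximation and a finite-sum argument, it suffices to take $d=d_0\otimes f$ with $f$ a finite-rank matrix unit combination lying in some corner $p\IK p$, $p$ a finite projection.
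Given finitely many such data, I would pick $\xi$ for $\cE$ handling $d_0$ and the $x$'s simultaneously. This is possible by iterating as in Lemma \ref{Lem:central free 2}, or by taking a direct sum of the finitely many $d$'s into a matrix over $D$. I would then set $\xi'(k)=\xi(k)\otimes p$, viewed in $\ell^2(D\otimes\IK)$.
The element $\xi'$ has norm one on the relevant corner, and the estimates tensor with $p$. To get a genuinely normalized $\xi'$, I would either note that the conditions are only tested against $d$ living in $p$-corners, or add an orthogonal piece $\xi(k)\otimes(1-p)$ approximated by $\xi(k)\otimes(p_N-p)$; the cross terms vanish after compressing.

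The main obstacle is the index bookkeeping in the first item, matching the shift by $\otimes 1_\cE$ in the correspondence definition with the exponents $(n,m)$ in the $\rho$-definition. I would handle it by writing the isomorphism ${}_{\rho}A^{\otimes k}\cong{}_{\rho^k}A$ explicitly before comparing the two conditions.
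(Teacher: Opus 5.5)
Your first item is the intended unwinding of definitions, and the conclusion holds, but the key identification should be stated precisely. In the $\rho$-definition, $a\in A$ means $a\in B_0=\iota_0(A)$ (the paper's standing convention), and $\rho$ acts on $\fB$ as $\sigma^{-1}$. Under $B_{[0,1]}\cong A+\rho(A)$ via $\iota_1$ one has $\iota_0(x)=\iota_1(\rho(x))$. So $a$ corresponds to $\rho(x)=x\otimes 1_\cE$, and the two conditions coincide verbatim for the \emph{same} pair $(n,m)$; no exponent shift is involved.

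Read literally with $x\in A\subset\cM(A)$, your phrase ``with $a=x$'' is false, since in general $\rho(x)\neq x$ and $\rho(x)\notin A$. If you absorb the discrepancy by shifting exponents instead, you only get part of the picture. The correspondence condition at $(n,m)$ with $n,m\geq 1$ is $\rho$ applied to the $\rho$-condition at $(n-1,m-1)$, which is fine because $\rho$ is injective on $\cM(A)$. But the pairs with $\min\{n,m\}=0$ then have no counterpart, and the ``if'' direction is left open.

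The genuine gap is in the second item. The first condition is not additive in $d$, because the witness $\xi$ must be common to all summands. So ``by approximation and a finite-sum argument it suffices to take $d=d_0\otimes f$'' is not valid. A general $d\in D\otimes\IK$ is approximated by $\sum_{i,j\leq N} d_{ij}\otimes e_{ij}$. With $\xi'(k)=\xi(k)\otimes p$, you then need $\sum_k \xi(k)^\ast d_{ij}\xi(k)\approx d_{ij}$ for all $i,j$ simultaneously. That is a finite-set version of the hypothesis, whereas the definition supplies a witness for one $d$ at a time. Fixing one element does not fix others: in $M_2$, $\xi=(\mathrm{diag}(1,-1),0,0,\dots)$ fixes $e_{11}$ exactly but sends $e_{12}$ to $-e_{12}$.

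Neither remedy you propose closes this.
\begin{itemize}
\item The iteration of Lemma \ref{Lem:central free 2} tracks exactly one element through the first condition: $b_{j+1}\approx b_j$, with $\xi_j$ chosen for $b_j$ only. The many terms it disposes of are second-condition terms. Those stay small under later compressions only because compressions are contractive, and nothing analogous keeps a second element approximately fixed.
\item Packing the $d_{ij}$ into a matrix over $D$ means applying the definition to $M_N(D)$, i.e.\ using central freeness of $\cE\otimes M_N$. That is what you are trying to prove, so the argument is circular.
\end{itemize}

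You must therefore either prove that the single-element definition self-improves to finitely many $d$'s, which needs a real argument, or adopt the finite-set reading of the definition. Under the finite-set reading, $\xi\otimes p$ works immediately, using $(\xi(k)\otimes p)\otimes 1=(\xi(k)\otimes 1)\otimes p$ and $(x_0\otimes q)\otimes 1_{\cE\otimes\IK}=(x_0\otimes 1_\cE)\otimes q$.

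Your normalization worry is moot: $\ip{\xi\otimes p,\xi\otimes p}=\ip{\xi,\xi}\otimes p$, so $\|\xi\otimes p\|=1$ already.
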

\begin{Def}
Let ${\rm T} \colon \cE \rightarrow \IB(\cE, \cE^{\otimes 2})$ denote the criation operator:
\[{\rm T}_\xi \zeta:=\xi \otimes \zeta \quad{\rm~for~}\xi, \zeta \in \cE.\]
We say that $I\lhd D:= A\otimes 1_\cE +\IK(\cE) \subset \IB(\cE)$ is $\cE$-invariant if it satisfies
\[{\rm T}_\cE ^\ast (I\otimes 1_{\cE}) {\rm T}_\cE \subset I,\quad (D \otimes 1_{\cE}) \cap (T_\cE \cdot I\cdot T_\cE^\ast) \subset I \otimes 1_\cE.\]
\end{Def}
\begin{Lem}
When $\cE={}_\rho A$ for some $\rho \colon A \rightarrow \cM(A)$,
$I\lhd D$ is $\cE$-invariant if and only if 
it satisfies condition $(\ref{equation:inv2})$ in Lemma $\ref{Lem:idclass}$.
\end{Lem}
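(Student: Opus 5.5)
The plan is to make the identifications explicit when $\cE={}_\rho A$, and then translate the two defining conditions of $\cE$-invariance one at a time into the two conditions of (\ref{equation:inv2}).

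\textbf{Step 1: identifications.} Since $\rho$ is faithful and non-degenerate:
\begin{itemize}
\item $\IK({}_\rho A)=A$, acting by left multiplication, and $\IB({}_\rho A)=\cM(A)$.
\item The left action is $a\otimes 1_\cE=\rho(a)$, so $D=A\otimes 1_\cE+\IK(\cE)$ is exactly $A+\rho(A)\subset\cM(A)$.
\item The map $a\otimes b\mapsto \rho(a)b$ gives an isomorphism ${}_\rho A\otimes_A {}_\rho A\cong {}_{\rho^2}A$. Its left action is $\rho^2$, as one checks on $\rho(c)a\otimes b$.
\item Under this isomorphism, $d\otimes 1_\cE$ becomes $\rho(d)$ for every $d\in\cM(A)$, since $(da)\otimes b\mapsto\rho(d)\rho(a)b$.
\item The creation operator ${\rm T}_\xi$ becomes left multiplication by $\rho(\xi)\in\cM(A)$, and ${\rm T}_\xi^\ast$ becomes multiplication by $\rho(\xi^\ast)$.
\end{itemize}

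\textbf{Step 2: translate the conditions.} Using Step 1, the two conditions defining $\cE$-invariance of $I\lhd D$ become
\[\rho(A)^\ast\rho(I)\rho(A)=\rho(AIA)\subset I,\qquad \rho(D)\cap \cl\big(\rho(A)I\rho(A)\big)\subset \rho(I).\]
By injectivity of $\rho$, the second reads: if $d\in D$ and $\rho(d)\in\cl(\rho(A)I\rho(A))$, then $d\in I$.

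\textbf{Step 3: second conditions, assuming the first conditions agree.}
\begin{itemize}
\item Since $\rho(A)\subset D$ and $I\lhd D$, we have $\cl(\rho(A)I\rho(A))\subset I$. So $\rho^{-1}(I)\cap D\subset I$ implies $\cE$-invariance.
\item Conversely, let $d\in D$ with $\rho(d)\in I$, and let $(e_n)_n$ be an approximate unit of $A$. Then $\rho(e_n)\rho(d)\rho(e_n)\in \rho(A)I\rho(A)$. Moreover $\rho(e_nde_n)\to\rho(d)$ in norm, because $e_nde_n\to d$ in norm: $d\in A+\rho(A)$ and $\rho(A)A\subset A$ (the argument of Lemma \ref{Lem:mult}). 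Hence $\rho(d)\in\cl(\rho(A)I\rho(A))$, and the translated second condition gives $d\in I$.
\end{itemize}

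\textbf{Step 4: first conditions (main obstacle).} It remains to show that $\rho(AIA)\subset I$ is equivalent to $D\rho(I)\subset I$.
\begin{itemize}
\item Because $I\lhd D$, the condition $D\rho(I)\subset I$ is equivalent to $A\rho(I)\subset I$ together with $\rho(A)\rho(I)=\rho(AI)\subset\rho(I)$.
\item ($\Leftarrow$) Assume $\rho(AIA)\subset I$. By non-degeneracy and Cohen factorization, $A=A\rho(A)$. Also every $i\in I$ is a limit of $e_nie_n$ with $e_n\in A$: this holds since $I\subset D$ and the approximate unit of $A$ acts as one on $D$ (same argument as in Step 3). So for $a=a'\rho(e)$ we get $a\rho(i)=\lim a'\rho(e\,e_nie_n)\in A\cdot I\subset I$.
\item ($\Rightarrow$) Assume $A\rho(I)\subset I$. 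Then $\rho(AIA)=\rho(A)\rho(I)\rho(A)$. Approximate $\rho(I)$ from the left by $\rho(e_n)\rho(I)$, and rewrite $\rho(e_n)\rho(i)$ using $\rho(e_n)\in D$. This lands in $D\cdot(\rho(A)\rho(I))$, and repeating the approximation brings it into $D\cdot A\rho(I)\subset I$.
\end{itemize}
The technical point I expect to need care is the bookkeeping of these approximations: every element must genuinely pass through $A$, which is where Cohen's factorization theorem and the standing non-degeneracy of $\rho$ are used.
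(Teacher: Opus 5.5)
Your identifications in Steps 1--2 are correct. The paper states this lemma without proof, and this translation is evidently the intended one. The two $\cE$-invariance conditions become $\rho(AIA)\subset I$ and the implication ``$d\in D$, $\rho(d)\in\cl(\rho(A)I\rho(A))\Rightarrow d\in I$''. The first bullet of Step 3 is also fine.

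\textbf{Main gap: Step 4.} The condition $D\rho(I)\subset I$ splits as $A\rho(I)\subset I$ together with $\rho(A)\rho(I)=\rho(AI)\subset I$. The inclusion $\rho(AI)\subset\rho(I)$ that you wrote holds for every ideal $I\lhd D$ and carries no information. This has two consequences.
\begin{itemize}
\item Your ($\Rightarrow$) bullet tries to deduce $\rho(AIA)\subset I$ from $A\rho(I)\subset I$ alone, and that implication is false. Take $I=A$, which is an ideal of $D$ because $A\lhd\cM(A)$. Then $A\rho(I)=A\rho(A)\subset I$ always holds. But $\cl(\rho(AIA))=\rho(A)$, and this lies in $I$ only when $\rho$ is proper. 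Concretely, ``repeating the approximation'' can never land in $\cl(D\cdot A\rho(I))$: that set is contained in $A$, while $\rho(AIA)$ need not be.
\item Your ($\Leftarrow$) bullet only establishes $A\rho(I)\subset I$ and never checks the other half, $\rho(A)\rho(I)\subset I$.
\end{itemize}

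\textbf{Second error: the approximate unit.} Steps 3 and 4 both use the claim that the approximate unit $(e_n)_n$ of $A$ ``acts as one on $D$'', i.e.\ $e_nde_n\to d$ in norm for $d\in D$. This is false. Since $e_nde_n\in A$ and $A$ is norm closed, the claim would force $D=A$. So it fails exactly in the non-proper case, where the lemma has content; for instance it fails for $\chi_S\in J$ in Remark \ref{Rem:idclass}.
\begin{itemize}
\item In Step 3 your conclusion survives, but for a different reason. One option: $\rho(d)\in I\subset D$ forces $d\in A$. Indeed, write $d=a+\rho(b)$ and $\rho(d)=a'+\rho(b')$; then $\rho(a+\rho(b)-b')=a'\in A$, so Lemma \ref{Lem:mult} gives $\rho(b)\in A$.
\item More simply for Step 3: $(\rho(e_n))_n$ \emph{is} a norm approximate unit for $D$. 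This holds because $\rho(e_n)\to 1$ strictly and $\rho(e_n)\rho(b)\rho(e_n)=\rho(e_nbe_n)$. Apply it directly to $x=\rho(d)\in I$.
\item In Step 4, your displayed limit $a\rho(i)=\lim_n a'\rho(e\,e_nie_n)$ is correct, but only because $ei\in A$.
\end{itemize}

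\textbf{Corrected Step 4.} With the correct splitting, Step 4 is short.
\begin{itemize}
\item If $D\rho(I)\subset I$, then $\rho(A)\rho(I)\rho(A)\subset I\rho(A)\subset I$.
\item Conversely, suppose $\rho(AIA)\subset I$, and let $a\in A$, $i\in I$. Then $ai\in A$, so $ai=\lim_n aie_n\in\cl(AIA)$. Hence $\rho(A)\rho(I)=\rho(AI)\subset I$.
\item Finally, write $a=a'\rho(e)$ by Cohen factorization. Then $a\rho(i)=a'\rho(ei)\in A\cdot I\subset I$, which gives $A\rho(I)\subset I$.
\end{itemize}
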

\begin{Thm}
Let $\cE$ be a centrally free \Cs-correspondence over $A$.
Then there is a bijective correspondence between ideals of $\Cn{\cE}$ and $\cE$-invariant ideals of $D:=A\otimes 1_\cE + \IK(\cE)$.
The correspondence is given by sending $I \lhd \Cn{\cE}$ to $I\cap D \lhd D$.
\end{Thm}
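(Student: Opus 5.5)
We reduce to the case of a single $\ast$-homomorphism by stabilization, and then apply Theorem \ref{Thm:idealclass1}. By Proposition \ref{Prop:st}, the exterior tensor product $\cE\otimes\IK$ is isomorphic to ${}_\rho(A\otimes\IK)$ for some faithful non-degenerate $\rho\colon A\otimes\IK\to\cM(A\otimes\IK)$. Moreover, $\Cn{\cE\otimes\IK}\cong\Cn{\cE}\otimes\IK$ canonically, compatibly with the inclusions of $A\otimes\IK$ and of $\IK(\cE\otimes\IK)=\IK(\cE)\otimes\IK$. By the preceding lemma, central freeness of $\cE$ passes to $\cE\otimes\IK$, and hence to $\rho$. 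Theorem \ref{Thm:idealclass1} then tells us that $I'\mapsto I'\cap D_\rho$, with $D_\rho:=(A\otimes\IK)+\rho(A\otimes\IK)$, is a lattice isomorphism from the ideals of $\Cn{\rho}$ onto the ideals of $D_\rho$ satisfying condition (\ref{equation:inv2}). By the lemma just before the statement, these are exactly the $({}_\rho(A\otimes\IK))$-invariant ideals of $D_\rho$.

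Next we transport this back through the stabilization. The ideals of $\Cn{\cE}\otimes\IK$ correspond bijectively to the ideals of $\Cn{\cE}$ via $I\mapsto I\otimes\IK$, since $\IK$ is simple and nuclear. Under the identification ${}_\rho(A\otimes\IK)\cong\cE\otimes\IK$, the algebra $D_\rho$, viewed inside $\IB({}_\rho(A\otimes\IK))$, becomes $(A\otimes\IK)\otimes 1+\IK(\cE\otimes\IK)=D\otimes\IK$. Here we check that the identification of $\Cn{\rho}$ with $\Cn{\cE\otimes\IK}$ sends $A\otimes\IK$ and $\rho(A\otimes\IK)\subset\IK$-part correctly, namely that $\rho(a)$ corresponds to $a\otimes 1_\cE$ under $\IK(\cE)\subset\Cn{\cE}$ when $a\otimes 1_\cE$ is compact. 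The ideals of $D\otimes\IK$ are again of the form $J\otimes\IK$ with $J\lhd D$. We also have $(I\otimes\IK)\cap(D\otimes\IK)=(I\cap D)\otimes\IK$, which follows by compressing with matrix units $e_{ij}\in\IK$.

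It remains to verify that $J\otimes\IK$ is $(\cE\otimes\IK)$-invariant if and only if $J$ is $\cE$-invariant. Both conditions in the definition are expressed through the creation operators ${\rm T}$ and the amplification $\bullet\otimes 1_\cE$, and both commute with the stabilization: ${\rm T}_{\cE\otimes\IK}$ is ${\rm T}_\cE\otimes\IK$ up to the obvious identifications. So the first condition reduces entrywise via matrix units. For the second condition, we also use $((D\otimes\IK)\otimes 1)\cap X=((D\otimes 1)\cap X_0)\otimes\IK$ for $X=X_0\otimes\IK$, again obtained by cutting with $e_{11}$. Composing the three bijections gives the claimed correspondence $I\mapsto I\cap D$.

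The main obstacle is the bookkeeping in the middle step. We must ensure that the abstract isomorphism $\cE\otimes\IK\cong{}_\rho(A\otimes\IK)$ from Proposition \ref{Prop:st} carries the subalgebra $D\otimes\IK\subset\IB(\cE\otimes\IK)$ exactly onto $A+\rho(A)$, with $\IK(\cE\otimes\IK)$ matching $\rho(A\otimes\IK)\cap(A\otimes\IK)$ plus $A\otimes\IK$. This holds because $\IK({}_\rho B)=B$ acting by left multiplication. The invariance translation for the second condition needs similar care.
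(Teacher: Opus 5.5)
Your proposal is correct and is essentially the argument the paper intends (the paper gives no separate proof and leaves the reduction implicit): stabilize via Proposition \ref{Prop:st}, transfer central freeness with the preceding lemma, apply Theorem \ref{Thm:idealclass1}, convert condition (\ref{equation:inv2}) into ${}_\rho(A\otimes\IK)$-invariance by the lemma just before the statement, and descend through $\otimes\IK$ using $(I\otimes\IK)\cap(D\otimes\IK)=(I\cap D)\otimes\IK$ together with the compatibility of creation operators and slice maps with the stabilization. The one piece of bookkeeping worth stating cleanly is the following: under $\cE\otimes\IK\cong{}_\rho(A\otimes\IK)$, the compact part $\IK(\cE\otimes\IK)$ goes to $A\otimes\IK=\IK({}_\rho(A\otimes\IK))$, while $(A\otimes\IK)\otimes 1$ goes to $\rho(A\otimes\IK)$, which is the coefficient copy $B_0$ inside $\Cn{\rho}$, consistent with your final sentence; your earlier phrase placing $\rho(A\otimes\IK)$ in the ``$\IK$-part'' has this backwards.
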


\section{Quasi-free actions on $\Cn{n}$}\label{section:quasi-free}
In this section,
we consider quasi-free actions of locally compact groups on the Cuntz algebras.
Although it is also possible to apply the strategy in this section to graph \Cs-algebras, to keep this article in a suitable length, we concentrate on the Cuntz algebras. 
The Hao--Ng isomorphism \cite{HN}, \cite{KR}, \cite{DT} (see also Theorem \ref{Thm:HN}) yields that
the reduced crossed product of a quasi-free action on a Cuntz algebra is isomorphic to the Cuntz--Pimsner algebra of a \Cs-correspondence over the reduced group \Cs-algebra of the acting group.
Using this isomorphism together with our results in Sections \ref{section:simple} and \ref{section:simplepjl},
we characterize the simplicity of their reduced crossed products.
We note that related results have been studied for abelian groups in \cite{MR2016248,Kat,MR623751}. 
For a quasi-free action of a compact group,
the crossed product algebra and the fixed point algebra are identified with a corner of a graph \Cs-algebra in \cite{MR1670363,MR1432596,Izuqp}.
Our approach also conceptually explains these isomorphisms in terms of a \Cs-correspondence.

\begin{no}
Let $G$ be a locally compact group.
Throughout this section, all unitary representations are assumed to be nonzero and strongly continuous.
Let $\sigma\colon G\to \cU(\cH)$ and $\sigma_{i}\colon G\to \cU(\cH_{i})$ ($i=1,2$) be unitary representations of $G$.
\begin{itemize}
 \item We write $1_{\hat{G}}$ for the trivial (one-dimensional) representation of $G$.
 \item Let $\cH^G$ denote the $G$-fixed point space of $\sigma$.
 \item Let $\alpha_\sigma\colon G \acts \Cn{\cH}$ denote the quasi-free action of $\sigma$.
 \item Let $\rg(G)$ denote the reduced group \Cs-algebra of $G$.
 \item Denote by $(\sigma_1, \sigma_2)$ the space of intertwiners from $\sigma_{2}$ to $\sigma_{1}$, that is,
\[(\sigma_{1},\sigma_{2}):=\{T\in \IB(\cH_{2},\cH_{1}): T\sigma_{2}(g)=\sigma_{1}(g)T\;\text{ for all }\;g\in G\}.\]
Note that when $\sigma_1$ is irreducible, $(\sigma_{1},\sigma_{2})$ forms a Hilbert space
with respect to the inner product
\[\ip{T_1, T_2}:= T_1 T_2^\ast \in \IC\cdot 1_{\cH_1} \cong \IC\]
 \item When $\sigma$ is finite-dimensional,
 we denote its conjugate representation by $\overline{\sigma}$.
 \item Denote by $F(\sigma)$ the (full) Fock representation of $\sigma$:
 \[F(\sigma):=\bigoplus_{n=0}^{\infty}\sigma^{\otimes n}\colon G\to \cU\bigg(\bigoplus_{n=0}^{\infty}\cH^{\otimes n}\bigg).\]
 \item 
 When $\sigma_1$ is weakly contained in $\sigma_2$,
 we write $\sigma_{1}\prec\sigma_{2}$.
 If $\sigma_{1}$ and $\sigma_{2}$ are weakly equivalent,
 then we write $\sigma_{1}\sim\sigma_{2}$.
 When $\sigma_1$ is conjugate to a subrepresentation of $\sigma_2$, we write $\sigma_1 \leq \sigma_2$.
 \item Denote by $\hat{G}$ the unitary dual of $G$, that is, the set of unitary equivalence classes of irreducible unitary representations of $G$.
 \item Denote by $\hat{G}_{w}$ the set of weak equivalence classes of irreducible unitary representations of $G$.
 We denote by $\hat{G}_{w,\prec\lambda}$ the subset of $\hat{G}_{w}$ consisting of those weakly contained in the left regular representation $\lambda$.
 \item Whenever necessary, we identify $\hat G$ (resp.\ $\hat G_w$, $\hat G_{w,\prec\lambda}$) with a set of unitary representations by fixing a choice of representatives.
 \item When $\sigma\prec\lambda$, we use the same symbol $\sigma \colon \rg(G)\to \IB(\cH)$
 to denote its induced $*$-representation.
 \item Unless otherwise specified, for a unitary representation $\pi$, we denote by $\cH_\pi$ its underlying Hilbert space.
\end{itemize}
\end{no}

\begin{Def}
Let $\pi\colon G\to \cU(\cH_{\pi})$ be a unitary representation.
 We define the \Cs-correspondence $\cH_{\pi}\rtimes_{\rm r} G$ over $\rg(G)$ as follows:
 \begin{itemize}
 \item As a right Hilbert \Cs-module, it is the exterior tensor product $\cH_{\pi}\otimes\rg(G)$.
 \item The left $\rg(G)$-action on $\cH_{\pi}\rtimes_{\rm r} G$ is defined via the unitary representation
 \[\pi\otimes\lambda\colon G\to \cU(\cH_{\pi}\rtimes_{\rm r} G).\]
 \end{itemize}
 Clearly it satisfies Assumption \ref{ass:setting}.
\end{Def}
\begin{Rem}\label{Rem:quasi free crossed}
When we regard $\cH_\pi$ as a $G$-\Cs-correspondence over $\IC$ via $\pi$,
the \Cs-correspondence $\cH_{\pi}\rtimes_{\rm r} G$ is nothing but its reduced Hao--Ng \Cs-correspondence.
We thus have a natural isomorphism
\[\Cn{\cH_{\pi}}\rca{\alpha_\pi} G \cong \Cn{\cH_{\pi}\rtimes_{\rm r} G}.\]
This observation is fundamental throughout this section.
\end{Rem}

We first characterize the minimality of $\cH_{\pi}\rtimes_{\rm r} G$, in the sense of Definition \ref{Def:IdC*corr}.

\begin{Lem}\label{lem_minimal}
 Let $G$ be a locally compact group. Let $\pi\colon G\to \cU(\cH_{\pi})$ be a unitary representation.
 \begin{itemize}
 \item
 If $\pi$ is finite-dimensional,
 then the following two conditions are equivalent.
 \begin{enumerate}\upshape
 \item $\cH_{\pi}\rtimes_{\rm r} G$ is minimal.
 \item There is no unitary representation $\mu$ of $G$ satisfying $\mu\prec\lambda$, $\mu\nsim\lambda$, and $\pi\otimes\mu\sim\mu$.
 \end{enumerate}
 \item
 If $\pi$ is infinite-dimensional, then the following two conditions are equivalent.
 \begin{enumerate}[label=(\arabic*$^\prime$)]\upshape
 \item $\cH_{\pi}\rtimes_{\rm r} G$ is minimal.
 \item There is no unitary representation $\mu$ of $G$ satisfying $\mu\prec\lambda$, $\mu\nsim\lambda$, and $\pi\otimes\mu\prec\mu$.
 \end{enumerate}
 \end{itemize}
\end{Lem}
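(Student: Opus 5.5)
We translate Definition \ref{Def:IdC*corr} into the language of representations. Ideals $I\lhd \rg(G)$ correspond to closed subsets of the dual of $\rg(G)$. Equivalently, each ideal is the kernel $\ker\mu$ of some unitary representation $\mu\prec\lambda$, with $I\subset \ker\mu$ meaning $\mu$ weakly contained in the complementary closed set. Properness of $I$ corresponds to $\mu\neq 0$, i.e.\ $\mu$ exists, and $I\neq 0$ corresponds to $\mu\nsim\lambda$. The plan is to compute the two invariance conditions for $I=\ker\mu$ in terms of weak containment involving $\pi\otimes\mu$.

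The first condition is $\ip{\cE,\rho(I)\cE}\subset I$, where $\cE=\cH_\pi\rtimes_{\rm r}G$. Inner products of the form $\ip{\xi\otimes x,(\pi\otimes\lambda)(f)(\eta\otimes y)}$ are $x^\ast\,\big(\ip{\xi,\pi(\cdot)\eta}f\big)\,y$, i.e.\ matrix-coefficient multiples of $f\in I$. Hence the condition says that $I$ is stable under multiplication by coefficients of $\pi$, after Fell's absorption identifies $\pi\otimes\lambda$ with a multiple of $\lambda$. Dually, this means $\ker\mu\subset\ker(\bar\pi\otimes\mu)$. Since $\pi\otimes\mu$ and $\bar\pi\otimes\mu$ play symmetric roles via the inner product, we expect it to be equivalent to $\pi\otimes\mu\prec\mu$ after replacing $\mu$ by its closed hull. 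More concretely, the induced representation of $\rg(G)$ on $\cE\otimes_{\rg(G)}\cH_\mu$ is exactly $\pi\otimes\mu$. The first condition therefore says that $\cE\otimes_{\rg(G)}\cH_\mu$, computed via $\ip{\cE,I\cE}$, is annihilated by $I$, i.e.\ $\pi\otimes\mu\prec\mu$. So invariance condition one is equivalent to $\pi\otimes\mu\prec\mu$.

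The second condition is $\rho^{-1}(\IK(\cE I))\subset I$. When $\pi$ is infinite-dimensional, $\IK(\cE)\cap\rho(\rg(G))=0$: any $\rho(a)$ compact would be a compact operator on $\cH_\pi\otimes \rg(G)$ commuting with $\pi\otimes\lambda$, which by Fell absorption is $a\otimes 1$ on an infinite-multiplicity space, hence zero. The condition is then automatic, which gives $(1')\Leftrightarrow(2')$. When $\pi$ is finite-dimensional, $\IK(\cE)=\IB(\cE)=M_{\dim\pi}(\rg(G))$, so $\rho^{-1}(\IK(\cE I))=\rho^{-1}(M_d(I))=\ker(\pi\otimes\lambda$ restricted through $\mu)$, which is $\ker(\pi\otimes\mu)$. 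The condition $\ker(\pi\otimes\mu)\subset\ker\mu$ reads $\mu\prec\pi\otimes\mu$, and combined with the first condition gives $\pi\otimes\mu\sim\mu$, which is $(1)\Leftrightarrow(2)$. The main obstacle is making the first identification rigorous, namely that $\ip{\cE,\rho(I)\cE}\subset I$ is exactly $\pi\otimes\mu\prec\mu$. We would prove it by showing that the representation of $\rg(G)$ on $\cE\otimes_{\rg(G)}\cH_\mu$ is unitarily equivalent to $\pi\otimes\mu$, then using the general fact that $\ip{\cE,J\cE}\subset J$ iff $J$ kills $\cE\otimes_A (A/J)$-representations, together with the closed-set/ideal duality for $\rg(G)$.
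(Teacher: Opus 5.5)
\paragraph{The gap.} Your outline follows the paper's proof. You write a nontrivial ideal as $\ker\mu$ with $\mu\prec\lambda$ and show that the first invariance condition is $\pi\otimes\mu\prec\mu$. In the finite-dimensional case you use properness and $(\id\otimes\mu)\circ(\pi\otimes\lambda)=\pi\otimes\mu$ to turn the second condition into $\mu\prec\pi\otimes\mu$. In the infinite-dimensional case you make the second condition vacuous via $(\pi\otimes\lambda)(\rg(G))\cap(\IK(\cH_\pi)\otimes\rg(G))=0$. The step you flag as the main obstacle is the easy one. The genuine gap is this last equality, which is most of the paper's proof and which your one-line argument does not establish.

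Your sentence contains two problems. First, $\rho(a)$ does not commute with $\pi\otimes\lambda$ unless $a$ is central. More seriously, the Fell absorption unitary $W$, $(W\xi)(g)=\pi(g^{-1})\xi(g)$, is a Hilbert-space object on $\cH_\pi\otimes L^2(G)$. It is not an adjointable operator on the module $\cH_\pi\otimes\rg(G)$, and in general it does not normalize $\IK(\cH_\pi)\otimes\rg(G)$: $W(e_{\xi,\eta}\otimes\lambda_g)W^*$ is $1\otimes\lambda_g$ multiplied by the non-constant field $h\mapsto e_{\pi(h^{-1})\xi,\pi(h^{-1}g)\eta}$. So from $\rho(a)\in\IK(\cE)$ nothing follows directly about $1\otimes\lambda(a)=W\rho(a)W^*$, and ``$a\otimes 1$ on an infinite-multiplicity space'' is not yet a compactness statement.

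What is missing is a substitute for compactness that survives conjugation by $W$. The paper first integrates the formula above against $a\in C_c(G)$ to get
$W(\IK(\cH_\pi)\otimes\rg(G))W^*\subset\csp\big(C_b(G,\IK(\cH_\pi))\cdot(1\otimes\rg(G))\big)$.
Every slice $\id\bar\otimes\omega$, with $\omega$ a vector functional on $\IB(L^2(G))$, maps this space into $\IK(\cH_\pi)$, whereas it maps $1\otimes\lambda(a)$ to $\omega(\lambda(a))1_{\cH_\pi}$. Only here is infinite-dimensionality used: $\IK(\cH_\pi)\cap\IC 1=0$ forces $\omega(\lambda(a))=0$ for all $\omega$, hence $a=0$. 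Your intuition is right, but without an argument of this kind $(1')\Leftrightarrow(2')$ is not proved.

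\paragraph{The rest.} The remainder of the proposal is sound, and one part takes a different route from the paper.

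For the first condition, your induced-representation argument is correct. The algebra $\rg(G)$ acts on $\cE\otimes_{\rg(G)}\cH_\mu\cong\cH_\pi\otimes\cH_\mu$ by $\pi\otimes\mu$, and $\|(a\eta)\otimes h\|^2=\ip{h,\mu(\ip{a\eta,a\eta})h}$. Together with polarization, this shows $\ip{\cE,J\cE}\subset J=\ker\mu$ if and only if $J\subset\ker(\pi\otimes\mu)$. This is a more conceptual replacement for the paper's slice identity $\mu\circ(\omega_{\xi,\eta}\otimes\id)\circ(\pi\otimes\lambda)=(\omega_{\xi,\eta}\bar\otimes\id)\circ(\pi\otimes\mu)$, and it works for any correspondence.

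Drop the detour through $\bar\pi$, however. The coefficients that appear are $g\mapsto\ip{\xi,\pi(g)\eta}$, which are coefficients of $\pi$. Moreover, $\bar\pi\otimes\mu\prec\mu$ is not equivalent to $\pi\otimes\mu\prec\mu$: for $G=\IR$, a nontrivial character $\pi$, and $\mu$ with spectrum a half-line, exactly one of the two holds.

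Finally, $\IK(\cE)\neq\IB(\cE)$ for non-discrete $G$. Your finite-dimensional argument only needs $\rho(\rg(G))\subset\IK(\cE)=\IB(\cH_\pi)\otimes\rg(G)$, which is true.
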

\begin{proof}
 We in fact describe all $(\cH_\pi \rtimes_{\rm r} G$)-invariant ideals of $\rg(G)$, in the sense of Definition \ref{Def:IdC*corr}.
 The statement is its immediate consequence.
 
 Let $I\lhd\rg(G)$ be a nontrivial ideal.
 Choose a unitary representation $\mu \prec \lambda$ of $G$ with $I=\ker\mu$.
 Observe that
 $I=\ker \mu$ is ($\cH_{\pi}\rtimes_{\rm r} G$)-invariant if and only if the following two conditions hold true:
 \begin{align}
 \langle \cH_{\pi}\rtimes_{\rm r} G, \ker\mu \cdot (\cH_{\pi}\rtimes_{\rm r} G)\rangle&\subset\ker\mu,\label{eq invariant1}\\
 (\pi\otimes\lambda)^{-1}(\IK(\cH_{\pi})\otimes\ker\mu)&\subset\ker\mu.\label{eq invariant2}
 \end{align}
 We will reformulate these conditions in terms of unitary representations of $G$.
 For $\xi,\eta\in\cH_{\pi}$, let $\omega_{\xi,\eta}\colon \IB(\cH_{\pi})\to\mathbb{C}$ denote the vector functional
 given by $\omega_{\xi, \eta}(x):=\ip{x\xi, \eta}$ for $x\in \IB(\cH_{\pi})$.
 Condition (\ref{eq invariant1}) is equivalent to the next condition: for every $x\in\ker\mu$ and $\xi,\eta\in\cH_{\pi}$, one has
 \begin{equation}\label{eq kernel}
 (\omega_{\xi,\eta}\otimes\id_{\rg(G)})((\pi\otimes\lambda)(x))\in\ker\mu.
 \end{equation}
 Since
 \[ \mu \circ (\omega_{\xi,\eta}\otimes\id_{\rg(G)}) \circ (\pi\otimes\lambda) = (\omega_{\xi,\eta}\otimes\mu)\circ(\pi\otimes\lambda)=(\omega_{\xi,\eta}\bar{\otimes} \id_{\IB(\cH_{\mu})})\circ(\pi\otimes\mu)\]
 on $\rg(G)$ for $\xi, \eta \in \cH_\pi$,
 condition (\ref{eq kernel}) is equivalent to the equality
 \begin{align*}
0=(\omega_{\xi,\eta}\bar{\otimes} \id_{\IB(\cH_{\mu})})((\pi\otimes\mu)(x)).
 \end{align*}
 Note that the maps $\omega_{\xi,\eta}\bar{\otimes} \id_{\IB(\cH_{\mu})}$; $\xi, \eta \in \cH_\pi$ separate points of $\IB(\cH_\pi \otimes \cH_\mu)$.
 Thus we conclude that condition (\ref{eq invariant1}) is equivalent to the condition $\ker\mu\subset\ker (\pi\otimes\mu)$; that is, the relation $\pi \otimes \mu \prec \mu$.
 
 We next study condition (\ref{eq invariant2}) when $\pi$ is finite-dimensional.
 In this case the image of $\pi\otimes\lambda\colon \rg(G)\to \IB(\cH_{\pi}\rtimes_{\rm r} G)$ sits in $\IK(\cH_{\pi}\rtimes_{\rm r} G)=\IB(\cH_\pi) \otimes \rg(G)$.
 (In other words, $\cH_\pi \rtimes_{\rm r} G$ is proper.)
 Thus, one has the decomposition
 \[\pi\otimes\mu=(\id_{\IB(\cH_{\pi})}\otimes\mu)\circ(\pi\otimes\lambda)\colon\rg(G)\to \IB(\cH_{\pi})\otimes \IB(\cH_{\mu}).\]
 Since $\pi\otimes\lambda \sim \lambda$, we obtain
 \[\ker (\pi\otimes\mu)=(\pi\otimes\lambda)^{-1}(\IK(\cH_{\pi})\otimes\ker\mu).\]
 Thus, the inclusion (\ref{eq invariant2}) is equivalent to $\ker (\pi\otimes\mu)\subset\ker\mu$, that is, the relation $\mu \prec \pi \otimes \mu$.
 In summary, we have shown the bijective correspondence between ($\cH_\pi \rtimes_{\rm r} G$)-invariant ideals of $\rg(G)$
 and weak equivalence classes of unitary representations $\mu$ of $G$ satisfying $\mu \prec \lambda$ and $\mu \sim \pi \otimes \mu$.
 This in particular proves the equivalence of conditions (1) and (2) when $\pi$ is finite-dimensional. 

 We next consider the infinite-dimensional case.
We claim that if $\pi$ is infinite-dimensional, then one has the equality
 \begin{equation}\label{eq not proper}
\left(\pi\otimes\lambda\right)(\rg(G))\cap\left(\IK(\cH_{\pi})\otimes\rg(G)\right)=0.
 \end{equation} 
 (In particular, $\cH_{\pi}\rtimes_{\rm r} G$ is non-proper.)
 Equation (\ref{eq not proper}) implies that
 condition (\ref{eq invariant2}) is redundant.
 Let $W\in \IB(\cH_{\pi}\otimes L^{2}(G))$ be the unitary operator defined by $(W\xi)(g):=\pi(g^{-1})(\xi(g))$ for $\xi\in \cH_\pi \otimes L^2(G)$ and $g\in G$. Here we identify $\cH_\pi \otimes L^2(G)$ with $L^2(G, \cH_\pi)$ in the obvious way.
 Then one has $\ad(W)\circ (\pi\otimes\lambda)=1_{\cH_{\pi}}\otimes \lambda$ (the Fell absorption principle).
 Hence, to prove (\ref{eq not proper}), it suffices to show the equality
 \[W(\IK(\cH_{\pi})\otimes\rg(G))W^{*}\cap\left(1_{\cH_{\pi}}\otimes\rg(G)\right)=0.\]
 For each $\xi,\eta\in\cH_{\pi}$ and $g\in G$, observe that the operator
 \[W(e_{\xi,\eta}\otimes\lambda_{g})W^{*}(1\otimes\lambda_{g}^{*})\in \IB(\cH_{\pi}\otimes L^{2}(G))\] is equal to the multiplication operator of the function $f_g \in C_b(G,\IK(\cH_\pi))$ defined by
 \[f_g \colon G\ni h\mapsto e_{\pi(h^{-1})\xi,\,\pi(h^{-1}g)\eta}\in\IK(\cH_{\pi}).\]
 Clearly, the map $G\ni g \mapsto f_g\in C_b(G,\IK(\cH_\pi))$ is continuous in norm.
 For any $a\in C_c(G) \subset \rg(G)$, one has
 \[W(e_{\xi, \eta} \otimes a)W^\ast = \int_G a(g) f_g \lambda_g dg.\]
 By approximating the function $g \mapsto a(g) f_g$ by simple Borel functions $s= \sum_{i=1}^n x_i \chi_{E_i}$; $x_i \in C_b(G,\IK(\cH_\pi))$, in the $L^1$-norm, we obtain approximations
 \[W(e_{\xi, \eta} \otimes a)W^\ast \approx \sum_{i=1}^n x_i \int_{E_i} \lambda_g dg \in \spa \Big(C_{b}(G,\IK(\cH_{\pi}))\cdot(1_{\cH_\pi} \otimes \rg(G))\Big).\] 
 This proves
 \[
 W(\IK(\cH_{\pi})\otimes\rg(G))W^{*}\subset \csp\Big(C_{b}(G,\IK(\cH_{\pi}))\cdot(1_{\cH_\pi} \otimes \rg(G))\Big).\]
Observe that
 \[\csp\Big(C_{b}(G,\IK(\cH_{\pi}))\cdot(1_{\cH_\pi} \otimes \rg(G))\Big)\cap (1_{\cH_\pi} \otimes \rg(G))=0.\]
 Indeed,
 if $x$ is an element of the intersection,
 then for any vector functional $\omega\in \IB(L^{2}(G))_{*}$,
 one has $(\id_{\IB(\cH_{\pi})} \bar{\otimes} \omega)(x)\in \IK(\cH_{\pi})\cap\IC1_{\cH_{\pi}}=0$.
 This implies $x=0$,
 and hence (\ref{eq not proper}) holds.

 In summary, we have shown the bijective correspondence between ($\cH_\pi \rtimes_{\rm r} G$)-invariant ideals of $\rg(G)$
 and weak equivalence classes of unitary representations $\mu$ of $G$ satisfying $\mu \prec \lambda$ and $\pi \otimes\mu \prec \mu$.
 This in particular proves the equivalence of conditions (1') and (2') when $\pi$ is infinite-dimensional. 
\end{proof}
\begin{Cor}\label{cor infinite quasi free}
 Let $\pi$ be a faithful infinite-dimensional unitary representation of a locally compact group $G$. Assume that $F(\pi)$ weakly contains some $\sigma \prec \lambda$.
 Then the following conditions are equivalent.
\begin{enumerate}\upshape
 \item $\Cn{\cH_\pi} \rca{\alpha_\pi} G$ is simple.
 \item There is no unitary representation $\mu$ of $G$ satisfying $\mu\prec\lambda$, $\mu\nsim\lambda$, and $\pi\otimes\mu\prec\mu$.
 \item $\lambda\prec F(\pi)$.
\end{enumerate}
\end{Cor}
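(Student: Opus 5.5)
By Remark \ref{Rem:quasi free crossed}, $\Cn{\cH_\pi}\rca{\alpha_\pi}G\cong \Cn{\cH_\pi\rtimes_{\rm r}G}$, so conditions (1) and (2) are both statements about the \Cs-correspondence $\cE:=\cH_\pi\rtimes_{\rm r}G$ over $\rg(G)$. By Lemma \ref{lem_minimal}, condition (2) is equivalent to the minimality of $\cE$. The plan is to prove (1)$\Leftrightarrow$(2) through this reformulation, and to prove (2)$\Leftrightarrow$(3) directly with weak containment calculus.

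\emph{(1)$\Leftrightarrow$(2).} The strategy is that $\cE$ is non-proper, and for non-proper correspondences simplicity is equivalent to minimality.
\begin{enumerate}
\item Minimality is necessary for simplicity. Pass to the stabilization ${}_\rho(\rg(G)\otimes\IK)$ given by Proposition \ref{Prop:st}. Theorem \ref{Thm:decomp} and Lemma \ref{Lem:minimal} show that a proper $\cE$-invariant ideal produces a proper $\sigma$-invariant ideal of $\fB$. This in turn produces a proper ideal of $\fB\rca{\sigma}\IZ$, which is stably isomorphic to $\Cn{\cE}$.
\item Non-properness of $\cE$. Since $\pi$ is infinite-dimensional, equation (\ref{eq not proper}) from the proof of Lemma \ref{lem_minimal} shows that $\cE$ is non-proper. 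Hence the stabilized $\rho$ is also non-proper.
\item Minimality is sufficient. Suppose $\cE$ is minimal. As noted in the proof of Theorem \ref{Thm:simpleproless} and in Remark \ref{Rem:simple}, non-properness forces the chain $B_{[n,\infty]}\lhd\fB$ to be strictly decreasing, so $\sigma$ is outer. This argument does not use the projectionless hypothesis. Proposition \ref{Prop:properly outer} then gives simplicity of $\fB\rca{\sigma}\IZ$, and hence of $\Cn{\cE}$.
\end{enumerate}
The remaining technical point is that Assumption \ref{ass:setting} requires $\rg(G)$ to be $\sigma$-unital. I would handle this by noting that the cited non-proper argument of \cite{Kit}, via \cite{OP}, does not need it; I regard this as a bookkeeping issue rather than the main difficulty.

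\emph{(3)$\Rightarrow$(2).} Let $\mu\prec\lambda$ satisfy $\pi\otimes\mu\prec\mu$.
\begin{enumerate}
\item Iterating gives $\pi^{\otimes n}\otimes\mu\prec\mu$ for every $n$, and therefore $F(\pi)\otimes\mu\prec\mu$.
\item Tensoring $\lambda\prec F(\pi)$ with $\mu$ gives $\lambda\otimes\mu\prec F(\pi)\otimes\mu\prec\mu$.
\item By Fell absorption, $\lambda\otimes\mu$ is a multiple of $\lambda$, so $\lambda\prec\mu$. Combined with $\mu\prec\lambda$, this gives $\mu\sim\lambda$.
\end{enumerate}

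\emph{(2)$\Rightarrow$(3).} This is the step that uses the hypothesis $\sigma\prec F(\pi)$ with $\sigma\prec\lambda$, and I expect it to be the main obstacle.
\begin{enumerate}
\item Set $\mu:=F(\pi)\otimes\sigma=\bigoplus_n\pi^{\otimes n}\otimes\sigma$.
\item Then $\mu\prec\lambda$. Indeed, $\pi^{\otimes n}\otimes\sigma\prec\pi^{\otimes n}\otimes\lambda$, and the latter is a multiple of $\lambda$ by Fell absorption.
\item Also $\pi\otimes\mu=\bigoplus_{n\ge1}\pi^{\otimes n}\otimes\sigma$ is a subrepresentation of $\mu$, so $\pi\otimes\mu\prec\mu$.
\item By (2), $\mu\sim\lambda$, that is, $\lambda\prec F(\pi)\otimes\sigma$.
\item Since $\sigma\prec F(\pi)$, we get $F(\pi)\otimes\sigma\prec F(\pi)\otimes F(\pi)=\bigoplus_{n,m}\pi^{\otimes(n+m)}$.
\item The last representation is a direct sum of copies of the summands of $F(\pi)$, hence weakly equivalent to $F(\pi)$. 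Therefore $\lambda\prec F(\pi)$.
\end{enumerate}
The faithfulness of $\pi$ does not appear to be needed in this argument. I would recheck whether it is implicitly used, for instance to guarantee that the weak containment relations above are between nonzero representations.
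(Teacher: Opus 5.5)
Your proof is correct. For (1)$\Leftrightarrow$(2) and (3)$\Rightarrow$(2) you follow the paper.

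For (1)$\Leftrightarrow$(2), the paper combines the Hao--Ng isomorphism, the non-properness (\ref{eq not proper}) and Lemma \ref{lem_minimal} with a direct citation of \cite[Theorem 2.3]{Kit}. You unpack that citation through Theorem \ref{Thm:decomp}, Lemma \ref{Lem:minimal} and Proposition \ref{Prop:properly outer}, exactly as in Remark \ref{Rem:simple}. The paper does not treat the $\sigma$-unitality issue either; it only asserts that $\cH_\pi\rtimes_{\rm r}G$ satisfies Assumption \ref{ass:setting}. Your (3)$\Rightarrow$(2) is the paper's chain $\lambda\sim\lambda\otimes\mu\prec F(\pi)\otimes\mu\prec\mu\prec\lambda$ verbatim.

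The one real difference is the test representation in (2)$\Rightarrow$(3).

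The paper sets $S:=\{\sigma\in\hat G:\sigma\prec\lambda,\ \sigma\prec F(\pi)\}$ and $\mu:=\bigoplus_{\sigma\in S}\sigma$. Then $\mu\prec\lambda$ and $\mu\prec F(\pi)$ hold by construction, and the hypothesis is used only to make $S\neq\emptyset$. The price is that the invariance $\mu\otimes\pi\prec\mu$, stated there as clear, needs two facts:
\begin{itemize}
\item every irreducible weakly contained in $\sigma\otimes\pi$, for $\sigma\in S$, lies again in $S$ (by Fell absorption and $F(\pi)\otimes\pi\le F(\pi)$);
\item a representation is weakly equivalent to the sum of the irreducibles it weakly contains.
\end{itemize}

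Your choice $\mu=F(\pi)\otimes\sigma$ reverses this bookkeeping. Invariance is automatic, because $\pi\otimes\mu$ is an honest subrepresentation of $\mu$. The hypothesis $\sigma\prec F(\pi)$ is then spent at the end to get $\mu\prec F(\pi)\otimes F(\pi)\sim F(\pi)$.

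What each route buys:
\begin{itemize}
\item Yours avoids irreducible representations and $\hat G$ altogether, so it is slightly more elementary.
\item The paper's $\mu$ is the maximal admissible candidate, so no final comparison with $F(\pi)\otimes F(\pi)$ is needed.
\end{itemize}

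You are also right that faithfulness of $\pi$ is never used; the paper's proof does not invoke it either.
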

\begin{proof}
 As observed in Remark \ref{Rem:quasi free crossed}, the Hao--Ng isomorphism theorem \cite{HN}, \cite{DT} (see also Theorem \ref{Thm:HN}) implies $\Cn{\cH_{\pi}}\rca{\alpha_\pi} G \cong \Cn{\cH_{\pi}\rtimes_{\rm r} G}$.
 As shown in the proof of Lemma \ref{lem_minimal},
 the \Cs-correspondence $\cH_{\pi}\rtimes_{\rm r} G$ is non-proper.
 Thus, by \cite[Theorem 2.3]{Kit},
 $\Cn{\cH_{\pi}\rtimes_{\rm r} G}$ is simple if and only if $\cH_{\pi}\rtimes_{\rm r} G$ is minimal.
 Thus
 the equivalence between $(1)$ and $(2)$ follows from Lemma \ref{lem_minimal}.

 We next show the implication (2) $\Rightarrow$ (3).
 Assume that $\pi$ satisfies condition (2).
 Set
 \[S:=\{\sigma\in\hat{G}: \sigma\prec\lambda,\; \sigma\prec F(\pi)\}.\]
 Note that $S \neq \emptyset$ by the assumption on $\pi$.
Define $\mu:=\bigoplus_{\sigma\in S}\sigma$.
Then clearly we have 
\[\mu\prec\lambda,\quad \mu\prec F(\pi),\quad \mu\otimes\pi\prec\mu.\]
By these relations and condition (2),
one has $\lambda\sim\mu\prec F(\pi)$.
(We note that this implication is the only point where we use the assumption on $\pi$.)

We next show the implication (3) $\Rightarrow$ (2), which completes the proof.
Assume that $\lambda\prec F(\pi)$.
Then, for every unitary representation $\mu\prec\lambda$ with $\mu\otimes\pi\prec\mu$, one has
\[\lambda\sim \lambda\otimes\mu\prec F(\pi)\otimes\mu\prec \mu\prec\lambda.\]
This implies $\mu \sim \lambda$, hence $\pi$ satisfies condition (2).
\end{proof}
\begin{Rem}
 When $G$ is abelian,
 Corollary \ref{cor infinite quasi free} is equivalent to the characterization given in \cite[Theorem 5.1]{MR623751}.
 Indeed,
 if $\pi$ is a unitary representation of a locally compact abelian group $G$,
 then the spectrum ${\rm Sp}\,F(\pi)\subset\hat{G}$ of the Fock representation is equal to the closed subsemigroup of $\hat{G}$ generated by ${\rm Sp}\,\pi$.
 Hence condition (3) in Corollary \ref{cor infinite quasi free} is equivalent to the condition in \cite[Theorem 5.1]{MR623751}.
\end{Rem}
\begin{Exm}\label{Exm:SL2}
As an example,
we study a quasi-free action of $\SL_{2}(\IR)$.
Let $\Pi\colon \SL_{2}(\IR)\to \cU(\cH_{\Pi})$ be a faithful unitary representation.
Recall that $\SL_{2}(\IR)$ admits no non-trivial finite-dimensional unitary representations.
The tensor product structure and the Fell topology of the unitary dual $\widehat{\SL_{2}(\IR)}$ are well understood \cite{MR308795,MR509073} (see also \cite[Section 7.6]{MR1397028}).
Using these well-known structures,
we characterize the simplicity of $\Cn{\cH_{\Pi}} \rca{\alpha_\Pi} \SL_{2}(\IR)$.

Following the notation in \cite{MR509073},
let $\{\pi_{is,\ve}^{u}\}_{s\in\IR^{+},\,\ve=\pm 1}$ be the principal series of unitary representations,
$\{\pi_{s,1}^{u}\}_{s\in(-1,0)}$ be
the complementary series of unitary representations,
$\{T_{n}\}_{n\in\IZ\setminus\{\pm 1,\,0\}}$ be the discrete series representations,
$T_{\pm 1}$ be the mock discrete series representations.
We show that $\Cn{\cH_{\Pi}}\rca{\alpha_\Pi} \SL_{2}(\IR)$ is simple if and only if one of the following conditions holds true:
\begin{enumerate}
 \item $\pi_{is,-1}^{u}\prec\Pi$ for some $s\in\IR^{+}$.
 \item $\pi_{t,1}^{u}, T_{n}\prec\Pi$ for some $t\in i\IR^{+}\cup (-1,0)$ and an odd integer $n\in\IZ\setminus\{0\}$. 
 \item $T_{-m}, T_{n}\prec\Pi$ for some integers $n,m \geq 1$ such that $n$ or $m$ is odd.
\end{enumerate}
This characterization follows from well-known results of the representation theory of $\SL_2(\IR)$.
For completeness, we provide a brief explanation.

It is well-known that $\SL_{2}(\IR)$ is
of Type I, and the unitary dual $\widehat{\SL_{2}(\IR)}$ (as a set) is equal to the disjoint union of the principal series, the discrete series, the mock discrete series, the complementary series representation, and the trivial representation.
Moreover, one has
\begin{align}\label{eq SL2}
 \{\sigma\in\widehat{\SL_{2}(\IR)}:\sigma\prec\lambda\}
 =\{\pi^{u}_{is,\ve}: s\in\IR^{+},\,\ve=\pm1\}\cup\{T_{n}: n\in\IZ\setminus\{0\}\}.
 \end{align}
The mock discrete series representations $T_{\pm 1}$ are contained in the closure of the set of the principal series representations \cite[Section 7.6]{MR1397028}.
Hence $\Pi$ satisfies condition (3) in Corollary \ref{cor infinite quasi free} if and only if one has $\pi^{u}_{is,\ve},T_{n}\prec F(\Pi)$ for every $s\in\IR^{+}$, $\ve=\pm1$ and $n\in\IZ\setminus\{0,\pm1\}$.
We also note that for any non-trivial irreducible unitary representation of $\SL_{2}(\IR)$,
its tensor square weakly contains a representation $\sigma \prec \lambda$ \cite{MR509073}.
Hence any faithful $\Pi$ satisfies the assumption of Corollary \ref{cor infinite quasi free}.

Using \cite[Theorems 4.6, 5.9, 6.4, 7.1, 7.3, and 8.1]{MR509073} together with (\ref{eq SL2}),
one can verify that $\lambda\prec F(\Pi)$ if either the above (1), (2), or (3) holds.
Conversely,
if $\Pi$ fails all three of these conditions,
then the set $S:=\{\sigma\in\widehat{\SL_{2}(\IR)}:\sigma\prec\Pi\}$ is contained in one of the following subsets:
\begin{itemize}
 \item $\{\pi^{u}_{is,1}\}_{s\in\IR^{+}}\cup\{\pi^{u}_{t,1}\}_{t\in(-1,0)}\cup\{T_{n}\}_{n\in 2\IZ\setminus\{0\}}$.
 \item $\{T_{n}\}_{n\geq1}$.
 \item $\{T_{n}\}_{n\leq-1}$.
\end{itemize}
In all cases,
$S$ is disjoint from $\{\pi^{u}_{is,-1}\}_{s\in\IR^{+}}$. 
By \cite[Theorems 4.6, 5.9, 6.4, 7.1, 7.3, and 8.1]{MR509073},
it follows that $\pi^{u}_{is,-1}\nprec\Pi^{\otimes n}$ for every $n\in\IN$ and $s\in\IR^{+}$.
Furthermore, the set $\{\pi^{u}_{is,-1}\}_{s\in \IR^{+}}$ is open in $\widehat{\SL_{2}(\IR)}$ with respect to the Fell topology \cite[Section 7]{MR1397028}.
Therefore, we conclude $\lambda\nprec F(\Pi)$. 
Consequently $\Cn{\cH_\Pi}\rca{\alpha_\Pi}\SL_{2}(\IR)$ is not simple by Corollary \ref{cor infinite quasi free}.
\end{Exm}

In the second half of this section, we concentrate on the finite-dimensional case.
In this case, the Hao--Ng \Cs-correspondences are proper, hence we need our new results in Sections \ref{section:simple} and \ref{section:simplepjl}.

We first show the following lemma.
\begin{Lem}\label{lem inner}
 Let $A$ be a stable \Cs-algebra and let $\rho\colon A\to A$ be a faithful nondegenerate $*$-endomorphism with a conjugate $\bar{\rho}\colon A\to A$ in the sense of \cite[Definition 4.3]{MR2085108}.
 If $p\in A$ is a projection such that $pAp\cong p\cdot(_{\rho} A)\cdot p$,
 then $pAp\cong p\cdot(_{\bar{\rho}} A)\cdot p$.
\end{Lem}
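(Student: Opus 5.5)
The plan is to translate the hypothesis into a partial isometry, exactly as in the first half of the proof of Theorem \ref{Thm:simple}, and then transport it through the conjugate equations. Recall the setting of \cite[Definition 4.3]{MR2085108}: a conjugate of $\rho$ is an endomorphism $\bar\rho$ together with isometries $R\in(\id,\bar\rho\rho)$ and $\bar R\in(\id,\rho\bar\rho)$ in $\cM(A)$. They satisfy the conjugate equations
\[(\bar R^\ast\otimes 1_\rho)(1_\rho\otimes R)=\rho(R)^{\ast}\bar R\ \text{(suitably scaled)} =c\cdot 1,\qquad R^\ast\bar\rho(\bar R)=c\cdot 1,\]
for some nonzero scalar $c$. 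Here $R\in(\id,\bar\rho\rho)$ means $R a=\bar\rho(\rho(a))R$.

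First I will reformulate the hypothesis. An isomorphism $pAp\cong p\cdot({}_\rho A)\cdot p$ of \Cs-correspondences over $pAp$ is given by a partial isometry $v\in A$ (more precisely in $\rho(p)Ap$) with $v^\ast v=p$ and $vv^\ast=\rho(p)$. It satisfies $va=\rho(a)v$ for all $a\in pAp$. Namely, $v$ is the image of $p$ under the isomorphism; the bimodule property gives the intertwining, and surjectivity gives $vv^\ast=\rho(p)$. Symmetrically, the conclusion amounts to producing $w\in A$ with $w^\ast w=p$, $ww^\ast=\bar\rho(p)$ and $wa=\bar\rho(a)w$ for $a\in pAp$.

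The key construction is the candidate
\[w:=\lambda\,\bar\rho(v^\ast)R\,p \qquad (\lambda>0 \text{ a normalisation}).\]
Intertwining follows from $wa=\lambda\bar\rho(v^\ast)\bar\rho\rho(a)Rp=\lambda\bar\rho(v^\ast\rho(a))Rp=\lambda\bar\rho(av^\ast)Rp=\bar\rho(a)w$ for $a\in pAp$, using $v^\ast\rho(a)=av^\ast$. Next, $w^\ast w=\lambda^2 pR^\ast\bar\rho(vv^\ast)Rp=\lambda^2pR^\ast\bar\rho\rho(p)Rp=\lambda^2 p R^\ast R p$, which equals $p$ if $R$ is an isometry (normalise $\lambda$ accordingly). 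Finally, $ww^\ast=\lambda^2\bar\rho(v^\ast)Rp R^\ast\bar\rho(v)$, and $RpR^\ast=\bar\rho\rho(p)RR^\ast$. This leaves $\bar\rho(v^\ast)RR^\ast\bar\rho(v)$, which need not equal $\bar\rho(p)$, so $w^\ast$ may only be a partial isometry onto a subprojection $q\le\bar\rho(p)$. Membership in $A$ is fine since $v\in A$ and $\bar\rho(A)\subset A$.

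The main obstacle is the equality $ww^\ast=\bar\rho(p)$. If the direct construction fails, I will instead use the conjugate equations to show that $q\sim\bar\rho(p)$ via the symmetric element built from $\bar R$, namely $\bar\rho(p)\bar R^\ast\rho$-type expressions, which give $\bar\rho(p)\precsim p$ with the intertwining property. The stability of $A$ then lets me run a Cantor--Schröder--Bernstein-type argument, or simply observe that both $p\precsim\bar\rho(p)$ and $\bar\rho(p)\precsim p$ intertwine $pAp$ with $\bar\rho(p)A\bar\rho(p)$. An intertwining isometry in one direction already identifies $pAp$ with a corner $q'\bar\rho(pAp)q'$. Combining the two directions, whose composite is $\bar\rho$ of $p$ composed with $\rho\bar\rho$, which is controlled by $\bar R$, will force $q'=\bar\rho(p)$, because by the second conjugate equation the composite of the two maps is a scalar multiple of the identity on $\bar\rho(p)$. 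I expect this last bookkeeping with the conjugate equations to be the delicate step. Stability is used to have enough room (and Proposition \ref{Prop:st}-type identifications) so that the correspondence isomorphism is implemented by elements of $A$ rather than of $\cM(A)$.
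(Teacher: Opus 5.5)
Your $w=\lambda\bar\rho(v^*)Rp$ is exactly the paper's map $a\mapsto(\sqrt d)^{-1}\bar\rho(u^*)Ra$ with $d=R^*R$. In general $d$ is an invertible element of ${\rm Z}(\cM(A))$, not a scalar, and $R$ need not be an isometry, so you should take $\lambda=d^{-1/2}$. The normalisation of the conjugate equations is $\rho(R^*)\bar R=\bar\rho(\bar R^*)R=1$. Your checks of the intertwining relation and of $w^*w=p$ are fine, and your reading of the hypothesis (with $vv^*=\rho(p)$) is the one the paper uses.

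The gap is the target of your last step. A correspondence isomorphism $pAp\cong\bar\rho(p)Ap$ is left multiplication by some $w\in\bar\rho(p)Ap$ with $w^*w=p$, the intertwining property, and $w\cdot pAp=\bar\rho(p)Ap$. The last condition only says $(\bar\rho(p)-ww^*)Ap=0$, not $ww^*=\bar\rho(p)$. In general $ww^*=\bar\rho(p)$ and your intermediate claim $\bar\rho(p)\precsim p$ are both false, so the Schr\"oder--Bernstein bookkeeping you sketch cannot be carried out.

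Here is a counterexample. Let $A=\IK\oplus\IK$, and let $S_1,S_2$ and $T_1,T_2$ be isometries in $\IB(\ell^2)$ with $S_1S_1^*+S_2S_2^*=1=T_1T_1^*+T_2T_2^*$. Put
\[\rho(x\oplus y)=(S_1xS_1^*+S_2yS_2^*)\oplus y,\qquad \bar\rho(x\oplus y)=x\oplus(T_1xT_1^*+T_2yT_2^*),\]
$R=S_1\oplus(T_1S_2+T_2)$ and $\bar R=(S_1+S_2T_1)\oplus T_2$. One checks that $R\in(\bar\rho\rho,\id)$, $\bar R\in(\rho\bar\rho,\id)$, and $\rho(R^*)\bar R=\bar\rho(\bar R^*)R=1$. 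Take $p=e\oplus 0$ with $e$ of rank one. Then $u=S_1e\oplus 0$ satisfies $u^*u=p$ and $uu^*=\rho(p)$, so the hypothesis holds in your strong form. However, $\bar\rho(p)=e\oplus T_1eT_1^*$ is not $\precsim p$, because its second component is nonzero. Still $\bar\rho(p)Ap=\IC p$, so the lemma holds, and your formula gives $w=p$ with $ww^*\neq\bar\rho(p)$.

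What is missing is the surjectivity argument, and this is where the conjugate equations actually enter. Since $Ra=\bar\rho\rho(a)R$ and $\bar\rho(\bar R^*)R=1$, every $a\in A$ satisfies $a=\bar\rho(\bar R^*\rho(a))R$. Hence $A=\bar\rho(A)R$; the paper phrases this via the quasi-basis of the expectation $E_{\bar\rho}$. Combining this with $uu^*=\rho(p)$ and $xu^*=u^*\rho(x)$ for $x\in pAp$ gives
\[\bar\rho(p)Ap=\bar\rho(pA)\bar\rho\rho(p)R=\bar\rho(pA\rho(p))R=\bar\rho(pAp\,u^*)R=\bar\rho(u^*)\bar\rho\rho(pAp)R=\bar\rho(u^*)R\,pAp,\]
that is, $w\cdot pAp=\bar\rho(p)Ap$; the central invertible factor $d^{-1/2}$ does not affect this. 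Replacing your last step with this computation completes the proof, with no Schr\"oder--Bernstein argument and no use of stability.
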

\begin{proof}
 Let $u\in \rho(p)Ap$ be a partial isometry element satisfying $uxu^{*}=\rho(x)$ for every $x\in pAp$.
 Let $R\in(\bar{\rho}\circ\rho,\,\id)$ and $\bar{R}\in(\rho\circ\bar{\rho},\,\id)$ be intertwiners satisfying $\rho(R^{*})\bar{R}=\bar{\rho}(\bar{R}^{*})R=1$.
 It is known that $d:=R^{*}R$ and $\bar{d}:=\bar{R}^{*}\bar{R}$ are invertible elements in ${\rm Z}(\cM(A))$.
 We claim that the following map $\bar{u}$ gives the desired isomorphism:
 \[\bar{u}\colon pAp\ni a\mapsto (\sqrt{d})^{-1}\bar{\rho}(u^{*})Ra\in p\cdot({}_{\bar{\rho}}A)\cdot p.\]
 It is straightforward to verify that $\bar{u}$ is adjointable and $(pAp)$-bilinear.
 Since $d^{-1}R^{*}\bar{\rho}(uu^{*})R=p$,
 $\bar{u}$ is an isometry operator.
 To show the surjectivity, we consider the conditional expectation
 \[E_{\bar{\rho}}\colon A\ni a\mapsto \bar{\rho}\left(\bar{d}^{-1}\bar{R}^{*}\rho(a)\bar{R}\right)\in\bar{\rho}(A).\]
 Observe that $E_{\bar{\rho}}$ has a quasi-basis $(R^{*}\bar{\rho}(\bar{d})^{\frac{1}{2}},\bar{\rho}(\bar{d})^{\frac{1}{2}}R)$.
 Indeed, for every $a\in A$, direct calculations show that
 \begin{align*}
 E_{\bar{\rho}}\left(aR^{*}\bar{\rho}(\bar{d})^{\frac{1}{2}}\right)\bar{\rho}(\bar{d})^{\frac{1}{2}}R
 =&E_{\bar{\rho}}\left(aR^{*}\right)\bar{\rho}(\bar{d})R\\
 =&\bar{\rho}\left(\bar{d}^{-1}\bar{R}^{*}\rho(aR^{*})\bar{R}\right)\bar{\rho}(\bar{d})R\\
 =&\bar{\rho}\left(\bar{R}^{*}\rho(aR^{*})\bar{R}\right)R
 =a.
 \end{align*}
 Then observe that
 \[\bar{u}(pAp)=\bar{\rho}(u^{*})RpAp=\bar{\rho}(pAu^{*})R=\bar{\rho}(pA\rho(p))R=\bar{\rho}(p)\left(\bar{\rho}(A)R\right)p=p \cdot ({}_{\bar{\rho}}A) \cdot p.\]
 Thus $\bar{u}$ is indeed a unitary operator.
\end{proof}
For finite-dimensional unitary representations $\sigma$ and $\pi$ of a locally compact group $G$,
one has a natural isomorphism
\[(\cH_{\sigma}\rtimes_{\rm r} G)\otimes_{\rg(G)}(\cH_{\pi}\rtimes_{\rm r} G)\cong(\cH_{\sigma}\otimes\cH_{\pi})\rtimes_{\rm r} G\] of \Cs-correspondences.
It follows that $\pi\mapsto\cH_{\pi}\rtimes_{\rm r} G$ is a tensor functor from the category ${\rm Rep_{f}}(G)$ of finite-dimensional unitary representations of $G$ to the category ${\rm Corr}(\rg(G))$ of \Cs-correspondences over $\rg(G)$.
Consequently,
for each finite-dimensional unitary representation $\pi$,
$\cH_{\bar{\pi}}\rtimes_{\rm r} G$ is a conjugate object of $\cH_{\pi}\rtimes_{\rm r} G$ in the sense of \cite[Definition 4.3]{MR2085108}.

\begin{Prop}\label{prop outer}
 Let $\pi\colon G\to \IB(\cH_{\pi})$ be a faithful finite-dimensional unitary representation of a locally compact group.
 Assume that $G$ satisfies
 \begin{equation}\label{equation:vanish}
 \inf\{\|\sigma(a)\|:\sigma\in \hat{G}_{w,\prec\lambda}\}=0 \quad {\rm~for~all~}a\in \rg(G).
 \end{equation}
 Then the following conditions are equivalent:
 \begin{enumerate}
 \item $\cH_{\pi}\rtimes_{\rm r} G$ is minimal.
 \item There is no unitary representation $\mu$ of $G$ satisfying $\mu\otimes\pi\sim\mu$,
 $\mu\prec\lambda$, and $\mu\nsim\lambda$.
 \item $\Cn{\cH_{\pi}}\rca{\alpha_\pi} G$ is simple.
 \end{enumerate}
\end{Prop}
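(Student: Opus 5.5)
(1)$\Leftrightarrow$(2) is exactly the finite-dimensional case of Lemma \ref{lem_minimal}, so nothing new is needed there. For (3)$\Rightarrow$(1), I will use Remark \ref{Rem:quasi free crossed} to identify $\Cn{\cH_\pi}\rca{\alpha_\pi} G$ with $\Cn{\cE}$, where $\cE:=\cH_\pi\rtimes_{\rm r} G$. By Theorem \ref{Thm:simple} (or by Lemma \ref{Lem:minimal} and Theorem \ref{Thm:decomp} after stabilizing via Proposition \ref{Prop:st}), minimality of $\cE$ is necessary for simplicity of $\Cn{\cE}$. That leaves (1)$\Rightarrow$(3).

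Assume $\cE$ is minimal; I want to show $\Cn{\cE}$ is simple. The coefficient algebra is $\rg(G)$. I will split into two cases according to whether $\rg(G)$ is stably projectionless.

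\emph{Case 1: $\rg(G)$ is not stably projectionless.} Here I apply Theorem \ref{Thm:simple}. It suffices to rule out condition (2) failing, i.e.\ to exclude $n\geq 1$ and a nonzero projection $p\in \rg(G)\otimes\IK$ with $p(\cE^{\otimes n}\otimes \IK)p\cong p(\rg(G)\otimes\IK)p$. Since $\cE^{\otimes n}\cong \cH_{\pi^{\otimes n}}\rtimes_{\rm r}G$, I may replace $\pi$ by $\pi^{\otimes n}$. Suppose such a $p$ exists. After stabilizing, $\cE\otimes\IK={}_\rho A$ with $A$ stable, and Lemma \ref{lem inner} together with the conjugate $\cH_{\bar\pi}\rtimes_{\rm r}G$ gives $p({}_{\bar\rho}A)p\cong pAp$ as well. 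Since $p$ lies in some ideal, I pick an irreducible $\sigma\prec\lambda$ with $\sigma(p)\neq 0$.

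The idea is that these local isomorphisms force dimension equalities. Passing to the quotient by $\ker\sigma$ and taking the minimal projection $q\leq \sigma(p)$ in the type-I-like corner should show that $\sigma\otimes\pi$ (resp.\ $\sigma\otimes\bar\pi$) contains $\sigma$ with multiplicity forcing $\dim\pi=1$ locally. In particular $p$ is a finite-rank-type projection at $\sigma$, so $p\rg(G)p$ has a nonzero finite-dimensional quotient. The set of such $\sigma$ with $\sigma(p)\neq 0$ is then open and quasi-compact in $\hat G_{w,\prec\lambda}$, because $\|\sigma(p)\|\in\{0,1\}$ is lower semicontinuous.

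This is where the vanishing hypothesis (\ref{equation:vanish}) enters. I expect to use it together with minimality in the following way. The ideal generated by $p$ is nonzero, so by minimality its $\cE$-invariant hull is everything. That hull should be generated by $p$ and its images under $\rho^{\pm k}$ (via $\rho$ and $\bar\rho$). The isomorphisms give $\rho^k(p)\sim p$, so the full projection family has constant norm $1$ at every $\sigma$ in the closure. This contradicts $\inf_\sigma\|\sigma(a)\|=0$ once I find a single element $a$ (built from $p$) with $\|\sigma(a)\|\geq 1/2$ for all $\sigma$. I anticipate this as the \textbf{main obstacle}: it requires showing that $\{\sigma:\sigma(p)\neq0\}$ is invariant under $\sigma\mapsto$ subrepresentations of $\sigma\otimes\pi$ and $\sigma\otimes\bar\pi$, and hence by minimality is all of $\hat G_{w,\prec\lambda}$. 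Once that is established, $\|\sigma(p)\|=1$ for every $\sigma$, which contradicts (\ref{equation:vanish}) with $a=p$.

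\emph{Case 2: $\rg(G)$ is stably projectionless.} Here I apply Theorem \ref{Thm:simpleproless} with $\Cn{\infty}$ replaced by a purely infinite simple algebra, and run the same argument on $\cE\otimes\Cn{\infty}$, whose coefficient algebra $\rg(G)\otimes\Cn{\infty}$ has the same primitive ideal space. The norm-vanishing condition (\ref{equation:vanish}) passes to $\rg(G)\otimes\Cn{\infty}$ since $\|(\sigma\otimes\id)(x)\|$ is computed fiberwise. In the stably projectionless subcase (1) of Theorem \ref{Thm:simpleproless}, minimality alone suffices and there is nothing to prove.
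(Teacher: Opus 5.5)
Your handling of (1)$\Leftrightarrow$(2) and (3)$\Rightarrow$(1) agrees with the paper. So does your reduction, via Theorems \ref{Thm:simple} and \ref{Thm:simpleproless} (using the $\Cn{\infty}$-stabilization when $\rg(G)$ is stably projectionless), to excluding a nonzero projection $p$ and $n\geq 1$ with $p(\cE^{\otimes n}\otimes\IK)p\cong p(\rg(G)\otimes\IK)p$.

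\textbf{The gap.} It sits exactly at the step you flag as the main obstacle, and your reduction ``replace $\pi$ by $\pi^{\otimes n}$'' creates it. Write $\rho_\pi,\rho_{\bar\pi}$ for the endomorphisms of the stabilized coefficient algebra attached to $\cH_\pi\rtimes_{\rm r}G$ and $\cH_{\bar\pi}\rtimes_{\rm r}G$, and put $\sigma^{\mathrm{stab}}:=\sigma\otimes\id$. Together with Lemma \ref{lem inner}, the hypothetical isomorphism only gives $\rho_\pi^n(p)\sim p\sim\rho_{\bar\pi}^n(p)$. It says nothing about $\rho_\pi^k(p)$ for $k\notin n\IN$. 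Via $\sigma^{\mathrm{stab}}\circ\rho_\pi^i\circ\rho_{\bar\pi}^j\simeq(\sigma\otimes\pi^{\otimes i}\otimes\bar\pi^{\otimes j})^{\mathrm{stab}}$, the support of $p$ in $\hat G_{w,\prec\lambda}$ is therefore controlled only under $\pi^{\otimes n}$ and $\bar\pi^{\otimes n}$. To conclude that this support is everything, you would need minimality of $\cE^{\otimes n}$, and that does not follow from minimality of $\cE$.

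Here is a counterexample. Take $K=\IT\times\IZ/2\IZ$; it is infinite compact, so (\ref{equation:vanish}) holds. Take $\pi=(1,1)\oplus(-1,0)$ in $\hat K\cong\IZ\times\IZ/2\IZ$, which is faithful.
\begin{itemize}
\item $\cE$ is minimal: the support of any $\mu$ with $\mu\otimes\pi\sim\mu$ is invariant under the semigroup generated by $(1,1)$ and $(-1,0)$, and that semigroup is all of $\hat K$.
\item $\cE^{\otimes 2}$ is not minimal: $\pi^{\otimes 2}$ has weights $(\pm2,0),(0,1),(0,1)$, and $2\IZ\times\IZ/2\IZ$ is invariant under them.
\end{itemize}
So testing (\ref{equation:vanish}) on $a=p$ cannot be justified.

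\textbf{The missing idea.} Keep the original $\pi$ and use all the projections $\rho_\pi^i\rho_{\bar\pi}^j(p)$ with $0\leq i,j\leq n-1$, as the paper does.
\begin{enumerate}
\item Let $F$ be the set of $\sigma\in\hat G_{w,\prec\lambda}$ at which every $(\sigma\otimes\pi^{\otimes i}\otimes\bar\pi^{\otimes j})^{\mathrm{stab}}(p)$ vanishes, and put $\mu:=\bigoplus_{\sigma\in F}\sigma$. This $\mu$ is maximal for $\prec$ with this vanishing property.
\item Shifting $i\mapsto i+1$ (resp.\ $j\mapsto j+1$) stays inside the index range except at $n-1$. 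There the wrap-around is supplied by $\rho_\pi^n(p)\sim p$ (resp.\ $\rho_{\bar\pi}^n(p)\sim p$).
\item Maximality then gives $\mu\otimes\pi\prec\mu$ and $\mu\otimes\bar\pi\prec\mu$. Hence $\mu\leq\mu\otimes\bar\pi\otimes\pi\prec\mu\otimes\pi$, so $\mu\sim\mu\otimes\pi$.
\item By (2), $\mu\sim\lambda$, contradicting $\mu^{\mathrm{stab}}(p)=0\neq p$. So $F=\emptyset$.
\item Now $a:=\sum_{i,j=0}^{n-1}\rho_\pi^i\rho_{\bar\pi}^j(p)$ satisfies $\|\sigma^{\mathrm{stab}}(a)\|\geq1$ for every $\sigma$, contradicting (\ref{equation:vanish}).
\end{enumerate}

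\textbf{Two smaller points.} Apply minimality to the closed zero set $F$, not to the support of $p$: applied to the support it only yields density. Also drop your paragraph on ``dimension equalities''. Nothing there forces $\dim\pi=1$ locally or a finite-dimensional quotient of $p\rg(G)p$, since no type~I assumption is available, and that paragraph is not needed.
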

\begin{proof}
 The equivalence between (1) and (2) follows from Lemma \ref{lem_minimal}.
 
 To prove (2)$\Rightarrow$(3),
 by Theorem \ref{Thm:simpleproless},
 it suffices to show that $\cH_{\pi}\rtimes_{\rm r} G$ satisfies condition (2) therein.
 Let $\rho_{\pi}$ (resp. $\rho_{\bar{\pi}}$) be an endomorphism on $\rg(G)\otimes\IK\otimes\Cn{\infty}$ associated with the stabilized \Cs-correspondence $(\cH_{\pi}\rtimes_{\rm r} G)\otimes\IK\otimes\Cn{\infty}$ (resp. $(\cH_{\bar{\pi}}\rtimes_{\rm r} G)\otimes\IK\otimes\Cn{\infty}$) by Proposition \ref{Prop:st}.
 For each unitary representation $\sigma\prec\lambda$,
 we put 
 \[\sigma^{\mathrm{stab}}:=\sigma\otimes\id_{\IK\otimes\Cn{\infty}}\colon \rg(G)\otimes\IK\otimes\Cn{\infty}\to \IB(\cH_{\sigma})\otimes \IK\otimes\Cn{\infty}.\]
 Note that by the assumption (\ref{equation:vanish}), one has
 \begin{equation}\label{equation:vanish2}
 \inf\{\|\sigma^{\mathrm{stab}}(x)\|: \sigma \in \hat{G}_{w, \prec \lambda}\} =0 \quad {\rm~for~all~}x\in \rg(G)\otimes\IK\otimes\Cn{\infty}.
 \end{equation}
 By construction of $\rho_{\pi}$,
 it is straightforward to see that
$\sigma^{\mathrm{stab}}\circ\rho_{\pi}^{i}\circ\rho_{\bar{\pi}}^{j}$ is unitarily equivalent to ${(\sigma\otimes\pi^{\otimes i}\otimes\bar{\pi}^{\otimes j})}^{\mathrm{stab}}$ for every $i,j \in \IN$.
 If a nonzero projection $p\in \rg(G)\otimes\IK\otimes\Cn{\infty}$ and $n\geq 1$ fail condition (2) in Theorem \ref{Thm:simple},
 then Lemma \ref{lem inner} implies the Murray--von Neumann equivalences
 \begin{equation}\label{eq MV}
 \rho_{\pi^{\otimes n}}(p)\sim p\sim \rho_{\bar{\pi}^{\otimes n}}(p).
 \end{equation}
 For each $0 \leq i, j \leq n-1$,
 put
 \[S_{i,j}:=\left\{\sigma\in \hat{G}_{w,\prec\lambda}:(\sigma\otimes\pi^{\otimes i}\otimes\bar{\pi}^{\otimes j})^{\mathrm{stab}}(p)\neq 0\right\}.\]
 Define $F:=\bigcap_{0 \leq i,j \leq n-1}\left(\hat{G}_{w,\prec\lambda}\setminus S_{i,j}\right)$.
 We show $F=\emptyset$.
 If $F\neq \emptyset$,
 then $\mu:=\bigoplus_{\sigma\in F}\sigma$ satisfies the relations
 \[\mu\prec\lambda, \quad (\mu\otimes\pi^{\otimes i}\otimes\bar{\pi}^{\otimes j})^{\mathrm{stab}}(p)=0 \quad {\rm~for ~all~} 0\leq i, j \leq n-1.\]
 By construction,
 $\mu$ is, with respect to weak containment, the maximal representation with these properties. 
 By using the unitary equivalence among $(\sigma\otimes\pi^{\otimes i}\otimes\bar{\pi}^{\otimes j})^{\mathrm{stab}}$, $\sigma^{\mathrm{stab}}\circ\rho_{\pi}^{i}\circ\rho_{\bar{\pi}}^{j}$,
 and $\sigma^{\mathrm{stab}}\circ\rho_{\bar{\pi}}^{j}\circ\rho_{\pi}^{i}$ together with
 condition (\ref{eq MV}), we obtain
 \begin{align*}
 ((\mu\otimes\pi)\otimes\pi^{\otimes i}\otimes\bar{\pi}^{\otimes j})^{\mathrm{stab}}(p)&=0,\\
 ((\mu\otimes\bar{\pi})\otimes\pi^{\otimes i}\otimes\bar{\pi}^{\otimes j})^{\mathrm{stab}}(p)&=0
 \end{align*}
 for every $0\leq i, j \leq n-1$.
 By the maximality of $\mu$,
 we have $\mu\otimes\pi,\;\mu\otimes\bar{\pi}\prec\mu$.
 As $\mu\leq\mu\otimes\bar{\pi}\otimes\pi\prec\mu\otimes\pi$,
 we get $\mu\sim\mu\otimes\pi$.
 By condition (2),
 these relations yield $\lambda\sim\mu$.
 This contradicts to the equality $\mu^{\mathrm{stab}}(p)=0$.
 Hence $F=\emptyset$, and thus $\hat{G}_{w,\prec\lambda}=\bigcup_{i,j=0}^{n-1}S_{i,j}$.
 Put $a:=\sum_{i,j=0}^{n-1}\rho_{\pi}^{i}\circ\rho_{\bar{\pi}}^{j}(p)$.
For each $\sigma\in S_{i,j}$,
we have 
\[\|\sigma^{\mathrm{stab}}(a)\|\geq\|\sigma^{\mathrm{stab}}\circ\rho_{\pi}^{i}\circ\rho_{\bar{\pi}}^{j}(p)\|=\|(\sigma\otimes\pi^{\otimes i}\otimes\bar{\pi}^{\otimes j})^{\mathrm{stab}}(p)\|=1.\]
This contradicts to (\ref{equation:vanish2}).
This proves (2)$\Rightarrow$(3).

The converse follows from Theorems \ref{Thm:simple} and \ref{Thm:simpleproless}.
\end{proof}

\begin{Exm}
 If $G$ is non-discrete and the center ${\rm Z}(\rg(G))$ is non-degenerate in $\rg(G)$,
 then $G$ satisfies the assumption (\ref{equation:vanish}) of Proposition \ref{prop outer}.
 Indeed,
 for each $\sigma\in\hat{G}_{w,\prec\lambda}$,
the restriction $\sigma|_{{\rm Z}(\rg(G))}$ is a character of ${\rm Z}(\rg(G))$ and the set $\{\sigma|_{{\rm Z}(\rg(G))}: \sigma\in \hat{G}_{w,\prec\lambda}\}$ is equal to the Gelfand spectrum of ${\rm Z}(\rg(G))$ (by the Hahn--Banach theorem and the GNS-construction).
Since ${\rm Z}(\rg(G))$ is non-unital,
we obtain 
\[\inf\{\|\sigma(a)\|:\sigma\in\hat{G}_{w,\prec\lambda}\}=0\]
for every $a\in {\rm Z}(\rg(G))$.
As ${\rm Z}(\rg(G))$ is non-degenerate in $\rg(G)$,
$G$ satisfies the assumption in Proposition~\ref{prop outer}.

For instance, if $G\rtimes K$ is a non-discrete semidirect product of an abelian group $G$ by a compact group $K$,
then the center ${\rm Z}(\rg(G\rtimes K))$ contains $C_{0}(\hat{G}/K)\otimes {\rm Z}(\rg(K))$, which is non-degenerate in $\rg(G\rtimes K)$.
Hence
$G\rtimes K$ satisfies the assumption (\ref{equation:vanish}) of Proposition \ref{prop outer}.
\end{Exm}

Since the inclusion $\Cn{n}^{{\rm U}(n)}\subset \Cn{n}$ is irreducible (see \cite[Theorem 3.2]{MR818932} and \cite[Corollary 3.3]{MR901232}),
every faithful quasi-free action is outer.
Hence, for discrete groups,
the reduced crossed product of a faithful quasi-free action is simple thanks to Kishimoto's theorem \cite{Kis}.

If $G$ is a connected locally compact group with a faithful finite-dimensional unitary representation,
then $G$ must be isomorphic to a direct product of a compact connected semisimple Lie group and $\IR^{n}\times\IT^{m}$ for some $n, m\in \IN$.
Indeed,
if $\pi\colon G\to {\rm U}(n)$ is a faithful finite-dimensional representation,
then for any open neighborhood $1\in V \subsetneq {\rm U}(n)$,
$\pi^{-1}(V)$ does not contain a nontrivial subgroup.
Hence,
the Gleason--Yamabe theorem shows that $G$ is a connected Lie group.
By the Levi--Malcev theorem, $G$ is isomorphic to the semidirect product $H\rtimes K$ of a semisimple Lie group $K$ and a solvable Lie group $H$.
By the structure theory of Lie groups together with the existence of a faithful finite-dimensional unitary representation,
it follows that $K$ is compact and $H\cong \IR^{n}\times\IT^{m}$ for some $m,n\in\IN$.
Consider the irreducible decomposition $\pi|_{H}=\bigoplus_{j=1}^{k}\chi_{j}^{\oplus m_{j}}$, where $\chi_1, \ldots, \chi_k\in \hat{H}$ are pairwise distinct.
Then, as $K$ normalizes $H$, $K$ acts on the finite set $\{\chi_{1},\dots,\chi_{k}\}$ by permutations.
Since $K$ is connected,
this action must be trivial.
As $\pi$ is faithful, $\chi_1, \ldots, \chi_k$ separate points of $H$.
Hence the action $K \acts H$ is also trivial.
This proves $G\cong \IR^{n}\times\IT^{m}\times K$.
We note that, as $H$ is abelian, any irreducible unitary representation $\pi$ of $H\times K$ is of the form
$\pi(h, k)=\chi(h)\sigma(k)$ for some $\chi \in \hat{H}$ and $\sigma \in \hat{K}$.
We denote $\pi$ as $(\chi, \sigma)$.

Consider the direct product $G\times K$ of a compact group $K$ and a locally compact abelian group $G$.
Let
\[
\Pi=\bigoplus_{i=1}^{n}(t_{i},\pi_{i})
\]
be a faithful finite-dimensional unitary representation of $G\times K$,
where $t_{1},\dots,t_{n}\in\hat{G}$ are pairwise distinct and $\pi_{1},\dots,\pi_{n}$ are finite-dimensional representations of $K$.
Associated to $\Pi$, we define a directed graph $\cG_{\Pi}=(V_{\Pi},E_{\Pi})$ as follows:
\begin{itemize}
 \item The vertex set is defined to be $V_{\Pi}:=\hat{G}\times\hat{K}$.
 \item For $(s_{1},\sigma_{1}), (s_{2},\sigma_{2})\in V_{\Pi}$,
 the number of edges from $(s_{1},\sigma_{1})$ to $(s_{2},\sigma_{2})$ is equal to $\dim ((s_1, \sigma_1), \Pi \otimes (s_2, \sigma_2))=\sum_{i=1}^{n}\dim\;(\sigma_{1},\;\pi_{i}\otimes\sigma_{2})\delta_{s_{1},\,t_{i}s_{2}}$.
 \end{itemize}
 We point out that it is occasionally helpful to realize the set of edges from $(s_1, \sigma_1)$ to $(s_2, \sigma_2)$ as a linear basis of the intertwiner space $((s_1, \sigma_1), \Pi\otimes (s_2, \sigma_2))$.
 
We next introduce a few related notations.
For $w=(w_{1},\dots,w_{m})\in\bigsqcup_{l=0}^{\infty}\{1,\dots,n\}^{l}$,
we put
\[t_{w}:=t_{w_{1}}t_{w_{2}}\cdots t_{w_{m}}, \quad \pi_{w}:=\bigotimes_{j=1}^{m}\pi_{w_{j}}.\]
Here, when $w=\emptyset \in\{1,\dots,n\}^{0}$ (the empty word), we set $t_{\emptyset}:=1_{\hat{G}}$ and $\pi_{\emptyset}:=1_{\hat{K}}$.
For $\sigma_1, \sigma_2\in \hat{K}$, we define
\begin{equation}\label{eq:words}
 W(\sigma_{1}\to\sigma_{2}):=\{w\in\bigsqcup_{m=0}^{\infty}\{1,\dots,n\}^{m}: \sigma_{1}\leq \pi_{w}\otimes\sigma_{2}\}.
 \end{equation}
\begin{Prop}\label{prop topological graph}
Under the notation introduced above,
the following conditions are equivalent.
 \begin{enumerate}
 \item $\Cn{\cH_{\Pi}}\rca{\alpha_\Pi}(G\times K)$ is simple.
 \item For any infinite directed path $\beta=\{(s_{j},\sigma_{j})\}_{j=1}^{\infty}$ of $\cG_{\Pi}$ and any $\sigma\in\hat{K}$, one has the equality
 \begin{equation}\label{eq:cofinal}
 \cl\bigg(\bigcup_{j=1}^{\infty}\{t_{w}s_{j}\in\hat{G}: w\in W(\sigma\to\sigma_{j})\}\bigg)=\hat{G}.\end{equation}
 \end{enumerate}
\end{Prop}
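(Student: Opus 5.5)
By Remark \ref{Rem:quasi free crossed}, $\Cn{\cH_\Pi}\rca{\alpha_\Pi}(G\times K)\cong \Cn{\cH_\Pi\rtimes_{\rm r}(G\times K)}$. Since $\Pi$ is finite-dimensional, the Hao--Ng correspondence is proper, and Lemma \ref{lem_minimal} applies. The plan has two independent parts. First, show that simplicity of the crossed product is equivalent to minimality of $\cH_\Pi\rtimes_{\rm r}(G\times K)$. Second, translate minimality, via condition (2) of Lemma \ref{lem_minimal}, into the graph-theoretic cofinality condition (\ref{eq:cofinal}).

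\emph{Simplicity $\Leftrightarrow$ minimality.} Simplicity always implies minimality (Theorems \ref{Thm:simple} and \ref{Thm:simpleproless}). For the converse I would split into two cases. If $G\times K$ is non-discrete, then ${\rm Z}(\rg(G\times K))\supset C_0(\hat G)\otimes c_0(\hat K)$ is non-degenerate. Hence assumption (\ref{equation:vanish}) holds by the Example following Proposition \ref{prop outer}, and Proposition \ref{prop outer} gives the equivalence. If $G\times K$ is discrete, then $\alpha_\Pi$ is outer, because $\Pi$ is faithful and $\Cn{n}^{{\rm U}(n)}\subset\Cn{n}$ is irreducible. Kishimoto's theorem then makes the crossed product simple, so minimality holds automatically as well. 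In both cases (1) is equivalent to minimality.

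\emph{Minimality $\Leftrightarrow$ (2).} I use that $\rg(G\times K)\cong\bigoplus_{\sigma\in\hat K}C_0(\hat G)\otimes M_{d_\sigma}$. Its primitive spectrum is $\hat G\times\hat K$, with the product of the dual topology and the discrete topology. A representation $\mu\prec\lambda$ is determined up to weak equivalence by a closed support $C\subset\hat G\times\hat K$. Tensoring a point $(s_2,\sigma_2)$ with $\Pi$ gives exactly the vertices $(s_1,\sigma_1)$ having an edge to $(s_2,\sigma_2)$ in $\cG_\Pi$. The two relations translate as follows.
\begin{itemize}
\item $\mu\otimes\Pi\prec\mu$ means that $C$ is closed under taking predecessors: if there is an edge from $v$ to $w$ and $w\in C$, then $v\in C$. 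Closedness of $C$ is already assumed, and each vertex has only finitely many predecessors, which move continuously in $s$.
\item $\mu\prec\mu\otimes\Pi$ means that every point of $C$ lies in the closure of the predecessors of $C$.
\end{itemize}

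\emph{(2) $\Rightarrow$ minimality.} Take a nonempty closed $C$ satisfying both relations. First I produce an infinite directed path inside $C$. Every $v\in C$ has a successor in $C$: since $\mu\le\mu\otimes\bar\Pi\otimes\Pi$, the vertex $v$ is a predecessor of a point of the support of $\mu\otimes\bar\Pi$, and I choose that point inside $C$, which is possible since $\hat K$ is discrete and the multiplicities are finite. Iterating gives the path. Predecessor-closure of $C$ then puts all $(t_ws_j,\sigma)$ with $w\in W(\sigma\to\sigma_j)$ into $C$, because a word $w$ is exactly a backward path. Now (\ref{eq:cofinal}) together with closedness forces $\hat G\times\{\sigma\}\subset C$ for every $\sigma$, so $C$ is everything and $\mu\sim\lambda$.

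\emph{Failure of (2) $\Rightarrow$ non-minimality.} If (\ref{eq:cofinal}) fails for a path $\beta$ and some $\sigma$, let $C$ be the closure of the set of all predecessors, of every length, of vertices of $\beta$. Then $C$ is predecessor-closed. Every point of $C$ is a limit of predecessors of points of $C$, since each path vertex $(s_j,\sigma_j)$ is itself a predecessor of $(s_{j+1},\sigma_{j+1})$. Thus $\mu_C\otimes\Pi\sim\mu_C$, while $C\cap(\hat G\times\{\sigma\})\ne\hat G\times\{\sigma\}$. So $\mu_C\nsim\lambda$ and minimality fails by Lemma \ref{lem_minimal}.

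I expect the main obstacle to be this topological bookkeeping. It requires checking that weak containment for tensor products with the finite-dimensional $\Pi$ corresponds exactly to these closure and predecessor operations on $\hat G\times\hat K$. In particular it needs continuity of $s\mapsto t_is$ and finiteness of the intertwiner dimensions, so that closures commute with taking predecessors.
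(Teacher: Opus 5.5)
Your proposal is correct and is essentially the paper's proof. Both arguments pass through the criterion ``no $\mu\prec\lambda$, $\mu\nsim\lambda$ with $\mu\otimes\Pi\sim\mu$'' (Proposition~\ref{prop outer} / Lemma~\ref{lem_minimal}). For one direction both use the sum over all iterated predecessors of a path, which is the paper's $\Sigma_\beta$. For the other direction both extract an infinite path inside the closed support of a non-faithful $\Sigma\sim\Sigma\otimes\Pi$. Your separate treatment of the discrete case via Kishimoto (which tacitly needs $\dim\cH_\Pi\ge 2$) covers a case where (\ref{equation:vanish}) fails, so the paper's direct appeal to Proposition~\ref{prop outer} does not literally apply there.

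One step needs its justification rewritten: the existence of a successor of $v$ inside $C$. It should not be derived from $\mu\otimes\bar\Pi$, whose support need not lie in $C$. It follows from $\mu\prec\mu\otimes\Pi$, because each $\hat K$-slice of the predecessor set of $C$ is a finite union of translates $t_iF_{\sigma'}$ of closed sets and hence closed. This is exactly the paper's identity $F_\sigma=\bigcup_i\bigcup_{\sigma\le\pi_i\otimes\sigma'}t_iF_{\sigma'}$.
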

\begin{proof}
 Suppose that (1) holds.
 For a given infinite directed path $\beta=\{(s_{j},\sigma_{j})\}_{j=1}^{\infty}$ of $\cG_{\Pi}$,
 we set
 \[\Sigma_{\beta}:=\bigoplus_{\sigma\in \hat{K}}\bigoplus_{j=1}^{\infty}\bigoplus_{w\in W(\sigma\to\sigma_{j})}(t_{w}s_{j},\,\sigma).\]
 By construction,
 it is straightforward to verify that $\Sigma_{\beta}\otimes\Pi\sim\Sigma_{\beta}$.
 Thus $\Sigma_{\beta} \sim \lambda$ by Proposition \ref{prop outer}.
 To lead to a contradiction, assume that condition (2) fails for some $\sigma\in \hat{K}$.
 Then one can take a positive norm one element $f\in C_{0}(\hat{G})\cong\rg(G)$ with $f(t_{w}s_{j})=0$ for all $j \geq 1$ and $w\in W(\sigma\to\sigma_{j})$.
Take the central support $\chi_{\sigma}\in {\rm Z}(\rg(K))$ of $\sigma$.
Then $f\otimes\chi_{\sigma}\in\rg(G)\otimes \rg(K)= \rg(G\times K)$ is a norm one element in $\ker \Sigma_{\beta}$.
This contradicts to the condition $\Sigma_{\beta} \sim \lambda$.
Hence $\Pi$ satisfies condition (2).

Conversely, 
if $\Cn{\cH_\Pi}\rca{\alpha_\Pi}(G\times K)$ is not simple,
then by Proposition \ref{prop outer}, one has a unitary representation $\Sigma$ of $G\times K$ satisfying $\Sigma\sim\Sigma\otimes\Pi$ and $0\neq\ker\Sigma\lhd \rg(G\times K)$.
Under the isomorphism $\rg(G\times K)\cong\bigoplus_{\sigma\in\hat{K}}C_{0}(\hat{G})\otimes \IB(\cH_{\sigma})$,
we have $\ker\Sigma\cong\bigoplus_{\sigma\in\hat{K}}C_{0}(\hat{G}\setminus F_{\sigma})\otimes \IB(\cH_{\sigma})$ for some closed sets $F_{\sigma}\subset \hat{G}$; $\sigma \in \hat{G}$.
By the choice of $(F_{\sigma})_{\sigma\in\hat{K}}$, one has
\begin{align*}
 \Sigma&\sim\bigoplus_{\sigma\in\hat{K}}\bigoplus_{s\in F_{\sigma}}(s,\sigma).
\end{align*}
Since $\Sigma\sim\Sigma\otimes\Pi$,
for every $\sigma\in\hat{K}$,
one has
\[F_{\sigma}=\bigcup_{i=1}^{n}\bigcup_{\sigma\leq\pi_{i}\otimes\sigma^{\prime}}t_{i}F_{\sigma^{\prime}}.\]
Therefore one has an infinite directed path $\beta=\{(s_{j},\sigma_{j})\}_{j=1}^{\infty}$ of $\cG_{\Pi}$ with $s_{j}\in F_{\sigma_{j}}$ for all $j\geq 1$.
Choose $\sigma\in\hat{K}$ with $F_{\sigma}\neq\hat{G}$.
We then obtain
 \[
 \cl\bigg(\bigcup_{j=1}^{\infty}\{t_{w}s_{j}\in\hat{G}: w\in W(\sigma\to\sigma_{j})\}\bigg)\subset F_{\sigma}\neq\hat{G}.\]
Thus, condition (2) fails.
\end{proof}
As an explicit example,
we give detailed calculations in the case $G=\IR$ and $K=\SU(2)$.

The following basic fact will be used in our analysis.
For completeness, we include a proof.
\begin{Lem}\label{Lem:subsem}
Let $S\subset\IR$ be a closed subsemigroup. If both $S \cap \IR_{>0}$ and $S \cap\IR_{<0}$ are non-empty, then $S$ is a closed subgroup of $\IR$.
\end{Lem}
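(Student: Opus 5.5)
The plan is to show first that $S$ contains $-s$ for every $s\in S$, so that $S$ is a closed subsemigroup closed under negation, hence a closed subgroup (note $0=s+(-s)\in S$). By symmetry (replace $S$ by $-S$) it suffices to treat $s\in S\cap\IR_{>0}$ and show $-s\in S$.

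Fix $a\in S\cap\IR_{>0}$ and $b\in S\cap\IR_{<0}$, and put $c:=-b>0$. Let $s>0$ be an arbitrary element of $S$; we aim to approximate $-s$ by elements of $S$. The elements $ms+kb=ms-kc$ with $m\geq 1$, $k\geq 0$ all lie in $S$. Split into two cases according to whether $s/c$ is rational.

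If $s/c\notin\IQ$, the multiples $ms-kc$ with $m,k\geq 1$ come arbitrarily close to any prescribed real number. Concretely, the fractional parts $\{ms/c\}$, $m\geq 1$, are dense in $[0,1)$ by Kronecker's theorem. Choosing $m$ with $\{ms/c\}$ close to $1-\{s/c\}$, or more directly close to the fractional part of $-s/c$, and then $k=\lfloor ms/c\rfloor - \lfloor -s/c\rfloor$ (which is $\geq 0$ once $m$ is large), gives $ms-kc\approx -s$. Closedness of $S$ then yields $-s\in S$.

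If $s/c=p/q\in\IQ$ with $p,q\geq 1$, then $qs=pc$, so $qs+pb=0$, and hence $0\in S$. Moreover $-s=(q-1)s+pb\in S$ when $q\geq 2$. When $q=1$ we have $s=pc$, so $-s=pb\in S$ directly. In either case $-s\in S$.

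Thus $S=-S$, and $S$ is a closed subgroup of $\IR$. The only mildly delicate step is the Kronecker density argument in the irrational case, in particular arranging the index $k$ to be non-negative. This is handled by taking $m$ large, since the density of $\{ms/c\}$ persists along $m\geq M$ for any $M$.
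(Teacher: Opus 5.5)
Your proof is correct, but it takes a different route from the paper's.

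The paper argues globally, through the extremal elements $d_+:=\inf(S\cap\IR_{>0})$ and $d_-:=\sup(S\cap\IR_{<0})$. Closedness puts both in $S$, and $d_++d_-\in S$ forces $d_+=-d_-$. Then either $d_+=0$, so $0$ is a two-sided accumulation point and $S=\IR$, or $d_+>0$ and $S=d_+\IZ$.

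You argue locally instead. For each $s>0$ and one fixed negative $b\in S$, you show that $-s$ already lies in the closed subsemigroup generated by $s$ and $b$. In the commensurable case this comes from the exact identity $-s=(q-1)s+pb$, and in the incommensurable case from Kronecker density.

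What each route buys:
\begin{itemize}
\item The paper's argument is shorter and gives the explicit dichotomy $S=\IR$ or $S=d_+\IZ$ for free. Its step ``$d_+=-d_-$'' does need a small extra argument when one of $d_\pm$ is $0$: add a small positive element of $S$ to $d_-$.
\item Your argument avoids extremal elements and that case split, at the cost of invoking Kronecker's theorem (or the pigeonhole argument behind it). It delivers exactly the subgroup property that Proposition \ref{prop:SU(2)} actually uses.
\end{itemize}

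A small remark: your concern about arranging $k\geq 0$ is unnecessary. Since $ms/c>0>-s/c$, one has $\lfloor ms/c\rfloor\geq 0>\lfloor -s/c\rfloor$, so $k\geq 1$ for every $m\geq 1$, with no need to take $m$ large.
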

\begin{proof}
Put
\[
d_{+}:=\inf(S\cap\IR_{>0}),\qquad d_{-}:=\sup(S\cap\IR_{<0}).
\]
Then, since $S$ is closed, one has $d_{+},d_{-}\in S$, and hence $d_{+}+d_{-}\in S$.
By the definitions of $d_{+}$ and $d_{-}$, we must have $d_{+}=-d_{-}$.
If $d_+=0$, then $0$ is an accumulation point of both $S\cap\IR_{>0}$ and $S \cap \IR_{<0}$, hence $S=\IR$.
If $d_+>0$, then $d_+$ (resp.~ $d_-$) is the smallest positive element (resp. the largest negative element) of $S$, and therefore $S=d_+ \IZ$.
\end{proof}
\begin{Prop}\label{prop:SU(2)}
 Let $\Pi$ be a finite dimensional faithful unitary representation of $G:=\SU(2)\times\IR$. 
 Then the following conditions are equivalent:
 \begin{enumerate}
 \item $\Cn{\cH_{\Pi}}\rca{\alpha_\Pi}G$ is simple.
 \item $F(\Pi) \sim \lambda$.
 \item $\Cn{\cH_{\Pi}}\rca{\alpha_\Pi} G$ is purely infinite simple.
 \end{enumerate}
\end{Prop}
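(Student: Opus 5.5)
Throughout, write $\Pi=\bigoplus_{i=1}^n(t_i,\pi_i)$ with $t_i\in\hat{\IR}\cong\IR$ (written additively) and $\pi_i$ finite-dimensional representations of $\SU(2)$. The plan is to prove (3)$\Rightarrow$(1) trivially, (2)$\Rightarrow$(1) via Proposition \ref{prop outer}, and (1)$\Rightarrow$(2) via Proposition \ref{prop topological graph} and Lemma \ref{Lem:subsem}. The remaining implication (1)$\Rightarrow$(3) comes from a separate pure infiniteness argument.

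\emph{(2)$\Rightarrow$(1).} The group $\IR\times\SU(2)$ satisfies (\ref{equation:vanish}) by the preceding example, since it is a non-discrete semidirect product of an abelian group by a compact group. So by Proposition \ref{prop outer} it suffices to check condition (2) there. Let $\mu\prec\lambda$ with $\mu\otimes\pi\sim\mu$, where $\pi=\Pi$. Then $\mu\otimes F(\Pi)\prec\mu$. Since $\lambda\sim F(\Pi)$ and $\mu\otimes\lambda\sim\lambda$ by Fell absorption, we get $\lambda\prec\mu$, hence $\mu\sim\lambda$.

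\emph{(1)$\Rightarrow$(2).} Assume simplicity. We want the closed subsemigroup $S\subset\IR$ generated by $\{t_i\}$ to equal $\IR$, together with a statement on the $\SU(2)$ side. For $\SU(2)$, faithfulness of $\Pi$ forces some $\pi_i$ to contain a half-integer spin. Then the $\pi_w$ eventually contain every irreducible $\sigma$ (Clebsch--Gordan), after suitable parity and length adjustments.

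Apply Proposition \ref{prop topological graph} to an infinite path that stays in a fixed $\SU(2)$-component. One first shows that if all $t_i\ge 0$, or all $t_i\le 0$, then there is an infinite path $(s_j,\sigma_j)$ with $s_j$ monotone. For such a path, (\ref{eq:cofinal}) fails, since the set of $t_ws_j$ lies in a half-line $[s_1,\infty)$ or $(-\infty,s_1]$. Hence $S$ meets both $\IR_{>0}$ and $\IR_{<0}$, and Lemma \ref{Lem:subsem} makes $S$ a closed subgroup: either $\IR$ or $d\IZ$. In the case $d\IZ$, choose a path with $s_j\in s_1+d\IZ$; then (\ref{eq:cofinal}) gives a discrete set, a contradiction. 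So $S=\IR$.

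Then $F(\Pi)$ weakly contains $(s,\sigma)$ for all $s$ in a dense set and all $\sigma$. To get all $\sigma$ for a dense set of $s$, combine the words $w$ with the $\SU(2)$ side, using the fact that $\{s: (s,\sigma)\prec F(\Pi)\}$ is a closed semigroup-like set, together with $\lambda\sim\bigoplus_{s,\sigma}(s,\sigma)$. Since $F(\Pi)\prec\lambda$ always holds here (the group is amenable), this gives $F(\Pi)\sim\lambda$. The hard step is the coupling between the $\IR$-weights and the spin parity: the words $w$ with $\sigma\le\pi_w\otimes\sigma'$ impose parity constraints tied to the $t_w$. One needs a careful choice of the test path in Proposition \ref{prop topological graph}, for example using the trivial $\sigma$ and even-length loops, to extract the density for each $\sigma$.

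\emph{(1)$\Rightarrow$(3).} Use $\Cn{\cH_\Pi}\rca{\alpha_\Pi}G\cong\Cn{\cH_\Pi\rtimes_{\rm r}G}$ and Theorem \ref{Thm:decomp}. The coefficient algebra $C_0(\IR)\otimes\rg(\SU(2))$ has many projections locally (in $\IB(\cH_\sigma)$ corners twisted by $C_0$). Simple $\fB\rca{\sigma}\IZ$ with $\fB$ approximately divisible/stably finite together with traceless-ness gives pure infiniteness: by (2) and Corollary \ref{Cor:gitrace}, a tracial weight would restrict to a $\rho$-invariant trace on $C_0(\IR)\otimes\rg(\SU(2))$. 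Shifting by $t_w$ densely in both directions together with the dimension growth $\dim\pi_w$ rules this out, as in the Cuntz algebra case. One then deduces pure infiniteness from simplicity plus the absence of traces, using the Kirchberg--Rørdam criterion for crossed products of AF-like inductive limits $\fB$ by a scaling automorphism. I expect this last deduction to be the main technical obstacle.
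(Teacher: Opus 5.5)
Your (2)$\Rightarrow$(1) is correct. It is the Fell-absorption argument of Corollary \ref{cor infinite quasi free} fed into Proposition \ref{prop outer}, whereas the paper goes (2)$\Rightarrow$(3)$\Rightarrow$(1). However, both (1)$\Rightarrow$(2) and (1)$\Rightarrow$(3) have genuine gaps.

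\emph{The half-line claim in (1)$\Rightarrow$(2) is false.} You claim that for a monotone path the set in (\ref{eq:cofinal}) lies in a half-line. An edge goes from $(s,\sigma)$ to $(s-t_i,\sigma')$, so if all weights are $\geq 0$, then $s_j$ decreases along every path. The bound $t_w+s_j\geq s_j$ only bounds the set below by $\inf_j s_j$, which is typically $-\infty$.

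The choice of path really matters. Take the faithful, non-simple $\Pi=(a,\pi_1)\oplus(b,\pi_1)$ with $0<a<b$ and $a/b\notin\mathbb{Q}$. The path alternating between $\pi_0$ and $\pi_1$ along $b$-edges has $(s_{2m+1},\sigma_{2m+1})=(s_1-2mb,\pi_0)$. For $\sigma=\pi_0$, the set then contains $s_1-2mb+2pa$ for all $m,p\in\IN$, which is dense.

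The missing idea is exactly the coupling you postpone. Decompose $\Pi$ into irreducibles $(t_i,\pi_{n_i})$, and choose $t_1$ with $|t_1|/n_1$ minimal among the components with $n_i\geq 1$. Use the path $s_k=-kt_1$, $\sigma_k=\pi_{kn_1}$. By Clebsch--Gordan, every $w\in W(1\to\pi_{kn_1})$ satisfies $\sum_i a_in_i\geq kn_1$, where $a_i$ counts occurrences of the letter $i$.
\begin{itemize}
\item If all weights are $\geq 0$, this gives $t_w\geq kt_1$, so the whole set lies in $[0,\infty)$. This contradicts Proposition \ref{prop topological graph}.
\item The case where all weights are $\leq 0$ is symmetric.
\item Lemma \ref{Lem:subsem} then makes $S$ a group. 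It contains this dense set, so $S=\IR$.
\end{itemize}

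The decoupling step that you call hard is actually easy:
\begin{itemize}
\item Doubling words, using $1\leq\pi_n^{\otimes 2}$, gives $2S\subset\cl\{t_w:w\in W(1\to 1)\}$. Hence $D:=\{t_w:w\in W(1\to1)\}$ is dense.
\item An odd $n_i$, which exists by faithfulness, makes each $F_n:=\{s:(s,\pi_n)\prec F(\Pi)\}$ nonempty.
\item Since $F_n+D\subset F_n$ and $F_n$ is closed, $F_n=\IR$.
\end{itemize}

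\emph{Your (1)$\Rightarrow$(3) is not an argument.}
\begin{itemize}
\item Here $\fB$ is an inductive limit of copies of $C_0(\IR)\otimes c_0(\IN)\otimes\IK$. It is therefore stably projectionless and far from AF, so trace-scaling criteria for crossed products of AF algebras do not apply.
\item Corollary \ref{Cor:gitrace} only sees gauge-invariant traces.
\item Simplicity plus tracelessness does not give pure infiniteness without further regularity.
\end{itemize}

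The paper instead proves (2)$\Rightarrow$(3) via Corollary \ref{cor:purely_infinite}:
\begin{itemize}
\item Through the Morita equivalence of Remark \ref{rmk:graph_algebra}, it works in $\Cn{\cE_\pi}\rca{\alpha_{\tilde\tau}}\IR$.
\item There, the density of $\{\chi:(1,\chi)\leq F(\Pi)\}$ together with $\dim(\pi^{\otimes m},1)\geq 2$ produces a Blackadar--Cuntz scaling element. This gives an infinite projection in the corner at the trivial vertex (Lemma \ref{lem:scaling_element}).
\item A Cuntz-type pair of aperiodic loops then compresses any nonzero positive $y$ in that corner to an element of $q\rg(\IR)$ of norm $\|E(y)\|$.
\end{itemize}
Since you obtain (2) from (1) anyway, this is the route to take.
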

\begin{proof}
For each $n\in \IN$, denote by $\pi_{n} (= \bar{\pi}_n)$
the unique irreducible ($n+1$)-dimensional unitary representation of $\SU(2)$.
Recall that 
\begin{equation}\label{eq_SU2}\pi_{n}\otimes \pi_{m}=\pi_{|n-m|}\oplus \pi_{|n-m|+2}\oplus\cdots\oplus \pi_{n+m}\end{equation} holds for all $n,m \in \IN$.
Pick $r_{1},\dots, r_{M},t_{1},\dots,t_{N}\in\hat{\IR}$ and $n_1, \ldots, n_N \geq 1$ with
 \[\Pi=\bigoplus_{i=1}^{N}(t_{i},\pi_{n_{i}})\oplus\bigoplus_{j=1}^{M}(r_{j},1_{\widehat{\SU(2)}}).\]
 Note that $N\geq 1$ as $\Pi$ is faithful. 
We regard $r_{1},\dots, r_{M},t_{1},\dots,t_{N}$ as elements of the additive group $\IR$ via the canonical isomorphism $\hat{\IR} \cong \IR$.

We show (1)$\Rightarrow$(2).
Let $S_{\Pi}$ be the closed subsemigroup of $\IR$ generated by the sequence $r_{1},\dots, r_{M},t_{1},\dots,t_{N}$.
We claim that $S_{\Pi}=\IR$.
To prove the claim, we first show that the sequence contains both positive and negative elements.
By reindexing the sequences if necessary,
we may assume that $\frac{|t_{1}|}{n_{1}}$ is minimal among $\frac{|t_{i}|}{n_{i}}; 1\leq i\leq N$.
We consider the infinite directed path $\beta:=\{(s_{k},\sigma_{k})\}_{k=1}^{\infty}$ given by $s_{k}:=-kt_{1}$ and $\sigma_{k}:=\pi_{kn_{1}}$ for $k \geq 1$.
Then we have
\begin{align}\label{eq_half}
&\bigcup_{k=1}^{\infty}\{t_{w}+s_{k}\in\IR: w\in W(1_{\widehat{\SU(2)}} \to\sigma_{k})\}\\
\subset&\left\{-kt_{1}+\sum_{i=1}^{N}a_{i}t_{i}+\sum_{j=1}^{M}b_{j}r_{j}\in\IR:\,k,a_{i},b_{j}\in \IN,\;\sum_{i=1}^{N}a_{i}n_{i}\geq kn_{1}\right\}.\nonumber
\end{align}
Indeed, for each $w\in W(1_{\widehat{\SU(2)}} \to\pi_{kn_{1}})$,
there exist $a_{1},\dots,a_{N}\in \IN$ satisfying $\pi_{w}=\bigotimes_{i=1}^{N}\pi_{n_{i}}^{\otimes a_{i}}$.
Since we have $1_{\widehat{\SU(2)}} \leq \pi_{w}\otimes \pi_{kn_{1}}$, equivalently, $(\pi_{w},\,\pi_{kn_{1}})\neq0$,
the formula (\ref{eq_SU2}) implies $\sum_{i=1}^{N}a_{i}n_{i}\geq kn_{1}$.
We next show that at least one of the numbers $r_{1},\dots, r_{M},t_{1},\dots,t_{N}$ is negative.
Indeed, if not,
then we have 
\begin{align*}
-kt_{1}+\sum_{i=1}^{N}a_{i}t_{i}+\sum_{j=1}^{M}b_{j}r_{j}&\geq -kt_{1}+\sum_{i=1}^{N}a_{i}n_{i}\frac{t_{i}}{n_{i}}\\
&\geq -kt_{1}+\left(\sum_{i=1}^{N}a_{i}n_{i}\right)\frac{t_{1}}{n_{1}}\geq -kt_{1}+kt_{1}=0.
\end{align*}
Hence, the right-hand side of \eqref{eq_half} is contained in $\IR_{\geq0}$.
By the simplicity of $\Cn{\cH_{\Pi}}\rca{\alpha_\Pi}G$ together with Proposition \ref{prop topological graph} (2), 
the right hand side of \eqref{eq_half} is dense in $\IR$. 
This is a contradiction.
Thus, 
we get $S_{\Pi}\cap\IR_{<0}\neq\emptyset$.
Similarly, we have $S_{\Pi}\cap\IR_{>0}\neq\emptyset$.
Thus, by Lemma \ref{Lem:subsem}, $S_{\Pi}$ is a subgroup of $\IR$.
This implies that $S_\Pi$ contains the left hand side of \eqref{eq_half}, which is dense in $\IR$.
Thus $S_{\Pi}=\IR$.
Since we have $1_{\widehat{\SU(2)}} \leq\pi_{n}^{\otimes 2}$ for any $n\in \IN$, we have
\begin{align*}
 2S_{\Pi}=\cl \left\{\sum_{i=1}^{N}2a_{i}t_{i}+\sum_{j=1}^{M}2b_{j}r_{j} : a_{i}, b_{j}\in \IN\right\}
 \subset \cl\left\{t_{w}\in\IR:w\in W(1_{\widehat{\SU(2)}}\to1_{\widehat{\SU(2)}})\right\}.
\end{align*}
Hence, the semigroup ${\alpha_\upsilon}:=\left\{t_{w}\in\IR:w\in W(1_{\widehat{\SU(2)}}\to1_{\widehat{\SU(2)}})\right\}$ is dense in $\IR$.
To obtain $F(\Pi)\sim\lambda$,
we follow the strategy of the proof of (2)$\Rightarrow$ (1) in Proposition \ref{prop topological graph}.
For $n\in \IN$, we define a closed subset $F_{n}\subset\IR$ to be $F_{n}:=\{s\in \IR:(s,\pi_{n})\prec F(\Pi)\}$.
It suffices to show that $F_{n}=\IR$ for all $n\in \IN$.
We first note that as $\pi_{m}$ factors through the quotient group ${\rm SO}(3)$ for every $m\in 2\IN$,
the faithfulness of $\Pi$ implies that there exists $1\leq i \leq n$ such that $n_i$ is odd.
Then, for any $n\in \IN$, by \eqref{eq_SU2}, for some $k\in \IN$, one has
 $(kt_{i},\pi_{n})\leq(t_{i},\pi_{n_{i}})^{\otimes k}\leq F(\Pi)$.
Hence $F_n \neq \emptyset$ for $n\in \IN$.
By definition,
for any $s\in {\alpha_\upsilon}$,
there exists $m\in \IN$ satisfying
$(s,1_{\widehat{\SU(2)}})\leq \Pi^{\otimes m}$.
Hence,
for any $n\in \IN$, $s^{\prime}\in F_{n}$, and $s\in{\alpha_\upsilon}$,
we have
\[(s^{\prime}+s,\pi_{n})=(s^{\prime},\pi_{n})\otimes(s,1_{\widehat{\SU(2)}})\prec F(\Pi).\]
That is, $F_{n}+{\alpha_\upsilon}\subset F_{n}.$
Since $F_{n}$ is a non-empty closed subset of $\IR$ and ${\alpha_\upsilon}\subset\IR$ is dense,
we conclude $F_{n}=\IR$.

The implication (2)$\Rightarrow$ (3) follows from a more general result Corollary \ref{cor:purely_infinite}.

The implication (3)$\Rightarrow$ (1) is trivial.
\end{proof}

\subsection*{Compact group quasi-free actions and graph \Cs-algebras}
In \cite[Section 7]{Izuqp},
the following condition (*) is introduced for a unitary representation $\pi$ of a compact group $K$,
which gives a sufficient condition for the quasi-free action $\alpha_\pi$ to be isometrically shift-absorbing (in the sense of \cite{GS2}):\\
(*)\quad For every $\sigma \in \hat{K}$, there exists $n\in \IN$ with $\sigma\leq \pi^{\otimes n}$; equivalently, every $\sigma\in \hat{K}$ is contained in $F(\pi)$.\\
In the infinite-dimensional case, Izumi characterized
isometric shift-absorption of a quasi-free action by condition (*) \cite[Proposition 7.2]{Izuqp}.
Here we prove a similar statement for finite-dimensional representations based on a fundamental fact about graph \Cs-algebras.
This settles a question mentioned in \cite{Izuqp}.
It is classically known that the fixed point algebra of a quasi-free action of a compact group on $\Cn{n}$ is isomorphic to a corner of a graph \Cs-algebra. 
By using the Hao--Ng isomorphism theorem,
we directly construct a Morita equivalence between the crossed product \Cs-algebra (which contains the fixed point algebra as a corner)
 and the corresponding graph \Cs-algebra.

We recall the definition of graph \Cs-algebras.
\begin{Def}[Section 1 of \cite{MR1777234}]\label{defn:graphalgebra}
 Let $\cG$ be a directed graph with a vertex set $V$ and an edge set $E$.
 The source map (resp. range map) of $\cG$ is denoted by $s\colon E\to V$ (resp. $r\colon E\to V$).
 The graph \Cs-algebra \Cs$(\cG)$ is the universal \Cs-algebra generated by pairwise orthogonal projections $(p_{v})_{v\in V}$ and partial isometry elements $(S_{e})_{e\in E}$ with the following relations.
 \begin{itemize}
 \item $S_{e}^{*}S_{e}=p_{r(e)}$ for all $e\in E$.
 \item $\sum_{s(e)=v}S_{e}S_{e}^{*}=p_{v}$ when $s^{-1}(v)$ is finite and non-empty..
 \end{itemize}
\end{Def}
\begin{Rem}\label{rmk graph algebra}
For a directed graph $\cG=(V,E)$, let $\{q_{v}\}_{v\in V}$ be the set of minimal projections of $c_{0}(V)$.
Define a \Cs-correspondence $\cE_{\cG}$ over $c_0(V)$ by specifying the multiplicities
\[\dim(q_{v}\cE_{\cG}q_{w})=|\{e\in E: r(e)=w ,\;s(e)=v\}|\]
for every $v,w\in V$.
(Equivalently, one can define $\cE_{\cG}$ as a completion of $c_c(E)$ equipped with a suitable pre-\Cs-correspondence structure.)
It is straightforward to show that the graph \Cs-algebra ${\rm C}^{*}(\cG)$ is isomorphic to the Cuntz--Pimsner algebra $\Cn{\cE_{\cG}}$.
\end{Rem}

Let $K$ be a compact second-countable group and $\pi\colon K\to \cU(\cH_{\pi})$ be a finite-dimensional unitary representation.
Using the definition before Proposition \ref{prop topological graph} with $G=\{1\}$,
we obtain the directed graph $\cG_{\pi}=(V_{\pi},E_{\pi})$ associated with $\pi$.
We prove the following key lemma based on the Hao--Ng isomorphism.
\begin{Lem}\label{lem_morita}
The graph \Cs-algebra ${\rm C}^{*}(\cG_{\pi})$ is stably isomorphic to $\Cn{\cH_{\pi}}\rca{\alpha_\pi} K$.
\end{Lem}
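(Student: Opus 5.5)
By Remark \ref{Rem:quasi free crossed} (the Hao--Ng isomorphism, Theorem \ref{Thm:HN}), we have $\Cn{\cH_\pi}\rca{\alpha_\pi} K \cong \Cn{\cH_\pi\rtimes_{\rm r} K}$. So it suffices to show that the \Cs-correspondence $\cH_\pi\rtimes_{\rm r} K$ over $\rg(K)$ is Morita equivalent, in the sense of \Cs-correspondences, to the graph correspondence $\cE_{\cG_\pi}$ over $c_0(\hat K)$ from Remark \ref{rmk graph algebra}. Since Morita equivalent correspondences have stably isomorphic (indeed Morita equivalent, hence stably isomorphic by Brown--Green--Rieffel in the $\sigma$-unital case) Cuntz--Pimsner algebras, this will give the lemma.

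We use the Peter--Weyl decomposition $\rg(K)\cong\bigoplus_{\sigma\in\hat K}\IB(\cH_\sigma)$, a $c_0$-direct sum. Let $X:=\bigoplus_{\sigma\in\hat K}\cH_\sigma$, regarded as a $c_0(\hat K)$--$\rg(K)$ imprimitivity bimodule: the left action of $q_\sigma$ is the projection onto $\cH_\sigma$, the right action of $\rg(K)$ is through the matrix blocks (using the conjugate convention as in Proposition \ref{Prop:st}), and the inner products are the obvious ones. This is a full imprimitivity bimodule with $\IK(X)= \rg(K)$ acting on the right, and $c_0(\hat K)$ on the left.

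The key step is to compute the transported correspondence $X\otimes_{\rg(K)}(\cH_\pi\rtimes_{\rm r}K)\otimes_{\rg(K)} X^\ast$ over $c_0(\hat K)$ and identify it with $\cE_{\cG_\pi}$. Its $(q_{\sigma_1},q_{\sigma_2})$-corner is a Hilbert space. To compute it, note that the left action of $\rg(K)$ on $\cH_\pi\otimes \rg(K)$ is through $\pi\otimes\lambda$. Cutting by the central projections $\chi_{\sigma_1}$ on the left and $\chi_{\sigma_2}$ on the right gives $\chi_{\sigma_1}(\cH_\pi\otimes\IB(\cH_{\sigma_2}))$, where $\chi_{\sigma_1}$ acts via $\pi\otimes\sigma_2$. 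After compressing with $X$ and $X^\ast$, this becomes the multiplicity space $(\sigma_1,\pi\otimes\sigma_2)$. Its dimension is exactly the number of edges from $\sigma_1$ to $\sigma_2$ in $\cG_\pi$ (the construction before Proposition \ref{prop topological graph} with $G=\{1\}$). We must also check that the left and right actions are the diagonal ones, which follows from the fact that the central projections act compatibly. Since a correspondence over $c_0(V)$ is determined up to isomorphism by these multiplicities, the transported correspondence is isomorphic to $\cE_{\cG_\pi}$.

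We then conclude as follows. By the Morita invariance of Cuntz--Pimsner algebras (conjugating a correspondence by an imprimitivity bimodule yields Morita equivalent $\Cn{\cdot}$, e.g.\ via the linking algebra or Muhly--Solel), $\Cn{\cH_\pi\rtimes_{\rm r} K}$ is Morita equivalent to $\Cn{\cE_{\cG_\pi}}\cong{\rm C}^\ast(\cG_\pi)$. Both algebras are $\sigma$-unital since $\hat K$ is countable, so they are stably isomorphic. The main obstacle is the bookkeeping in the corner identification, in particular verifying that the tensor products over $\rg(K)$ with $X$ reduce to intertwiner spaces without any extra multiplicity factors. That the ranks match, rather than differing by factors such as $\dim\sigma$, follows because $X\otimes X^\ast$ contracts each $\IB(\cH_\sigma)$ block to $\IC$.
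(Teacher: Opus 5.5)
Your proposal is correct and follows essentially the paper's route: Hao--Ng to pass to $\Cn{\cH_\pi\rtimes_{\rm r}K}$, the Peter--Weyl imprimitivity bimodule between $\rg(K)$ and $c_0(\hat K)$, a blockwise identification of the multiplicity spaces with $(\sigma_1,\pi\otimes\sigma_2)$, and then Muhly--Solel's Morita invariance followed by Brown--Green--Rieffel. The only difference is cosmetic: you conjugate $\cH_\pi\rtimes_{\rm r}K$ by $X$ and $X^\ast$ to obtain a correspondence over $c_0(\hat K)$, whereas the paper checks the equivalent intertwining relation $(\cH_\pi\rtimes_{\rm r}K)\otimes_{\rg(K)}X\cong X\otimes_{c_0(V_\pi)}\cE_\pi$ directly.
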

\begin{proof}
 Let $X$ be the \Cs-($\rg(K)$, $c_{0}(V_{\pi})$)-correspondence given by
 \[X:=\bigoplus_{\sigma\in\hat{K}}\cH_{\sigma}\otimes q_{\sigma}\] as a right Hilbert \Cs-$c_{0}(V_{\pi})$-module,
 equipped with the left action of $\rg(K)$ via the unitary representation
 \[\bigoplus_{\sigma\in\hat{K}}\sigma\colon K\to \prod_{\sigma\in\hat{K}} \cU(\cH_{\sigma})\cong\cU(X).\]
 Here
 $\{q_{\sigma}\}_{\sigma\in\hat{K}}$ denotes the set of minimal projections of $c_{0}(V_{\pi})$.
 Then $X$ gives Morita equivalence between $\rg(K)$ and $c_{0}(V_{\pi})$.
 We use the notation in Remark \ref{rmk graph algebra},
 and put $\cE_{\pi}:=\cE_{\cG_{\pi}}$ for short.
 Then one has the isomorphism
 \begin{equation}\label{eq:corr}(\cH_{\pi}\rtimes_{\rm r} K)\otimes_{\rg(K)}X\cong X\otimes_{c_{0}(V_{\pi})}\cE_{\pi}\end{equation}
 of \Cs-($\rg(K)$, $c_{0}(V_{\pi})$)-correspondences.
 Indeed,
 for each $\sigma\in\hat{K}$,
 one has the following unitary isomorphisms of $K$-Hilbert spaces:
 \begin{align*}
 (X\otimes_{c_{0}(V_{\pi})}\cE_{\pi})\cdot q_{\sigma}
 &\cong\bigoplus_{\sigma^{\prime}\in\hat{K}}\,\left(\cH_{\sigma^{\prime}}\otimes q_{\sigma^{\prime}}\cE_{\pi}q_{\sigma}\right)\\
 &\cong\bigoplus_{\sigma^{\prime}\in\hat{K}}\left(\cH_{\sigma^{\prime}}\otimes (\sigma^{\prime},\;\pi\otimes\sigma)\right)\\
 &\cong\cH_{\pi}\otimes\cH_{\sigma} \cong\left((\cH_{\pi}\rtimes_{\rm r} K)\otimes_{\rg(K)}X\right)\cdot q_{\sigma}.
 \end{align*}
 Here,
 in the first and second isomorphisms,
 the left $K$-actions on $\cH_{\sigma^{\prime}}\otimes q_{\sigma^{\prime}}\cE_{\pi}q_{\sigma}$ and $\cH_{\sigma^{\prime}}\otimes (\sigma^{\prime},\;\pi\otimes\sigma)$ are given by $\sigma^{\prime}\otimes 1$.
 In the third isomorphism, 
 the $K$-action on $\cH_{\pi}\otimes\cH_{\sigma} $ is given by $\pi\otimes\sigma$.
 By \cite[Theorem 3.5]{MR1757049},
 we obtain the Morita equivalence between $\Cn{\cH_{\pi}\rtimes_{\rm r} K}\cong\Cn{\cH_{\pi}}\rca{\alpha_\pi} K$ and $\Cn{\cE_{\pi}}\cong{\rm C}^{*}(\cG_{\pi})$.
 This shows the claim.
\end{proof}
\begin{Rem}
 In \cite[Proposition 5.1]{MR1777234}, it is proved that the graph \Cs-algebra ${\rm C}^{*}(\cG_{\pi})$ is simple if and only if $\cG_{\pi}$ is cofinal (see \cite[Section 5]{MR1777234} for the definition).
It follows that the cofinality of $\cG_{\pi}$ is equivalent to condition (2) of Proposition \ref{prop topological graph} for the case $G=\{1\}$.
\end{Rem}

\begin{Rem}\label{rmk:graph_algebra}
Again let $K$ be a compact group and let $G$ be a locally compact abelian group,
and consider a unitary representation $\Pi\colon K\times G\to \cU(\cH_{\Pi})$.
Put $\pi:=\Pi|_{K}\colon K\to \cU(\cH_{\Pi})$ and $\tau:=\Pi|_{G}\colon G\to \cU(\cH_{\Pi})$.
We use the notation $\cG_{\pi}$ and $\cE_{\pi}$ as in Lemma \ref{lem_morita}.
The isomorphism \eqref{eq:corr} of \Cs-correspondences gives rise to a unitary representation
\[
\tilde{\tau}\colon G\to \cU(\cE_{\pi})\cap c_{0}(V_{\pi})^{\prime}
\]
associated with the representation
\[
\tau\otimes 1_{\rg(K)}\colon G\to \cU(\cH_{\Pi}\rtimes_{\rm r}K)\cap(\pi\otimes\lambda_{K})(K)^{\prime}.
\]
Indeed, let
\[\Pi=\bigoplus_{i=1}^{n}(\pi_{i},t_{i})\]
 be the irreducible decomposition of $\Pi$. 
Then one has
\[
\pi=\bigoplus_{i=1}^{n}\pi_{i},
\qquad
\tau=\bigoplus_{i=1}^{n}t_{i}^{\oplus \dim \pi_{i}}.
\]
For each $\sigma,\sigma^{\prime}\in V_{\pi}$,
the representation $\tilde{\tau}$ of $G$ on
$
q_{\sigma^{\prime}}\cE_{\pi}q_{\sigma}\cong\bigoplus_{i=1}^n(\sigma^{\prime},\pi_{i}\otimes\sigma)
$
is given by
$
\bigoplus_{i=1}^n 1_{(\sigma^{\prime},\pi_{i}\otimes\sigma)}\otimes t_{i}.
$
We denote the quasi-free action $G\acts\Cn{\cE_{\pi}}$ of $\tilde{\tau}$ by $\alpha_{\tilde{\tau}}$.

Using the $\rg(K)$-$c_{0}(V_{\pi})$ correspondence $X$ in \eqref{eq:corr},
we obtain an isomorphism
\[
(X\otimes\rg(G))\otimes_{c_{0}(V_{\pi})\otimes \rg(G)}(\cE_{\pi}\rtimes_{\rm r} G)
\cong
\big(\cH_{\Pi}\rtimes_{\rm r}(K\times G)\big)\otimes_{\rg(K\times G)}(X\otimes\rg(G))
\]
of \Cs-($\rg(K\times G)$, $c_{0}(V_{\pi})\otimes\rg(G)$)-correspondences.
Here $X\otimes\rg(G)$ denotes the exterior tensor product, while the other tensor products are interior.
Using the Hao--Ng isomorphism theorem and \cite[Theorem 3.5]{MR1757049},
it follows that the two \Cs-algebras
\[
\Cn{\cE_{\pi}}\rca{\alpha_{\tilde{\tau}}} G \cong \Cn{\cE_{\pi}\rtimes_{\rm r} G}
\quad\text{and}\quad
\Cn{\cH_{\Pi}}\rca{\alpha_\Pi} (K\times G)\cong \Cn{\cH_{\Pi}\rtimes_{\rm r}(K\times G)}
\]
are stably isomorphic.
\end{Rem}

To complete the proof of Proposition \ref{prop:SU(2)}, we include the following lemma. The pure infiniteness of crossed product \Cs-algebras arising from finite-dimensional 
unitary representations of abelian groups was characterized in \cite{MR1943097}. The same technique applies to quasi-free actions on graph \Cs-algebras associated with unitary representations of compact groups.
As the proof of the following lemma follows that of \cite{MR1943097} closely, we describe only the necessary modifications for the context of graph algebras.
This can also be obtained by applying Theorem A of \cite{Kat08} in an appropriate setting.
For the remaining details, we refer the reader to \cite{MR1943097} or \cite{Kat08}.
\begin{Lem}\label{lem:scaling_element}
Under the setting in Remark \ref{rmk:graph_algebra},
assume further that the abelian group $G$ is non-trivial and that
\[\dim(\pi^{\otimes m},1_{\hat{K}})\geq 2 \quad {\rm for~some~}m \in \IN.
\]
Set
\[
{\alpha_\upsilon}:=\{\chi\in\hat{G}: (1_{\hat{K}},\chi)\leq F(\Pi)\}.
\]
If ${\alpha_\upsilon}$ is dense in $\hat{G}$,
then there exists a scaling element $x$ in $\Cn{\cE_{\pi}}\rca{\alpha_{\tilde{\tau}}} G$ \cite{BC}; that is,
\[
(x^{*}x)(xx^{*}) = xx^{*}
\qquad\text{and}\qquad
x^{*}x\neq xx^{*}.
\]

Moreover, if we assume in addition that the graph $\cG_{\pi}$ is cofinal,
then $\Cn{\cE_{\pi}}\rca{\alpha_{\tilde{\tau}}} G$,
or equivalently $\Cn{\cH_{\Pi}}\rca{\alpha_\Pi}(K\times G)$,
is purely infinite simple.
\end{Lem}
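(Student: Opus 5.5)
The plan follows the strategy of \cite{MR1943097}, transplanted from $\Cn{n}$ to the graph algebra $\Cn{\cE_{\pi}}$ via Lemma \ref{lem_morita} and Remark \ref{rmk:graph_algebra}. Write $q:=q_{1_{\hat{K}}}$ for the vertex projection of the trivial representation. I work inside the corner $q(\Cn{\cE_{\pi}}\rca{\alpha_{\tilde{\tau}}} G)q$ and regard $C_0(\hat{G})\cong \rg(G)$ as sitting in the multiplier algebra, commuting with $q$.

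\textbf{Step 1 (loops at $q$ and their translation rule).} By Remark \ref{rmk graph algebra}, paths of length $m$ from $1_{\hat{K}}$ to itself correspond to a basis of $(1_{\hat{K}},\pi^{\otimes m})$. Each such path $w$ gives a partial isometry $S_w\in\Cn{\cE_{\pi}}$ with $S_w^*S_w=q$ and $S_wS_w^*\le q$. Decomposing along $\Pi=\bigoplus_i(\pi_i,t_i)$, I choose the path basis inside the $t$-isotypic pieces, so that each $S_w$ is an eigenvector of $\alpha_{\tilde{\tau}}$ for a character $\chi_w$. By construction the $\chi_w$ exhaust the semigroup ${\alpha_\upsilon}$, and in the crossed product one has
\[S_w f = f(\,\cdot-\chi_w)\, S_w \quad \text{for } f\in C_0(\hat{G}),\]
with the sign convention fixed once and for all. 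The hypothesis $\dim(\pi^{\otimes m},1_{\hat{K}})\ge 2$ gives two loops $e\ne f$ of the same length with $S_eS_e^*\perp S_fS_f^*$. Composing with further loops, the characters of the available loops $V$ with $V^*V=q$ and $VV^*\lneq q$ still run through a dense subset of $\hat{G}$.

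\textbf{Step 2 (the scaling element).} I look for $x:=k_1Vk_2$ with $k_1,k_2\in C_c(\hat{G})$, $0\le k_i\le 1$, and $V$ a loop as in Step 1 with character $\chi$. Using the translation rule,
\[x^*x=k_2^2\,\tau_{\chi}(k_1^2)\,q,\qquad xx^*=k_1^2\,\tau_{-\chi}(k_2^2)\,VV^*,\]
where $\tau_{\pm\chi}$ denote the corresponding translates. The identity $(x^*x)(xx^*)=xx^*$ then reduces to a condition on functions: the product $k_2^2\,\tau_\chi(k_1^2)$, transported through $V$, must equal $1$ on the support of $k_1^2\,\tau_{-\chi}(k_2^2)$.

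This is the main obstacle. With a single cutoff ($k_1=1$) it fails on noncompact $\hat{G}$, because a compactly supported bump cannot equal $1$ on its own nontrivial translate. The two-sided cutoff is meant to remove this: take $k_2$ supported in a small open set $U$, and take $k_1$ equal to $1$ on a neighbourhood of $U\cup(U\pm\chi)$. This is exactly where density of ${\alpha_\upsilon}$ enters, since it lets $\chi$ be chosen so that the translated supports land where the relevant cutoff is identically $1$. Non-triviality of $G$ is used to make $U$ a proper open set. Finally, $x^*x\ne xx^*$ because $xx^*\le VV^*\lneq q$, whereas $x^*x$ has nonzero component in $q-VV^*$ (by orthogonality of $S_e$ and $S_f$). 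I expect to have to iterate the choice of $(V,\chi,U)$ once or twice to make every support condition hold simultaneously.

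\textbf{Step 3 (pure infiniteness).} If $\cG_{\pi}$ is cofinal, the Remark after Lemma \ref{lem_morita} together with Proposition \ref{prop topological graph} and density of ${\alpha_\upsilon}$ give simplicity of $\Cn{\cH_{\Pi}}\rca{\alpha_\Pi}(K\times G)$, and hence of the stably isomorphic algebra $\Cn{\cE_{\pi}}\rca{\alpha_{\tilde{\tau}}}G$. To show pure infiniteness I follow \cite{MR1943097}: given a nonzero hereditary subalgebra, cut down by the conditional expectation onto the gauge-fixed part and use cofinality to move into a corner $q_\sigma(\cdots)q_\sigma$ dominated by a vertex projection. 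There, the construction of Step 2, run at vertex $\sigma$ instead of $1_{\hat{K}}$ and localized in the $\hat{G}$-variable, produces a scaling element. By \cite{BC}, a scaling element yields an infinite projection. Hence every hereditary subalgebra is infinite, and the simple algebra is purely infinite.
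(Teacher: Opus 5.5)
\textbf{Step 2 has a genuine gap: no version of the single-loop ansatz $x=k_1Vk_2$ can be a scaling element when $\hat G$ is non-compact and connected.} Since $V$ is an $\alpha_{\tilde\tau}$-eigenvector, every $k\in C_0(\hat G)\cong\rg(G)$ satisfies $kV=Vk^{\chi}$, where $k^{\chi}$ is the translate of $k$ by $\chi^{\epsilon}$ for a fixed sign $\epsilon\in\{\pm1\}$ (your convention). Hence $x=k_1Vk_2=Vh$ for the \emph{single} function $h:=k_1^{\chi}k_2\in C_c(\hat G)$, so the two-sided cutoff buys nothing.

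Your own formulas already say $x^*x=q|h|^2$ and $xx^*=(\text{translate of }|h|^2)\,VV^*$. The identity $(x^*x)(xx^*)=xx^*$ is equivalent to $(x^*x)x=x$, i.e.\ $V(|h|^2)^{\chi}h=Vh$, i.e.\ $|h|^2(s\chi^{\epsilon})=1$ for all $s\in O:=\{h\neq 0\}$. Iterating gives $s\chi^{n\epsilon}\in\{|h|=1\}$ for all $s\in O$ and $n\geq 1$. Because $\{|h|=1\}$ is compact, the closed semigroup generated by $\chi^{\epsilon}$ is compact, hence a group containing $1$. This forces $|h|=1$ on $O$, so $|h|^2$ is a projection in $C_c(\hat G)$ and $O$ is compact open.

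If $\hat G$ is connected and non-compact, this gives $h=0$, hence $x=0$. That is exactly the case $G=\IR$, which Corollary~\ref{cor:purely_infinite} feeds into Proposition~\ref{prop:SU(2)}. So no choice of $(V,\chi,U,k_1,k_2)$, iterated or not, works. For discrete $G$ one can simply take $x=V$, which is why the one-loop idea looks plausible.

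\textbf{The missing idea is to use many loops with mutually orthogonal ranges (Katsura's Lemmas 4.3--4.4 in \cite{MR1943097}, which the paper adapts).} Since $\dim q\cE_\pi^{\otimes km}q\geq 2^k\to\infty$ and the set ${\alpha_\upsilon}$ is dense, for every neighbourhood $U$ of $1$ and every $M$ there are loops $S_{\xi_1},\dots,S_{\xi_M}$ with $S_{\xi_i}^*S_{\xi_j}=\delta_{ij}q$ and eigencharacters in $U\cap{\alpha_\upsilon}$.

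Fix a compact neighbourhood $X\subsetneq\hat G$ of $1$; this is where non-triviality of $G$ enters. Cover $X$ by small pieces with functions $f_i\in C_c(\hat G)_+$ forming a partition of unity on $X$ that is not a projection. Attach to each piece its own orthogonal loop $S_{\xi_i}$, whose character $\chi_i$ satisfies $\chi_i\cdot\supp(f_i)\subset X$, and set $x=\sum_i S_{\xi_i}\hat f_i$. Orthogonality kills the cross terms, so (after suitable normalization) $x^*x$ is $q$ times one function equal to $1$ on $X$. Meanwhile each summand of $x$, once $\hat f_i$ is moved through $S_{\xi_i}$, lives over $X$. Pieces near different parts of the boundary of $X$ are pushed inward by different characters, which no single translation can achieve.

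\textbf{Step 3 also lacks a concrete mechanism behind ``cut down by the conditional expectation''.} The paper does this as follows:
\begin{enumerate}
\item It performs loop surgery on the two distinct loops to obtain an aperiodic pair at $1_{\hat K}$.
\item From this pair it builds a partial isometry $u\in q\Cn{\cE_\pi}q$ with $u^*yu$ equal to a dual-action translate of $E(y)$ (Cuntz's trick).
\item It then uses cofinality to find a path from $\sigma$ back to $1_{\hat K}$, landing in $\rg(G)q$, where the scaling-element argument applies.
\end{enumerate}
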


\begin{proof}
Put $q:=q_{1_{\hat{K}}}\in c_{0}(V_{\pi})$.
For any $m\in\IN$,
we have
\[
q\cE_{\pi}^{\otimes m}q\cong (1_{\hat{K}},\pi^{\otimes m})
\cong (\cH_{\Pi}^{\otimes m})^{K}.
\]
For every unit vector $\xi\in q\cE_{\pi}^{\otimes m}q$,
let $S_{\xi}\in\Cn{\cE_{\pi}}$ be the associated partial isometry element.
Since the action of $G$ on $q\cE_{\pi}^{\otimes m}q$ is given by the restriction of $\tau^{\otimes m}$ to $(\cH_{\Pi}^{\otimes m})^{K}$,
we see that $(1_{\hat{K}},\chi)\leq \Pi^{\otimes m}$ if and only if there exists a unit vector $\xi\in q\cE_{\pi}^{\otimes m}q$ such that
\[
(\alpha_{\tilde{\tau}})_{g}(S_{\xi})=\chi(g)S_{\xi}
\]
for all $g\in G$.

By assumption,
we have $\dim q\cE_{\pi}^{\otimes m}q\geq 2$ for some $m$.
Thus,
\[
\dim q\cE_{\pi}^{\otimes km}q
\geq
\dim \big(q\cE_{\pi}^{\otimes m}q\big)^{\otimes k}
= \big(\dim q\cE_{\pi}^{\otimes m}q \big)^{k}
\to\infty
\qquad (k\to\infty).
\]
Hence, as in \cite[Lemma 4.3]{MR1943097},
for any neighborhood $U$ of $1\in \hat{G}$ and any $M\in\IN$,
one can choose unit vectors
\[
\xi_{1}^{\prime},\dots,\xi_{M}^{\prime}\in \bigcup_{m=1}^{\infty} q\cE_{\pi}^{\otimes m}q
\]
and characters $\chi_{1},\dots,\chi_{M}\in U\cap{\alpha_\upsilon}$ satisfying
\[
(\alpha_{\tilde{\tau}})_{g}(S_{\xi_{i}^{\prime}})=\chi_{i}(g)S_{\xi_{i}^{\prime}} \quad{\rm~for~all~} g\in G
,
\qquad
S_{\xi_{i}^{\prime}}^{*}S_{\xi_{j}^{\prime}}=\delta_{i,j}q
\]
for every $i,j=1,\dots,M$.

Take a compact neighborhood $X\subsetneq\hat{G}$ of $1\in \hat{G}$.
Following an argument similar to that of \cite[Lemma 4.4]{MR1943097} and applying the above claim,
one can choose sequences
\[
\xi_{1},\dots,\xi_{N}\in \bigcup_{m=1}^{\infty} q\cE_{\pi}^{\otimes m}q,
\qquad
\chi_{1},\dots,\chi_{N}\in{\alpha_\upsilon},
\qquad
f_{1},\dots,f_{N}\in C_{c}(\hat{G})_{+},
\]
satisfying the following conditions:
\begin{itemize}
\item $(\alpha_{\tilde{\tau}})_{g}(S_{\xi_{i}})=\chi_{i}(g)S_{\xi_{i}}$ for all $g\in G$ and $1 \leq i \leq N$,
\item $S_{\xi_{i}}^{*}S_{\xi_{j}}=\delta_{i,j}q$ for all $1\leq i, j\leq N$,
\item $\sum_{i=1}^{N}f_{i} \equiv 1$ on $X$,
\item $\chi_{i} \cdot \supp(f_{i})\subset X$ (the multiplication in the group $\hat{G}$)
\item $\sum_{i=1}^{N}f_{i}\in C_{0}(\hat{G})$ is not a projection (in the pointwise product).
\end{itemize}
Let $x_i:= \hat{f}_{i}\in \rg(G)$ be the Fourier transform of $f_{i}$.
Put
\[x:=\sum_{i=1}^{N}S_{\xi_{i}}x_{i}\in\Cn{\cE_{\pi}}\rca{\alpha_{\tilde{\tau}}} G.\]
Then, just as in \cite[Proposition 4.5]{MR1943097},
we conclude that $x$ is a scaling element.

If the graph $\cG_{\pi}$ is cofinal,
then the left-hand side of \eqref{eq:cofinal} is non-empty for any directed path and any $\sigma\in\hat{K}$.
Since the left-hand side of \eqref{eq:cofinal} is ${\alpha_\upsilon}$-invariant,
$\Pi$ satisfies condition (2) in Proposition \ref{prop topological graph}.
Hence,
$\Cn{\cH_{\Pi}}\rca{\alpha_\Pi} (K\times G)$ is simple.
By Remark \ref{rmk:graph_algebra},
$\Cn{\cE_{\pi}}\rca{\alpha_{\tilde{\tau}}} G$ and the corner
$
q(\Cn{\cE_{\pi}}\rca{\alpha_{\tilde{\tau}}} G)q\cong \left(q\Cn{\cE_{\pi}}q\right)\rca{\alpha_{\tilde{\tau}}} G
$
are also simple.
The corner $q(\Cn{\cE_{\pi}}\rca{\alpha_{\tilde{\tau}}} G)q$ contains 
an infinite projection by \cite[Proposition 4.2]{MR1943097}, as it contains a scaling element.

We now extend \cite[Lemma 4.6]{MR1943097} to our setting.
Let
\[
{\alpha_\upsilon}\colon\IT\acts\Cn{\cE_{\pi}}\rca{\alpha_{\tilde{\tau}}} G
\]
be the action induced by the gauge action on $\Cn{\cE_{\pi}}$. Let $E$ be the averaging conditional expectation of ${\alpha_\upsilon}$.
Take $M\in \IN$ and sequences
\[
\xi_{1},\dots,\xi_{L},\eta_{1},\dots,\eta_{L}\in\bigcup_{m=1}^{M}q\cE_{\pi}^{\otimes m},\quad\quad f_{1},\dots,f_{L}\in\rg(G),
\]
such that
$
y:=\sum_{l=1}^{L}S_{\xi_{l}}f_{l}S_{\eta_{l}}^{*} \in q(\Cn{\cE_{\pi}}\rca{\alpha_{\tilde{\tau}}} G)q
$
is positive and nonzero.
We next construct an element
$
a\in q\Cn{\cE_{\pi}}q
$
satisfying
\[
\|a\|\leq1,
\qquad
aya^{*}\in\rg(G)q,\qquad \|aya^{*}\|=\|E(y)\|.
\]
To adapt the proof of \cite[Lemma 4.6]{MR1943097} to the graph algebra setting,
we first fix orthonormal bases which respect the path structure of $\cG_{\pi}$ and the $G$-action.
First, for each $\sigma,\sigma^{\prime}\in V_{\pi}$,
we fix an orthonormal basis
$
\ONB_{G}(q_{\sigma}\cE_{\pi}q_{\sigma^{\prime}})
$
of $q_{\sigma}\cE_{\pi}q_{\sigma^{\prime}}$ each of whose elements $\zeta$ is an eigenvector for $\tilde{\tau}$; that is,
\[
\tilde{\tau}(g)\zeta=\chi(g)\zeta
\]
for some $\chi\in \hat{G}$.
Then, for $m \geq 2$, we define
\[
\ONB_{G}(q_{\sigma}\cE_{\pi}^{\otimes m}q_{\sigma^{\prime}}):=\{\zeta=\zeta_{1}\otimes\cdots\otimes\zeta_{m}\in q_{\sigma}\cE_{\pi}^{\otimes m}q_{\sigma^{\prime}}\;:\;
\zeta\neq0,\;\zeta_{i}\in\ONB_{G}(q_{\sigma_{i-1}}\cE_{\pi}q_{\sigma_{i}})\}.
\]
Note that if we identify 
$\ONB_{G}(q_{\sigma}\cE_{\pi}q_{\sigma^{\prime}})$
with the set of all edges of $\cG_\pi$ from $\sigma$ to $\sigma'$ via a fixed bijection,
then each $0 \neq \zeta=\zeta_{1}\otimes\cdots\otimes\zeta_{m}\in\ONB_{G}(q_{\sigma}\cE_{\pi}^{\otimes m}q_{\sigma^{\prime}})$ corresponds to the directed path $\beta_{\zeta}\colon\sigma_{0}\overset{\zeta_{1}}{\to}\sigma_{1}\cdots\overset{\zeta_{m}}{\to}\sigma_{m}$ of length $m$ in $\cG_{\pi}$.
In what follows, we will freely use these bijective correspondences.

Choose the smallest $m\in \IN$ with $\dim(1_{\hat{K}},\pi^{\otimes m})\geq 2$.
Pick two distinct directed loops
\[\beta_{\zeta}\colon1_{\hat{K}}\overset{\zeta_{1}}{\to}\sigma_{1}\cdots\overset{\zeta_{m}}{\to}1_{\hat{K}}\quad\text{and}\quad 
\beta_{\zeta^{\prime}}\colon1_{\hat{K}}\overset{\zeta_{1}^{\prime}}{\to}\sigma_{1}^{\prime}\cdots\overset{\zeta_{m}^{\prime}}{\to}1_{\hat{K}}\]
of $\cG_\pi$.
Let $i:=\min\{j:\zeta_{j}\neq\zeta_{j}^{\prime}\}$.
By the minimality of $m$, the common subpath
\[1_{\hat{K}}\overset{\zeta_{1}}{\to}\sigma_{1}\cdots\overset{\zeta_{i-1}}{\to}\sigma_{i-1}\]
of $\beta_\zeta$ and $\beta_{\zeta^{\prime}}$ does not contain a subloop.
By using $\beta_{\zeta}$ and $\beta_{\zeta^{\prime}}$, we next construct directed loops
\[\beta_{\zeta^{(1)}}\colon1_{\hat{K}}\overset{\zeta_{1}^{(1)}}{\to}\sigma^{(1)}_{1}\cdots\overset{\zeta_{m_{1}}^{(1)}}{\to}1_{\hat{K}} 
\quad\text{and}\quad 
\beta_{\zeta^{(2)}}\colon1_{\hat{K}}\overset{\zeta_{1}^{(2)}}{\to}\sigma_{1}^{(2)}\cdots\overset{\zeta_{m_{2}}^{(2)}}{\to}1_{\hat{K}}\]
such that $\zeta_{i}$ appears exactly once in $\beta_{\zeta^{(1)}}$ and $\zeta_{j}^{(1)}\neq\zeta_{j}^{(2)}$ for some $j\leq \min\{m_{1},m_{2}\}$.
(These conditions correspond to the aperiodicity of the word in Cuntz's original proof \cite{Cu77}.)
Define $i_0:= \min\{j: \zeta_{j}=\zeta_i\}, i_1:=\max\{j: \zeta_j=\zeta_i\}$. Note that $i_0 \leq i \leq i_1$.
We define $\beta_{\zeta^{(1)}}$ by removing the edges from the $i_0$-th to the $(i_1-1)$-th positions of $\beta_{\zeta}$. (If $i_0 =i_1$, the original path remains unchanged.)
The resulting directed path $\beta_{\zeta^{(1)}}$ remains a loop based at $1_{\hat{K}}$
in which $\zeta_i$ appears exactly once. Note that $\zeta^{(1)}_{i_0}=\zeta_i$.
We next construct $\zeta^{(2)}$. If $i_0=i$, we simply set $\zeta^{(2)}:=\zeta^{\prime}$.
Otherwise we have $i_0<i$. By the choice of $i$ and $i_0$, one has
\[\sigma' _{i_0-1}= \sigma_{i_0-1}= s(\zeta_i)=\sigma_{i-1}=r(\zeta_{i-1})=\sigma'_{i-1}.\]
Thus, by removing the edges from the $i_0$-th to the $(i-1)$-th positions of $\beta_{\zeta^{\prime}}$,
we obtain a new directed loop, say $\beta_{\zeta^{(2)}}$.
Again by the choice of $i$ and $i_0$, we obtain $\zeta^{(1)}_{i_0}=\zeta_i \neq \zeta^{\prime}_i = \zeta^{(2)}_{i_0}$.
Thus $\beta_{\zeta^{(1)}}$ and $\beta_{\zeta^{(2)}}$ have the desired properties.

Now, by the choice of $\beta_{\zeta^{(1)}}$ and $\beta_{\zeta^{(2)}}$, for $k\in \IN$ with $km_{1}\geq M$,
we obtain
\[(S_{\zeta^{(1)}}^{k}S_{\zeta^{(2)}})^{*}S_{\xi}S_{\zeta^{(1)}}^{k}S_{\zeta^{(2)}}=0\]
for all $\xi\in \bigcup_{m=1}^{M}q\cE_{\pi}^{\otimes m}q$.
For each $\sigma \in V_\pi$,
we denote by $\zeta^{(i)}_{\sigma}\in q_{\sigma}\cE_{\pi}^{\otimes m_{i}}q_{\sigma}$
the image of $\zeta^{(i)}$ by the canonical $G$-equivariant inclusion
\[q\cE_{\pi}^{\otimes m_{i}}q\cong(1_{\hat{K}},\,\pi^{\otimes m_{i}})\to(\sigma,\,\pi^{\otimes m_{i}}\otimes\sigma)\cong q_{\sigma}\cE_{\pi}^{\otimes m_{i}}q_{\sigma}.\]
Note that $\zeta^{(i)}_{\sigma}\in\ONB_{G}(q_{\sigma}\cE_{\pi}^{\otimes m_{i}}q_{\sigma})$.
Pick $\chi_{i}\in\hat{G}$ satisfying
\[(\alpha_{\tilde{\tau}})_{g}(S_{\zeta^{(i)}})=\chi_{i}(g)S_{\zeta^{(i)}}\quad {\rm~for~all~}g\in G,\]
and let 
\[F:=\{\sigma\in V_\pi :q\cE_{\pi}^{\otimes M}q_{\sigma}\neq0\}.\]
Define
\[u:=\sum_{\sigma\in F}\sum_{\zeta\in\ONB_{G}(q\cE_{\pi}^{\otimes M}q_{\sigma})}S_{\zeta}S_{\zeta^{(1)}_{\sigma}}^{k}S_{\zeta^{(2)}_{\sigma}}S_{\zeta}^{*}\in q\Cn{\cE_{\pi}}q.\]
Note that $u$ is a partial isometry element.
In the same way as in the proof of \cite[Lemma 4.6]{MR1943097},
one has
\[u^{*}yu=\hat{\alpha}_{\chi_{1}^{k}\chi_{2}}(E(y)).\]
Note that $u^{*}yu=\hat{\alpha}_{\chi_{1}^{k}\chi_{2}}(E(y))$ is a nonzero positive element in
\[
\overline{\rm span}\{S_{\xi}f S_{\xi^{\prime}}^{*}: f\in\rg(G),\;\xi,\xi^{\prime}\in q\cE_{\pi}^{\otimes M}\}\cong C_{0}(\hat{G})\otimes \IK(q\cE_{\pi}^{\otimes M})
\cong \bigoplus_{\sigma\in V_\pi} (C_{0}(\hat{G})\otimes \IK(q\cE_{\pi}^{\otimes M}q_\sigma)).
\]
Hence,
one can take $\sigma\in F$, $\xi\in q\cE_{\pi}^{\otimes M}q_{\sigma}$ and $\zeta\in\ONB_{G}(q\cE_{\pi}^{\otimes M}q_{\sigma})$ satisfying
\[S_{\zeta}S_{\xi}^{*}u^{*}yuS_{\xi}S_{\zeta}^{*}\in S_{\zeta}S_{\zeta}^{*}\otimes \rg(G),\qquad \|S_{\zeta}S_{\xi}^{*}u^{*}yuS_{\xi}S_{\zeta}^{*}\|=\|E(y)\|.\]
By assumption,
there exists a directed loop based at $1_{\hat{K}}$ in $\cG_{\pi}$.
Hence,
by the cofinality of $\cG_\pi$, there exists a directed path from $\sigma$ to $1_{\hat{K}}$.
Thus, one has a unit vector $\zeta^{\prime}\in\bigcup_{m=1}^{\infty}q_{\sigma}\cE_{\pi}^{\otimes m}q$.
Set
\[a:=S_{\zeta^{\prime}}^{*}S_{\xi}^{*}u^{*}=S_{\zeta^{\prime}}^{*}S_{\zeta}^{*}S_{\zeta}S_{\xi}^{*}u^{*}.\]
Then it satisfies the desired conditions
\[aya^{*}\in q\rg(G), \quad \|aya^{*}\|=\|E(y)\|.\]

Since the above claims have been verified, pure infiniteness of $q(\Cn{\cE_{\pi}}\rca{\alpha_{\tilde{\tau}}} G)q$ follows in the same way as in \cite[Theorem 4.7]{MR1943097}.
\end{proof}
\begin{Cor}\label{cor:purely_infinite}
 Let $G$ be a locally compact abelian group and let $K$ be a compact group. Let $\Pi\colon K\times G \rightarrow \cU(\cH_\Pi)$ be a finite-dimensional unitary representation.
 If $F(\Pi) \sim \lambda$,
 then $\Cn{\cH_{\Pi}}\rca{\alpha_\Pi}(K\times G)$ is purely infinite simple.
\end{Cor}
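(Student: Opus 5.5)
The plan is to reduce everything to Lemma \ref{lem:scaling_element}. By Remark \ref{rmk:graph_algebra}, $\Cn{\cH_{\Pi}}\rca{\alpha_\Pi}(K\times G)$ is stably isomorphic to $\Cn{\cE_{\pi}}\rca{\alpha_{\tilde{\tau}}} G$, where $\pi:=\Pi|_K$. Pure infiniteness and simplicity are both stable invariants. So it suffices to verify the three hypotheses of Lemma \ref{lem:scaling_element}:
\begin{enumerate}
\item the set ${\alpha_\upsilon}=\{\chi\in\hat{G}:(1_{\hat{K}},\chi)\leq F(\Pi)\}$ is dense in $\hat{G}$;
\item $\dim(\pi^{\otimes m},1_{\hat{K}})\geq 2$ for some $m$;
\item $\cG_\pi$ is cofinal.
\end{enumerate}
The degenerate case where $G$ is trivial has to be treated separately. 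There the crossed product is stably isomorphic to the graph algebra ${\rm C}^*(\cG_\pi)$ by Lemma \ref{lem_morita}, and we would use the standard graph-algebra dichotomy: a simple graph algebra in which every vertex connects to a loop with an exit is purely infinite.

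\textbf{Density and cofinality.} The hypothesis $F(\Pi)\sim\lambda$ gives that every irreducible $(\sigma,\chi)$ of $K\times G$ is weakly contained in $F(\Pi)$, since $\lambda$ weakly contains all of them for abelian $G$ and compact $K$. Because $K$ is compact, $\hat{K}$ is discrete, so weak containment of $(1_{\hat K},\chi)$ in $F(\Pi)$ means that $\chi$ lies in the closure of $\{s:(1_{\hat K},s)\leq F(\Pi)\}$. This is exactly the density of ${\alpha_\upsilon}$.

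Simplicity follows from Proposition \ref{prop outer}. Alternatively, argue as in the proof of (3)$\Rightarrow$(2) of Corollary \ref{cor infinite quasi free}: any $\mu\prec\lambda$ with $\mu\otimes\Pi\sim\mu$ satisfies $\lambda\sim\lambda\otimes\mu\prec F(\Pi)\otimes\mu\prec\mu$. The assumption (\ref{equation:vanish}) in Proposition \ref{prop outer} holds by the subsequent example, since the center of $\rg(G\times K)$ is non-degenerate whenever the group is non-discrete. In the discrete case we would instead use Kishimoto's theorem together with outerness.

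Cofinality of $\cG_\pi$ should come from simplicity through Proposition \ref{prop topological graph}. Projecting condition (2) there to the $\hat{K}$-coordinate gives that from every infinite path one can reach every $\sigma$. That is cofinality.

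\textbf{The dimension condition.} This is the step I expect to be the main obstacle. If $\dim(1_{\hat K},\pi^{\otimes m})\leq1$ for all $m$, then the loops at $1_{\hat K}$ in $\cG_\pi$ form at most a single cycle. Since every $\sigma\leq F(\pi)$, the component would then look like a cycle without exits, and I would aim to derive that the crossed product has a nonzero non-simple corner, for instance containing $C(\IT)\otimes\IK$-type pieces, contradicting simplicity. More concretely, I would try to show that $F(\Pi)\sim\lambda$ forces $(1_{\hat K},\chi)$ to occur in $\Pi^{\otimes m}$ for infinitely many distinct $\chi$ spread densely. With $G$ non-trivial, density in an infinite or non-trivial $\hat{G}$ requires at least two linearly independent $K$-invariant vectors in some single tensor power: take two invariant vectors in powers $a$ and $b$, and compare the vectors in $\Pi^{\otimes ab}$ obtained from the $b$-th and $a$-th tensor powers. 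If those two products coincide up to scalar for all choices, the characters would form a cyclic semigroup, which cannot be dense unless $\hat{G}$ is trivial. Once (1)--(3) are established, Lemma \ref{lem:scaling_element} yields pure infiniteness and simplicity.
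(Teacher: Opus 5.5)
Your plan follows the paper's proof except at one step. The paper observes that $F(\Pi)\sim\lambda$ forces every $\sigma\in\hat K$ to occur in $F(\pi)$, so $\cG_\pi$ is cofinal. In fact it is strongly connected: for $\sigma,\sigma'\in\hat K$, any constituent of $\sigma\otimes\bar{\sigma}'$ lies in some $\pi^{\otimes m}$, which gives a path of length $m$ from $\sigma$ to $\sigma'$. The paper then checks density exactly as you do, invokes Lemma~\ref{lem:scaling_element}, and refers the degenerate case to Izumi's results. Your route to cofinality through simplicity, Proposition~\ref{prop outer} and Proposition~\ref{prop topological graph} is valid but unnecessary. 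Your graph-algebra dichotomy is a reasonable substitute for Izumi in the case $G=\{1\}$.

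The gap is the hypothesis $\dim(\pi^{\otimes m},1_{\hat K})\ge 2$. You rightly flag it (the paper's short proof does not spell it out), but neither of your arguments establishes it. A cyclic semigroup $\{\psi^k\}_{k\in\IN}$ can be dense in a nontrivial $\hat G$. Likewise, a cycle without exit in $\cG_\pi$ does not contradict simplicity of the crossed product once $G$ acts. Concretely, take $K$ trivial, $G=\IZ$, $\cH_\Pi=\IC$ and $\Pi(n)=e^{2\pi i n\theta}$ with $\theta$ irrational. Then $F(\Pi)\sim\lambda$ and $\dim(\pi^{\otimes m},1)=1$ for all $m$, while $\Cn{\cH_\Pi}\rca{\alpha_\Pi}\IZ$ is the irrational rotation algebra: simple, but with a trace.

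So the condition cannot come from $F(\Pi)\sim\lambda$ alone. You must use $\dim\cH_\Pi\ge 2$, which is implicit in the section's setting $\Cn{n}$, $n\ge 2$; the corollary fails without it. With that assumption the step is short:
\begin{enumerate}
\item Suppose $\dim(\pi^{\otimes m},1_{\hat K})\le 1$ for all $m$, and some vertex $v$ of the strongly connected graph $\cG_\pi$ has two outgoing edges $e_1\neq e_2$.
\item Take loops $c_i=p\,e_i\,q_i$ at $1_{\hat K}$, where $p$ is a common path from $1_{\hat K}$ to $v$ and $q_i$ returns from the range of $e_i$ to $1_{\hat K}$.
\item Then $c_1c_2\neq c_2c_1$ (they differ at the edge after $p$), so they are two distinct loops at $1_{\hat K}$ of the same length, a contradiction.
\item Hence every vertex has out-degree one. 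By Frobenius reciprocity the out-degree of $\sigma$ is the number of irreducible constituents of $\bar\pi\otimes\sigma$, so every $\bar\pi\otimes\sigma$ is irreducible.
\item But for $\sigma\le\pi$, $\bar\pi\otimes\sigma$ contains $1_{\hat K}$, which forces $\dim\pi=1$.
\end{enumerate}

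The same input is what gives condition (L), and hence simplicity, in your $G=\{1\}$ branch. For $K=\IZ/N$ and $\Pi$ a generating character, $F(\Pi)\sim\lambda_K$, yet $\Cn{\IC}\rca{\alpha_\Pi}K\cong C(\IT)\rtimes\IZ/N\cong M_N(C(\IT))$ is not simple.
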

\begin{proof}
When $K$ is trivial,
the statement follows from \cite[Section 7]{Izuqp}.
Let $\pi:=\Pi|_{K}\colon K\to \IB(\cH_{\Pi})$ be the restriction of $\Pi$.
Then $F(\Pi)\sim \lambda_{K\times G}$ implies that $F(\pi)$ contains every irreducible representation $\sigma\in\hat{K}$.
This implies that the associated graph $\cG_{\pi}$ is cofinal.

Observe that, if $F(\Pi)\sim\lambda_{K\times G}$,
then, as $\Pi$ is finite-dimensional, the set
\[
{\alpha_\upsilon}=\left\{\chi\in\hat{G}:(1_{\hat{K}},\chi)\leq F(\Pi)\right\}
\]
is dense in $\hat{G}$.
Consequently,
the statement follows from Lemma \ref{lem:scaling_element}.
\end{proof}

Finally, we give a characterization of isometrically shift-absorbing quasi-free actions for compact groups.
The new result of this article is the implications (2), (3) $\Rightarrow$ (1),
which characterizes the property in terms of $\pi$.

\begin{Thm}\label{thm_isa}
 Let $K$ be a compact group and let $\pi\colon K\to \cU(\cH_{\pi})$ be a faithful finite-dimensional unitary representation.
 Then the following conditions are equivalent.
\begin{enumerate}\upshape
 \item $F(\pi)$ contains every irreducible representation of $K$.
 \item $\alpha_{\pi}\colon K\acts\Cn{\cH_{\pi}}$ is isometrically shift-absorbing.
 \item $\Cn{\cH_{\pi}}^{K}$ is purely infinite simple.
 \end{enumerate}
\end{Thm}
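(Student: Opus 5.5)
The plan is to prove the cycle $(1)\Rightarrow(3)$, $(3)\Rightarrow(1)$, $(1)\Rightarrow(2)$ and $(2)\Rightarrow(1)$. Two of these are known. The implication $(1)\Rightarrow(2)$ is exactly Izumi's sufficient condition $(\ast)$ from \cite[Section 7]{Izuqp}. For the implication $(2)\Rightarrow(1)$, recall that an isometrically shift-absorbing action of a compact group on a unital Kirchberg algebra has a purely infinite simple fixed point algebra; this is a standard consequence of \cite{GS2}, since such actions absorb the $K$-action on $\Cn{\infty}$ induced by $L^2(K)^{\infty}$. So $(2)\Rightarrow(3)$, and it is enough to prove $(1)\Leftrightarrow(3)$. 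Alternatively, for $(2)\Rightarrow(1)$ one can argue directly: isometric shift-absorption gives $K$-equivariant isometric copies of $L^2(K)$ in the central sequence algebra, hence $\alpha_\pi$-spectral subspaces for every $\sigma\in\hat{K}$. Since the spectral subspaces of $\alpha_\pi$ are spanned by words $S_{\xi}S_{\eta}^\ast$, every $\sigma$ must then occur in some $\pi^{\otimes n}\otimes\overline{\pi^{\otimes m}}$. Passing from this to occurrence in $F(\pi)$ is delicate, which is why the route through $(3)$ is preferable.

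\emph{Proof of $(1)\Rightarrow(3)$.} By Lemma \ref{lem_morita}, $\Cn{\cH_\pi}\rca{\alpha_\pi} K$ is stably isomorphic to ${\rm C}^\ast(\cG_\pi)$. The fixed point algebra $\Cn{\cH_\pi}^K$ is a full corner of the crossed product, since the action is minimal (the inclusion $\Cn{n}^{{\rm U}(n)}\subset\Cn{n}$ is irreducible). Under the Morita equivalence $X$, this corner corresponds to the corner $q_{1_{\hat{K}}}{\rm C}^\ast(\cG_\pi)q_{1_{\hat{K}}}$ at the vertex $1_{\hat{K}}$. Condition (1) says that every vertex $\sigma$ is reached by a path from $1_{\hat{K}}$. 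Using conjugates ($\sigma\le\pi^{\otimes n}\Rightarrow 1\le\bar\sigma\otimes\pi^{\otimes n}$, and $\bar\sigma$ is also reachable), every vertex also reaches $1_{\hat{K}}$, so $\cG_\pi$ is cofinal. By \cite[Proposition 5.1]{MR1777234}, ${\rm C}^\ast(\cG_\pi)$ is then simple.

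For pure infiniteness, I would use the standard graph criterion: every vertex connects to a loop with an exit, and there are no sinks, because $\pi\otimes\sigma\neq 0$. Faithfulness of a finite-dimensional $\pi$ with $\dim\cH_\pi\ge 2$ should give $\dim(1,\pi^{\otimes m})\ge2$ for some $m$, that is, two distinct loops at $1_{\hat{K}}$, hence a loop with an exit. The degenerate cases I must rule out are $K$ finite abelian with $\pi$ one-dimensional, and $\dim\cH_\pi=1$. Such cases either violate (1) or yield the Cuntz algebra $\Cn{1}$ issue, and I will need to check them separately; for instance, $K=\IT$ with $\pi$ a character does satisfy (1) only for $K$ trivial. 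Cofinality together with this loop condition gives pure infiniteness, and hence so is the corner.

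\emph{Proof of $(3)\Rightarrow(1)$.} Suppose (1) fails. Let $V_0\subset V_\pi$ be the set of vertices reachable from $1_{\hat{K}}$. Then $V_0$ is a proper hereditary set, with $\pi\otimes V_0\subset V_0$ in the tensor-closed sense. I expect this to be the main obstacle: a proper hereditary saturated set need not by itself contradict simplicity of the corner at $1_{\hat{K}}$, because the corner only sees $V_0$. The corner $q_1{\rm C}^\ast(\cG_\pi)q_1$ equals the corner of ${\rm C}^\ast(\cG_\pi|_{V_0})$. So I must instead show that the restricted graph $\cG_\pi|_{V_0}$ has a nontrivial hereditary saturated subset, or that it fails the loop-with-exit condition.

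The first thing I would try is conjugation. Let $V_1\subset V_0$ be the set of $\sigma$ such that $\bar\sigma\in V_0$ as well. Then $1_{\hat{K}}\in V_1$, and vertices in $V_1$ reach back to $1_{\hat{K}}$. If $V_1\neq V_0$, then the set of vertices in $V_0$ that cannot reach $1_{\hat{K}}$ is a nonempty hereditary set, which gives a proper ideal of the corner. If $V_1=V_0$, then $V_0$ is closed under conjugation and tensor products, so it is the dual of a quotient group $K/N$ with $N\neq\{1\}$. But $\pi$ would then factor through $K/N$, contradicting faithfulness. Combining these two cases with simplicity from (3) gives (1).
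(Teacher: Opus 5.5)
The genuine gap is in $(3)\Rightarrow(1)$. Your other implications are fine at the level of citations, and your direct graph argument for $(1)\Rightarrow(3)$ is a reasonable alternative to citing \cite{Izuqp}. For $(2)\Rightarrow(3)$, the right reference is Izumi's Theorem 1.2 applied using the minimality of quasi-free actions, not a general appeal to \cite{GS2}.

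In $(3)\Rightarrow(1)$ you only use that $\Cn{\cH_\pi}^K$ is \emph{simple}, and simplicity alone does not imply (1). Take $K=\IT$ and $\pi=z\oplus z$, i.e.\ the gauge action on $\Cn{2}$. Then $\pi$ is faithful and $F(\pi)$ contains only the characters $z^n$ with $n\geq 0$, so (1) fails. Yet $\Cn{2}^{\IT}$ is the CAR algebra, which is simple.

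The step that breaks is ``if $V_1\neq V_0$, the vertices of $V_0$ that cannot reach $1_{\hat K}$ give a proper ideal of the corner''. This is correct when $1_{\hat K}$ lies on a loop: every vertex of $V_1$ then emits an edge into $V_1$, so the saturation of $H:=V_0\setminus V_1$ misses $1_{\hat K}$. It fails when $1_{\hat K}$ lies on no loop. In that case $V_1=\{1_{\hat K}\}$ and every edge leaving $1_{\hat K}$ ends in $H$. So $1_{\hat K}$ lies in the saturation of $H$, the ideal generated by $H$ contains $q_{1_{\hat K}}$, and your ``proper ideal'' is the whole corner. This is exactly what happens in the example above.

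What is missing is a proof that $1_{\hat K}$ lies on a loop, i.e.\ $1_{\hat K}\le\pi^{\otimes n}$ for some $n\geq 1$, and this is where pure infiniteness must enter. Since $\Cn{\cH_\pi}^K$ is stably isomorphic to ${\rm C}^*(\cG_\pi)$, the graph must contain a loop; otherwise ${\rm C}^*(\cG_\pi)$ is AF \cite{MR1626528}.

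Graph theory alone cannot then move that loop to $1_{\hat K}$. A vertex on no loop that feeds into a vertex carrying two loops already has a purely infinite simple corner. So representation-theoretic input is needed. The paper lets $S$ be the set of vertices lying on loops and notes $\mu_S\otimes\pi\sim\mu_S$. It then uses minimality of $\cH_\pi\rtimes_{\rm r}K$ (Lemma \ref{lem_minimal}, Proposition \ref{prop outer}; available because the crossed product is simple) to get $\mu_S\sim\lambda$, hence $1_{\hat K}\in S$.

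Once $1_{\hat K}$ is on a loop, your dichotomy does finish the proof correctly. This includes reducible $\pi$, where $1_{\hat K}\le\pi^{\otimes n}$ only puts some irreducible summand of $\bar\pi$ into $\pi^{\otimes n-1}$.

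Two smaller points on $(1)\Rightarrow(3)$:
\begin{itemize}
\item Faithfulness and $\dim\cH_\pi\geq 2$ alone do not give $\dim(1_{\hat K},\pi^{\otimes m})\geq 2$; the same example $z\oplus z$ shows this. It is (1) that makes $\cG_\pi$ strongly connected, and $\dim\cH_\pi\geq 2$ that keeps it from being a single cycle.
\item $\dim\cH_\pi=1$ is a genuine exception, not a case to ``check separately''. For $K=\IZ/n$ with a faithful character, (1) holds but $\Cn{\cH_\pi}=C(\IT)$ and its fixed point algebra is not simple. So $\dim\cH_\pi\geq 2$ has to be assumed, as the paper tacitly does.
\end{itemize}
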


\begin{proof}
 Since quasi-free actions are minimal,
 the equivalence between (2) and (3) follows from \cite[Theorem 1.2]{Izuqp}.
 The implication (1)$\Rightarrow$(2) follows from \cite[Section 7]{Izuqp}. 
 
 It is enough to show (3)$\Rightarrow$(1).
 Suppose that the fixed point algebra $\Cn{\cH_{\pi}}^{K}$ is purely infinite simple.
 Since $\Cn{\cH_{\pi}}\rca{\alpha_\pi} K$ is a simple \Cs-algebra containing $\Cn{\cH_{\pi}}^{K}$ as a full corner, the \Cs-algebras
 \[\Cn{\cH_{\pi}}^{K}, \quad \Cn{\cH_{\pi}}\rca{\alpha_\pi} K,\quad {\rm C}^{*}(\cG_{\pi})\]
 are mutually stably isomorphic.
 As in \cite[Theorem 2.4]{MR1626528}, 
 if $\cG_{\pi}$ has no directed loops,
 then ${\rm C}^{*}(\cG_{\pi})$ is an AF-algebra.
 Hence, $\cG_\pi$ must contain a directed loop.
 Put 
 \[S:=\{\sigma\in \hat{K}: \sigma\;\text{lies on a directed loop of } \cG_\pi\} (\neq \emptyset).\]
 Set $\mu_S:= \bigoplus_{\sigma \in S} \sigma$.
 By definition, one has $\mu_{S}\otimes\pi\sim\mu_{S}$.
 By Proposition \ref{prop outer},
 one has $1_{\hat{K}}\leq\mu_{S}$,
 which implies $1_{\hat{K}}\in S$.
 Thus one can choose $n\geq1$ with $1_{\hat{K}}\leq\pi^{\otimes n}$.
 This implies that
 \[\overline{\pi}\leq\pi^{\otimes n-1}\leq F(\pi),\]
 whence
 \[\bigoplus_{k,l=1}^{\infty}\pi^{\otimes k}\otimes\overline{\pi}^{\otimes l}\prec F(\pi).\]
 As $\pi$ is faithful, the left-hand side contains every irreducible representation of $K$ (see Remark \ref{rmk faithful} for details).
 Hence $\pi$ satisfies condition (1).
 \end{proof}
\begin{Rem}\label{rmk faithful}
 Let $\pi$ be a unitary representation of a compact group $K$,
 and put $\Pi:=\bigoplus_{k,l=1}^{\infty}\pi^{\otimes k}\otimes\overline{\pi}^{\otimes l}$.
 Then it is well-known that $\pi$ is faithful if and only if $\sigma\leq\Pi$ for every $\sigma\in\hat{K}$.
 Indeed,
 the \Cs-subalgebra of the commutative \Cs-algebra $C(K)$ generated by $\{\langle \pi(\cdot)\xi,\eta\rangle\in C(K):\xi,\eta\in\cH_{\pi}\}$ is equal to
 \[A_{\Pi}:=\csp\left\{\langle \Pi(\cdot)\xi,\eta\rangle\in C(K):\xi,\eta\in\bigoplus_{k,l=1}^{\infty}\cH_{\pi}^{\otimes k}\otimes\cH_{\overline{\pi}}^{\otimes l}\right\}.\]
 Therefore,
 $\pi$ is faithful if and only if
 $A_{\Pi}\subset C(K)$ separates points of $K$.
 The claim follows by the Peter--Weyl theorem and the Stone--Waierstrass theorem.
 \end{Rem}

 \begin{Cor}\label{cor_semisimple}
 If $K$ is a compact semisimple Lie group,
 then every faithful finite-dimensional unitary representation $\pi$ of $K$ satisfies the equivalent conditions in Theorem \ref{thm_isa}.
\end{Cor}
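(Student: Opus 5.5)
We verify condition (1) of Theorem \ref{thm_isa}, namely that $F(\pi)$ contains every $\sigma\in\hat{K}$. By the argument in the proof of Theorem \ref{thm_isa}, it is enough to find $n\geq 1$ with $1_{\hat{K}}\leq \pi^{\otimes n}$. Indeed, once we have such an $n$, we get $\overline{\pi}\leq \pi^{\otimes n-1}\leq F(\pi)$. Then $\bigoplus_{k,l\geq 1}\pi^{\otimes k}\otimes\overline{\pi}^{\otimes l}\prec F(\pi)$, and by Remark \ref{rmk faithful} and the faithfulness of $\pi$, the left-hand side contains every irreducible representation of $K$. So the whole proof reduces to producing a nonzero $K$-invariant vector in some tensor power $\cH_\pi^{\otimes n}$.

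To produce it we use the determinant. Since $\pi$ is finite-dimensional, say of dimension $d$, the top exterior power $\Lambda^d\cH_\pi$ is a subrepresentation of $\pi^{\otimes d}$. On this one-dimensional space $K$ acts by the character $\det\circ\pi\colon K\to\IT$. Because $K$ is a compact semisimple Lie group, its Lie algebra $\mathfrak{k}$ satisfies $[\mathfrak{k},\mathfrak{k}]=\mathfrak{k}$. Every continuous character $\chi\colon K\to\IT$ is smooth, and its derivative is a Lie algebra homomorphism into the abelian algebra $i\IR$, so it vanishes on $[\mathfrak{k},\mathfrak{k}]=\mathfrak{k}$. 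Hence $\chi$ is trivial on the identity component $K_0$.

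If $K$ is connected, this shows that $\det\circ\pi$ is trivial, so $1_{\hat{K}}\leq\Lambda^d\cH_\pi\leq\pi^{\otimes d}$ and we are done. If $K$ is not connected, $K/K_0$ is a finite group of some order $r$. Then $(\det\circ\pi)^r$ is trivial on $K_0$ and also kills $K/K_0$, so it is trivial. Consequently $(\Lambda^d\cH_\pi)^{\otimes r}\leq\pi^{\otimes dr}$ is the trivial representation, and we may take $n=dr$.

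The only genuinely non-routine step is the vanishing of characters, that is, the perfectness of $\mathfrak{k}$ and its passage to the group. This is standard for semisimple Lie algebras and needs only a sentence. If "semisimple" is read as "$K_0$ semisimple", the finite-index argument above covers that reading as well.
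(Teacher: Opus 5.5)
Your strategy is the paper's. You realize $\det\circ\pi$ inside $\pi^{\otimes\dim\pi}$ via $\Lambda^{\dim\pi}\cH_\pi$, kill it using semisimplicity, get $\overline{\pi}\leq F(\pi)$, and finish with the last step of the proof of Theorem~\ref{thm_isa} and Remark~\ref{rmk faithful}. Your finite-index trick is a small but real improvement. If ``semisimple'' only constrains the Lie algebra, the paper's assertion that $\det\circ\pi$ is trivial tacitly uses connectedness: for $\mathrm{O}(3)$ with its defining representation, $\det\circ\pi$ is the sign character. Your argument only needs $\det\circ\pi$ to have finite order.

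There is, however, a gap in your reduction, and it would fail as stated. A trivial subrepresentation $1_{\hat K}\leq\pi^{\otimes n}$ is a nonzero invariant vector $v\in\cH_\pi^{\otimes n}$. It only yields the nonzero intertwiner $\overline{\cH_\pi}\to\cH_\pi^{\otimes n-1}$, $\bar\xi\mapsto(\langle\cdot,\xi\rangle\otimes\id)(v)$. This map need not be injective when $\pi$ is reducible, so $\overline{\pi}\leq\pi^{\otimes n-1}$ does not follow. Two examples:
\begin{itemize}
\item For $K=\SU(2)$ and $\pi=1_{\hat K}\oplus\pi_1$, with $\pi_1$ the defining two-dimensional representation, one has $1_{\hat K}\leq\pi^{\otimes 1}$. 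Yet the three-dimensional $\overline{\pi}$ is not contained in $\pi^{\otimes 0}=1_{\hat K}$.
\item For $K=\IT$ and $\pi=1\oplus\chi$ with $\chi(z)=z$, one has $1\leq\pi$, but $\overline{\chi}$ never occurs in $F(\pi)$.
\end{itemize}
So ``it is enough to find $n$ with $1_{\hat K}\leq\pi^{\otimes n}$'' is false as a general principle. The sentence you borrow from the proof of Theorem~\ref{thm_isa} cannot be used as a black box.

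What saves your proof is the particular invariant vector you construct. Contraction with a volume form $\omega\in\Lambda^d\cH_\pi$ is injective; this is the isomorphism $\Lambda^{d-1}\pi\cong\overline{\pi}\otimes(\det\circ\pi)$. Hence $(\det\circ\pi)^r=1_{\hat K}$ gives $\overline{\pi}\cong\Lambda^{d-1}\pi\otimes(\det\circ\pi)^{\otimes(r-1)}\leq\pi^{\otimes(dr-1)}$. Therefore $\pi^{\otimes k}\otimes\overline{\pi}^{\otimes l}\leq\pi^{\otimes(k+l(dr-1))}\leq F(\pi)$ for all $k,l\geq 1$, and Remark~\ref{rmk faithful} finishes the proof. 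This is exactly what the paper's ``consequently, $\overline{\pi}\leq\pi^{\otimes(\dim\pi-1)}$'' rests on (the case $r=1$). State it explicitly rather than routing through the general inference.
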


 \begin{proof}
 In general,
 $\pi^{\otimes{\dim}\pi}$ contains a one-dimensional representation ${\bigwedge^{{\dim}\pi}}\pi = \det \circ \pi$, which is trivial in the present case
 due to the semisimplicity of $K$.
 Consequently,
 we obtain $\overline{\pi}\leq\pi^{\otimes({\dim}\pi-1)}$.
 As in the proof of Theorem \ref{thm_isa},
 it follows that $\pi$ satisfies condition (1) in Theorem \ref{thm_isa}.
 \end{proof}
 \begin{Rem}
 Semisimplicity is used only to ensure that $\det \circ \pi =1$.
 \end{Rem}

\subsection*{Acknowledgements}
The authors would like to thank Takeshi Katsura for helpful discussions on graph algebras regarding Theorem \ref{thm_isa}, and Kan Kitamura for valuable comments on outer actions in Section \ref{section:prop outer} and semisimple Lie groups in Corollary \ref{cor_semisimple}.
They also thank Elias Katsoulis and Adam Dor-On for some remarks on the Hao--Ng isomorphism problem.

\subsection*{Fundings}
The first author was supported by JSPS KAKENHI (Grant-in-Aid for Early-Career Scientists) 26K16999.
The second author was supported by JSPS KAKENHI (Grant-in-Aid for Early-Career Scientists) JP22K13924, JSPS KAKENHI Grant-in-Aid for Scientific Research (C) 26K06820,
and the Distinguished Researcher Grant of Hokkaido University.
\subsection*{AI statement}
AI tools were utilized solely for language editing and literature search. All mathematical content, including proofs and their verification, was carried out entirely without the use of AI.


\begin{thebibliography}{99}
\bibitem{MR1777234}
T.~Bates, D.~Pask, I.~Raeburn, and W.~Szyma\'{n}ski,
{\it The \Cs-algebras of row-finite graphs.}
New York J. Math., {\bf 6}:307--324, 2000.
\bibitem{BKQR}E.~ B{\'e}dos, S.~ Kaliszewski, J.~ Quigg, D.~ Robertson,
{\it A new look at crossed
product correspondences and associated \Cs-algebras.} J.~ Math.~ Anal.~App.~ {\bf 426} (2015), no. 2, 1080--1098.

\bibitem{BC}B.~Blackadar, J,~Cuntz, {\it The structure of stable algebraically simple \Cs-algebras.} Amer.~ J.~ Math.~
{\bf 104} (1982), no. 4, 813--822.
\bibitem{MR818932}

O.~Bratteli and D.~E. Evans,
{\it Derivations tangential to compact groups: the nonabelian case.}
Proc. London Math. Soc. (3), {\bf 52}(2):369--384, 1986.
\bibitem{Br} L.~ G.~ Brown, {\it Stable isomorphism of hereditary subalgebras of \Cs-algebras.} Pacific J.~ Math.~ {\bf 71}(1977),
335--348.
\bibitem{BO} N.~ P.~ Brown, N.~Ozawa, {\it \Cs-algebras and finite-dimensional approximations.} Graduate Studies in Mathematics {\bf 88}. American Mathematical Society, Providence, RI, 2008.
\bibitem{CGS} J.~ Carri{\'o}n, J.~Gabe, C.~ Schafhauser, A.~ Tikuisis, S.~ White.
{\it Classifying $\ast$-homomorphisms I: unital simple nuclear \Cs-algebras.}
To appear in Acta Math., arXiv:2307.06480v4.
\bibitem{Cu77} J.~ Cuntz, {\it Simple \Cs -algebras generated by isometries.}
Comm.~ Math.~ Phys.~ {\bf 57}(2)(1977), 173--185.
\bibitem{MR901232}
S.~Doplicher and J.~E. Roberts,
{\it Duals of compact Lie groups realized in the Cuntz algebras and their actions on \Cs-algebras.}
J. Funct. Anal., {\bf 74}(1)(1987), 96--120.

\bibitem{DT} A.~ Dor-On, I.~ Thompson, {\it The Hao--Ng isomorphism theorem for reduced crossed products.} Preprint, arXiv:2505.00587v3.
\bibitem{MR1397028}
G.~B. Folland,
{\em A Course in Abstract Harmonic Analysis.}
Studies in Advanced Mathematics, CRC Press, Boca Raton, FL, 1995.
\bibitem{MR1670363}
N.~J. Fowler, M.~Laca, and I.~Raeburn,
{\it The \Cs-algebras of infinite graphs.}
{\em Proc. Amer. Math. Soc.}, {\bf 128}(8):2319--2327, 2000.
\bibitem{GS2} J.~ Gabe, G.~ Szab\'o, {\it The dynamical Kirchberg--Phillips theorem.} Acta Math.~ {\bf 232} (2024) 1--77.

\bibitem{HN} G.~ Hao, C.-K. Ng, {\it Crossed products of \Cs -correspondences by amenable group
actions.} J.~ Math.~ Anal.~ App.~ {\bf 345} (2008), no. 2, 702--707.
\bibitem{Izu}M.~ Izumi, {\it Inclusions of simple \Cs-algebras.} J.~ Reine Angew.~ Math.~ {\bf 547} (2002), 97--138.
\bibitem{Izuqp}M.~ Izumi, {\it Minimal compact group actions on \Cs-algebras with simple fixed point algebras.}
Rev. Math. Phys., published online, doi:10.1142/S0129055X24610026.


\bibitem{MR2085108}
T.~Kajiwara, C.~Pinzari, and Y.~Watatani,
Jones index theory for Hilbert {$C^*$}-bimodules and its equivalence with conjugation theory,
{\em J. Funct. Anal.}, {\bf 215}(1), 1--49, 2004.
\bibitem{Kas}G,~ Kasparov, {\it Hilbert \Cs-modules: theorems of Stinespring and Voiculescu.} J.~
Operator Theory {\bf 4}(1980), 133--150.
\bibitem{Kat17} E.~ Katsoulis, {\it \Cs-envelopes and the Hao--Ng isomorphism for discrete groups.} Int. Math. Res. Not., IMRN {\bf 2017} (2017), no. 18, 5751--5768
\bibitem{KR19} E.~ Katsoulis, C.~ Ramsey, {\it Crossed products of operator algebras.} Mem.~ Amer.~ Math.~ Soc.~ {\bf 258} (2019), no. 1240, xviii + 85 pp. 
\bibitem{KR} E.~ Katsoulis, C.~ Ramsey, {\it The non-selfadjoint approach to the Hao--Ng isomorphism.} Int. Math. Res. Not., IMRN {\bf 2021} (2021), no. 2, 1160--1197.
\bibitem{MR1943097}
T.~Katsura,
{\it AF-embeddability of crossed products of Cuntz algebras.}
J. Funct. Anal., {\bf 196}(2):427--442, 2002.
\bibitem{Kat}T.~Katsura, {\it On crossed products of the Cuntz algebra ${\mathcal O}_\infty$ by quasi-free actions of abelian groups.} Operator algebras and mathematical physics, 209--233, Theta, Bucharest, (2003).
\bibitem{MR2016248}T.~Katsura, {\it The ideal structures of crossed products of Cuntz algebras by quasi-free actions of abelian groups.}
Canad. J. Math., {\bf 55}(6) (2003), 1302--1338.
\bibitem{Kat04}T.~ Katsura, {\it On \Cs-algebras associated with \Cs-correspondences.} J.~ Funct.~
Anal.~ {\bf 217} (2004), no. 2, 366--401.
\bibitem{KatId}
T.~Katsura, {\it A class of \Cs-algebras generalizing both graph algebras and homeomorphism \Cs-algebras III, ideal structures.} Ergodic Theory Dynam. Systems, {\bf 26} (2006), no. 6, 1805--1854.
\bibitem{Kat08}T.~Katsura, {\it A class of \Cs-algebras generalizing both graph algebras and homeomorphism \Cs-algebras IV, pure infiniteness.} 
J.~ Funct.~ Anal.~ {\bf 254} (2008), no. 5, 1161--1187.
\bibitem{MR623751}
A.~Kishimoto,
{\it Simple crossed products of \Cs-algebras by locally compact abelian groups.}
{Yokohama Math. J.}, {\bf 28}(1-2):69--85, 1980.
\bibitem{Kis}A.~Kishimoto, {\it Outer automorphisms and reduced crossed products of simple \Cs-algebras.} Commun.~ Math.~ Phys.~ {\bf 81} (1981), no. 3, 429--435.

\bibitem{KK} A.~ Kishimoto, A.~ Kumjian, {\it Simple stably projectionless \Cs-algebras arising as crossed products.} Canad.~ J.~ Math.~ {\bf 48} (1996), no. 5, 980--996.
\bibitem{Kit}K.~Kitamura, {\it Actions of tensor categories on Kirchberg algebras.}
Ann. Sci. {\'E}c. Norm. Sup{\'e}r. {\bf 59} (2026), no.2, 273--330.
\bibitem{MR1626528}
A.~Kumjian, D.~Pask, and I.~Raeburn,
Cuntz--Krieger algebras of directed graphs,
{\em Pacific J. Math.}, {\bf 184}(1)(1998), 161--174.
\bibitem{MR1432596}
A.~Kumjian, D.~Pask, I.~Raeburn, and J.~Renault,
{\it Graphs, groupoids, and Cuntz--Krieger algebras.}
{\em J. Funct. Anal.}, {\bf 144}(2)(1997), 505--541.
\bibitem{Kum}A.~ Kumjian, {\it On certain Cuntz--Pimsner algebras.} Pacific J.~ Math.~ {\bf 217} (2004), no. 2, 275--289.

\bibitem{MR3756105}
 B.~K.~Kwa\'{s}niewski, and R.~Meyer,
 {\it Aperiodicity, topological freeness and pure outerness: from group actions to Fell bundles.}
 Studia~ Math.~ {\bf 241} (2018), no. 3, 257--303.

\bibitem{LN}M.~Laca, S.~Neshveyev, {\it KMS states of quasi-free dynamics on Pimsner algebras.} J.~ Funct.~ Anal.~ {\bf 211} (2), 457--482 (2004).
 \bibitem{MS} H.~Matui, Y.~Sato, {\it $\cZ$-stability of crossed products by strongly outer actions.}
Comm.~ Math.~ Phys.~ {\bf 314} (2012), 193--228.

\bibitem{Mey}R.~ Meyer, {\it On the classification of group actions on \Cs-algebras up to equivariant KK-equivalence.} Ann. K-Theory {\bf 6} (2021), 157--238.
\bibitem{MR308795}
D.~Mili\v{c}i\'{c},
{\it Topological representation of the group \Cs-algebra of {${\rm SL}(2,\IR)$}.}
{Glasnik Mat. Ser. III}, {\bf 6}(26):231--246, 1971.
\bibitem{MP} A.~Mingo, J.~Phillips, {\it Equivariant triviality theorems for {H}ilbert \Cs-modules.} 
Proc. Amer. Math. Soc. {\bf 91} (1984), 225--230. 
\bibitem{MR1757049}
P.~S. Muhly and B.~Solel,
{\it On the Morita equivalence of tensor algebras.}
{Proc. London Math. Soc. (3)}, {\bf 81}(1):113--168, 2000.
\bibitem{OP}D.~ Olesen, G.~ K.~ Pedersen, {\it Applications of the Connes spectrum to \Cs-dynamical systems
II.} J.~ Funct.~ Anal., {\bf 36} (1980), no.1,18--32.
\bibitem{OS}N.~ Ozawa, Y.~ Suzuki, {\it On characterizations of amenable \Cs-dynamical systems and new
examples.} Selecta Math.~ (N.S.), {\bf 27} (2021), Article number: 92 (29 pages).

\bibitem{Pedbook}G.~Pedersen, {\it \Cs-algebras and their automorphism groups.} 
Pure and Applied Mathematics, Academic Press, London, 2018. Second edition.

\bibitem{Pim}M.~ V.~ Pimsner, {\it A class of \Cs-algebras generalizing both Cuntz--Krieger algebras and crossed
products by $\IZ$.} Free probability theory, 189--212, Fields Inst. Commun., {\bf 12} (1997), Amer. Math. Soc.,
Providence, RI.

\bibitem{MR509073}
J.~Repka,
{\it Tensor products of unitary representations of {${\rm SL}_2({\IR})$}.}
{Amer. J. Math.}, {\bf 100}(4):747--774, 1978.

\bibitem{Ric}N.~ W.~ Rickert, {\it Amenable groups and groups with the fixed point property.} Trans.~ Amer.~ Math.~ Soc., {\bf 127} (1967), pp. 221--232.

\bibitem{Sch}J.~ Schweizer, {\it Dilations of \Cs-correspondences and the simplicity of Cuntz--Pimsner algebras.} J.~
Funct.~ Anal., {\bf 180}(2) (2001), 404--425.
\bibitem{Suzeq}Y.~Suzuki, {\it Simple equivariant \Cs-algebras whose full and reduced crossed products coincide.}
J.~ Noncommut.~ Geom. {\bf 13} (2019), 1577--1585.

\bibitem{SuzCMP} Y.~ Suzuki, {\it Complete descriptions of intermediate operator algebras by intermediate extensions of dynamical systems.} Commun.~ Math.~ Phys.~ {\bf 375} (2020), 1273--1297.

\bibitem{Suzsf}Y.~Suzuki, {\it Every countable group admits amenable actions on stably finite simple \Cs-algebras.}
Amer. J. Math. {\bf 148} no.1 (2026), 69--77.
\bibitem{SuzP}Y.~Suzuki, {\it Amenable actions on finite simple \Cs-algebras arising from flows on Pimsner algebras.}
To appear in M{\"u}nster J. Math., special issue in honour of Eberhard Kirchberg (invited), arXiv:2305.13056.
\bibitem{SuzMAAN2}Y.~Suzuki, {\it Crossed product splitting of intermediate operator algebras via 2-cocycles.}
Math.~ Ann.~ {\bf 394} (2026), Article: 38, 37 pages.


\bibitem{Thom}K.~Thomsen, {\it An introduction to KMS weights I, II, III.}
Preprint, arXiv:2204.01125v5.

\bibitem{Ur} D.~Ursu, {\it Characterizing traces on crossed products of noncommutative \Cs-algebras.}	Adv.~ in Math.~ {\bf 391} (2021), 107955.
\end{thebibliography}
\end{document}